\DeclareSymbolFont{cyrletters}{OT2}{wncyr}{m}{n}
\DeclareMathSymbol{\Sha}{\mathalpha}{cyrletters}{"58}
\newlength{\baseunit}
\newcommand{\comments}[1]{}
\def \cI {\mathcal I}
\def \cM {\mathcal M}
\def \cA {\mathcal A}
\def \cP {\mathcal P}
\def \cS {\mathcal S}
\def \vP {\vec{P}}
\def \vR {\vec{R}}
\def \CZ {CZ^\circ}
\def \Wh {\mbox{Wh}}
\def \dist {\mbox{dist}}
\def \oA {\overline{\mathcal{A}}}
\def \oE {{\overline{E}}}
\def \oa {{\overline{\alpha}}}
\def \ob {{\overline{\beta}}}
\def \onu {{\overline{\nu}}}
\def \sp {\mbox{dp}}
\def \cl {\mbox{cl}}
\def \rst {{\mbox{\scriptsize rst}}}
\def \th {{\mbox{\scriptsize th}}}
\def \new {{\mbox{\scriptsize new}}}
\def \spec {{\mbox{\scriptsize special}}}
\def \ky {{\mbox{\scriptsize key}}}
\def \hx {{\widehat{x}}}
\def \hz {{\widehat{z}}}
\def \hQ {{\widehat{Q}}}
\def \hy {{\widehat{y}}}
\def \hE {{\widehat{E}}}
\def \hP {{\widehat{P}}}
\newcommand{\R}{\mathbb{R}}
\newcommand{\Z}{\mathbb{Z}}
\newcommand{\N}{\mathbb{N}}
\newcommand{\X}{{\mathbb{X}}}
\newcommand{\LR}{L^{m,p}(\R^n)}
\newcommand{\LE}{L^{m,p}(E)}
\newcommand{\WR}{W^{m,p}(\R^n)}
\newcommand{\WE}{W^{m,p}(E)}
\newcommand{\diam}{\mbox{diam}}
\newcommand{\supp}{\mbox{supp}}
\newcommand{\otheta}{\overline{\theta}}
\newcommand{\key}{\mathcal{K}}
\DeclareMathOperator*{\Blim}{Blim}
\newcommand{\oeq}[1]{\overset{ {\scriptscriptstyle #1}}{=}}
\newcommand{\oleq}[1]{\overset{ {\scriptscriptstyle #1}}{\leq}}
\newcommand{\ogeq}[1]{\overset{ {\scriptscriptstyle #1}}{\geq}}
\newcommand{\olesssim}[1]{\overset{ {\scriptscriptstyle #1}}{\lesssim}}
\newcommand{\osimeq}[1]{\overset{ {\scriptscriptstyle #1}}{\simeq}}
\newtheorem*{extthm}{Extension Theorem for $\mathbf{(E,z)}$}
\newtheorem*{ml}{Main Lemma for $\mathbf{\cA}$}
\newtheorem{rek}{Remark}
\newtheorem{defn}{Definition}
\newtheorem{thm}{Theorem}
\newtheorem{lem}{Lemma}
\newtheorem{cor}{Corollary}
\newtheorem{prop}{Proposition}
\numberwithin{lem}{section}
\title{Sobolev Extension by Linear Operators}
\author{Charles L. Fefferman}
\thanks{The first author is partially supported by NSF and ONR grants DMS 09-01040 and N00014-08-1-0678}
\author{Arie Israel}
\thanks{The second author is partially supported by an NSF postdoctoral fellowship, DMS-1103978}
\author{Garving K. Luli}
\thanks{The third author is partially supported by NSF and ONR grants DMS 09-01040 and N00014-08-1-0678}
\begin{document}

\begin{abstract}
Let $L^{m,p}(\R^n)$ be the Sobolev space of functions with $m^\th$ derivatives lying in $L^p(\R^n)$. Assume that $n< p < \infty$. For $E \subset \R^n$, let $L^{m,p}(E)$ denote the space of restrictions to $E$ of functions in $L^{m,p}(\R^n)$. We show that there exists a bounded linear map $T : \LE \rightarrow \LR$ such that, for any $f \in \LE$, we have $Tf = f$ on $E$. We also give a formula for the order of magnitude of  $\|f\|_{L^{m,p}(E)}$ for a given $f : E \rightarrow \R$ when $E$ is finite.
\end{abstract}
\maketitle

\tableofcontents

\section{Introduction} \label{sec_intro}
Fix positive integers $m,n$ and let $f$ be a real-valued function defined on an (arbitrary) given subset $E \subset \R^n$. How can we tell whether $f$ extends to a $C^m$ function $F$ on the whole $\R^n$? If such an $F$ exists, then how small can we take its $C^m$ norm? What can we say about the derivatives $\partial^\alpha F(x)$ at a given point $x$? Can we take $F$ to depend linearly on $f$?

Suppose $E$ is finite. Can we compute an extension $F$ whose $C^m$ norm has the least possible order of magnitude? How many computer operations does it take?

The above questions were answered in \cite{F1,F2,F3,F4,F5, FK1,FK2}, building on earlier work of H. Whitney, G. Glaeser, Y. Brudnyi, P. Shvartsman, E. Bierstone, P. Milman and W. Paw\l ucki \cite{BMP,BS1,BS2,G,W1,W2,W3}.

Now we want to address the analogous questions for Sobolev spaces in place of $C^m$. Important first steps were taken by P. Shvartsman \cite{S1,S2,S3} and A. Israel \cite{I}; we discuss these papers later in the introduction.

To state our results, we set up notation. We work in the Sobolev space $\X = \LR$ ($n<p<\infty$) with seminorm
$$\|F\|_{\X} = \max_{|\alpha| = m} \left( \int_{\R^n} |\partial^\alpha F(x)|^p dx \right)^{1/p},$$
or else in $\X = C^m(\R^n)$ with norm
$$\|F\|_{C^m(\R^n)} = \max_{|\alpha| \leq m} \sup_{x \in \R^n} |\partial^\alpha F(x)|.$$

Given $E \subset \R^n$, we write $\X(E)$ for the space of restrictions to $E$ of functions in $\X$. The space $\X(E)$ has a natural seminorm
$$\|f\|_{\X(E)} = \inf \{ \|F\|_{\X} : F \in \X, \; F=f \; \mbox{on}\; E\}.$$

When $\X = \LR$, our standing assumption $n < p < \infty$ guarantees by the Sobolev theorem that any $F \in \X$ has continuous derivatives up to order $(m-1)$. Consequently, $F$ may be restricted to $E$ and our definition of $\X(E)$ makes sense.

When $\X = \LR$, then $c,C,C'$ etc. denote constants depending only on $m,n,p$. Similarly, when $\X = C^m(\R^n)$, then $c,C,C'$ etc. denote constants depending only on $m,n$. These symbols may denote different constants in different occurrences.

The simplest form of our results is as follows. Let $\X = \LR$ until further notice.

\begin{thm}[Existence of Linear Extension Operators]\label{thm1} Given $E \subset \R^n$, there exists a linear map $T \colon \X(E) \rightarrow \X$ such that
$$Tf = f \; \mbox{on} \; E \;\;\; \mbox{and} \;\;\; \|Tf\|_{\X} \leq C \|f\|_{\X(E)} \;\;\; \mbox{for all} \;\;\; f \in \X(E).$$
\end{thm}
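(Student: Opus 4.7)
The plan is to adapt Fefferman's machinery for $C^m$-extension to the Sobolev setting, where pointwise bounds on derivatives are replaced by $L^p$-integrability of the top-order derivatives. The first step is a reduction to the case of finite $E$. Given an arbitrary $E$, I would exhaust it by an increasing sequence of finite subsets $E_k$ with union dense in $E$, construct linear operators $T_k : \X(E_k) \to \X$ with $\|T_k\| \leq C$ uniform in $k$, and then extract a weak limit using reflexivity of $L^p$ (since $1 < p < \infty$); the hypothesis $p > n$ together with Sobolev embedding ensures that pointwise restriction to $E$ commutes with the limit.

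For finite $E$, I would fix a large cube $Q^\circ \supset E$ and produce a Calderón--Zygmund decomposition $\CZ$ of $Q^\circ$ into dyadic cubes $Q$, each chosen minimal subject to a regularity predicate asserting that $E$ intersected with a fixed dilate $Q^*$ is ``well-structured.''  On each $Q \in \CZ$ one selects a polynomial $P_Q$ of degree less than $m$ approximating $f$ near $Q$, and forms
\[
Tf \;=\; \sum_{Q \in \CZ} \theta_Q \cdot P_Q,
\]
where $\{\theta_Q\}$ is a smooth partition of unity subordinate to a mild dilation of the cubes. Bounding $\|\nabla^m (Tf)\|_{L^p(\R^n)}$ reduces to estimating commutator-type terms involving derivatives of $\theta_Q$ multiplied by differences $P_Q - P_{Q'}$ on adjacent cubes. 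These will be controlled via a Sobolev--Poincaré inequality of the form $\|P - F\|_{L^\infty(Q)} \lesssim (\diam Q)^{m - n/p}\|\nabla^m F\|_{L^p(CQ)}$, valid because $p > n$, together with a bounded-overlap property of the $\CZ$ decomposition.

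The main obstacle is performing the jet selection $f \mapsto (P_Q)_{Q \in \CZ}$ linearly in $f$ while preserving these estimates. I would follow an induction over ``labels'' $\oA$ (monotone sets of multi-indices encoding the shape of admissible jets), governed by a Main Lemma for $\oA$ asserting the existence, for each pair $(E,z)$, of a linear Extension Operator matching any prescribed jet at $z$ and equalling $f$ on $E$ with near-optimal $L^p$ cost. The inductive step would patch operators for strictly smaller labels using a localized version of the $\CZ$ construction above. The subtle point, absent from the $C^m$ theory, is that cube contributions must be aggregated in $\ell^p$ rather than $\ell^\infty$: one has to carry a finite-overlap bookkeeping across the whole decomposition and simultaneously verify that label assignment, jet selection, and patching remain linear in $f$ at every stage of the induction. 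Maintaining this linearity while gaining the required $L^p$ control is the principal difficulty.
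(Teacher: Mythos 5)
Your overall architecture is the one the paper uses: reduction to finite $E$, a Calder\'on--Zygmund decomposition of a large cube, polynomial jets chosen linearly in $f$, a partition of unity, and an induction over labels $\oA$. But the proposal leaves out the two ideas that actually make the argument close, and at precisely the places you flag as hard.

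First, the passage from finite to infinite $E$. You propose ``extract a weak limit using reflexivity of $L^p$,'' but $\LR$ carries only a seminorm, with kernel $\cP$: the operators $T_k$ satisfy $\|T_k f\|_{\LR}\leq C\|f\|_{\X(E_k)}$, and this gives no bound at all on the sequence $(T_k(f|_{E_k}))_k$ in any actual norm, since one can add arbitrary polynomials to each $T_k f$ without affecting the hypotheses. (The paper spells this out with the example $T_N^\sharp f = T_N f + \mu_N f(x_0)P_0$ for any $P_0$ vanishing on $E$.) Without further input, no compactness argument applies. The paper fixes this by choosing a finite $S_0\subset E$ minimizing $\dim\cP(S_0)$, noting that any $P\in\cP$ vanishing on $S_0$ then vanishes on all of $E$, and normalizing each $T_N$ so that $\pi_0 J_{x_0}(T_N f)=0$ for a projection $\pi_0:\cP\to\cP(S_0)$. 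Only after this normalization are the $T_N(f|_{E_N})$ bounded in $W^{m,p}(Q)$ for each fixed cube $Q$, at which point one can take a (Banach) limit. This normalization is not optional; your proposal has no substitute for it.

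Second, the jet selection. You correctly identify ``performing the jet selection $f\mapsto(P_Q)_Q$ linearly while preserving the $\ell^p$ estimates'' as the principal difficulty, and you note that contributions must be summed in $\ell^p$ rather than taken in $\ell^\infty$; but you then describe only ``finite-overlap bookkeeping'' without giving a mechanism to enforce mutual consistency of the $P_Q$ at that $\ell^p$ scale. That mechanism is the paper's main new device: \emph{keystone cubes}. A CZ cube $Q$ is keystone if every CZ cube meeting $100Q$ is at least as large; the paper shows there are only $O(\#E)$ of them (Lemma 6.11 and Proposition 6.1), and that from any CZ cube there is a chain of neighboring CZ cubes with geometrically decaying sidelengths ending at a keystone cube (Proposition \ref{keygeom}). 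One then picks jets $R^\sharp_\mu$ at the keystone cubes by a local $\ell^p$-minimization (Lemma \ref{jetop}) and sets $R_\nu = R^\sharp_{\kappa(\nu)}$ for every non-keystone $Q_\nu$, i.e.\ the Whitney field is made \emph{constant along paths}. The geometric decay along paths is what makes the resulting $\ell^p$ sums $\sum_\nu |R_\nu - R_{\nu'}|_\nu^p$ controllable (Lemma \ref{globsob} is essentially an $\ell^p$-along-paths Sobolev inequality). This is the step that is genuinely different from the $C^m$ theory and does not follow from bounded overlap alone: in $\ell^\infty$ one can pick jets at each cube almost independently via Helly/Glaeser-type selection, but in $\ell^p$ independent local choices accumulate uncontrollably across scales, and keystone cubes are the tool for suppressing that accumulation. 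As it stands, your proposal poses the right problem but does not propose a solution to it.
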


\begin{thm}[Computation of the norm]\label{thm2}
Suppose $E \subset \R^n$ is finite; let $N = \#(E)$ be the number of points in $E$. Then there exist linear functionals $\xi_1,\ldots,\xi_L : \X(E) \rightarrow \R$, such that $L \leq C N$ and
$$c \sum_{\ell=1}^L |\xi_\ell(f)|^p \leq \|f\|_{\X(E)}^p \leq C \sum_{\ell=1}^L |\xi_\ell(f)|^p \;\;\; \mbox{for all} \;\;\; f \in \X(E).$$
\end{thm}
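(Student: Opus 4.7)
The plan is to reduce via Theorem~\ref{thm1} to computing $\|Tf\|_\X^p$ for the bounded linear extension operator $T : \X(E) \to \X$ that theorem provides. Since $\|f\|_{\X(E)} \leq \|Tf\|_\X \leq C\,\|f\|_{\X(E)}$, it suffices to exhibit $L \leq CN$ linear functionals $\xi_1,\dots,\xi_L$ on $\X(E)$ with
$$c\sum_\ell |\xi_\ell(f)|^p \leq \|Tf\|_\X^p \leq C\sum_\ell |\xi_\ell(f)|^p.$$
Replacing the max over $|\alpha|=m$ in the definition of $\|\cdot\|_\X$ by the sum (which costs only a combinatorial factor), the task is to compute $\int_{\R^n} |\partial^\alpha Tf|^p\,dx$ for each $\alpha$ with $|\alpha|=m$ in terms of $O(N)$ linear functionals of $f$.

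Next I would carry out a Calder\'on--Zygmund decomposition of a large cube containing $E$ into pairwise disjoint dyadic cubes $\{Q_\nu\}_{\nu=1}^M$ with $M \leq CN$, adapted to $E$ in the usual fashion so that each cube is either well separated from $E$ or contains $O(1)$ points of $E$, and dilates of the $Q_\nu$ have bounded overlap. Appealing to the Whitney patching structure underlying the constructive proof of Theorem~\ref{thm1}, I would write $Tf = \sum_\nu \theta_\nu P_\nu$, where $\{\theta_\nu\}$ is a partition of unity adapted to $\{Q_\nu\}$ and each $P_\nu \in \cP$ is a polynomial of degree $\leq m-1$ depending linearly on the values of $f$ at $O(1)$ points of $E$ near $Q_\nu$.

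The third step is a localized computation. On each $Q_\nu$, using $\sum_{\nu'} \theta_{\nu'}=1$ and the vanishing of $m$-th derivatives on polynomials of degree $\leq m-1$, one may subtract $P_\nu$ inside the patching formula to obtain
$$\partial^\alpha Tf(x) = \sum_{\nu'\sim\nu}\sum_{\beta\leq\alpha}\binom{\alpha}{\beta}\,\partial^\beta\theta_{\nu'}(x)\,\partial^{\alpha-\beta}\bigl(P_{\nu'}(x)-P_\nu(x)\bigr),$$
the outer sum having a bounded number of terms (cubes neighboring $\nu$). Combining $|\partial^\beta\theta_{\nu'}|\leq C\delta_\nu^{-|\beta|}$ with the polynomial equivalence $\int_{Q_\nu}|P|^p \simeq |Q_\nu|\sum_\gamma \delta_\nu^{p|\gamma|}|\partial^\gamma P(y_\nu)|^p$ for $P\in\cP$ (with $\delta_\nu=\diam Q_\nu$ and $y_\nu$ a fixed basepoint in $Q_\nu$), the local integral $\int_{Q_\nu}|\partial^\alpha Tf|^p$ becomes equivalent to a sum of $O(1)$ terms
$$\delta_\nu^{n+p(|\gamma|-m)}\bigl|\partial^\gamma(P_{\nu'}-P_\nu)(y_\nu)\bigr|^p,$$
each a $p$-th power of a linear functional of $f$ depending on $O(1)$ points of $E$. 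Summing over $\nu$ and $\alpha$ yields $L\leq CN$ functionals and the upper bound $\|Tf\|_\X^p \leq C\sum_\ell |\xi_\ell(f)|^p$.

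The main obstacle will be the lower bound $c\sum_\ell|\xi_\ell(f)|^p \leq \|f\|_{\X(E)}^p$. The functionals $\xi_\ell$ are defined through the specific polynomials $P_\nu$ built by $T$, so a priori they see only $Tf$ rather than an arbitrary extension. I would tackle this by showing that for every $F\in\X$ with $F|_E=f$, each $|\partial^\gamma(P_{\nu'}-P_\nu)(y_\nu)|$ is controlled by localized Sobolev quantities of $F$: concretely, by writing $P_\nu$ as a controlled linear combination of Taylor jets of $F$ at the relevant points of $E$, connecting neighboring jets via scaled Poincar\'e--Sobolev inequalities along the CZ cubes, and then exploiting the bounded overlap of the $Q_\nu$ to get $\sum_\ell|\xi_\ell(f)|^p \leq C\|F\|_\X^p$. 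Taking the infimum over $F$ then delivers the lower bound.
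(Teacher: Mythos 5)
Your high-level plan (reduce to $\|Tf\|_\X^p$ and localize on a CZ decomposition) matches the spirit of the paper, but three of the steps that you treat as routine are in fact the main content, and as stated the argument has gaps that cannot be repaired without the keystone-cube and induction-on-labels machinery.

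First, the representation $Tf = \sum_\nu \theta_\nu P_\nu$ with $P_\nu \in \cP$ is not what the paper's patching produces, and cannot be: if each summand were a polynomial of degree $\leq m-1$ times $\theta_\nu$, then $Tf$ would in general fail to agree with $f$ at points of $E \cap Q_\nu$ other than the basepoint. The paper instead patches genuine local Sobolev extensions, $T^\circ(f,\vR) = \sum_\nu \theta_\nu\, T_\nu(f_\nu, R_\nu)$, where $T_\nu(f_\nu, R_\nu) \in L^{m,p}$ interpolates $f$ on $E \cap (1.1)Q_\nu$ subject to $J_{x_\nu}T_\nu = R_\nu$. Your formula for $\partial^\alpha Tf(x)$ therefore drops the term $\partial^\alpha T_\nu(f_\nu,R_\nu)(x)$, which does not vanish; what one actually gets (Lemma \ref{whittrick}) is the jet-difference term you wrote \emph{plus} $\sum_\nu \|T_\nu(f_\nu,R_\nu)\|^p_{L^{m,p}(1.1Q_\nu)}$. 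Those local seminorms must themselves be re-expressed as sums of $O(\#E_\nu)$ $p$-th powers of linear functionals of $(f_\nu,R_\nu)$; this is obtained only by invoking the Extension Theorem inductively for a label $\cA' < \cA$, which is why the paper runs Theorems 1, 2, 3 through a simultaneous induction rather than deriving Theorem 2 from Theorem 1 as a black box.

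Second, the assertion that ``each $P_\nu$ depends linearly on the values of $f$ at $O(1)$ points of $E$ near $Q_\nu$'' is false for any near-optimal operator of this kind; indeed \cite{FIL} gives a counterexample showing no bounded-depth extension operator exists. In the paper, $R_\nu = R^\sharp_{\kappa(\nu)}$ is determined by $f\big|_{E \cap 9Q^\sharp_{\kappa(\nu)}}$, a set of potentially many points; these are precisely the ``assists'' $\Omega$.

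Third, and most seriously for Theorem 2 itself: the number of CZ cubes $\nu_{\max}$, and hence the number of neighboring pairs $(\nu,\nu')$, is \emph{not} controlled by $CN$. A straightforward localization therefore yields $L \simeq \nu_{\max}$, not $L \leq CN$. The paper gets $CN$ by arranging the jets to be \emph{Constant Along Paths}: $R_\nu = R^\sharp_{\kappa(\nu)}$ for a careful keystone assignment $\kappa$, so that $R_\nu - R_{\nu'} = 0$ whenever $\kappa(\nu)=\kappa(\nu')$, and the number of neighbor pairs with $\kappa(\nu) \neq \kappa(\nu')$ is $\leq CN$ by Proposition \ref{keygeom} \textbf{(K2)}. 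You have no mechanism to make most neighbor-pair differences vanish, so your functional count is wrong. Relatedly, the ``chain Poincar\'e--Sobolev along the CZ cubes'' step in your lower bound requires the sidelengths to decay geometrically along the chain (Lemma \ref{lem10}, \textbf{(K1)}), since otherwise the losses accumulate; and one must also verify that the particular jets $R^\sharp_\mu$ are nearly optimal subject to coherence with $P$ (Lemma \ref{jetop}) and that near-optimal extensions satisfying both Coherence and Constancy Along Paths actually exist (Proposition \ref{special}). None of this is addressed, and together it constitutes the bulk of the proof.
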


For finite $E \subset \R^n$, we can say more about the linear map $T$ in Theorem \ref{thm1} and the linear functionals $\xi_1,\ldots,\xi_L$ in Theorem \ref{thm2} ; they have ``assisted bounded depth'', a notion that we explain below. We expect that this will be useful when we look for algorithms analogous to those in \cite{FK1,FK2}.

To motivate the idea of assisted bounded depth, let us compute the variance of $N$ given real numbers $x_1,\ldots,x_N$. Two standard formulas are
\begin{align*}
&\mbox{(Var 1)} \qquad \sigma^2 =\frac{1}{2N^2} \sum_{i,j=1}^N (x_i-x_j)^2 \;\;\; \mbox{and} \\
&\mbox{(Var 2)} \qquad \sigma^2 =\frac{1}{N} \sum_{i=1}^N (x_i-\overline{x})^2 \quad \mbox{where} \;\;\; \overline{x} = \frac{1}{N} \sum_{j=1}^N x_j.
\end{align*}
Note that it takes $\sim N^2$ computer operations to apply formula (Var 1), but only $\sim N$ operations to apply (Var 2). By precomputing $\overline{x}$, we save a lot of work. We will return later to the problem of computing a variance.

Now we explain the notion of assisted bounded depth. Let $E \subset \R^n$ be finite, and let $N=\#(E)$. Any linear functional $\omega$ on the space $\X(E)$ can be written in the form

\begin{equation} \label{intro1}
\omega(f) = \sum_{x \in E} \lambda(x) f(x), \;\;\; \mbox{for coefficients} \; \lambda(x) \; \mbox{independent of} \; f.
\end{equation}
We write $\sp(\omega)$ (the ``depth'' of $\omega$) to denote the number of nonzero coefficients $\lambda(x)$. A collection of functionals $\Omega = \{\omega_1,\ldots,\omega_S\}$ on $\X(E)$ will be said to be a set of ''assists'' provided we have 
$$\sum_{s=1}^S \sp(\omega_s) \leq C N.$$
The significance of this condition is that for any given $f \in \X(E)$, it takes at most $CN$ computer operations to calculate all the values $\omega_1(f),\ldots,\omega_S(f)$. The assists $\omega_s(f)$ ($s=1,\ldots,S$) will play the r\^{o}le of the mean $\overline{x}$ from our earlier remarks on computing a variance.

Let $\Omega = \{\omega_1,\ldots,\omega_S\}$ be a collection of assists for $\X(E)$. A linear functional $\xi : \X(E) \rightarrow \R$ is said to have ``$\Omega$-assisted bounded depth'' if it can be expressed in the form
\begin{equation} \label{intro2}
\xi(f) = \sum_{y \in E} \lambda(y) f(y) + \sum_{s=1}^S \mu_s \omega_s(f), \;\;\; \mbox{for all} \; f \in \X(E),
\end{equation}
where the $\lambda(y), \mu_s \in \R$ are independent of $f$, and at most $C$ of the coefficients $\lambda(y), \mu_s$ are nonzero.

Similarly, a linear map $T : \X(E) \rightarrow \X$ is said to have $\Omega$-assisted bounded depth if it can be expressed in the form
\begin{equation} \label{intro3}
Tf(x) = \sum_{y \in E} \lambda(x,y) f(y) + \sum_{s=1}^S \mu_s(x) \omega_s(f), \;\;\; \mbox{for all} \; f \in \X(E), \; x \in \R^n;
\end{equation}
where $\lambda(x,y), \mu_s(x) \in \R$ are independent of $f$, and for each $x \in \R^n$, at most $C$ of the $\lambda(x,y), \mu_s(x)$ are nonzero.

An operator or functional is said to have ``bounded depth'' if it has $\Omega$-assisted bounded depth with $\Omega$ equal to the empty set (no assists). See \cite{F4, L1}.

We will prove the following result.
\begin{thm}\label{thm3}
Suppose $E \subset \R^n$ is finite. Then there exist assists $\Omega =  \{\omega_1,\ldots,\omega_S\}$ such that the operator $T$ in Theorem \ref{thm1} , and the functionals $\xi_1,\ldots,\xi_L$ in Theorem \ref{thm2} can be taken to have $\Omega$-assisted bounded depth.
\end{thm}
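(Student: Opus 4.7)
The plan is to extract Theorem \ref{thm3} from the constructive proofs of Theorems \ref{thm1} and \ref{thm2}, by packaging the ``global'' linear combinations of $f$ that enter those proofs as a single up-front list of assists, and then verifying that every remaining ingredient is genuinely pointwise local. All of the construction will hinge on a Calderón--Zygmund decomposition $\CZ$ of $\R^n$ adapted to $E$, together with a distinguished subfamily of \emph{keystone} cubes carrying the coarse geometric information of $E$. On each $Q \in \CZ$ I would build a local polynomial $P_Q \in \cP$ (of degree $<m$) and a local extension $F_Q$ on a bounded dilate $Q^*$ with $F_Q = f$ on $E \cap Q^*$ and $\|F_Q\|_{L^{m,p}(Q^*)}$ controlled by the natural local trace seminorm; the global extension would then be $Tf = \sum_{Q \in \CZ} \theta_Q F_Q$ for a Whitney partition of unity $\{\theta_Q\}$ with bounded overlap.

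The assists would be defined once and for all, before any $F_Q$ is constructed. For each keystone cube $Q^\circ$ I would designate a bounded number of linear functionals of $f$ --- essentially the coefficients that arise when a near-optimal representative polynomial for $Q^\circ$ is selected by solving the local Whitney-type convex-set problem attached to $Q^\circ$ --- and let $\Omega$ be the totality of these functionals over all keystones. The central combinatorial observation is that the keystones can be arranged so that each $y \in E$ is relevant for only $O(1)$ of them; consequently $\sum_s \sp(\omega_s) \lesssim N$, so $\Omega$ is a legitimate list of assists. Non-keystone cubes would borrow their polynomials from the nearest keystone, so that every $P_Q$ is by construction an $\Omega$-assisted bounded-depth object.

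With $\Omega$ and the $P_Q$ in place, the bounded-depth claim for $T$ follows from two pointwise facts: the local extension $F_Q(x)$ is given by an explicit formula involving $P_Q$ together with at most $O(1)$ point values $f(y)$, $y \in E \cap Q^*$; and at any $x \in \R^n$ only $O(1)$ cubes $Q$ satisfy $\theta_Q(x) \neq 0$. Together these put $Tf(x)$ in the form \eqref{intro3}. For Theorem \ref{thm2}, the proof of the norm equivalence produces a sum-of-$p$-th-powers expression of the general shape
$$\|f\|_{L^{m,p}(E)}^p \;\simeq\; \sum_{Q \in \CZ} \sum_{y \in S_Q} w_{Q,y}\,|f(y)-P_Q(y)|^p \;+\; \sum_{Q \in \CZ} |Q|\cdot \Lambda_Q(P_Q,P_{Q^\prime})^p,$$
where $S_Q$ has $O(1)$ points, $Q^\prime$ ranges over $O(1)$ neighbours of $Q$, and $\Lambda_Q$ is a bounded-depth functional in the polynomial coefficients. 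Each summand on the right-hand side is the $p$-th power of a single $\Omega$-assisted bounded-depth functional, producing the desired $\xi_1,\ldots,\xi_L$ with $L \lesssim \#(\CZ) \lesssim N$.

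The main obstacle will be the simultaneous construction, at each keystone $Q^\circ$, of a near-optimal polynomial $P_{Q^\circ}$ as a bounded-depth linear combination of a small list of assists $\Omega_{Q^\circ}$, while at the same time preventing any assist from being duplicated across too many keystones. This seems to require an induction on the dimension of the relevant Whitney convex set attached to $Q^\circ$, running in parallel with the induction that drives the sharp estimates in Theorems \ref{thm1} and \ref{thm2}; the bookkeeping in that induction must be kept explicit so that $\Omega := \bigcup_{Q^\circ}\Omega_{Q^\circ}$ automatically satisfies the assist budget $\sum_s \sp(\omega_s) \lesssim N$. The surrounding ingredients --- the CZ decomposition, the partition of unity, the patching estimate, and the equivalent discrete norm --- are Sobolev-space analogues of their counterparts in the $C^m$ theory (cf.\ \cite{F4, L1}) and introduce no new difficulty beyond this core combinatorial problem.
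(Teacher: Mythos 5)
Your sketch has the right high-level skeleton --- a Calder\'on--Zygmund decomposition adapted to $E$, a distinguished family of keystone cubes, assists computed once and for all on keystones, non-keystone cubes borrowing their polynomials from a nearby keystone, and a partition-of-unity patching with bounded overlap --- and all of these are indeed the paper's ingredients. But there are two genuine gaps, and they are precisely where the hard work lies.

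First, the inductive engine. The paper does not prove Theorems \ref{thm1} and \ref{thm2} first and then ``read off'' Theorem \ref{thm3}; it formulates one stronger \emph{Extension Theorem for $(E,z)$} on the constrained space $L^{m,p}(E;z)$ --- data a pair $(f,P)$ with $P$ a prescribed jet at an auxiliary point $z$ --- and proves the operator $T$, the norm $M$, the assist list $\Omega$, and the functionals $\Xi$ \emph{simultaneously}, by induction on a label $\cA\subset\cM$ with the total order of Definition~\ref{order}. The auxiliary jet $P$ is not decoration: Lemma~\ref{jetop} chooses the keystone polynomial $R^\sharp_\mu$ subject to the constraint $\partial^\alpha R^\sharp_\mu(x^\sharp_\mu)=\partial^\alpha P(x^\sharp_\mu)$ for $\alpha\in\cA$, which is what makes the patched extension Coherent with $P$ and Constant Along Paths (Proposition~\ref{special}, Lemma~\ref{optlem}). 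In your plan the keystone polynomial ``$P_{Q^\circ}$'' depends only on $f$ with no global jet to match, which removes the coherence mechanism; you could not then prove that the glued extension is near-optimal. Your ``induction on the dimension of the relevant Whitney convex set'' is the right instinct stated imprecisely: the induction variable is the label $\cA$, the CZ stopping rule is engineered so that every CZ cube carries a strictly smaller label (hence the inductive hypothesis applies locally), and an entire preliminary reduction (\S\ref{sec_ind}) disposes of non-monotonic $\cA$ before the geometry even starts. This is the \emph{only} induction in the argument; it cannot be run as a ``parallel'' induction for depth alone.

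Second, the combinatorial control of $L\lesssim N$ in Theorem~\ref{thm2}. Your ``central observation'' --- each $y\in E$ enters $O(1)$ keystones --- is correct (Lemma~\ref{moregeom}) and does bound the assist budget $\sum_s\sp(\omega_s)\lesssim N$. But it does not bound the number of functionals $\xi_\ell$. The norm formula carries a cross-term indexed by neighboring CZ pairs $Q\leftrightarrow Q'$ with $\key(Q)\neq\key(Q')$, and the total number of neighboring CZ pairs is on the order of $\#(\CZ)$, which is \emph{not} $O(N)$ in general: the decomposition can have far more than $N$ cubes (e.g.\ to span widely separated clusters of $E$). The claim that only $\leq CN$ neighboring pairs have distinct keystones is property (K2) of Proposition~\ref{keygeom}, and it is a genuinely separate and harder combinatorial estimate requiring the well-separated pairs decomposition, interstellar cubes, cluster halos, and the paths-to-keystones construction of \S\S\ref{PTKC}--\ref{sec_PTKC2}. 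Without (K2) your scheme yields $L\lesssim\#(\CZ)$, not $L\lesssim N$, and Theorem~\ref{thm3} as stated would not follow.
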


If an oracle told us the coefficients in \eqref{intro1} for the assists $\omega_1,\ldots,\omega_S$, and in \eqref{intro3} for the operator $T$ in Theorems \ref{thm1}  and \ref{thm3} , then we could efficiently compute $Tf$ for any given $f \in \X(E)$. We could first precompute the assists $\omega_1(f),\ldots,\omega_S(f)$ using at most $CN$ computer operations by applying \eqref{intro1}. After that we could compute $Tf(x)$ at any given point $x \in \R^n$ using at most $C$ operations, by applying \eqref{intro3}.

Similarly, if an oracle told us the coefficients in \eqref{intro1} for the assists $\omega_1,\ldots,\omega_S$, and in \eqref{intro2} for the functionals $\xi_1,\ldots,\xi_L$ in Theorems \ref{thm2}  and \ref{thm3} , then we could compute the order of magnitude of $\|f\|_{\X(E)}$ for any given $f$ by first computing the assists $\omega_1(f),\ldots,\omega_S(f)$ and then computing $\xi_1(f),\ldots,\xi_L(f)$ in Theorems \ref{thm2}  and \ref{thm3} . This would require at most $CN$ computer operations. 

We hope that the coefficients arising in our formula for the assists $\omega_1,\ldots,\omega_S$ and the functionals $\xi_1,\ldots,\xi_L$ can be computed using at most $C N \log N$ operations; and that after one-time work $C N \log N$ we can compute the coefficients relevant to $Tf(x)$ at a given query point $x$ using at most $C \log N$ operations. This would provide an efficient algorithm for interpolation of data in the Sobolev space $\X  = \LR$ ($n < p < \infty$), analogous to the algorithms in Fefferman-Klartag \cite{FK1,FK2} for $C^m(\R^n)$.

Let us compare our present results to what we know about $C^m(\R^n)$. Switching over to $\X = C^m(\R^n)$, we recall the following results \cite{F3,F5,FK1}.

\begin{thm}\label{thm4}
For any $E \subset \R^n$, there exists a linear map $T : \X(E) \rightarrow \X$, such that 
$$Tf=f \; \mbox{on} \;E \;\;\; \mbox{and} \;\;\; \|Tf\|_\X \leq C \|f\|_{\X(E)} \;\;\; \mbox{for all} \; f \in \X(E).$$
Moreover, if $E$ is finite, then $T$ has bounded depth.
\end{thm}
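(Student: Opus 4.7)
The plan is to follow the classical Fefferman--Klartag scheme in reverse order: first establish the bounded-depth linear extension for finite $E$, and then deduce the general case by a weak-$\ast$ limit argument. For the general-$E$ statement, I would construct, for every finite $E' \subset E$, a linear operator $T_{E'} : \X(E') \to \X$ with $\|T_{E'}\| \le C$, view each $T_{E'}$ as an element of a uniformly bounded subset of a space of linear operators (acting on a separable trace space), and extract a weak-$\ast$ limit along an ultrafilter of finite subsets of $E$ ordered by inclusion. Since the estimate $\|T_{E'} f\|_\X \le C \|f\|_{\X(E')}$ is preserved under the limit and each $T_{E'}$ reproduces $f$ on $E'$, the resulting operator $T$ satisfies the claims of the theorem.

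For finite $E$, I would enclose $E$ in a large cube $Q^\circ$ and run a Calder\'on--Zygmund-type decomposition adapted to $E$: a cube $Q$ is declared \emph{OK} if $\#(E \cap C Q) \le k^{\#}$ for a constant $k^{\#} = k^{\#}(m,n)$ (chosen via the Finiteness Principle for $C^m$), and one repeatedly bisects otherwise. This yields a finite family $\CZ$ of pairwise disjoint cubes tiling $Q^\circ$ with diameters comparable to the distance to the ``complicated'' parts of $E$. To each $Q \in \CZ$ I would attach a polynomial $P_Q$ of degree less than $m$ such that (i) $P_Q$ agrees with $f$ on a bounded ``keystone'' set $S_Q \subset E$ of cardinality at most $k^\#$, (ii) the map $f \mapsto P_Q$ is linear and depends on at most $C$ values of $f$, and (iii) neighboring cubes satisfy the Whitney compatibility $|\partial^\alpha (P_Q - P_{Q'})(x_Q)| \le C \|f\|_{\X(E)} \,\delta_Q^{m-|\alpha|}$ for $|\alpha| \le m$. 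The operator is then assembled using a smooth partition of unity $\{\theta_Q\}$ subordinate to a mild enlargement of the cubes:
\[
Tf(x) = \sum_{Q \in \CZ} \theta_Q(x) \, P_Q(x) \qquad (x \in Q^\circ),
\]
extended by cutoff to all of $\R^n$. The $\X$-norm bound follows from the standard Whitney estimate applied to the compatible field $\{(P_Q, x_Q)\}$, while bounded depth is inherited from (ii) together with the fact that at each $x$ only $O(1)$ of the $\theta_Q$ are nonzero.

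The main obstacle is producing the $P_Q$'s so that linearity, bounded depth, and Whitney compatibility hold simultaneously. My approach would be the Glaeser-refinement machinery of \cite{F3,F5}: define a set-valued map $x \mapsto \Gamma_0(x,M)$ of plausible $(m-1)$-jets at $x$ (polynomials of bounded seminorm that interpolate $f$ on nearby points of $E$), iterate the Glaeser refinement $\Gamma_{\ell+1}(x,M) := \{P \in \Gamma_\ell(x,M) : \forall y\; \exists P' \in \Gamma_\ell(y,C M) \text{ with } \partial^\alpha(P-P')(x) = O(|x-y|^{m-|\alpha|})\}$, and observe that it stabilizes in at most $D = D(m,n)$ steps to nonempty convex sets $\Gamma^*(x,CM)$. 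Each set $\Gamma^*(x,CM)$ is determined by $O(1)$ constraints involving $O(1)$ points of $E$, so one can linearly select representatives $P_Q \in \Gamma^*(x_Q, CM)$ using the polynomial linear-selection lemma of \cite{FK1}. This is the step where the bounded depth genuinely enters: the stabilized Glaeser refinement localizes the construction of $P_Q$ to a bounded neighborhood of $Q$ in the combinatorial sense, hence to $O(1)$ values of $f$.
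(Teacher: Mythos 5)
Theorem 4 is not proved in this paper; it is a recalled result, with the authors pointing to \cite{F3,F5,FK1} for the (linear, bounded-depth) extension operator for $C^m(\R^n)$. So there is no ``paper's own proof'' against which to check you, and your proposal should be judged on its own.

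The limit argument you sketch for passing from finite $E'\subset E$ to general $E$ is sound for $\X=C^m(\R^n)$: since $C^m$ carries a genuine norm (not a seminorm), the family $T_{E'}f$ is uniformly bounded pointwise together with its derivatives, and a Banach/ultrafilter limit produces a bounded extension operator. (Note that the subtlety treated carefully in the paper's Section 17 --- normalizing by a projection $\pi_0$ onto jets vanishing on a fixed finite $S_0$ --- is needed precisely because $L^{m,p}$ is only a seminormed space; it does not arise here, and your proposal correctly does not include it.)

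The genuine gap is in the bounded-depth step of the finite-$E$ argument. You assert that after $D$ rounds of Glaeser refinement the stabilized convex set $\Gamma^*(x,CM)$ is ``determined by $O(1)$ constraints involving $O(1)$ points of $E$,'' and you credit this to the refinement localizing to a combinatorial neighborhood. That is not how Glaeser refinement behaves: each iteration
$$\Gamma_{\ell+1}(x,M) = \bigl\{P\in\Gamma_\ell(x,M) : \forall y\;\exists P'\in\Gamma_\ell(y,CM)\;\text{compatible with}\;P\bigr\}$$
is an intersection over \emph{all} $y\in E$, so a priori $\Gamma^*(x,CM)$ depends on the entire set $E$, even after stabilization. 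The statement that $\Gamma^*$ (or an appropriate substitute for it) admits a bounded-depth linear selection is precisely the difficult content of \cite{F5} and especially \cite{F4}; there one introduces a finer family of convex sets carrying a counting parameter $k$, proves a finiteness principle at each level, and shows the effective constraints collapse to $O(1)$-point subsets. That argument does not fall out of the generic Glaeser machinery, so the proposal as written does not establish the ``moreover'' clause. A secondary concern: the stopping rule $\#(E\cap CQ)\le k^\#$ is cruder than what is used in \cite{F3,FK1}, where the Calder\'on--Zygmund cubes are defined via the structure of the refined convex sets (or, in the present paper's language for $L^{m,p}$, via the existence of a labeled basis). A purely combinatorial cutoff does not by itself drive the inductive or label-reduction step needed to make the local pieces Whitney-compatible.
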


\begin{thm}\label{thm5}
Let $E \subset \R^n$ be finite, and let $N=\#(E)$.

Then there exist subsets $S_1,\ldots,S_K \subset E$, with $K \leq C N$, and with $\#(S_k) \leq C$ for each $k$, such that
$$\|f\|_{\X(E)} \leq C \cdot \max_{1 \leq k \leq K} \|(f|_{S_k})\|_{\X(S_k)} \;\;\; \mbox{for all} \; f \in \X(E).$$
\end{thm}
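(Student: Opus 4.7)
The plan is to combine the classical finiteness principle for $C^m(\R^n)$ with a linear-sized Calder\'on--Zygmund decomposition adapted to the finite set $E$. The classical principle (from \cite{F1,F3}) already gives us that the norm is comparable to the max of $\|f|_S\|_{\X(S)}$ over \emph{all} subsets $S\subset E$ of cardinality at most some $k^\#=k^\#(m,n)$; the challenge is that there are $\sim N^{k^\#}$ such subsets, and we need to exhibit only $O(N)$ of them that do the job.

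First I would fix a large cube $Q^\circ$ containing $E$ and run the CZ decomposition of \cite{F3,FK1} to obtain pairwise disjoint dyadic cubes $\{Q_\nu\}_{\nu}$ partitioning $Q^\circ$, with the properties (a) $\#\{Q_\nu\}\le CN$, (b) neighboring cubes have comparable sidelengths, and (c) each dilated cube $Q_\nu^*$ (say with a fixed factor like $3$ or $9$) meets $E$ in at most a controlled number of points, call it $C_0=C_0(m,n)$. Property (c) is exactly what keeps the decomposition adapted to $E$ while ensuring a linear count.

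Second, for each $\nu$ I would enumerate the subsets of $E\cap Q_\nu^{**}$ of size at most $k^\#$; by (c) there are at most $\binom{C_0}{k^\#}\le C$ such subsets per cube, and I collect them all into the master list $S_1,\dots,S_K$, so $K\le CN$ and each $\#(S_k)\le k^\#$. Given $f\in\X(E)$, I then produce on each $Q_\nu$ a local extension $F_\nu\in C^m(\R^n)$ satisfying $\|F_\nu\|_{\X}\le C\max_i\|f|_{S_\nu^{(i)}}\|_{\X(S_\nu^{(i)})}$ by invoking the local finiteness principle on $E\cap Q_\nu^{**}$. Finally I would glue the $F_\nu$ via a partition of unity subordinate to $\{Q_\nu^*\}$, producing a single $F\in C^m(\R^n)$ agreeing with $f$ on $E$. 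Because $C^m$-norms are $L^\infty$-based and neighboring cubes have comparable sidelengths with bounded overlap, the patching bound comes out as a max (times a universal constant) rather than as a sum, yielding $\|f\|_{\X(E)}\le\|F\|_\X\le C\max_k\|f|_{S_k}\|_{\X(S_k)}$.

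The main obstacle is the patching step: one must verify that the local extensions agree to high order at the boundaries between adjacent CZ cubes, so that when multiplied by cutoff functions and summed the derivatives don't blow up. This is handled by choosing the local $F_\nu$ to come from a single ``consistent'' family (e.g.\ by requiring that on each cube the extension is selected via a canonical Whitney-type formula driven by a coherent choice of jets at representative points of $E\cap Q_\nu^{**}$), so that on overlaps $F_\nu-F_{\nu'}$ is dominated by the max of local norms and has vanishing jet along $E$; then the standard Whitney gluing estimate applies. Once this is in place, bounded overlap of the partition of unity converts the max over $\nu$ into a max over $k$, and we are done.
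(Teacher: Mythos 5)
Your outline captures the right skeleton --- a linear-sized Calder\'on--Zygmund decomposition adapted to $E$, the bounded-cardinality finiteness principle applied locally on each cube, and partition-of-unity gluing. Note that the present paper does not reprove Theorem~\ref{thm5}; it recalls it from \cite{F3,F5,FK1}, and your plan does match the shape of the arguments there. The decomposition and counting parts of your sketch are sound: stopping the dyadic bisection of $Q^\circ$ at cubes with $\#(E\cap Q^{**})\le C_0$ yields good geometry (by monotonicity of the stopping rule) and $O(N)$ cubes whose dilations meet $E$, and collecting the at most $\binom{C_0}{k^\#}$ small subsets from each such cube gives the list $S_1,\dots,S_K$ with $K\le CN$.

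The genuine gap is in the gluing step, which you have compressed into a single clause that begs the question. When you form $\sum_\nu\theta_\nu F_\nu$ the derivatives of the cutoff $\theta_\nu$ are of size $\delta_{Q_\nu}^{-k}$, so controlling $\|F\|_{C^m}$ requires $\lvert\partial^\beta(F_\nu - F_{\nu'})\rvert \le C\,\delta_{Q_\nu}^{m-|\beta|}\cdot\max_k\|f|_{S_k}\|_{\X(S_k)}$ on overlaps of adjacent cubes, for all $|\beta|\le m$. That is a statement about the jets of $F_\nu$ and $F_{\nu'}$ at points near $\cl(Q_\nu)\cap\cl(Q_{\nu'})$, which are generally not in $E$; requiring the local extensions to be ``driven by a coherent choice of jets at representative points of $E\cap Q_\nu^{**}$'' imposes nothing when $E\cap Q_\nu^{**}\cap Q_{\nu'}^{**}$ is empty, and even when it is nonempty you must explain how two near-optimal extensions of overlapping data can be forced to share a jet without ceasing to be near-optimal. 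This is precisely the ``arranging consistency'' issue that the introduction of this paper calls the hardest part of the whole construction. In the $C^m$ sources you are invoking, the mechanism that turns ``coherent choice of jets'' from a hope into a theorem is the Glaeser-refined family of convex sets $\Gamma(x,M)$ of feasible jets, together with a Whitney-field selection argument; the present paper notes explicitly that it had to abandon that $\Gamma(x,M)$ apparatus for $L^{m,p}$ and replace it with the keystone-cube machinery. Without specifying one of these mechanisms (or a genuine substitute), your sketch does not produce the extension $F$ with the claimed bound, and the inequality $\|f\|_{\X(E)} \le C\max_k\|f|_{S_k}\|_{\X(S_k)}$ remains unproved.
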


\begin{cor}\label{cor1}
Let $E \subset \R^n$ be finite, and let $N = \#(E)$. Then there exist linear functionals $\xi_1,\ldots,\xi_L : \X(E) \rightarrow \R$, such that $L \leq CN$, each $\xi_\ell$ has bounded depth, and
$$c \cdot \max_{1 \leq \ell \leq L} |\xi_\ell(f)| \leq \|f\|_{\X(E)} \leq C \cdot \max_{1 \leq \ell \leq L} |\xi_\ell(f)| \;\;\; \mbox{for all} \; f \in \X(E).$$
\end{cor}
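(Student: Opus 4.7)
The plan is to combine Theorem \ref{thm5} with the fact that every norm on a vector space of bounded dimension is controlled, up to a bounded factor, by finitely many linear functionals.

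Apply Theorem \ref{thm5} to obtain subsets $S_1, \ldots, S_K \subset E$ with $K \leq CN$ and $\#(S_k) \leq C$, so that
$$\|f\|_{\X(E)} \leq C \max_{1 \leq k \leq K} \|f|_{S_k}\|_{\X(S_k)} \;\;\; \text{for all } f \in \X(E).$$
For each $k$ the space $\X(S_k)$ is a finite-dimensional normed space of dimension $\#(S_k) \leq C$, so John's ellipsoid theorem (or any similar soft fact from finite-dimensional convex geometry, e.g.\ an $\varepsilon$-net on the dual unit sphere) produces linear functionals $\xi_{k,1}, \ldots, \xi_{k,M_k}$ on $\X(S_k)$ with $M_k \leq C$ such that
$$c \max_j |\xi_{k,j}(g)| \leq \|g\|_{\X(S_k)} \leq C \max_j |\xi_{k,j}(g)| \quad \text{for every } g \in \X(S_k),$$
with constants depending only on $\#(S_k) \leq C$, hence only on $m,n$.

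Pull each $\xi_{k,j}$ back to $\X(E)$ by the restriction map, setting $\xi_{k,j}(f) := \xi_{k,j}(f|_{S_k})$. Each such functional depends only on the values of $f$ at the $\leq C$ points of $S_k$, and therefore has bounded depth. Relist the collection as $\xi_1, \ldots, \xi_L$; then $L = \sum_k M_k \leq C K \leq C N$. Chaining the two displayed inequalities yields the upper bound $\|f\|_{\X(E)} \leq C \max_\ell |\xi_\ell(f)|$. For the lower bound, observe that the trace seminorm is monotone under shrinking the set, so $\|f|_{S_k}\|_{\X(S_k)} \leq \|f\|_{\X(E)}$; combined with the upper estimate for each $\xi_{k,j}$ on $\X(S_k)$, this gives $c \max_\ell |\xi_\ell(f)| \leq \|f\|_{\X(E)}$.

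No step here presents a genuine obstacle: Theorem \ref{thm5} is the substantive input, and the rest is bookkeeping around a standard finite-dimensional estimate. The one point worth noting is that the uniform bound $\#(S_k) \leq C$ is exactly what makes both the cardinality count in Step 2 and the bounded-depth property of the pulled-back functionals work, and that bound depends only on $m, n$ via Theorem \ref{thm5}.
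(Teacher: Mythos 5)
Your proof is correct, and it is evidently the intended derivation: the paper states Corollary \ref{cor1} as a recalled result (cited from earlier work) immediately after Theorem \ref{thm5} without supplying a proof, and deducing it from Theorem \ref{thm5} via a standard finite-dimensional norm approximation is precisely how that corollary structure is meant to be read. The two places where care is actually needed — that the number of functionals per $S_k$ and the two-sided constants come out uniform because $\dim \X(S_k) \le \#(S_k) \le C(m,n)$, and that the lower bound uses monotonicity of the trace norm under restriction — you handle explicitly and correctly.
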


For the rest of the introduction, we again take $\X = \LR$ ($n < p < \infty$). Motivated by Theorems \ref{thm4}, \ref{thm5}  and Corollary 1, one might wonder whether we can dispense with the assists in Theorem \ref{thm3} , and take $T,\xi_1,\ldots,\xi_L$ to have bounded depth. An optimist might even conjecture that $\|f\|_{\X(E)}^p$ is comparable to $\sum_{\ell=1}^L \lambda_\ell \cdot \|(f|_{S_\ell})\|_{\X(S_\ell)}^p$ for subsets $S_1,\ldots,S_L \subset E$ and coefficients $\lambda_1,\ldots,\lambda_L$ independent of $f$, with $L \leq C N$ and $\#(S_\ell) \leq C$ for each $\ell$.

In fact, a counterexample \cite{FIL} shows that the extension operator $T$ in Theorem \ref{thm3}  cannot be taken to have bounded depth. On the other hand, the remarkable work of Batson-Spielman-Srivastava \cite{BSS} on ``sparsification'' gives hope that bounded-depth $\xi$'s may exist. We illustrate the result of \cite{BSS} by returning to the computation of the variance of real numbers $x_1,\ldots,x_N$.

Given $\epsilon>0$, there exist coefficients $\lambda_1,\ldots,\lambda_L >0$ and integers $i_1,\ldots,i_L$, $j_1,\ldots,j_L \in \{1,\ldots,N\}$ such that $L \leq C(\epsilon)N$, and such that the variance of $x_1,\ldots,x_N$ differs by at most a factor of $(1+\epsilon)$ from
$$\sum_{\ell=1}^L \lambda_\ell (x_{i_\ell} - x_{j_\ell})^2,$$
for any real numbers $x_1,\ldots,x_N$. Here, $C(\epsilon)$ depends only on $\epsilon$. Thus, one can compute a variance to within, say, a one percent error by using $O(N)$ functionals without assists. This is merely a special case of \cite{BSS}. See also \cite{SS,ST}.

Dealing with sums of $p^\th$ powers $(p>2)$ is more difficult than dealing with sums of squares. We don't know whether sparsification applies to our problems, or whether the functionals $\xi_1,\ldots,\xi_L$ in Theorems \ref{thm2}  and \ref{thm3}  can be taken to have bounded depth.

Theorems 1,2,3 deal with the homogeneous Sobolev space $\LR$. It is easy to pass from these results to analogous theorems for the inhomogeneous Sobolev space $\WR$; see section \ref{sec_pf}.

So far, we have looked for functions $F \in \LR$ that agree perfectly with a given $f : E \rightarrow \R$. More generally, we can look for functions $F$ that agree approximately with a given $f$. To do so (for $E$ finite), we specify a weight function $\mu : E \rightarrow (0,\infty)$ along with our usual $f : E \rightarrow \R$. We then look for $F \in \LR$ and $M \geq 0$ such that
$$\sum_{x \in E} \mu(x) \lvert F(x) - f(x)\rvert^p \leq M^p \;\; \mbox{and} \;\; \|F\|_{\LR} \leq M,$$
with $M$ having the smallest possible order of magnitude.

We believe that Theorems 1,2,3, and the algorithms to which (we hope) they give rise, can be extended to solve this more general problem. Analogous results for $C^m(\R^n)$ are given in \cite{F1,F5,FK1,FK2}.

Let us sketch the proof of Theorem \ref{thm1}  for the case of finite sets $E$. The general case (infinite E) follows by taking a Banach limit, and the proofs of Theorems \ref{thm2}  and \ref{thm3}  arise by examining carefully our proof of Theorem \ref{thm1} . We will oversimplify the discussion, so that the spirit of the ideas comes through.

We first set up a bit more notation. We will write $Q,Q',$ etc. to denote cubes in $\R^n$ (with sides parallel to the coordinate axes). We write $\delta_Q$ to denote the sidelength of the cube $Q$. For any real number $A>0$, we write $AQ$ to denote the cube having the same center as $Q$, and having sidelength $A \cdot \delta_Q$. If $F$ is a $C^{m-1}$ function on a neighborhood of a point $x \in \R^n$, then we write $J_x(F)$ (the ``jet'' of $F$ at $x$) to denote the $(m-1)^\rst$ degree Taylor polynomial of $F$ at $x$. Thus, $J_x(F)$ belongs to $\cP$, the vector space of real $(m-1)^\rst$ degree polynomials on $\R^n$ (``jets'').

Next, we introduce a local variant of the problem addressed by Theorem \ref{thm1} . Suppose we are given the following data:
\begin{itemize}
\item A cube $Q$.
\item A finite set $E \subset 3Q$.
\item A function $f : E \rightarrow \R$.
\item A point $x \in E$.
\item A jet $P \in \cP$.
\end{itemize}

The \underline{local interpolation problem} $LIP(Q,E,f,x,P)$ is the problem of finding a function $F \in \X(3Q)$, such that
\begin{equation} \label{intro4}
F = f \; \mbox{on} \; E, \; J_x (F) = P,
\end{equation}
and
\begin{equation} \label{intro5}
\|F\|_{\X(3Q)} \;\; \mbox{is as small as possible, up to a factor}  \; C.
\end{equation}
To prove Theorem \ref{thm1} , we will solve an arbitrary local interpolation problem $LIP(Q,E,f,x,P)$ in such a way that the interpolant $F$ depends linearly on the data $(f,P)$ for fixed $Q,E,x$.

Once we give such a linear solution, then Theorem \ref{thm1}  for finite $E$ follows easily by taking $Q$ to be a large enough cube containing $E$.

We will measure the difficulty of a local interpolation problem by assigning to it a ``label'' $\cA$. Here, $\cA$ is any subset of the set $\cM$ of all multi-indices $\alpha = (\alpha_1,\ldots,\alpha_n)$ of order $|\alpha| = \alpha_1 + \cdots + \alpha_n \leq m-1$. Labels $\cA$ come with a (total) order relation $<$. Roughly speaking, whenever $\cA < \cA'$, a local interpolation problem that carries the label $\cA$ is easier than one that carries the label $\cA'$. In our ordering, the empty set $\emptyset$ is maximal, and the set $\cM$ is minimal. Accordingly, $\emptyset$ labels the hardest interpolation problems, and $\cM$ labels the easiest problems.

To assign a label $\cA$ to a local interpolation problem, we introduce the convex set
\begin{equation} \label{intro6}
\sigma(x,E) = \{J_x(F) : F \in \X, \; \|F\|_\X \leq 1, \; F = 0 \; \mbox{on} \; E\} \subset \cP.
\end{equation}
This set is clearly relevant to the problem of finding $F \in \X$ such that $F=f$ on $E$; it measures our freedom of action in assigning $J_x(F)$ for such an interpolant.

Roughly speaking, a local interpolation problem $LIP(Q,E,f,x,P)$ carries a label $\cA \subset \cM$ if for each $\alpha \in \cA$, the monomial
\begin{equation} \label{intro7}
P_\alpha(y) = \delta_Q^{m - |\alpha| - n/p} (y-x)^\alpha \;\;\; \mbox{belongs to} \;\; \sigma(x,E).
\end{equation}
(Recall that $\delta_Q$ is the sidelength of $Q$, and that $\X = \LR$.)

Thus, we have defined the notion of a local interpolation problem that carries a given label $\cA$. We list a few relevant properties of labels.
\begin{itemize}
\item[\refstepcounter{equation} (\arabic{equation})\label{intro8}] Any problem that carries a label $\cA$ also carries the label $\cA'$ for any $\cA' \subset \cA$. If $\cA' \subset \cA$, then $\cA < \cA'$, where $<$ is the order relation used to rate the difficulty of a local interpolation problem.
\item[\refstepcounter{equation} (\arabic{equation})\label{intro9}] Every local interpolation problem carries the label $\emptyset$, since we then make no requirement that any monomial $P_\alpha$ belongs to $\sigma(x,E)$.
\item[\refstepcounter{equation} (\arabic{equation})\label{intro10}] On the other hand, a local interpolation problem carries the label $\cM$ only when $E$ is the empty set.
\item[\refstepcounter{equation} (\arabic{equation})\label{intro11}] The assignment of a label to $LIP(Q,E,f,x,P)$ depends only on $(Q,E,x)$; the function $f$ and the jet $P$ play no r\^{o}le.
\end{itemize}

For each label $\cA$, we will prove the following

\underline{MAIN LEMMA FOR $\cA$:} Any local interpolation problem $LIP(Q,E,f,x,P)$ that carries the label $\cA$ has a solution $F$ that depends linearly on the data $(f,P)$. 

In particular, the MAIN LEMMA FOR $\cA=\emptyset$ tells us that every local interpolation problem admits a solution $F$ depending linearly on the data $(f,P)$. (See \eqref{intro9}.) Consequently, the proof of Theorem \ref{thm1}  reduces to the task of proving the MAIN LEMMA for every label $\cA$. We proceed by induction on $\cA$, with respect to the order $<$. 

\underline{In the BASE CASE}, we take $\cA = \cM$, the smallest possible label under $<$. The MAIN LEMMA FOR $\cM$ holds trivially. For any local interpolation problem $LIP(Q,E,f,x,P)$ with label $\cM$, we simply take our interpolant to be $F=P$. We have $F = f$ on $E$ vacuously, since $E$ is empty; and $\|F\|_{\X} = 0$. (See \eqref{intro10}.)

\underline{For the INDUCTION STEP}, we fix a label $\cA \neq \cM$, and make the following inductive assumption:
\begin{equation} \label{intro12}
\mbox{The MAIN LEMMA FOR} \; \cA' \; \mbox{holds for all} \; \cA' < \cA.
\end{equation}
Under this assumption we will prove the MAIN LEMMA FOR $\cA$.

Thus, let
\begin{align}
LIP^\circ = LIP(Q^\circ,E^\circ,f^\circ,x^\circ,P^\circ) \; \mbox{be a local interpolation problem}&\notag{}\\
\mbox{that carries the label} \; \cA&. \label{intro13}
\end{align}
Our task is to find an interpolant $F \in \X(3Q^\circ)$ that solves the interpolation problem $LIP^\circ$ and depends linearly on the data $(f^\circ,P^\circ)$. To do so, we proceed in several steps, as follows.

\underline{STEP 1:} We partition $3Q^\circ$ into ``Calder\'on-Zygmund'' subcubes $Q_\nu$ ($\nu = 1,\ldots,\nu_{\max}$). Each $Q_\nu$ is such that $E_\nu : = E \cap 3Q_\nu$ is non-empty. We pick a point $x_\nu \in E_\nu$ for each $\nu$.

\underline{STEP 2:} For each $\nu$, we pick a jet $P_\nu$, depending linearly on the data $(f^\circ,P^\circ)$. We then set $f_\nu := f|_{E_\nu}$, and consider the local interpolation problem
\begin{equation} \label{intro14}
LIP_\nu = LIP(Q_\nu,E_\nu,f_\nu,x_\nu,P_\nu).
\end{equation}

\underline{STEP 3:} The partition of $3Q^\circ$ into Calder\'on-Zygmund cubes $Q_\nu$ has been defined to guarantee that each local interpolation problem $LIP_\nu$ carries a label $\cA' < \cA$ (except in the trivial case in which $E_\nu$ contains at most one point). Hence, by our inductive assumption \eqref{intro12}, we can solve each problem $LIP_\nu$, obtaining an interpolant $F_\nu \in \X(3Q_\nu)$ that depends linearly on $(f_\nu,P_\nu)$. Since also $f_\nu$ and $P_\nu$ depend linearly on $(f^\circ,P^\circ)$ (see STEP 2), it follows that each $F_\nu$ depends linearly on $(f^\circ,P^\circ)$.

\underline{STEP 4:} We introduce a partition of unity
$$1 = \sum_\nu \theta_\nu \;\;\; \mbox{on} \; 3Q^\circ, \; \mbox{with each} \; \theta_\nu \; \mbox{supported in} \; 3 Q_\nu.$$
Using that partition of unity, we patch together the local interpolants $F_\nu$ into a single interpolant $F$, by taking
$$F = \sum_\nu \theta_\nu F_\nu.$$
Thus, we obtain a function $F \in \X(3Q^\circ)$ that satisfies $F = f^\circ$ on $E^\circ$, and $J_{x^\circ} (F) = P^\circ$.
It remains to show that the seminorm of $F$ in $\X(3Q^\circ)$ is as small as possible up to a factor $C$.

This has a chance only if the jets $P_\nu$ in STEP 2 are carefully picked to be mutually consistent. Arranging this consistency is the hardest part of our proof, and the main difference between our arguments here and those in \cite{F1,F2,F3,F4} for $C^m(\R^n)$ interpolation.

To assign the jets $P_\nu$ in STEP 2 and achieve their needed mutual consistency, we first pick out from among the Calder\'on-Zygmund cubes $Q_\nu$ the subcollection of \underline{keystone cubes}. A given $Q_\nu$ is a keystone cube if every $Q_{\nu'}$ that meets $100Q_\nu$ is at least as big as $Q_\nu$.

We carry out STEP 2 by first assigning jets $P_\nu$ to the keystone cubes. Each keystone cube may be treated separately, without worrying about mutual consistency. For any Calder\'on-Zygmund cube $Q_\nu$ other than a keystone cube, we carefully select a nearby keystone cube $Q_{\kappa(\nu)}$, and then simply set 
$$P_\nu = P_{\kappa(\nu)}; \;\mbox{ the right-hand side has already been defined}.$$
The above procedure associates jets $P_\nu$ to all the Calder\'on Zygmund cubes $Q_\nu$, in such a way that the required mutual consistency is guaranteed.

This concludes our sketch of the proof of Theorem \ref{thm1}. We again warn the reader that it is oversimplified. Even the notation and definitions in the subsequent sections differ from those presented in the introduction.

Throughout this paper, we study $\LR$ only for $n < p  < \infty$. It would be natural to investigate the more general case $\frac{n}{m} < p < \infty$, since then any $F \in \LR$ is continuous, and may therefore be restricted to an arbitrary subset $E \subset \R^n$.

We briefly review the earlier work on Sobolev extensions. The first breakthrough was the discovery by P. Shvartsman \cite{S1} that Theorem \ref{thm1} for the Sobolev space $L^{1,p}(\R^n)$ (i.e., $m=1$) holds with $T$ given by the classical proof of the Whitney extension theorem \cite{W1}. Shvartsman gave a formula \cite{S1} for the order of magnitude of the seminorm $\|f\|_{\X(E)}$ when $\X = L^{1,p}(\R^n)$ and $E \subset \R^n$ is arbitrary (possibly infinite). See also \cite{S2}. The proof of Theorem \ref{thm1} for the Sobolev space $L^{m,p}(\R)$ (i.e., $n=1$) was given by G.K. Luli \cite{L2} when $E \subset \R$ is finite and by P. Shvartsman \cite{S2} in the general case.

The next significant progress was the proof by A. Israel of Theorems \ref{thm1} and \ref{thm2} for the case $\X = L^{2,p}(\R^2)$ (with the bound $L \leq C N^2$ in place of $L \leq C N$ in Theorem \ref{thm2}). See \cite{I}. The proof in \cite{I} makes explicit use of the keystone cubes. 

Our main technical achievement here is to combine the ideas used previously for interpolation problems with labels, and those used to exploit keystone cubes. In particular, we have to dispense with the convex sets called $\Gamma(x,M)$ in \cite{FK1,FK2}; these sets played a crucial r\^{o}le in our earlier analysis of $C^m(\R^n)$.

P. Shvartsman has lectured at several workshops on results and ideas that appear closely related to ours. He is now writing up his results, and we look forward to studying them; see also \cite{S3}. It would be interesting to study the relationship between our keystone cubes and the ``important'' cubes from \cite{S3}.

\textbf{Acknowledgments}

We are grateful to B. Klartag and A. Naor for introducing us to the idea of sparsification, and for several lively discussions. We are grateful also to the NSF, the ONR, and the American Institute of Math (AIM) for generous support. Key ideas arose during fruitful workshops held at AIM and at the College of William and Mary. We are grateful to P. Shvartsman and Nahum Zobin for lively discussions.

\section{Notation and Preliminaries}\label{sec_not}
\setcounter{equation}{0}

\subsection{Notation}
Fix integers $m, n \geq 1$ and a real number $p>n$. Unless we say otherwise, constants written $c,c',C,C',$ etc. depend only on $m,n,$ and $p$. They are called ``universal'' constants. The lower case letters denote small (universal) constants while the upper case letters denote large (universal) constants.

For non-negative quantities $A,B$, we write $A \simeq B$, $A \lesssim B$, or $A \gtrsim B$ to indicate (respectively) that $c B \leq A \leq CB$, $A \leq CB$, or $A \geq cB$.

This paper is divided into sections. The label (p.q) refers to formula (q) in section p. Within section p, we abbreviate (p.q) to (q).

A \emph{cube} $Q \subset \R^n$ is any subset of the form:
$$Q = \{a\} + (-\delta,\delta]^n \qquad (a \in \R^n, \; \delta>0).$$
The \textit{sidelength} of $Q$ is denoted $\delta_Q = 2\delta$, while the \textit{center} of $Q$ is the point $a$. For $A > 0$ let $A Q$ denote the cube having the same center as $Q$ but with sidelength $A \delta_Q$. A \textit{dyadic} cube $Q \subset \R^n$ is any cube of the form:
$$Q = (j_1 \cdot 2^k, (j_1+1) \cdot 2^k] \times (j_2 \cdot 2^k, (j_2+1) \cdot 2^k]  \times \cdots \times (j_n \cdot 2^k, (j_n+1) \cdot 2^k]\qquad (j_1,j_2,\ldots,j_n,k \in \Z).$$
To \textit{bisect} a cube $Q \subset \R^n$ is to partition it into $2^n$ disjoint subcubes of sidelength $\frac{1}{2} \delta_Q$. These subcubes are called the \textit{children} of $Q$. If $Q \subsetneq Q'$ are dyadic cubes we say that $Q'$ is an \textit{ancestor} of $Q$. Every dyadic cube $Q$ has a smallest ancestor called its \textit{parent}, which we denote by $Q^+$.

We use the following notation:
\begin{align*}
& |x| := |x|_\infty = \max\{|x_1|,\ldots,|x_n|\} && (x = (x_1,\ldots,x_n) \in \R^n);\\
& \dist(x,\Omega) := \inf \{|x-y|: y \in \Omega\} && (\Omega \subset \R^n, \; x \in \R^n); \\
& \dist(\Omega', \Omega) := \inf \{|x-y|:x \in \Omega', y \in \Omega\} && (\Omega, \Omega' \subset \R^n);\\
& B(\Omega,R) : = \{x \in \R^n : \dist(x,\Omega) \leq R\} &&(\Omega \subset \R^n, \; R>0); \; \mbox{and} \\
& \diam(S) := \max\{ |x-y| : x,y \in S\} &&(S \subset \R^n \; \mbox{finite}).
\end{align*}
The analogous objects defined with respect to the Euclidean norm $|x|_2 = (|x_1|^2 + \cdots + |x_n|^2)^{1/2}$ are denoted by $\dist_2(x,\Omega)$, $\dist_2(\Omega',\Omega)$, $B_2(\Omega, R)$ and $\diam_2(S)$.

We write $\cM$ for the collection of all multi-indices $\alpha$ of order $|\alpha| \leq m -1$. If $\alpha$ and $\beta$ are multi-indices, then $\delta_{\alpha \beta}$ denotes the Kronecker delta.

If $F$ is a $C^{m-1}$ function on a neighborhood of a point $y \in \R^n$, then we write $J_y(F)$ (the ``jet'' of $F$ at $y$) for the $(m-1)^\rst$ degree Taylor polynomial of $F$ at $y$.

Let $U \subset \R^n$ be a domain. The homogeneous Sobolev space $L^{m,p}(U)$ consists\footnote{Here, the derivatives $\partial^\alpha F$ are defined as distributions. Strictly speaking, $F \in L^{m,p}$ is defined only up to a set of measure zero. Under our assumption $n < p < \infty$, the Sobolev theorem allows us to regard any $F \in L^{m,p}$ as a function in $C_{loc}^{m-1,1- n/p}$. For a discussion of these technicalities, see \cite{Evans}.} of all functions $F: U \rightarrow \R$ with finite seminorm
$$\|F\|_{L^{m,p}(U)} := \max_{|\alpha| = m} \left( \int_{U} |\partial^\alpha F(x)|^p dx \right)^{1/p}.$$
Similarly, the inhomogenous Sobolev space $W^{m,p}(U)$ consists of all functions with finite norm
$$\|F\|_{W^{m,p}(U)} := \max_{|\alpha| \leq m} \left( \int_{U} |\partial^\alpha F(x)|^p dx \right)^{1/p}.$$
For $0<s<1$, the homogeneous H\"older space $\dot{C}^{m-1,s}(U)$ consists of all functions $F : U \rightarrow \R$ with finite seminorm
$$\|F\|_{\dot{C}^{m-1,s}(U)} := \;\;\;\; \max \left\{\frac{|\partial^\alpha F (x) - \partial^\alpha F(y)|}{|x-y|^s} : x,y \in U, \; |\alpha| = m-1 \right\}.$$
Likewise, the inhomogeneous H\"older space $C^{m-1,s}(U)$ consists of all functions with finite norm
\begin{align*}
\|F\|_{C^{m-1,s}(U)} :=& \;\;\;\; \max \left\{\frac{|\partial^\alpha F (x) - \partial^\alpha F(y)|}{|x-y|^s} : x,y \in U, \; |\alpha| = m-1 \right\} \\
&+\max \bigl\{\left|\partial^\alpha F(x)\right| : x \in U, \; |\alpha| \leq m-1 \bigr\} .\\
\end{align*}

Let $\cP$ denote the space of real-valued $(m-1)^{\rst}$ degree polynomials on $\R^n$. Then $\cP$ is a vector space of dimension $D := \mbox{dim}(\cP)$. For each $x \in \R^n$, the multiplication $\odot_x$ on $\cP$ is defined by
$$P \odot_x Q := J_x(P \cdot Q)\qquad (P,Q \in \cP).$$

Given a finite subset $E' \subset \R^n$, the space of Whitney fields on $E'$ is denoted
$$Wh(E') := \bigl\{(P_x)_{x \in E'} : P_x \in \cP \; \mbox{for all} \; x \in E' \bigr\}.$$
For every function $F$ that is $C^{m-1}$ on a neighborhood of $E'$, we define the Whitney field $J_{E'} (F) := (J_x(F))_{x \in E'} \in Wh(E')$ (the ``jet'' of $F$ on $E'$).

Given an arbitrary subset $E \subset \R^n$ and a finite subset $E' \subset \R^n$, we define the trace space
$$L^{m,p}(E;E') := \left\{ (f,\vP) : \vP \in Wh(E'), \; f : E \rightarrow \R, \; \exists \; F \in \LR \; \mbox{with} \; F|_E = f  \; \mbox{and} \; J_{E'}(F) = \vP\right\}.$$ 
This space comes equipped with the natural seminorm
$$\|(f,\vec{P})\|_{L^{m,p}(E;E')} := \inf \bigl\{\|F\|_{\LR} : F|_E = f \; \mbox{and} \; J_{E'} (F) = \vP \bigr\}.$$
If $E'$ is empty, then $L^{m,p}(E;E')$ is simply $L^{m,p}(E)$, and we take $\vP \equiv 0$. Similarly, if $E$ is empty, then $L^{m,p}(E;E') = Wh(E')$ (equipped with the obvious seminorm), and we take $f \equiv 0$. For $z \in \R^n$, we often write $L^{m,p}(E;z)$ in place of $L^{m,p}(E;\{z\})$. 

A function $F \in \LR$ is called an \textit{extension} of $(f,\vP) \in L^{m,p}(E;E')$ if 
\begin{equation*}
F= f \; \mbox{on} \; E \;\; \mbox{and} \;\; J_{E'} (F) = \vec{P}.
\end{equation*}
By a \textit{near optimal} extension of $(f,\vP)$, we mean an extension that satisfies 
$$\|F\|_{\LR} \leq C \|(f,\vP)\|_{L^{m,p}(E;E')} \;\; \mbox{for some universal constant} \; C.$$ 
An \textit{extension operator} is a linear map $T:L^{m,p}(E;E') \rightarrow \LR$, such that $T(f,\vP)$ is an extension of $(f,\vP)$ for every $(f,\vP) \in L^{m,p}(E;E')$. We say that $T$ is \textit{bounded} if 
$$\|T\| := \sup \left\{ \|T(f,\vec{P})\|_{L^{m,p}(\R^n)}: \|(f,\vec{P})\|_{L^{m,p}(E;E')} \leq 1 \right\} \leq C.$$

Let $\lambda$ be a linear map from $L^{m,p}(E;E')$ into either $V=\R$ or $V= \cP$. The \textit{depth of $\lambda$} (denoted by $\sp(\lambda)$) is the smallest non-negative integer $d$, such that there exist points $x_1,\ldots,x_r \in E$ and $y_{1},\ldots,y_s \in E'$ with $r + s = d$, multi-indices $\alpha_{1},\ldots,\alpha_s \in \cM$ and elements $v_1,\ldots,v_{d} \in V$, such that
\begin{align*}
\lambda(f,\vP) = f(x_1) \cdot v_1 + \cdots + f(x_r) \cdot v_r + \partial^{\alpha_1} P_{y_1}(y_1) \cdot v_{r+1} + & \cdots + \partial^{\alpha_{s}} P_{y_{s}}(y_{s}) \cdot v_{r+s} \\ 
& \mbox{for all} \; (f,\vP) \in L^{m,p}(E;E').
\end{align*}
We now introduce the notion of ``assisted bounded depth'' linear maps on $L^{m,p}(E;E')$.
\begin{defn}
Let $V = \R$ or $V = \cP$. Given a collection of linear functionals $\Omega \subset [\LE]^*$ and an integer $d \geq 0$, we say that a linear map $\lambda: L^{m,p}(E;E') \rightarrow V$ has $\Omega$-assisted depth $d$ if there exist $\omega_1,\ldots,\omega_d  \in \Omega$, elements $v_1,\ldots,v_d \in V$ and a linear map $\widetilde{\lambda}: L^{m,p}(E;E') \rightarrow V$ with $\sp(\widetilde{\lambda}) \leq d$, such that
\begin{align*}
\lambda(f,\vec{P}) = \omega_1(f) \cdot v_1 + \cdots + \omega_{d}(f) \cdot v_{d} + \widetilde{\lambda}(f,\vP) \;\;\; \mbox{for all} \; (f,\vP) \in L^{m,p}(E;E').
\end{align*}

Let $U \subset \R^n$ be a domain. We say that a linear map $T : L^{m,p}(E;E') \rightarrow L^{m,p}(U)$ has $\Omega$-assisted depth $d$ if the linear map 
$$ (f,\vP) \in L^{m,p}(E;E')  \longmapsto J_x(T(f,\vec{P})) \in \cP \;\; \mbox{has} \; \Omega\mbox{-assisted depth} \; d, \; \mbox{for all} \; x \in U.$$

If $T$ (resp. $\lambda$) has $\Omega$-assisted depth $d$ for some constant $d$ that depends only on $m$, $n$ and $p$, then we say that $T$ (resp. $\lambda$) has $\Omega$-assisted bounded depth.
\end{defn}
For a finite subset $E \subset \R^n$ and $E'$ empty, we have introduced the notion of an $\Omega$-assisted bounded depth map $T : L^{m,p}(E) \rightarrow L^{m,p}(U)$ above and also in the introduction. It is not very difficult to see that these definitions are equivalent; we leave this as an exercise for the reader.

Let $E \subset \R^n$ and $x \in \R^n$. Then we define
$$\sigma(x,E) = \left\{ P \in \cP :  \exists \;  \phi \in \LR \; \mbox{with} \;  J_{x}\phi = P, \; \phi|_E=0,\; \mbox{and} \; \|\phi\|_{\LR} \leq 1 \right\}.$$
Thus, $\sigma(x,E)$ is a centrally-symmetric ($P \in \cP \implies -P \in \cP$ ) convex subset of $\cP$. 

\subsection{Preliminaries}

For each $x \in \R^n$, $\delta>0$, we define a norm on $\cP$ by
\begin{equation}\label{scalednorm}
\left|P\right|_{x,\delta} := \Bigl(\sum_{|\alpha| \leq m-1} \lvert \partial^\alpha P(x)\rvert^p \delta^{n + (|\alpha|-m)p} \Bigr)^{1/p}  \qquad (P \in \cP).
\end{equation}
For $x' \in \R^n$, we have the Taylor expansion
$$\partial^\alpha P(x) = \sum_{|\gamma| \leq m-1-|\alpha|} \frac{1}{\gamma!} \partial^{\alpha + \gamma} P(x') \cdot (x-x')^\gamma \qquad (|\alpha| \leq m-1).$$
Thus, the norms defined in \eqref{scalednorm} satisfy the inequality
\begin{equation}\label{ti0}
 \left|P\right|_{x,\delta} \leq C' \left|P\right|_{x',\delta} \qquad (x,x' \in \R^n, |x-x'| \leq C \delta).
\end{equation}

We will consider the following result in several settings.

\noindent\underline{The Sobolev Inequality for $U$:} For every $F \in L^{m,p}(U)$, we have
\begin{equation} \label{SET} \lvert \partial^\alpha (J_y F - F)(x) \rvert \leq C \cdot \lvert x - y \rvert^{m-|\alpha| - n/p} \cdot \|F\|_{L^{m,p}(U)} \quad (x,y \in U, \; |\alpha| \leq m-1). \end{equation}
For the appropriate class of domains $U$, the constant $C$ in \eqref{SET} will be universal. The usual proof of the Sobolev Inequality for $U = \R^n$ establishes \eqref{SET} when $U$ is any cube $Q \subset \R^n$. By applying a linear transformation, we obtain \eqref{SET} also when $U$ is an axis-parallel rectangular box that is non-degenerate (i.e., whose sidelengths differ at most by a universal constant factor). 

Finally, we consider the case when $U= R_1 \cup R_2$ is the union of two such rectangular boxes; we assume that $R_1$ and $R_2$ have an interior point in common. Take $x,y \in U$ and fix some function $F \in L^{m,p}(U)$. If $x$ and $y$ lie in the same box $R_i$ then \eqref{SET} follows from the Sobolev Inequality for $R_i$. Thus, we may assume that $x$ and $y$ lie in different boxes. Let $z \in R_1 \cap R_2$ be any point that satisfies
$$|x-z| \leq |x-y| \;\; \mbox{and} \;\;  |y-z| \leq |x-y|.$$
%For example, in the standard coordinates on $\R^n$, we can arrange that $z_i$ lies between $x_i$ and $y_i$ for each $i=1,\ldots,n$. 
(Here, we exploit the axis-parallel structure of $R_1$ and $R_2$.) For any $\alpha$ with $|\alpha| \leq m-1$, since $|\alpha| + n/p - m < 0$, we have
\begin{align}
\lvert \partial^\alpha (J_y F - J_x F)(z) \rvert \cdot |x-y|^{|\alpha| + n/p - m} \leq \;\;& \lvert \partial^\alpha (J_y F - J_z F)(z) \rvert \cdot |y-z|^{|\alpha| + n/p - m} \notag{} \\
+ &\lvert \partial^\alpha (J_x F - J_z F)(z) \rvert \cdot |x-z|^{|\alpha| + n/p - m}, \notag{}
\end{align}
which is bounded by $C' \|F\|_{L^{m,p}(R_1 \cup R_2)}$, thanks to the Sobolev Inequality for $R_i$ ($i=1,2$). Inserting this inequality on the right-hand side of \eqref{ti0}, where we have set $P = J_y F - J_x F$, $\delta = |x-y|$ and $x' = z$, we obtain \eqref{SET}. This completes the proof of the Sobolev Inequality for $U = R_1 \cup R_2$.

\comments{Let us recall two classical results.

\noindent\underline{The Sobolev Embedding Theorem:} For every $G \in \LR$, we have
\begin{equation} \label{SI} \|G\|_{\dot{C}^{m-1,s}(\R^n)} \leq C \|G\|_{\LR} \;\; \mbox{with}  \; s = 1-n/p \in (0,1).\end{equation}

\noindent\underline{Taylor's Theorem:} Let $0 < s < 1$. For every $G \in \dot{C}^{m-1,s}(\R^n)$, we have
$$|\partial^\alpha (G -  J_y(G))(x)| \leq C \cdot |x-y|^{(m-1) + s-|\alpha|} \cdot \|G\|_{\dot{C}^{m-1,s}(\R^n)} \quad (x,y \in \R^n, \; |\alpha| \leq m-1).$$
}

\comments{Let $L \in \mathbb{N}$, $K \in \R_+$ and $\delta > 0$ be given. Consider a domain $U \subset \R^n$ that satisfies the following property: There exist balls $B_1,\ldots,B_L$ of radius $\delta$ that cover the boundary $\partial U$ with the property that, in an appropriate coordinate system, each $\partial U \cap B_\ell$ lies on the graph of a Lipschitz function with norm at most $K$. A.P. Calder\'on \cite{C} proved that for these domains, every function $F \in L^{m,p}(U)$ can be extended to a function $G \in L^{m,p}(\R^n)$ such that 
$$\|G\|_{\LR} \leq C_0 \cdot \|F\|_{L^{m,p}(U)}, \;\; \mbox{with} \; C_0 \;\mbox{depending only on} \; L, K, \delta, m,n,p.$$
(See also \cite{Stein}.) Thus, from the Sobolev embedding theorem and Taylor's theorem, we have the Sobolev inequality:
\begin{equation}
\label{SET}
\begin{aligned}
|\partial^\alpha (F - J_{y} (F))(x)| \leq C_1 |x-y&|^{m - |\alpha| - n/p} \|F\|_{L^{m,p}(U)} \quad (x,y \in U, \; |\alpha| \leq m-1), \\
&\quad\mbox{with} \; C_1 \; \mbox{depending only on} \; L, K, \delta, m,n,p.
\end{aligned}
\end{equation}
The most complicated domains for which we apply \eqref{SET} take the form $((1+c)Q_1 \cup (1+c)Q_2) \cap Q_3$  for some universal constant $c>0$, where $Q_1, Q_2, Q_3$ are dyadic cubes, $Q_1 , Q_2 \subset Q_3$, $Q_1$ and $Q_2$ have intersecting closures, and $\delta_{Q_1} \simeq \delta_{Q_2}$. In these cases $L$, $K$ and $\delta$ (and thus also $C_1$) are universal constants.
}

\comments{
Consider a domain $U \subset \R^n$, for which there exists $A \geq 1$, such that any two points $x, y \in U$ can be joined by a smooth curve $\gamma : [0,1] \rightarrow U$ that satisfies
\begin{equation}
\begin{aligned}
&\mbox{length}(\gamma) \leq A \cdot \lvert x-y \rvert, \; \mbox{and} \\
&\dist(\gamma(t), \partial U)  \geq A^{-1} \cdot \min \bigl\{ \lvert \gamma(t)-x \rvert, \lvert \gamma(t)-y \rvert \bigr\} \;\; \mbox{for all} \; t \in [0,1]. 
\end{aligned}
\label{jones}
\end{equation}
(In the literature these are known as Jones domains.) P. Jones \cite{J} proved that every function $F \in L^{m,p}(U)$ can be extended to a function $G \in L^{m,p}(\R^n)$, such that 
$$\|G\|_{\LR} \leq C_0 \cdot \|F\|_{L^{m,p}(U)}, \;\; \mbox{with} \; C_0 \;\mbox{depending only on} \; A,m,n,p.$$ 
Thus, from the Sobolev embedding theorem and Taylor's theorem, we have the Sobolev inequality:
\begin{align}
\label{SET}
|\partial^\alpha (F - J_{y} (F))(x)| \leq C_1 |x-y&|^{m - |\alpha| - n/p} \|F\|_{L^{m,p}(U)} \quad (x,y \in U, \; |\alpha| \leq m-1), \\
&\quad\mbox{with} \; C_1 \; \mbox{depending only on} \; A, m,n,p. \notag{} 
\end{align}
We only require \eqref{SET} for simple domains, where it is obvious that the constant $A$ in \eqref{jones} and the constant $C_1$ in \eqref{SET} are universal. (In these cases, \eqref{SET} can be derived more directly, without appeal to Jones' extension theorem.)
}

\section{Plan for the Proof}\label{sec_plan}
\setcounter{equation}{0}

Let $E \subset \R^n$ (finite) and $z \in \R^n$ be given. Our main goal is to prove the following theorem.
\begin{extthm}
There exist a linear map $T : L^{m,p}(E;z) \rightarrow \LR$, a map $M: L^{m,p}(E;z) \rightarrow \R_+$ and a collection of linear functionals $\Omega \subset [\LE]^*$, that satisfy the following properties:
\begin{align}
&\mathbf{(E1)} \; T \; \mbox{is an extension operator}.\notag{}\\
&\mathbf{(E2)} \; \|(f,P)\|_{L^{m,p}(E;z)} \leq \|T(f,P)\|_{\LR} \leq C \cdot \|(f,P)\|_{L^{m,p}(E;z)}, \;\; \mbox{and} \notag{} \\
&\mathbf{(E3)} \;C^{-1} \cdot M(f,P) \leq \|T(f,P)\|_{\LR} \leq C \cdot M(f,P), \;\; \mbox{for each} \; (f,P) \in L^{m,p}(E;z).\notag{}\\
&\mathbf{(E4)} \; \sum_{\omega \in \Omega} \sp(\omega) \leq C \cdot \#(E).\notag{} \\
&\mathbf{(E5)} \;T \; \mbox{has} \; \Omega\mbox{-assisted bounded depth}.\notag{} \\
&\mathbf{(E6)} \; \mbox{There exists a collection of linear functionals} \; \Xi\subset [L^{m,p}(E;z)]^*, \; \mbox{such that} \notag{}\\
&\qquad\qquad \mathbf{(a)} \; \mbox{each functional in}\; \Xi \; \mbox{has}\; \Omega\mbox{-assisted bounded depth},\notag{}\\
&\qquad\qquad \mathbf{(b)} \; \# (\Xi) \leq C \cdot \#(E), \; \mbox{and} \notag{} \\
&\qquad\qquad \mathbf{(c)} \; M(f,P) = \left(\sum_{\xi \in \Xi} |\xi(f,P)|^p \right)^{1/p} \;\;\; \mbox{for each} \; (f,P) \in L^{m,p}(E;z). \notag{}
\end{align}
Here, $C$ depends only on $m$, $n$ and $p$.
\end{extthm}
Theorem \ref{thm1} (for finite $E$) and Theorems \ref{thm2}, \ref{thm3} follow easily from the above extension theorem. (See section \ref{sec_pf}.)

\subsection{Order Relation on Labels}

To prove the Extension Theorem for $(E,z)$ we proceed by induction on the ``shape'' of the convex subsets $\sigma(x,E) \subset \cP$, where $x$ ranges over $E$. The shape of a single convex subset $\sigma \subset \cP$ will be defined in terms of a label $\cA \subset \cM$. We use the following order relation on labels.

\begin{defn} \label{order}
Given distinct elements $\alpha=(\alpha_1,\ldots,\alpha_n)$, $\beta=(\beta_1,\ldots,\beta_n) \in \cM$, let $k \in \{0,\ldots,n\}$ be maximal subject to the condition $\sum_{i=1}^k \alpha_i \neq \sum_{i=1}^k \beta_i$. We write $\alpha < \beta$ if
$$\sum_{i=1}^k \alpha_i < \sum_{i=1}^k \beta_i.$$
Given distinct labels $\cA$, $\oA \subset \cM$, we write $\cA < \oA$ if the minimal element of the symmetric difference $\cA \Delta \oA$ (under the order $<$ on elements) lies in $\cA$.
\end{defn}

The next lemma is immediate from the definition.
\begin{lem}\label{lem_order} The following properties hold:
\begin{itemize}
\item If $\alpha, \beta \in \cM$ and $|\alpha| < |\beta|$ then $\alpha < \beta$. 
\item If $\cA \subsetneq \oA \subset \cM$ then $\oA < \cA$. In particular, the empty set is maximal and the whole set $\cM$ is minimal under the order on labels.
\end{itemize}
\end{lem}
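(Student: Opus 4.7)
My plan is to verify both bullets directly from Definition \ref{order}, since the claims really are bookkeeping statements about the definitions. I will handle the two items separately, and then note that the ``in particular'' statement follows formally from the second.

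For the first bullet, suppose $\alpha,\beta \in \cM$ are distinct with $|\alpha| < |\beta|$. Then the $n$-th partial sums differ, since $\sum_{i=1}^n \alpha_i = |\alpha| \neq |\beta| = \sum_{i=1}^n \beta_i$. Consequently, the maximal index $k \in \{0,\dots,n\}$ with $\sum_{i=1}^k \alpha_i \neq \sum_{i=1}^k \beta_i$ is simply $k = n$. Since the $n$-th partial sums compare as $|\alpha| < |\beta|$, the definition gives $\alpha < \beta$.

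For the second bullet, assume $\cA \subsetneq \oA \subset \cM$. Then the symmetric difference $\cA \Delta \oA$ equals $\oA \setminus \cA$, which is a nonempty subset of $\oA$. In particular, the minimal element of $\cA \Delta \oA$ (with respect to the order on $\cM$) lies in $\oA$ and not in $\cA$. By the definition of the order on labels, this is precisely the condition ``$\oA < \cA$''.

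The ``in particular'' statement is now a formal consequence. For any label $\cA \subsetneq \cM$ we have $\cA \neq \emptyset$ only in the nontrivial direction: the inclusion $\emptyset \subsetneq \cA$ gives $\cA < \emptyset$ by the second bullet, so $\emptyset$ is maximal; the inclusion $\cA \subsetneq \cM$ gives $\cM < \cA$, so $\cM$ is minimal. I do not foresee any obstacle here; the only thing to be slightly careful about is that $k=0$ is excluded automatically because the empty partial sums always agree, which is why the maximal $k$ exists whenever $\alpha \neq \beta$.
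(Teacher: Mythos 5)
Your proof is correct and is simply the direct verification that the paper itself omits (it declares the lemma ``immediate from the definition''). Both bullets follow cleanly: for the first, since $\sum_{i=1}^n \alpha_i = |\alpha| \neq |\beta| = \sum_{i=1}^n \beta_i$, the maximal differing index is $k=n$ and the comparison reduces to $|\alpha|<|\beta|$; for the second, $\cA \Delta \oA = \oA\setminus\cA$ is nonempty and contained in $\oA$, so its minimal element lies in $\oA$, giving $\oA<\cA$. One tiny remark: your closing aside about $k=0$ does not by itself show the maximal $k$ exists; the existence comes from the observation that when $\alpha\neq\beta$, the smallest index $j$ at which they differ gives a differing $j$-th partial sum. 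This is immaterial to the argument as written, but worth stating correctly if you keep the remark.
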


\subsection{Polynomial Bases}\label{poly_basis}
In this section, we define the notion of a labeled basis for a symmetric convex subset $\sigma \subset \cP$. 

\begin{defn}\label{basis2}
Given $\cA \subset \cM$, $x \in \R^n$, $\epsilon > 0$ and $\delta > 0$, we say that $(P_\alpha)_{\alpha \in \cA} \subset \cP$ forms an $(\cA,x,\epsilon,\delta)$-basis for $\sigma$ if 
\begin{align*}
&\mathbf{(B1)} \; P_\alpha \in \epsilon \cdot \delta^{n/p + |\alpha| - m} \cdot \sigma, \qquad \; \mbox{for} \; \alpha \in \cA;
\\
&\mathbf{(B2)} \; \partial^\beta P_\alpha(x) = \delta_{\alpha \beta},\qquad\qquad \;\;\; \mbox{for}\; \alpha, \beta \in \cA; \; \mbox{and}\\
&\mathbf{(B3)} \; \lvert\partial^\beta P_\alpha(x)\rvert \leq \epsilon \cdot \delta^{|\alpha|-|\beta|}, \qquad \mbox{for} \; \alpha \in \cA,\; \beta \in \cM \; \mbox{with}  \; \beta > \alpha.
\end{align*}
\end{defn}
\begin{rek} \label{rem1} 
The above definition is monotone in the parameters $(\epsilon, \delta)$ in the following sense: Suppose that $(P_\alpha)_{\alpha \in \cA}$ forms an $(\cA,x,\epsilon,\delta)$-basis for some convex subset $\sigma \subset \cP$. If $\epsilon' \geq \epsilon$ and $0 < \delta' \leq \delta$ then $(P_\alpha)_{\alpha \in \cA}$ forms an $(\cA,x,\epsilon',\delta')$-basis for $\sigma$.
\end{rek}

\subsection{The Main Lemma}\label{theml}
Fix a collection of multi-indices $\cA \subset \cM$. We prove the following by induction with respect to the label $\cA$.
\begin{ml}
There exists a constant $\epsilon=\epsilon(\cA)$, depending only on $\cA$, $m$, $n$ and $p$, such that the following holds. Let $E \subset \R^n$ and $z \in \R^n$ satisfy $2 \leq \#(E \cup \{z\}) < \infty$, and assume that
\begin{align} \sigma(x,E) \; \mbox{contains an} \; &(\cA,x,\epsilon,\delta_{E,z})\mbox{-basis for every} \; x \in E,  \notag{}\\
& \mbox{where} \; \delta_{E,z} := 10 \cdot \diam(E \cup \{z\}). \label{type11}
\end{align}
Then the Extension Theorem for $(E,z)$ holds.
\end{ml} 

Note that condition \eqref{type11} in the Main Lemma for $\emptyset$ holds vacuously. Thus the Main Lemma for $\cA=\emptyset$ implies the Extension Theorem for $(E,z)$, whenever $\#(E \cup\{z\}) \geq 2$. The Extension Theorem for $(E,z)$ is obvious when $\#(E \cup\{z\}) \leq 1$ (e.g., see Lemma \ref{small} in the next section). Thus we have reduced the proof of the Extension Theorem for $(E,z)$ to the task of proving the Main Lemma for $\cA$, for each $\cA \subset \cM$. 

We proceed by induction and establish the following.

\noindent \underline{Base Case:} The Main Lemma for $\cM$ holds.

\noindent \underline{Induction Step:} Let $\cA \subset \cM$ with $\cA \neq \cM$. Suppose that the Main Lemma for $\cA'$ holds for each $\cA' < \cA$. Then the Main Lemma for $\cA$ holds.

To prove the Main Lemma for $\cM$, we define $\epsilon(\cM) = 1$ and take $\alpha = \beta = 0$ in \textbf{(B2)} from the definition of $(\cM,x,1,\delta)$-basis. Thus, for $x \in E$, we have $P^x_0(x) = 1$. On the other hand, from \textbf{(B1)} we have $P^x_0(x) = 0$. This contradiction shows that $E$ is empty, hence we cannot have $\#(E \cup \{z\}) \geq 2$. Consequently, the Main Lemma for $\cM$ holds vacuously.

\subsection{Small Extension Problems}
\begin{lem}\label{small}
Suppose that $\#(E \cup \{z\}) \leq 2$. Then the Extension Theorem for $(E,z)$ holds.
\end{lem}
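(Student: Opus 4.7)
The plan is to enumerate the handful of cases with $\#(E \cup \{z\}) \leq 2$ and to write down $(T, M, \Omega, \Xi)$ explicitly, with no assists (take $\Omega := \emptyset$). Since the input data has bounded complexity, properties $\mathbf{(E1)}$--$\mathbf{(E6)}$ should follow from a direct construction together with a single application of the Sobolev inequality $(\ref{SET})$.

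First I would dispose of the trivial sub-case $E \cup \{z\} = \{z\}$, so that $E$ is either empty or $\{z\}$. Here the only datum is the jet $P \in \cP$ at $z$ (with $f(z) = P(z)$ forced by the definition of $L^{m,p}(E;z)$ when $E = \{z\}$), so I would simply set $T(f,P) := P$, $M(f,P) := 0$ and $\Xi := \emptyset$. Since $P \in \cP$ has vanishing $\LR$-seminorm, all required estimates collapse to $0 \leq 0$, and every clause of $\mathbf{(E1)}$--$\mathbf{(E6)}$ is immediate.

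The substantive case is $E \cup \{z\} = \{x, z\}$ with $x \neq z$. Setting $r := |x - z|$, I would fix a cutoff $\eta \in C_c^\infty(\R^n)$ with $\eta \equiv 1$ on $B(x, r/8)$ and $\supp (\eta) \subset B(x, r/4)$, so that $\eta(z) = 0$, $J_z(\eta) = 0$ and $\|\partial^\alpha \eta\|_\infty \leq C r^{-|\alpha|}$ for $|\alpha| \leq m$. Then I would define
\[ T(f,P) := P + (f(x) - P(x)) \, \eta, \qquad M(f,P) := r^{n/p - m} \, |f(x) - P(x)|. \]
Linearity of $T$ and of $M^p$ is clear. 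A direct verification shows that $T(f,P)$ extends the data: $T(f,P)(x) = f(x)$, $J_z T(f,P) = P$, and (in the case $E = \{x, z\}$) the compatibility $f(z) = P(z)$ together with $\eta(z) = 0$ gives $T(f,P)(z) = f(z)$. A routine Leibniz calculation yields $\|T(f,P)\|_{\LR} \leq C \, r^{n/p - m} |f(x) - P(x)|$, while $(\ref{SET})$ applied to any competing extension $F$ of $(f,P)$ gives the matching lower bound $|f(x) - P(x)| = |F(x) - J_z(F)(x)| \leq C \, r^{m - n/p} \, \|F\|_{\LR}$. Taking the infimum over $F$ closes the loop and delivers $\mathbf{(E1)}$--$\mathbf{(E3)}$.

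For $\mathbf{(E4)}$--$\mathbf{(E6)}$ I would take $\Xi := \{\xi\}$ with $\xi(f,P) := r^{n/p - m}(f(x) - P(x))$, so that $M(f,P) = |\xi(f,P)|$. Expanding $P(x) = \sum_{\alpha \in \cM} \frac{1}{\alpha!}(x-z)^\alpha \partial^\alpha P(z)$ writes $\xi$ as a linear combination of $f(x)$ and the scalars $\partial^\alpha P(z)$, so $\sp(\xi) \leq 1 + \#\cM$; the same expansion, applied pointwise to $J_y T(f,P)$, shows that $T$ has bounded depth. Since $\Omega = \emptyset$, conditions $\mathbf{(E4)}$, $\mathbf{(E5)}$ and $\mathbf{(E6)(a)}$--$\mathbf{(E6)(b)}$ are automatic. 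The only place any care is needed is in matching the exponent $r^{n/p - m}$ between the upper bound from the bump construction and the lower bound from the Sobolev inequality; both are governed by the same Sobolev scale, so they match by design.
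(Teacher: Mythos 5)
Your proof is correct and takes essentially the same approach as the paper's: after dispatching the degenerate one-point case with $T(f,P) = P$, the substantive case $E \cup \{z\}=\{x,z\}$ is handled by a bump function supported near $x$ at scale $|x-z|$, with $M$ and the single functional $\xi$ chosen exactly as in the paper; the only cosmetic difference is that you write $(f(x)-P(x))\eta$ directly, whereas the paper passes through an auxiliary polynomial $R$ whose difference $R-P$ is in fact this same constant.
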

\begin{proof} 
If $E$ is empty then the Extension Theorem for $(E,z)$ holds with $(T,M,\Omega, \Xi)$ defined by
$$T(P) = P \; \mbox{and} \; M(P) = 0 \;\; \mbox{for each} \; P \in \cP; \;\;\; \Omega = \emptyset \; \mbox{and} \;  \Xi = \emptyset.$$

Suppose that $E$ is non-empty. Since the Extension Theorem for $(E,z)$ is equivalent to the Extension Theorem for $(E \setminus \{z\},z)$, it suffices to assume that $E = \{\hx\}$ for some $\hx \in \R^n$ and that $z \in \R^n \setminus E$. 

Fix data $f : E=\{\widehat{x}\} \rightarrow \R$ and $P \in \cP$. Define the polynomial $R \in \cP$ that satisfies\begin{equation}
R(\widehat{x}) = f(\widehat{x}); \; \mbox{and} \; \partial^\alpha R(\widehat{x}) = \partial^\alpha P(\widehat{x}) \;\; \mbox{for all} \; \alpha \in \cM \setminus \{ 0 \}. \label{uuu}
\end{equation}
(Note that $R$ is determined uniquely and linearly from $(f,P)$.) Take a cutoff function $\theta \in C^\infty(\R^n)$ that satisfies
\begin{align}
\label{p001} & \theta \; \mbox{is supported on}\; B(\widehat{x},\Delta/2), \; \mbox{where} \; \Delta := |z-\widehat{x}|/2; \\
\label{p002} & \theta \equiv 1 \; \mbox{in a neighborhood of} \; \widehat{x};\; \mbox{and}\\ 
\label{p003} & |\partial^\alpha \theta| \leq C \lvert z - \widehat{x}\rvert^{-|\alpha|} \; \mbox{when} \; |\alpha| \leq m.
\end{align}
(Note that $\theta$ necessarily vanishes in a neighborhood of the point $z$.) Define the linear map $T : L^{m,p}(E;z) \rightarrow \LR$ by
$$T(f,P) = P + \theta (R - P), \;\; \mbox{for each} \; (f,P).$$
Then \eqref{uuu}, \eqref{p001} and \eqref{p002} imply that $T$ is an extension operator, i.e., that \textbf{(E1)} holds. 

Since $T(f,P)$ agrees with an $(m-1)^\rst$ degree polynomial outside of $B(\hx,|z-\hx|/2)$, the Leibniz rule and \eqref{p003} show that
\begin{align}\|T(f,P)\|_{\LR}^p & = \|T(f,P)\|_{L^{m,p}(B(\hx,|\hx-z|/2))}^p \lesssim \sum_{|\alpha| \leq m} \lvert \partial^\alpha(P-R)(\widehat{x})\rvert^p \cdot \lvert z - \widehat{x}\rvert^{n + (|\alpha|-m)p}\notag{} \\
& \oeq{\eqref{uuu}} |P(\widehat{x}) - f(\widehat{x})|^p \cdot \lvert z - \widehat{x}\rvert^{n - mp}. \label{ruf}
\end{align}
From the Sobolev inequality, we have
\begin{align}
|P(\widehat{x}) - f(\widehat{x})|^p \cdot \lvert z - \widehat{x}\rvert^{n - mp} \leq C \cdot \|F\|^p_{\LR}, \;\; \mbox{for} \; & \mbox{every} \; F \in \LR \; \mbox{that satisfies} \notag{} \\
&F(\widehat{x}) = f(\widehat{x}) \; \mbox{and} \; J_{z}(F) = P. \label{fef1}
\end{align} 
Taking the infimum with respect to $F$ in \eqref{fef1}, yields
$$|P(\widehat{x}) - f(\widehat{x})|^p \cdot \lvert z - \widehat{x}\rvert^{n - mp} \leq C \cdot \|(f,P)\|^p_{L^{m,p}(E;z)}.$$
Since $T$ is an extension operator, the definition of the trace seminorm shows that
$$\|(f,P)\|_{L^{m,p}(E;z)} \leq \|T(f,P)\|_{\LR}.$$
The last two inequalities and \eqref{ruf} suffice to prove \textbf{(E2)}. If we define $M(f,P) = |P(\widehat{x}) - f(\widehat{x})| \cdot |z - \widehat{x}|^{n/p - m}$ then the same inequalities establish \textbf{(E3)}. Finally, \textbf{(E4)}, \textbf{(E5)} and \textbf{(E6)} follow easily by taking $\Omega = \emptyset$ and $\Xi = \bigl\{\xi : \xi(f,P) = [P(\hx) - f(\hx)] \cdot |z-\hx|^{\frac{n}{p}-m}\bigr\}$. This concludes the proof of Lemma \ref{small}.
\end{proof}

\subsection{Technical Lemmas}

In this section we present two technical lemmas used in our proof of the \underline{Induction Step}; their proofs involve nothing but the most elementary linear algebra, though they are a bit involved (the reader may wish to omit the proofs of Lemmas \ref{techlem} and \ref{weaktostrong} on first reading). Recall that $D$ stands for the dimension of the space of polynomials $\cP$.

\begin{lem}
\label{techlem}
There exist universal constants $c_1 \in (0,1)$ and $C_1 \geq 1$ so that the following holds. Suppose we are given the following data:
\begin{itemize}
\item Real numbers $\epsilon_1 \in (0,c_1]$ and $\epsilon_2 \in (0,\epsilon_1^{2D+2}]$.
\item A lengthscale $\delta>0$.
\item A collection of multi-indices $\cA \subset \cM$.
\item Two finite subsets $\emptyset \neq E_1 \subset E_2 \subset \R^n$, with $\diam(E_2) \leq 10 \delta$.
\item A family of polynomials $(\widetilde{P}_\alpha^x)_{\alpha \in \cA}$ that form an $(\cA,x,\epsilon_2,\delta)$-basis for $\sigma(x,E_1)$, for each $x \in E_2$, and satisfy
\begin{equation}
\label{p6} \max \bigl\{ \lvert \partial^\beta \widetilde{P}^{x}_\alpha(x) \rvert \delta^{|\beta|-|\alpha|}  : \; x \in E_2, \; \alpha \in \cA, \; \beta \in \cM \bigr\} \geq \epsilon_1^{-D-1}.
\end{equation}
\end{itemize}
Then there exists $\oA < \cA$ so that $\sigma(x,E_1)$ contains an $(\oA,x,C_1 \cdot \epsilon_1,\delta)$-basis for every $x \in E_2$.
\end{lem}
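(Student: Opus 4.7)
My plan is to use the large derivative identified by \eqref{p6} to promote a new multi-index into the label via Gauss--Jordan elimination. First, I would choose a triple $(x_0, \alpha_0, \beta_0) \in E_2 \times \cA \times \cM$ attaining the maximum
\[
\Lambda \;:=\; \max_{(x,\alpha,\beta)} |\partial^\beta \widetilde{P}^x_\alpha(x)|\, \delta^{|\beta|-|\alpha|} \;\geq\; \epsilon_1^{-D-1}.
\]
A direct comparison with \textbf{(B2)}, which forces $|\partial^\beta \widetilde{P}^{x_0}_{\alpha_0}(x_0)|\,\delta^{|\beta|-|\alpha_0|} \in \{0,1\}$ when $\beta \in \cA$, and with \textbf{(B3)}, which bounds the same quantity by $\epsilon_2 \leq \epsilon_1^{2D+2}$ when $\beta > \alpha_0$, forces $\beta_0 \in \cM \setminus \cA$ and $\beta_0 < \alpha_0$ in the order of Definition \ref{order}, provided $c_1$ is taken small.

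I set $\oA := (\cA \setminus \{\alpha_0\}) \cup \{\beta_0\}$. Then $\oA \Delta \cA = \{\alpha_0,\beta_0\}$ has minimum $\beta_0 \in \oA$, so $\oA < \cA$. I then introduce the normalized polynomial $Q := \widetilde{P}^{x_0}_{\alpha_0} / \partial^{\beta_0} \widetilde{P}^{x_0}_{\alpha_0}(x_0)$; it satisfies $\partial^{\beta_0} Q(x_0) = 1$ and lies in $\eta \cdot \delta^{n/p+|\beta_0|-m} \sigma(x_0, E_1)$ with $\eta := \epsilon_2 \epsilon_1^{D+1} \leq \epsilon_1^{3D+3}$. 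The maximality of $(x_0,\alpha_0,\beta_0)$ further yields the uniform derivative bound $|\partial^\mu Q(x_0)| \leq \delta^{|\beta_0|-|\mu|}$ for every $\mu \in \cM$; a Taylor expansion of $Q$ around $x_0$ then gives $|\partial^\mu Q(x)| \leq C \delta^{|\beta_0|-|\mu|}$ for all $x \in E_2$. I fix $\psi \in L^{m,p}(\R^n)$ realizing $Q$, so that $J_x \psi \in 2\eta \cdot \delta^{n/p+|\beta_0|-m} \sigma(x, E_1)$ for every $x \in E_2$; the Sobolev inequality \eqref{SET} yields the transport estimate $|\partial^\mu J_x \psi(x) - \partial^\mu Q(x)| \leq C\eta\, \delta^{|\beta_0|-|\mu|}$.

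To construct the $(\oA, x, C_1\epsilon_1, \delta)$-basis at each $x$, I perform Gauss--Jordan elimination on $\{J_x \psi\} \cup \{\widetilde{P}^x_\alpha : \alpha \in \cA \setminus \{\alpha_0\}\}$: set
\[
R^x_{\beta_0} \;:=\; \frac{1}{B(x)}\bigg( J_x \psi \,-\, \sum_{\alpha \in \cA \setminus \{\alpha_0\}} \partial^\alpha(J_x \psi)(x) \cdot \widetilde{P}^x_\alpha \bigg),
\]
where $B(x)$ is the scalar that normalizes $\partial^{\beta_0} R^x_{\beta_0}(x) = 1$; and for $\gamma \in \cA \setminus \{\alpha_0\}$ set $R^x_\gamma := \widetilde{P}^x_\gamma - \partial^{\beta_0}(\widetilde{P}^x_\gamma)(x) \cdot R^x_{\beta_0}$. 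Property \textbf{(B2)} follows by a routine check from these definitions; at $x = x_0$, the orthogonality $\partial^\alpha Q(x_0) = 0$ for $\alpha \in \cA \setminus \{\alpha_0\}$ gives $B(x_0) = 1$ and the sizes $R^{x_0}_\gamma \in O(\epsilon_2) \cdot \delta^{n/p+|\gamma|-m} \sigma(x_0,E_1)$, which is well within the $C_1 \epsilon_1$ budget.

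The hard part will be to transfer these bounds uniformly to every $x \in E_2$: I must bound $|B(x)|$ from below and verify \textbf{(B3)} with parameter $C_1 \epsilon_1$. Expanding $B(x)$ using the Taylor expansion of $Q$ and the Sobolev transport produces cross terms of the form $\partial^\alpha \psi(x) \cdot \partial^{\beta_0} \widetilde{P}^x_\alpha(x)$ for $\alpha \in \cA \setminus \{\alpha_0\}$, which are naively only $O(\Lambda)$ in magnitude, potentially allowing $B(x)$ to degenerate for some $x \in E_2$. I expect two mechanisms to overcome this: first, the exponential gap $\epsilon_2 \leq \epsilon_1^{2D+2}$ provides enough slack to absorb accumulating constants inside the $C_1 \epsilon_1$ budget; second, should the construction fail at some $x \in E_2$, the failure itself produces a new triple realizing a still larger ratio in \eqref{p6} for the partially modified basis, allowing a further promotion step. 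Each such step strictly decreases the label in the total order $<$ of Definition \ref{order}, so the iteration must terminate after at most $|\cM|$ steps at a final label $\oA < \cA$ and basis $(R^x_\gamma)_{\gamma \in \oA}$ satisfying all requirements.
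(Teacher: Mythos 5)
Your skeleton — identify the offending triple, normalize, transport via a realizing function, then Gauss–Jordan — is indeed the shape of the paper's argument, but two of the choices you make in the first paragraph break the proof, and your speculative ``promotion iteration'' does not repair them.

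\textbf{The missing pigeonhole step.} You take $\beta_0$ to be the global maximizer of the ratio in \eqref{p6}. That gives only $\lvert\partial^\mu Q(x_0)\rvert \leq \delta^{|\beta_0|-|\mu|}$ for all $\mu\in\cM$, i.e.\ a ratio bounded by $1$. But property \textbf{(B3)} of the $(\oA,x_0,C_1\epsilon_1,\delta)$-basis demands that for $\mu > \beta_0$ this ratio be $\leq C_1\epsilon_1$, and $\epsilon_1$ is \emph{small}. No amount of slack in the exponent $\epsilon_2 \leq \epsilon_1^{2D+2}$ helps, because the offending quantity is $\lvert\partial^\mu Q(x_0)\rvert\delta^{|\mu|-|\beta_0|}$, which does not involve $\epsilon_2$ at all; and the Gauss--Jordan correction $\sum_\alpha \partial^\alpha(J_x\psi)(x)\,\widetilde{P}^x_\alpha$ cannot cancel it for $\mu\notin\cA$, since \textbf{(B2)}/\textbf{(B3)} make $\partial^\mu\widetilde{P}^x_\alpha(x)$ small or Kronecker when $\alpha\leq\mu$. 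The paper avoids this by \emph{not} using the raw maximizer: it lists the elements $\beta_0<\beta_1<\cdots<\beta_k=\oa$ and chooses $r$ to maximize $a_r\epsilon_1^{-r}$, forcing the gap $a_i \leq \epsilon_1 a_r$ for $i>r$. This pigeonhole is exactly what manufactures the factor $\epsilon_1$ that \textbf{(B3)} needs, and your proof has nothing playing this role: the failure is already present at the base point $x_0$, so an ``iteration triggered by failure at other $x\in E_2$'' never starts, and in any case a bounded-by-$1$ ratio does not produce a new triple with a ratio $\geq\epsilon_1^{-D-1}$.

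\textbf{The wrong new label.} You set $\oA = (\cA\setminus\{\alpha_0\})\cup\{\beta_0\}$, keeping any $\alpha'\in\cA$ with $\beta_0<\alpha'<\alpha_0$ in the new label. There is no control on those: you picked $\alpha_0$ to maximize the ratio globally over $\cA$, not to be the \emph{$<$-minimal} $\alpha\in\cA$ attaining a large $Z_\alpha$, so $Z_{\alpha'}$ for $\alpha'\in\cA$ with $\alpha'<\alpha_0$ can itself be huge, and the basis elements $R^x_{\alpha'}$ built from them can fail \textbf{(B1)} and \textbf{(B3)}. The paper instead defines $\oa$ to be the \emph{minimal} such index, so by construction $Z_\alpha<\epsilon_1^{-D-1}$ for every $\alpha\in\cA$ with $\alpha<\oa$, and then takes $\oA=\{\alpha\in\cA:\alpha<\ob\}\cup\{\ob\}$, discarding all $\cA$-indices $\geq\ob$. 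That keeps only the benign rows of the derivative matrix in the retained label. Both your choices — global rather than $<$-minimal $\alpha_0$, and single-element swap rather than truncation — undermine what would otherwise be a workable elimination argument.
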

\begin{proof}
By rescaling it suffices to assume that $\delta = 1$. Let $c_1$ be a sufficiently small constant, to be determined later. Our hypothesis tells us that $\epsilon_1 \leq c_1$, $\epsilon_2 \leq \epsilon_1^{2D+2}$ and that $(\widetilde{P}_\alpha^x)_{\alpha \in \cA}$ forms an $(\cA,x,\epsilon_2,1)$-basis for $\sigma(x,E_1)$, for each $x \in E_2$. That is,
\begin{align}
\label{p3} & \widetilde{P}_\alpha^x \in \epsilon_2 \cdot \sigma(x,E_1);\\
\label{p4} & \partial^\beta \widetilde{P}_\alpha^x(x) = \delta_{\alpha \beta} && (\alpha,\beta \in \cA); \; \mbox{and}\\
\label{p5} & |\partial^\beta \widetilde{P}_\alpha^x(x)| \leq \epsilon_2 && (\alpha \in \cA, \;\beta \in \cM, \; \beta > \alpha).
\end{align}
For each $\alpha \in \cA$, we define $Z_\alpha = \max \left\{ |\partial^\beta \widetilde{P}^{x}_\alpha(x)|: \; x \in E_2, \; \beta \in \cM \right\}$. Then hypothesis \eqref{p6} is equivalent to $\max\{Z_\alpha : \alpha \in \cA\} \geq \epsilon_1^{-D-1}$. Let $\oa \in \cA$ be the minimal index with $Z_{\oa} \geq \epsilon_1^{-D-1}$. Thus,
\begin{equation}
\label{stuff1} Z_\alpha < \epsilon_1^{-D-1}, \;\; \mbox{for all} \; \alpha \in \cA \; \mbox{with} \; \alpha < \oa,
\end{equation}
and there exist $x_0 \in E_2$ and $\beta_0 \in \cM$ with
\begin{equation}
\label{p6c} \epsilon_1^{-D-1} \leq Z_{\oa} = |\partial^{\beta_0} \widetilde{P}^{x_0}_{\oa}(x_0)|.
\end{equation}
Thus, \eqref{p4} and \eqref{p5} imply that $\beta_0 \neq \oa$ and $\beta_0 \leq \oa$, respectively. Therefore, $\beta_0 <\oa$. By definition of $Z_{\oa}$, we also have
\begin{equation}
\label{p6a} |\partial^\beta \widetilde{P}^{y}_{\oa}(y)| \leq |\partial^{\beta_0} \widetilde{P}^{x_0}_{\oa}(x_0)|, \;\; \mbox{for all} \; y \in E_2 \; \mbox{and} \; \beta \in \cM.
\end{equation}

Let the elements of $\cM$ between $\beta_0$ and $\oa$ be ordered as follows:
$$\beta_0 < \beta_1 < \cdots < \beta_k = \oa.$$
Note that $k + 1 \leq \# \cM = D$. Define 
$$a_i = |\partial^{\beta_i} \widetilde{P}^{x_0}_\oa(x_0)|,\;\; \mbox{for} \; i=0,\ldots,k.$$ 
Then, \eqref{p4} and \eqref{p6c} imply that $a_k = 1$ and  $a_0 \geq \epsilon_1^{-D-1}$. Choose $r \in \{0,\ldots,k\}$ with $a_r \epsilon_1^{-r} = \max\{a_l \epsilon_1^{-l} : 0 \leq l \leq k\}$. Note that $a_0 \geq \epsilon_1^{-D-1} > a_k \epsilon_1^{-k}$ which implies $r \neq k$. Moreover, we have
\begin{equation}\label{stuffs} a_r \geq \epsilon_1^D a_0 \;\; \mbox{and} \; a_r \geq \epsilon_1^{-1} a_i\; \mbox{for} \; i=r+1,\ldots,k.
\end{equation}

Define $\ob = \beta_r \in \cM$. Then, \eqref{stuffs} states that
\begin{equation}
\label{p7} |\partial^{\ob} \widetilde{P}^{x_0}_{\oa}(x_0)| \geq \epsilon_1^D |\partial^{\beta_0} \widetilde{P}^{x_0}_{\oa}(x_0)| \ogeq{\eqref{p6c}} \epsilon_1^{-1}.
\end{equation}
Also, \eqref{p5} and \eqref{p7} imply that 
$$|\partial^\beta \widetilde{P}^{x_0}_{\oa}(x_0)| \leq \epsilon_2 \leq 1 \leq \epsilon_1 |\partial^{\ob} \widetilde{P}^{x_0}_{\oa}(x_0)| \qquad (\beta \in \cM, \;\beta > \oa).$$ 
Meanwhile, for $\ob < \beta \leq \oa$, \eqref{stuffs} implies that $|\partial^\beta \widetilde{P}^{x_0}_{\oa}(x_0)| \leq \epsilon_1 |\partial^{\ob} \widetilde{P}^{x_0}_{\oa}(x_0)|$. Thus, we have
\begin{equation}
\label{p8} |\partial^\beta \widetilde{P}^{x_0}_{\oa}(x_0)| \leq \epsilon_1 |\partial^{\ob} \widetilde{P}_{\oa}(x_0)| \qquad (\beta \in \cM, \; \beta > \ob).
\end{equation}

By \eqref{p7} we have $|\partial^{\ob} \widetilde{P}^{x_0}_{\oa}(x_0)|> 1$. Hence, \eqref{p4} and \eqref{p5} imply that
\begin{equation}
\label{p9} \ob < \oa \; \mbox{and} \; \ob \notin \cA.
\end{equation}
Define $\widetilde{P}^{x_0}_{\ob} = \widetilde{P}^{x_0}_{\oa} / \partial^{\ob} \widetilde{P}^{x_0}_{\oa}(x_0)$, which satisfies
\begin{align}
\label{p10} & \widetilde{P}^{x_0}_{\ob} \in \epsilon_2 \cdot \sigma(x_0,E_1), \qquad \mbox{thanks to} \; \eqref{p3} \; \mbox{and} \; |\partial^{\ob} \widetilde{P}^{x_0}_{\oa}(x_0)|> 1;\\
\label{p13} & |\partial^\beta \widetilde{P}^{x_0}_{\ob}(x_0)| \leq \epsilon_1 \qquad (\beta \in \cM, \; \beta > \ob), \qquad \mbox{thanks to} \; \eqref{p8}; \\
\label{p14} & |\partial^\beta \widetilde{P}^{x_0}_{\ob}(x_0)| \leq \epsilon_1^{-D} \quad\; (\beta \in \cM), \qquad \mbox{thanks to} \; \eqref{p6a} \; \mbox{and}\; \eqref{p7}; \; \mbox{and} \\
\label{p14a} & \partial^{\ob} \widetilde{P}^{x_0}_{\ob}(x_0) = 1.
\end{align}

By \eqref{p10}, there exists $\varphi_{\ob} \in \LR$ with 
\begin{align}
\label{p901} &J_{x_0} \varphi_{\ob} = \widetilde{P}^{x_0}_{\ob}; \\
\label{p902} &\varphi_{\ob}|_{E_1} = 0; \; \mbox{and} \\
\label{p903} &\|\varphi_{\ob}\|_{\LR} \leq \epsilon_2.
\end{align}
Fix an arbitrary point $y \in E_2$ and define $\widetilde{P}^y_{\ob} := J_y \varphi_{\ob}$. Then \eqref{p902}-\eqref{p903} and the definition of $\sigma(\cdot,\cdot)$ imply that
\begin{equation}
\label{p15} \widetilde{P}^y_\ob \in \epsilon_2 \cdot \sigma(y,E_1).
\end{equation}
Since $\widetilde{P}^y_\ob = J_y \varphi_\ob$, we have
\begin{align}\label{p16pre}
|\partial^\beta \widetilde{P}^y_\ob(y) &- \delta_{\beta \ob}| \leq  |\partial^\beta \varphi_\ob(y) - \partial^\beta \widetilde{P}^{x_0}_{\ob}(y)| \\
& + \left|\sum_{|\gamma| \leq m-1-|\beta|} \frac{1}{\gamma!} \partial^{\beta+\gamma} \widetilde{P}^{x_0}_\ob(x_0) \cdot (y-x_0)^\gamma -  \delta_{\beta \ob} \right| \qquad (\beta \in \cM).\notag{} 
\end{align}
Since $x_0,y \in E_2$ and $\diam(E_2) \leq 10\delta = 10$ we have $|x_0-y|\leq 10$. Therefore, the first term on the right-hand side of \eqref{p16pre} is bounded by $C \|\varphi_\ob\|_{\LR}$; this uses \eqref{p901} and the  Sobolev inequality (\ref{sec_not}.\ref{SET}) . In turn this is bounded by $C \epsilon_2$, thanks to \eqref{p903}. If $\beta > \ob$ then $\beta + \gamma > \ob$ for each multi-index $\gamma$ with $|\gamma| \leq m-1-|\beta|$. In this case \eqref{p13} and $|x_0-y| \leq 10$ imply that the second term on the right-hand side of \eqref{p16pre} is controlled by $C \epsilon_1$. If $\beta = \ob$ then \eqref{p14a} implies that the second term from \eqref{p16pre} equals
$$\left|\sum_{0 < |\gamma| \leq m-1-|\beta|} \frac{1}{\gamma!} \partial^{\beta+\gamma} \widetilde{P}^{x_0}_\ob(x_0) \cdot (y-x_0)^\gamma \right|,$$
which is bounded by $C \epsilon_1$, again by \eqref{p13} and $|x_0 - y| \leq 10$. Thus we have argued that
\begin{equation}
\label{p16} |\partial^\beta \widetilde{P}^y_\ob(y) - \delta_{\beta \ob}| \leq C \epsilon_1 \qquad (\beta \in \cM,\; \beta \geq \ob).
\end{equation}
Finally, if $\beta \in \cM$ then \eqref{p14} and $|x_0-y| \leq 10$ imply that the second term on the right-hand side of \eqref{p16pre} is controlled by $C \epsilon_1^{-D}$. Therefore,
\begin{align}
|\partial^\beta \widetilde{P}^y_\ob(y)| &\leq |\partial^\beta \widetilde{P}^y_\ob(y) - \delta_{\beta\ob}| + 1  \notag{}\\
& \leq C \epsilon_1 + C \epsilon_1^{-D} + 1 \leq C' \epsilon_1^{-D} \qquad ( \beta \in \cM). \label{p17}
\end{align}

Define $\oA = \{\alpha \in \cA : \alpha < \ob\} \cup \{\ob\}$. Then \eqref{p9} implies that the minimal element of $\cA \Delta \oA$ is $\ob$. Thus, we have $\overline{\cA} < \cA$. For each $\alpha \in \oA \setminus \{\ob\}$, we have $\alpha < \ob$; hence, \eqref{p9} implies that $\alpha < \oa$. Thus, \eqref{stuff1} and the definition of $Z_\alpha$ imply that
\begin{align}
& \label{p20a} |\partial^\beta \widetilde{P}^y_\alpha(y)| \leq \epsilon_1^{-D-1} \qquad (\alpha \in \oA \setminus \{\ob\},\; \beta \in \cM).
\end{align}

Define
$$P^y_\ob := \widetilde{P}^y_\ob - \sum_{\gamma \in \oA \setminus \{\ob\}} \partial^\gamma \widetilde{P}^y_{\ob}(y) \widetilde{P}^y_\gamma.$$
Notice that $\oA \setminus \{\ob\} \subset \cA$; thus \eqref{p4} implies that
$$\partial^\alpha P^y_\ob(y) = \partial^\alpha \widetilde{P}^y_\ob(y) - \sum_{\gamma \in \oA \setminus \{\ob\}} \partial^\gamma \widetilde{P}^y_\ob(y) \delta_{\alpha \gamma} = 0 \qquad (\alpha \in \oA \setminus \{\ob\}).$$
Also, \eqref{p15},\eqref{p17} and \eqref{p3} imply that
$$P^y_\ob \in (\epsilon_2 + C\epsilon_1^{-D} \epsilon_2) \cdot \sigma(y,E_1) \subseteq C \epsilon_1^{-D} \epsilon_2 \cdot \sigma(y,E_1).$$
Since $\ob$ is the maximal element of $\oA$, it follows that for any $\beta \geq \ob$ and any $\gamma \in \oA \setminus \{\ob\}$, we have $\beta > \gamma$. Thus, \eqref{p16}, \eqref{p17} and \eqref{p5} imply that
\begin{equation}
\label{eq21a}
|\partial^\beta P^y_\ob(y) - \delta_{\beta \ob}| \leq C (\epsilon_1 + \epsilon_1^{-D} \epsilon_2) \qquad( \beta \in \cM, \;\beta \geq \ob).
\end{equation}
Finally, \eqref{p17} and \eqref{p20a} imply that
$$|\partial^\beta P^y_\ob(y)| \leq C\epsilon_1^{-2D-1} \qquad (\beta \in\cM).$$

Recall that $\epsilon_2 \leq \epsilon_1^{D+1}$ and $\epsilon_1 \leq c_1$. We now fix $c_1$ to be a small universal constant, so that \eqref{eq21a} yields $\partial^\ob P^y_\ob(y) \in [1/2,2]$. We then define $\hP^y_{\ob} = P^y_\ob / \partial^\ob P^y_\ob(y)$. The above four lines give that
\begin{align}
\label{p22} & \hP^y_\ob \in C\epsilon_1^{-D} \epsilon_2 \cdot \sigma(y,E_1); \\
\label{p23} & \partial^\beta (\hP^y_\ob)(y) = \delta_{\beta \ob} \qquad \; (\beta \in \oA);\\
\label{p24} & |\partial^\beta \hP^y_\ob(y)| \leq C(\epsilon_1 + \epsilon_1^{-D} \epsilon_2) \qquad ( \beta \in \cM, \; \beta > \ob); \; \mbox{and}\\
\label{p24a} & |\partial^\beta \hP^y_\ob(y)| \leq C \epsilon_1^{-2D-1} \qquad (\beta \in \cM).
\end{align}

For each $\alpha \in \oA \setminus \{\ob\}$ we define $\hP^y_\alpha = \widetilde{P}^y_\alpha - \left[\partial^{\ob} (\widetilde{P}^y_\alpha)(y)\right] \hP^y_\ob$. Note that $\alpha < \ob$, and hence $|\partial^{\ob} (\widetilde{P}^y_\alpha)(y) | \leq \epsilon_2 \leq 1$, thanks to \eqref{p5}. From \eqref{p3} and \eqref{p22}, we now obtain
\begin{equation}
\label{p26} \hP^y_\alpha \in C' \epsilon_1^{-D}\epsilon_2 \cdot \sigma(y,E_1).
\end{equation} 
From \eqref{p5} and \eqref{p24a}, we have
\begin{align}
 \left| \partial^\beta (\hP^y_\alpha)(y) \right| &\leq \left| \partial^\beta (\widetilde{P}^y_\alpha)(y)\right| + \left|\partial^{\ob} (\widetilde{P}^y_\alpha)(y)\right|\cdot \left| \partial^\beta (\hP^y_\ob)(y) \right| \leq \epsilon_2  + \epsilon_2 \cdot C \epsilon_1^{-2D-1} \notag{}\\
& \leq C' \epsilon_1^{-2D-1} \epsilon_2 \qquad ( \beta \in \cM, \; \beta > \alpha). \label{p28}
\end{align}
Recall that $\oA = \{\alpha \in \cA : \alpha < \ob\} \cup \{\ob\}$. From \eqref{p4} and \eqref{p23}, we have
\begin{align}
\partial^{\beta} \hP^y_\alpha(y) &= \partial^\beta (\widetilde{P}^y_\alpha)(y) - \left[\partial^{\ob} (\widetilde{P}^y_\alpha)(y)\right] \partial^\beta (\hP^y_\ob)(y)  \notag{}\\
&= \left\{
\begin{array}{ll}
 \delta_{\alpha \beta} - \left[\partial^{\ob}(\widetilde{P}^y_\alpha)(y) \right] \delta_{\beta \ob} \quad\quad\;\;\;= \delta_{\alpha \beta}& \mbox{for}\; \beta \in \cA, \; \beta < \ob \\
\partial^{\ob} (\widetilde{P}^y_\alpha)(y) - \left[\partial^{\ob} (\widetilde{P}^y_\alpha)(y)\right] \delta_{\ob \;\ob} = 0 & \mbox{for} \; \beta = \ob\\
\end{array}
\right. \notag{} \\
& = \delta_{\alpha \beta} \qquad (\beta \in \oA). \label{p27} 
\end{align}

By now varying the point $y \in E_2$, we deduce from \eqref{p22}-\eqref{p24} and \eqref{p26}-\eqref{p27} that $\sigma(y,E_1)$ contains an $(\oA,y,C \cdot [\epsilon_1 + \epsilon_1^{-2D-1} \epsilon_2],1)$-basis for each $y \in E_2$. Since $\epsilon_2 \leq \epsilon_1^{2D+2}$, the conclusion of Lemma \ref{techlem} is immediate.
\end{proof}

\begin{defn}[Near-triangular matrices]
Let $S \geq 1$, $\epsilon \in (0,1)$ and $\cA \subset \cM$ be given. A matrix $B = (B_{\alpha\beta})_{\alpha,\beta \in \cA}$ is called $(S, \epsilon)$ \textit{near-triangular} if 
\begin{align*}
& \lvert B_{\alpha\beta} - \delta_{\alpha \beta}\rvert \leq \epsilon \quad (\alpha, \beta \in \cA, \; \alpha \leq \beta); \; \mbox{and}\\ 
& \lvert B_{\alpha \beta}\rvert \leq S \qquad\quad \;\;(\alpha,\beta \in \cA).
\end{align*}
\end{defn}

\begin{lem}
\label{weaktostrong}
Given $R \geq 1$, there exist constants $c_2>0$, $C_2 \geq 1$ depending only on $R$, $m$, $n$, so that the following holds. Suppose we are given $\epsilon_2 \in (0,c_2]$, $x \in \R^n$, a symmetric convex subset $\sigma \subset \cP$ and a family of polynomials $(P_\alpha)_{\alpha \in \cA} \subset \cP$, such that
\begin{subequations}
\begin{align}
\label{trans1} & \;\;\;\; P_\alpha \in \epsilon_2 \cdot \sigma \qquad\qquad\quad (\alpha \in \cA); \\
\label{trans2} &\lvert \partial^\beta P_\alpha(x) - \delta_{\alpha \beta}\rvert \leq \epsilon_2  \quad\;\;\;  (\alpha \in \cA, \beta \in \cM, \beta \geq \alpha); \; \mbox{and}\\
\label{trans3} & \;\;\;\; \lvert \partial^\beta P_\alpha(x) \rvert \leq R \qquad\quad\;\; ( \alpha \in \cA, \beta \in \cM).
\end{align}
\end{subequations}
Then there exists a $(C_2,C_2 \epsilon_2)$ near-triangular matrix $B=(B_{\alpha \beta})_{\alpha,\beta \in \cA}$, so that
\begin{equation}
\label{low} \widehat{P}_\alpha := \sum_{\beta \in \cA} B_{\alpha \beta} P_\beta \qquad (\alpha \in \cA)
\end{equation}
forms an $(\cA,x,C_2 \epsilon_2,1)$-basis for $\sigma$. Furthermore, $\lvert\partial^\beta \widehat{P}_\alpha(x)\rvert \leq C_2$ for every $\alpha \in \cA$ and every $\beta \in \cM$.
\end{lem}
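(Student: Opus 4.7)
The plan is to reformulate this as a matrix-inversion problem. Define $\Phi_{\alpha\gamma} := \partial^\gamma P_\alpha(x)$ for $\alpha,\gamma \in \cA$, viewed as a matrix with rows and columns indexed by $\cA$ in the order $<$. Conditions \eqref{trans2} and \eqref{trans3} say that $\Phi$ has diagonal entries within $\epsilon_2$ of $1$, strictly-above-diagonal entries of magnitude at most $\epsilon_2$, and strictly-below-diagonal entries of magnitude at most $R$. I would seek $B := \Phi^{-1}$ and set $\widehat{P}_\alpha := \sum_{\beta \in \cA} B_{\alpha\beta} P_\beta$; then property \textbf{(B2)} becomes automatic, since $\partial^\gamma \widehat{P}_\alpha(x) = (B\Phi)_{\alpha\gamma} = \delta_{\alpha\gamma}$ for $\alpha,\gamma \in \cA$.

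To produce $B$ and establish near-triangularity, split $\Phi = L + U$ with $L$ the lower-triangular part (diagonal included) and $U$ strictly upper-triangular. The matrix $L$ has diagonal close to $1$ and below-diagonal entries bounded by $R$, hence is invertible via forward substitution with $\|L^{-1}\|_{\max} \leq C(R, m, n)$; here one uses $\#\cA \leq D := \dim \cP$. Choosing $c_2$ sufficiently small in terms of $R$ and $D$ forces $\|L^{-1} U\|_\infty \leq \tfrac{1}{2}$, so the Neumann series
\begin{equation*}
B \;=\; (I + L^{-1}U)^{-1} L^{-1} \;=\; L^{-1} - L^{-1} U L^{-1} + L^{-1} U L^{-1} U L^{-1} - \cdots
\end{equation*}
converges. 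The leading term $L^{-1}$ is lower-triangular with diagonal close to $1$; every subsequent term carries a factor of $U$ (whose entries are $\leq \epsilon_2$) and contributes only $O(\epsilon_2)$ per entry. Thus the diagonal entries of $B$ are $1 + O(\epsilon_2)$, the strictly-above-diagonal entries are $O(\epsilon_2)$, and every entry is bounded by a constant $C_2 = C_2(R, m, n)$. This is exactly the $(C_2, C_2 \epsilon_2)$-near-triangular property.

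It remains to verify \textbf{(B1)} and \textbf{(B3)}. Since $\sigma$ is balanced (symmetric and convex) and $\sum_\beta |B_{\alpha\beta}| \leq C_2 \cdot D$, the inclusions $P_\beta \in \epsilon_2 \sigma$ give $\widehat{P}_\alpha \in C_2' \epsilon_2 \cdot \sigma$, proving \textbf{(B1)}. For \textbf{(B3)} with $\beta \in \cM$, $\beta > \alpha$, I would expand
\begin{equation*}
\partial^\beta \widehat{P}_\alpha(x) \;=\; \sum_{\gamma \in \cA,\, \gamma \leq \beta} B_{\alpha\gamma}\, \partial^\beta P_\gamma(x) \;+\; \sum_{\gamma \in \cA,\, \gamma > \beta} B_{\alpha\gamma}\, \partial^\beta P_\gamma(x).
\end{equation*}
In the first sum, $\gamma \leq \beta$ and $\gamma \neq \beta$ (indeed, if $\beta \in \cA$ then \textbf{(B2)} already forces $0$, while if $\beta \in \cM \setminus \cA$ then $\gamma \in \cA$ guarantees $\gamma \neq \beta$), so \eqref{trans2} gives $|\partial^\beta P_\gamma(x)| \leq \epsilon_2$, and combined with $|B_{\alpha\gamma}| \leq C_2$ yields an $O(\epsilon_2)$ contribution. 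In the second sum, $\gamma > \beta > \alpha$, so near-triangularity of $B$ gives $|B_{\alpha\gamma}| \leq C_2 \epsilon_2$, paired with $|\partial^\beta P_\gamma(x)| \leq R$ from \eqref{trans3}, again contributing $O(\epsilon_2)$. The ``furthermore'' bound is a direct application of $|B_{\alpha\gamma}| \leq C_2$ and \eqref{trans3}. The only non-mechanical point to track is that the above-diagonal smallness of $B$ (not merely boundedness) is essential for the $\gamma > \beta$ cross-term in \textbf{(B3)}; this smallness is delivered precisely by the Neumann-series corrections to the leading lower-triangular term $L^{-1}$.
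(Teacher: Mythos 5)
Your proposal is correct and takes essentially the same route as the paper: you define $B$ as the inverse of the same matrix $\Phi = D$ (with $D_{\alpha\beta} = \partial^\beta P_\alpha(x)$) and verify \textbf{(B1)}--\textbf{(B3)} by the same kind of estimates. The paper states without argument that $D^{-1}$ is $(C',C'\epsilon_2)$ near-triangular for $\epsilon_2 \leq c_2$; your $LU$-splitting plus Neumann series supplies a correct, explicit proof of that claim, and your \textbf{(B3)} split ($\gamma \leq \beta$ versus $\gamma > \beta$) differs only cosmetically from the paper's ($\beta \leq \alpha$ versus $\beta > \alpha$), with both yielding the same $O(\epsilon_2)$ bound.
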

\begin{proof}
Let $c_2 \in (0,1)$ and $C_2 \geq 1$ be constants depending only on $R$, $m$ and $n$, that will be determined later. Define the matrix $D$ by setting $D_{\alpha \beta} = \partial^\beta P_\alpha(x)$ for all $\alpha,\beta \in \cA$. From \eqref{trans2} and \eqref{trans3} it is immediate that $D$ is $(R,\epsilon_2)$ near-triangular. Since $\epsilon_2 \leq c_2$, by fixing a small enough constant $c_2$ determined by $R$, $m$, $n$, we can ensure that $D$ is invertible and that $B := D^{-1} = (B_{\alpha\beta})_{\alpha,\beta \in \cA}$ is $(C',C'\epsilon_2)$ near-triangular, for some constant $C'$ determined by $R$, $m$, $n$. 

For each $\alpha \in \cA$ we define $\widehat{P}_\alpha$ as in \eqref{low}. Since $\sigma$ is a symmetric convex set, from $|B_{\alpha \beta}| \leq C'$ and \eqref{trans1} we deduce that
$$\widehat{P}_\alpha \in C'' \epsilon_2 \cdot \sigma, \;\;\; \mbox{where} \; C'' = C''(R,m,n).$$
Since $B = D^{-1}$, we have
$$\partial^\gamma \widehat{P}_\alpha(x) = \sum_{\beta \in \cA} B_{\alpha \beta} D_{\beta \gamma} = \delta_{\alpha \gamma} \qquad ( \gamma \in \cA).$$
Finally, for each $\gamma \in \cM$ with $\gamma > \alpha$, we write
$$|\partial^\gamma \widehat{P}_\alpha(x)| \leq \Biggl\lvert \sum_{\substack{\beta \in \cA\\ \beta \leq \alpha}} B_{\alpha \beta} \partial^\gamma P_\beta(x) \Biggr\rvert + \Biggl\lvert \sum_{\substack{\beta \in \cA\\ \beta > \alpha}} B_{\alpha \beta} \partial^\gamma P_\beta(x)\Biggr\rvert.$$ 
From \eqref{trans2}, \eqref{trans3} and the fact that $B$ is $(C',C'\epsilon_2)$ near-triangular, it follows that each summand is dominated by $C' \cdot R \cdot \epsilon_2$. Hence,
$$|\partial^\gamma \widehat{P}_\alpha(x)| \leq \hat{C} \epsilon_2 \quad  (\gamma \in \cM, \; \gamma> \alpha), \quad \mbox{where} \; \hat{C} = \hat{C}(R,m,n).$$
Taking a large enough constant $C_2$ determined by $R$, $m$, $n$, we have shown that $(\widehat{P}_\alpha)_{\alpha \in \cA}$ forms an $(\cA,x,C_2 \epsilon_2,1)$-basis for $\sigma$. Finally, \eqref{trans3} and $|B_{\alpha \beta}| \leq C'$ imply the last conclusion of the lemma.
\end{proof}

\section{The Inductive Hypothesis}\label{sec_ind}
\setcounter{equation}{0}

Let $\cA \subsetneq \cM$. Here, we start on the proof of the \underline{Induction Step}. We make the inductive assumption that the Main Lemma for $\cA'$ holds for each $\cA' < \cA$. Put
$$\epsilon_0 = \min \bigl\{ \epsilon(\cA') : \cA' < \cA\bigr\},$$
with $\epsilon(\cA')$ as in the Main Lemma for $\cA'$. From the remark in section \ref{poly_basis}, we have:
\begin{itemize}
\item[\textbf{(IH)}] Let $\widehat{\cA} < \cA$. Let $\hE \subset \R^n$ and $\hz \in \R^n$ satisfy $2 \leq \#(\hE \cup \{\hz\}) < \infty$, and assume that
\begin{align} \label{eq_600} \sigma(x,\hE) \; \mbox{contains an} &\; (\widehat{\cA},x,\epsilon_0,\delta_{\hE,\hz})\mbox{-basis for every} \; x \in \hE, \\
& \mbox{where} \; \delta_{\hE,\hz} = 10 \cdot \diam(\hE \cup \{\hz\}). \notag{}
\end{align}
Then the Extension Theorem for $(\hE,\hz)$ holds.
\end{itemize}

Let us start on the proof of the Main Lemma for $\cA$. The value of the universal constant $ \epsilon(\cA)$ is determined later in the paper. We now assume that $\epsilon = \epsilon(\cA)$  is less than a small enough universal constant (the ``small $\epsilon$ assumption'').

Fix $E \subset \R^n$ and $z \in \R^n$ with $2 \leq \#(E \cup \{z\}) <\infty$, and such that condition (\ref{sec_plan}.\ref{type11}) from the Main Lemma for $\cA$ holds. By rescaling and translating $E$ and $\{z\}$, we can arrange that
\begin{align} &\delta_{E,z} := 10 \cdot \diam(E \cup \{z\}) = 1 \; \mbox{and}\notag{}\\
&E \cup \{z\} \subset \frac{1}{8}Q^\circ \; \mbox{where} \; Q^\circ := (0,1]^n \;. \label{cube} 
\end{align}
(Note that $Q^\circ$ is a dyadic cube as per our notation.)

Suppose that there exists $\oA < \cA$ such that $\sigma(x,E)$ contains an $(\oA,x,\epsilon_0,1)$-basis for every $x \in E$. Then the Extension Theorem for $(E,z)$ holds in view of \textbf{(IH)}. Having reached the conclusion of the Main Lemma for $\cA$ in this case, we now assume that
\begin{align}
\mbox{for every} \; \oA < \cA, &\; \mbox{there exists} \; x \in E, \notag{}\\
&\;\mbox{such that} \; \sigma(x,E) \; \mbox{\underline{does not} contain an} \;  (\oA,x,\epsilon_0,1)\mbox{-basis}.\label{d1}
\end{align}

If $\#(E) \leq 1$ then the Extension Theorem for $(E,z)$ holds (see Lemma \ref{small}). Thus, we may assume that
\begin{equation} \label{atleasttwo}
\#(E) \geq 2.
\end{equation}

The assumptions \eqref{cube}-\eqref{atleasttwo} on $(E,z)$ will remain in place until the end of section \ref{sec_pf}, when we prove Theorem \ref{thm1} for finite $E$, and establish Theorems \ref{thm2}, \ref{thm3}.

\subsection{Auxiliary Polynomials} \label{aux}
Placing $\delta_{E;z} = 1$ into condition (\ref{sec_plan}.\ref{type11}) from the Main Lemma for $\cA$, we obtain $(\widetilde{P}^x_\alpha)_{\alpha \in \cA}$ that forms an $(\cA,x,\epsilon,1)$-basis for $\sigma(x,E)$, for every $x \in E$. The goal of this subsection is to exhibit a similar basis for $\sigma(x,E)$ at every other point $x \in Q^\circ$. 

As a consequence of \eqref{d1}, we will show that
\begin{equation} \label{a3}
\lvert \partial^\beta \widetilde{P}_\alpha^x(x)\rvert \leq C \qquad (x \in E, \; \alpha \in \cA, \; \beta \in \cM).
\end{equation}
To prove \eqref{a3}, we fix a universal constant $\epsilon_1 \leq \min \{c_1, \epsilon_0/C_1\}$, where $c_1$ and $C_1$ are the constants from Lemma \ref{techlem}. For the sake of contradiction, suppose that \eqref{a3} fails to hold with $C = \epsilon_1^{-D-1}$:
\begin{equation} \label{cont2}\max \bigl\{ \lvert \partial^\beta \widetilde{P}_\alpha^x(x) \rvert : x \in E, \alpha \in \cA, \beta \in \cM \bigr\} > \epsilon_1^{-D-1}.\end{equation}
We may assume that $\epsilon \leq \epsilon_1^{2D+2}$. Then the hypotheses of Lemma \ref{techlem} hold with parameters
$$\left(\epsilon_1,\epsilon_2,\delta,\cA,E_1,E_2,(\widetilde{P}^x_\alpha)_{\alpha \in \cA,x \in E_2} \right)  := \left(\epsilon_1,\epsilon,1,\cA,E,E,(\widetilde{P}^x_\alpha)_{\alpha \in \cA,x \in E} \right).$$
(Here, the left-hand side denotes the local parameters of Lemma \ref{techlem}, and the right-hand side is defined as in the paragraph above. Notice that $\diam(E) \leq 10$  and $E \neq \emptyset$ follow from \eqref{cube} and \eqref{atleasttwo}, respectively.) Thus we find $\oA < \cA$ so that $\sigma(x,E)$ contains an $(\oA,x,C_1 \epsilon_1,1)$-basis for each $x \in E$. Since $C_1 \epsilon_1 \leq \epsilon_0$, this contradicts \eqref{d1}, which concludes our proof of \eqref{a3}.

Fix $x_0 \in E$ and $\alpha \in \cA$. Since $\widetilde{P}^{x_0}_\alpha \in \epsilon \cdot \sigma(x_0,E)$, there exists $\varphi_\alpha \in \LR$ with
\begin{align}
\label{p101} & \varphi_\alpha = 0 \; \mbox{on} \; E; \\
\label{p102} & J_{x_0} (\varphi_\alpha) = \widetilde{P}_\alpha^{x_0}; \; \mbox{and} \\
\label{p103} & \|\varphi_\alpha\|_{\LR} \leq \epsilon.
\end{align}
In view of \eqref{a3} and \textbf{(B2)},\textbf{(B3)} from the definition of $(\widetilde{P}_\alpha^{x_0})_{\alpha \in \cA}$ being an $(\cA,x_0,\epsilon,1)$-basis, we have
\begin{align}
&\; \lvert \partial^{\beta} \varphi_\alpha(x_0) \rvert = \lvert \partial^{\beta} \widetilde{P}^{x_0}_\alpha(x_0) \rvert \leq C \qquad\;\; \mbox{for} \; \beta \in \cM; \label{a30}\\
&\; \; \partial^\alpha \varphi_\alpha(x_0) = \partial^\alpha \widetilde{P}^{x_0}_\alpha(x_0) = 1;  \; \mbox{and}\label{p104} \\
&\;\lvert \partial^{\beta+\gamma} \varphi_\alpha(x_0) \rvert = \lvert \partial^{\beta+\gamma} \widetilde{P}^{x_0}_\alpha(x_0) \rvert \leq \epsilon \;\;\; \mbox{for} \; \beta > \alpha, \; |\gamma| \leq m-1-|\beta|, \; \mbox{and} \notag{} \\
& \;\qquad\qquad\qquad\qquad\qquad\qquad\qquad\quad \mbox{for} \; \beta = \alpha, \; 0 < |\gamma| \leq m-1-|\beta| \label{p105}\\
& \;\; (\mbox{since in either case we have } \; \beta + \gamma > \alpha). \notag{}
\end{align}

For each $x \in Q^\circ$, define $\hP_\alpha^x = J_x(\varphi_\alpha)$. Then \eqref{p101} and \eqref{p103} yield
\begin{equation} \label{t_1} \hP^x_\alpha \in \epsilon \cdot \sigma(x,E).
\end{equation}
For each $\beta \in \cM$, by the Sobolev inequality (notice that $|x_0 - x| \leq \diam(Q^\circ) = 1$), we have
\begin{align*}
\left| \partial^\beta \hP^x_\alpha(x) - \delta_{\alpha \beta} \right| &= \left|\partial^\beta \varphi_\alpha(x) - \delta_{\alpha \beta}\right| \leq \left|\partial^\beta J_{x_0}(\varphi_\alpha)(x) - \delta_{\alpha \beta}\right| + \left| \partial^\beta \left[J_{x_0} (\varphi_\alpha) - \varphi_\alpha \right] (x) \right| \\
& \lesssim \Bigl| \sum_{|\gamma| \leq m- 1 -|\beta|} \frac{1}{\gamma!}\partial^{\beta+\gamma}\varphi_\alpha(x_0)(x-x_0)^\gamma - \delta_{\alpha \beta} \Bigr| + \|\varphi_\alpha\|_{\LR}. 
\end{align*}
Thus, \eqref{p103},\eqref{p104},\eqref{p105} show that
\begin{equation} \label{t_2}\lvert\partial^\beta \hP^x_\alpha(x)- \delta_{\alpha \beta}\rvert \leq \hat{C} \epsilon \qquad (\beta \in \cM, \beta \geq \alpha),
\end{equation}
while \eqref{p103},\eqref{a30} imply that
\begin{align} \label{t_3}
\lvert\partial^\beta \hP^x_\alpha(x)\rvert \leq \lvert\partial^\beta \hP^x_\alpha(x) - \delta_{\alpha \beta}\rvert + 1 \leq \hat{C} + \epsilon + 1 \qquad (\beta \in \cM).
\end{align}

It follows from \eqref{t_1},\eqref{t_2},\eqref{t_3} that (\ref{sec_plan}.\ref{trans1}),(\ref{sec_plan}.\ref{trans2}),(\ref{sec_plan}.\ref{trans3}) hold for $(\hP^x_\alpha)_{\alpha \in \cA}$, with $R$ equal to a universal constant, $\epsilon_2 = \hat{C} \epsilon$ and $\sigma = \sigma(x,E)$. We may assume that $\hat{C} \epsilon \leq c_2$, with the constant $c_2$ as in Lemma \ref{weaktostrong}. We have verified the hypotheses of Lemma \ref{weaktostrong}; hence for each $x \in Q^\circ$ there exists a $(C',C'\epsilon)$ near-triangular matrix $A^x = (A^x_{\alpha \beta})_{\alpha,\beta \in \cA}$, such that
\begin{align} 
& \notag{}P^x_\alpha :=  \sum_{\beta \in \cA} A^x_{\alpha \beta} \cdot J_x(\varphi_{\beta}) \; \mbox{satisfies:}\\
& (P^x_\alpha)_{\alpha \in \cA} \; \mbox{is an} \; (\cA,x,C'\epsilon,1)\mbox{-basis for} \; \sigma(x,E), \; \mbox{and} \label{mainprops}\\
&\qquad\qquad \lvert\partial^\beta P^x_\alpha(x)\rvert \leq C', \;\; \mbox{for every} \; \beta \in \cM. \notag{}
\end{align}
That is,
\begin{subequations}
\begin{align} \label{mainprops1}
& P^x_\alpha \in C'\epsilon \cdot \sigma(x,E) &&(x \in Q^\circ,\; \alpha \in \cA);\\
& \partial^\beta P^x_\alpha(x) = \delta_{\alpha \beta} && (x\in Q^\circ, \;\alpha,\beta \in \cA);\label{mainprops2}\\
& \lvert\partial^\beta P^x_\alpha(x)\rvert \leq C'\epsilon && (x \in Q^\circ,\; \alpha \in \cA, \; \beta \in \cM, \; \beta>\alpha); \; \mbox{and}\label{mainprops3} \\
& \lvert\partial^\beta P^x_\alpha(x)\rvert \leq C' &&(x \in Q^\circ,\; \alpha \in \cA, \;\beta \in \cM). \label{mainprops4}
\end{align}
\end{subequations}
Here and elsewhere, $\partial^\beta P^x_\alpha(x)$ denotes the value of $\partial^\beta_y P^x_\alpha(y)$ at $y=x$, not the $\beta^\th$ derivative of the function $x \mapsto P^x_\alpha(x)$.

\subsection{Reduction to Monotonic $\cA$}

The notion of monotonic labels played a key r\^{o}le in the study of $C^m$ extension problems. It continues to be crucial for us here.
\begin{defn}[Monotonic labels]\label{mon}
A collection of multi-indices $\cA \subset \cM$ is monotonic if 
$$\alpha \in \cA \;\; \mbox{and} \;\; \lvert\gamma\rvert \leq m-1- \lvert\alpha\rvert \; \implies \; \alpha + \gamma \in \cA.$$
If the above property fails, we say that $\cA$ is non-monotonic.
\end{defn}

In this section we deduce the monotonicity of $\cA$ using assumption \eqref{d1} and condition (\ref{sec_plan}.\ref{type11}) from the Main Lemma for $\cA$.

We proceed by contradiction and assume that $\cA$ is non-monotonic. In this setting, we prove
\begin{equation*} \circledast \;\; \mbox{There exists} \; \oA < \cA \; \mbox{so that} \; \sigma(x,E) \; \mbox{contains an}  \; (\oA,x,\epsilon_0,1)\mbox{-basis for each} \; x \in E. \end{equation*} 
This contradicts \eqref{d1}; thus, our proof of the \underline{Induction Step} will be reduced to the case of monotonic $\cA$.

In order to construct $\oA$, we exploit the non-monotonicity of $\cA$ and choose $\alpha_0 \in \cA$ and $\gamma \in \cM$ with 
\begin{equation} \label{nonmon} 
0 < \lvert\gamma\rvert \leq m-1-\lvert\alpha_0\rvert \;\; \mbox{and} \;\; \oa := \alpha_0 + \gamma \in \cM \setminus \cA.
\end{equation}
We then define $\oA = \cA \cup \{\oa\}$. Note that the minimal (only) element of $\oA \Delta \cA$ is $\oa$, which is a member of $\oA$. Hence, $\oA < \cA$ by definition of the order. Also, note that $\alpha_0 < \oa$.

For each $y \in Q^\circ$, we have defined $(P^y_\alpha)_{\alpha \in \cA}$ that satisfy (\ref{mainprops})$-$(\ref{mainprops4}). For each $y \in E$, we now define
\begin{equation} \label{eq_601}P^y_{\overline{\alpha}} = P^y_{\alpha_0} \odot_{y} q^y,\; \; \mbox{where} \;\; q^y(x):=\frac{\alpha_0!}{\overline{\alpha}!}(x-y)^\gamma.
\end{equation} 
Expanding out this product, we have
\begin{equation*}
P^y_{\oa}(x) = \frac{\alpha_0!}{\overline{\alpha}!} \sum_{|\omega| \leq m - 1 -|\gamma|} \frac{1}{\omega!} \partial^\omega P^y_{\alpha_0}(y) (x-y)^{\omega + \gamma}.
\end{equation*}
Note that $\omega=\alpha_0$ arises in the sum above, thanks to \eqref{nonmon}. Also, the terms with $\omega + \gamma > \oa = \alpha_0 + \gamma$ correspond precisely to $\omega >\alpha_0$, by definition of the order. The following properties are now immediate. \begin{subequations}
\begin{align}
\label{dia1}
&\partial^{\overline{\alpha}} P^y_{\overline{\alpha}}(y) = 1, \;\;\; \mbox{thanks to} \; (\ref{mainprops2}) \; \mbox{with} \; \alpha=\beta=\alpha_0.\\
\label{dia2}
&\lvert\partial^\beta P^y_{\overline{\alpha}}(y)\rvert \leq C' \epsilon \quad (\beta \in \cM, \beta > \overline{\alpha}), \;\; \mbox{thanks to} \; (\ref{mainprops3}) \; \mbox{with} \; \alpha = \alpha_0. \\
\label{dia3}
&\lvert\partial^\beta P^y_{\oa}(y)\rvert \leq C' \quad\;\; (\beta \in \cM), \;\;\; \mbox{thanks to} \;(\ref{mainprops4}).
\end{align}

Next, we establish that
\begin{equation}
\label{i204}
P^y_{\overline{\alpha}} \in C' \epsilon \cdot \sigma(y,E).
\end{equation}
\end{subequations}
From (\ref{mainprops1}) (with $\alpha = \alpha_0$), we find a function $\varphi \in \LR$ with 
\begin{align}
\label{p202} &\varphi = 0 \; \mbox{on}\; E; \\
\label{p203} &J_{y} (\varphi) = P^y_{\alpha_0}; \;\mbox{and}\\
\label{p201} &\|\varphi\|_{\LR} \leq C\epsilon.
\end{align}

Consider $x \in Q^\circ$ and $\beta \in \cM$ with $\beta > \alpha_0$. Since $x$ and $y$ belong to the unit cube $Q^\circ$, we have $|x-y| \leq \delta_{Q^\circ} = 1$. Thus, \eqref{p203} and the Sobolev inequality yield
\begin{align}
\notag{}
\lvert \partial^\beta \varphi(x) \rvert \leq \lvert \partial^\beta P^y_{\alpha_0}(x)\rvert &+ \lvert\partial^\beta \varphi(x) - \partial^\beta P^y_{\alpha_0}(x)\rvert \leq  \lvert\partial^\beta P^y_{\alpha_0}(x)\rvert + C \|\varphi\|_{\LR} \\
& = \Bigl|\sum_{|\gamma| \leq m-1-|\beta|} \frac{1}{\gamma!}\partial^{\beta+\gamma} P^y_{\alpha_0}(y) (x-y)^\gamma\Bigr| + C \|\varphi\|_{\LR}.\notag{}
\end{align}
Note that $\beta > \alpha_0 \implies \beta + \gamma > \alpha_0$. Thus, (\ref{mainprops3}),\eqref{p201} imply that
\begin{equation}
\label{i205}
\lvert\partial^\beta \varphi(x)\rvert \leq C' \epsilon \qquad (x \in Q^\circ, \; \beta\in \cM, \beta > \alpha_0).
\end{equation}

Choose a cutoff function $\theta \in C^\infty_0(Q^\circ)$ with 
\begin{align}
\label{p301} &\theta \equiv 1 \; \mbox{in a neighborhood of} \; E, \; \mbox{and} \\
\label{p302} &|\partial^\alpha \theta| \lesssim 1 \; \mbox{when} \; |\alpha|\leq m.
\end{align} 
(This cutoff exists in view of (\ref{cube}).) Define
$$\widehat{\varphi} = P^y_{\oa} + \theta (q^y \varphi - P^y_{\oa}).$$ 
Since $y \in E$, by \eqref{p202},\eqref{p203},\eqref{p301}, we have
\begin{equation}
\label{dada}
J_{y}\widehat{\varphi} = P^y_{\alpha_0} \odot_y q^y = P^y_{\overline{\alpha}} \quad \mbox{and} \quad \widehat{\varphi}|_E = 0.
\end{equation}
Because $\theta$ is compactly supported on $Q^\circ$, the function $\widehat{\varphi}$ agrees with an $(m-1)^\rst$ degree polynomial outside $\supp(\theta) \subset Q^\circ$. Also, since the $m^\th$ order derivatives of $P^y_{\oa} \in \cP$ vanish, we find that
\begin{equation*} 
\|\widehat{\varphi}\|_{\LR} = \|\widehat{\varphi}\|_{L^{m,p}(Q^\circ)} = \|\theta \cdot (q^y\varphi - P^y_{\oa})\|_{L^{m,p}(Q^\circ)} \oeq{\eqref{eq_601},\eqref{p203}}  \|\theta \cdot (q^y \varphi - J_{y} \left[ q^y \varphi \right])\|_{L^{m,p}(Q^\circ)}.
\end{equation*}
Thus, by \eqref{p302} and the Sobolev inequality, we have
\begin{equation} \label{i206} \|\widehat{\varphi}\|_{\LR}  \lesssim \sum_{|\alpha| \leq m} \|\partial^\alpha ( q^y \varphi - J_{y} \left[ q^y \varphi \right])\|_{L^p(Q^\circ)} \lesssim \|q^y \varphi\|_{L^{m,p}(Q^\circ)}.
\end{equation}

Consider $x \in Q^\circ$ and some multi-index $\beta$ with $|\beta|=m$. Since $q^y$ is a polynomial of degree $|\gamma|$, we have
\begin{equation}
\label{i207} \partial^\beta \left[q^y \varphi\right](x) =  \sum_{ \omega + \omega' = \beta} \frac{\beta!}{\omega! \omega' !} \partial^\omega q^y(x) \partial^{\omega'} \varphi(x) = \sum_{\substack{\omega + \omega' = \beta \\ |\omega| \leq |\gamma|}} \frac{\beta!}{\omega! \omega' !} \partial^\omega q^y(x) \partial^{\omega'} \varphi(x). \\
\end{equation}
If $\omega + \omega' = \beta$ and $|\omega| \leq |\gamma|$ then $|\omega'| \geq |\beta| - |\gamma| = m - |\gamma|$. Thus, from \eqref{nonmon}, we have $|\omega'| > |\alpha_0|$. By definition of the order on $\cM$, either [$\omega=0$ and $\omega'=\beta$] or [$\omega' \in \cM$ and $\omega' > \alpha_0$]. Using either $\|q^y\|_{L^{\infty}(Q^\circ)} \leq C$ (see the definition of $q^y$ in \eqref{eq_601}) or \eqref{i205} to bound the corresponding summand from \eqref{i207}, we obtain
$$|\partial^\beta [q^y \varphi](x)| \leq  C' \cdot \lvert \partial^\beta \varphi(x)\rvert + C' \cdot \epsilon \quad (x \in Q^\circ,|\beta|=m).$$
Taking $p^\th$ powers, integrating over $Q^\circ$ and maximizing with respect to the multi-indices $\beta$ with $|\beta|=m$, we obtain
$$\|\widehat{\varphi}\|_{\LR}^p \olesssim{\eqref{i206}} \|q^y \varphi\|^p_{L^{m,p}(Q^\circ)} \lesssim \|\varphi\|_{L^{m,p}(Q^\circ)}^p + \epsilon^p \olesssim {\eqref{p201}} \epsilon^p.$$
Together with \eqref{dada}, this proves \eqref{i204}.

Now (\ref{mainprops1})-(\ref{mainprops4}) and \eqref{dia1}-\eqref{i204} imply that $(P^y_\alpha)_{\alpha \in \oA}$ satisfies (\ref{sec_plan}.\ref{trans1})-(\ref{sec_plan}.\ref{trans3}) with $R$ equal to a universal constant, $\epsilon_2 = C' \epsilon$ and $\sigma = \sigma(y,E)$. We may assume that $C' \epsilon \leq c_2$, where the constant $c_2$ comes from Lemma \ref{weaktostrong}. Then the hypotheses of Lemma \ref{weaktostrong} hold, hence $\sigma(y,E)$ contains an $(\oA,y,C'' \epsilon,1)$-basis for some universal constant $C'' \geq 1$.  

Finally, we may assume that $\epsilon \leq \epsilon_0/C''$, in which case $\sigma(y,E)$ contains an $(\oA, y, \epsilon_0,1)$-basis. Since $y \in E$ was arbitrary this concludes the proof of $\circledast$. As mentioned already, this contradicts (\ref{d1}). Thus we may assume that $\cA \subsetneq \cM$ is a monotonic set. This property will be called upon much later to prove Proposition \ref{special} (but nowhere else).

\section{The CZ Decomposition}\label{sec_cz}
\setcounter{equation}{0}

To start, we make two definitions.

\begin{defn}[OK cubes]
A dyadic cube $Q \subset Q^\circ$ is OK if either $\#(3Q \cap E) \leq 1$ or there exists $\oA < \cA$ such that $\sigma(x,E \cap 3Q)$ contains an $(\oA,x,\epsilon_0,30\delta_Q)$-basis for every $x \in E \cap 3Q$.
\end{defn}

\begin{defn}[Calder\'on-Zygmund cubes]
A dyadic cube $Q \subset Q^\circ$ is CZ if $Q$ is OK and all dyadic cubes $Q' \subset Q^\circ$ that properly contain $Q$ are not OK.
\end{defn}
The collection of CZ cubes will be denoted by
$$\CZ = \{Q_1,\ldots,Q_{\nu_{\max}}\}.$$

\begin{lem} \label{lem_fin}
The Calder\'on-Zygmund cubes $\CZ$ form a non-trivial finite partition of $Q^\circ$ into dyadic cubes. In particular, each $Q \in \CZ$ has a (unique) dyadic parent $Q^+ \subset Q^\circ$. 
\end{lem}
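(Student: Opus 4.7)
The plan is to establish three facts, after which the partition property follows from the nested dyadic structure: (i) $Q^\circ$ itself is \emph{not} OK; (ii) every dyadic cube $Q \subset Q^\circ$ of small enough sidelength \emph{is} OK; (iii) distinct CZ cubes are disjoint. Given (i) and (ii), a standard descent argument will produce, for each $x \in Q^\circ$, a unique CZ cube containing $x$, and a lower bound on $\delta_Q$ for $Q \in \CZ$ will yield finiteness.

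First I would check (i). Since $E \subset \tfrac{1}{8}Q^\circ$ we have $E \cap 3Q^\circ = E$, and by \eqref{atleasttwo} this set has at least two elements, so the first alternative in the definition of OK fails. If the second alternative held, there would exist $\oA < \cA$ and an $(\oA,x,\epsilon_0,30\delta_{Q^\circ})$-basis of $\sigma(x,E)$ for every $x \in E$; recalling $\delta_{Q^\circ} = 1$ and invoking the monotonicity in $\delta$ from Remark \ref{rem1} (with $\delta'=1 \leq 30 = \delta$), this would produce an $(\oA,x,\epsilon_0,1)$-basis for every $x \in E$, flatly contradicting \eqref{d1}. Hence $Q^\circ$ is not OK, which is precisely the ``non-trivial'' assertion and forces every $Q \in \CZ$ to be a proper dyadic subcube of $Q^\circ$, so $Q^+ \subset Q^\circ$ is well-defined.

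Next I would verify (ii) using finiteness of $E$. Set $\eta := \min\{|x-y| : x,y \in E, \; x \neq y\}$, which is positive by \eqref{atleasttwo}. Any dyadic cube $Q \subset Q^\circ$ with $3\delta_Q < \eta$ satisfies $\diam_\infty(3Q) = 3\delta_Q < \eta$, so $3Q$ contains at most one point of $E$ and is therefore OK by the first alternative in the definition. This gives a uniform threshold $\delta_* := \eta/4$ such that every dyadic $Q \subset Q^\circ$ with $\delta_Q \leq \delta_*$ is OK.

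With (i) and (ii) in hand, fix any $x \in Q^\circ$ and consider the (finite) dyadic chain $Q^\circ = Q^{(0)} \supsetneq Q^{(1)} \supsetneq \cdots \supsetneq Q^{(K)}$ of dyadic cubes containing $x$ with $\delta_{Q^{(k)}} = 2^{-k}$, where $K$ is chosen so that $2^{-K} \leq \delta_*$. Then $Q^{(K)}$ is OK while $Q^{(0)}$ is not, so there is a \emph{smallest} $k^\ast \in \{1,\ldots,K\}$ with $Q^{(k^\ast)}$ OK; by minimality all strict ancestors $Q^{(0)},\ldots,Q^{(k^\ast-1)}$ in $Q^\circ$ are not OK, so $Q^{(k^\ast)}$ is CZ. This proves that $\CZ$ covers $Q^\circ$. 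For disjointness (iii), if $Q, Q' \in \CZ$ meet then one is a dyadic ancestor of the other; but a CZ cube admits no OK proper dyadic ancestor inside $Q^\circ$, forcing $Q = Q'$. Finally, since every $Q \in \CZ$ satisfies $\delta_Q \geq 2^{-K-1}$ (otherwise $Q^+$ would have sidelength $\leq \delta_*$ and hence be OK, contradicting the CZ property), and all CZ cubes lie in the bounded set $Q^\circ$, the collection $\CZ$ is finite. No single step is a serious obstacle; the only subtlety is the $\delta$-scaling of the basis parameter in step (i), which is exactly what Remark \ref{rem1} is designed to handle.
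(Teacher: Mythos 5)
Your proof is correct and follows essentially the same route as the paper: you show $Q^\circ$ is not OK by combining $\#(E) \geq 2$ with the rescaling in Remark~\ref{rem1} and assumption~\eqref{d1}, and you show sufficiently small dyadic subcubes are OK via the minimal separation of points in the finite set $E$. You simply spell out the stopping-time/descent argument and the dyadic nesting reasoning for disjointness, which the paper compresses into the single sentence that finitely many subcubes have sidelength $\geq \delta/4$.
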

\begin{proof}
Recall our assumption that $E \subset \R^n$ is finite with $\#(E) \geq 2$ (see (\ref{sec_ind}.\ref{atleasttwo})). Therefore, %MISTAKE 
$$\delta := \inf \left\{|x-y| : x,y \in E \; \mbox{distinct} \right\} \in (0,\infty).$$ 
Let $Q \subset Q^\circ$ be some dyadic cube with sidelength $\delta_Q \leq \delta/4$. It follows that $\#(E \cap 3Q) \leq 1$, hence $Q$ is OK. Since there are finitely many dyadic subcubes of $Q^\circ$ with sidelength greater or equal to $\delta/4$, it follows that $\CZ$ is a finite partition of $Q^\circ$.

Suppose that $Q^\circ$ is OK. Since $E \subset Q^\circ$, this means that either 
\begin{itemize}
\item[(a)] $\#(E) \leq 1$ or 
\item[(b)] $\sigma(x,E)$ contains an $(\cA',x,\epsilon_0,30 \delta_{Q^\circ})$-basis for each $x \in E$ and some $\cA' < \cA$.
\end{itemize}
Since $\delta_{Q^\circ}=1$, we can replace $30 \delta_{Q^\circ}$ by $1$ in (b), while retaining validity (see the remark in section \ref{poly_basis}). Therefore, (a) contradicts $\#(E) \geq 2$, while (b) contradicts (\ref{sec_ind}.\ref{d1}).

It follows that $Q^\circ$ is not OK. Therefore, $\CZ$ is not the trivial partition $\{Q^\circ\}$.
\end{proof}

Two cubes $Q_{\nu}, Q_{\nu'} \in \CZ$ are called ``neighbors'' if their closures satisfy $\cl(Q_\nu)  \cap \cl(Q_{\nu'}) \neq \emptyset$. (In particular, any CZ cube is neighbors with itself.) We denote this relation by $Q_\nu \leftrightarrow Q_{\nu'}$ or $\nu \leftrightarrow \nu'$.

Suppose that $\#(E \cap 3Q^+) \leq 1$ for some $Q \in \CZ$. Then by definition $Q^+$ is OK, which contradicts $Q \in \CZ$. Thus, we have
\begin{equation}
\#(E \cap 9Q) \geq \#(E \cap 3Q^+) \geq 2 \;\; \mbox{for each}\; Q \in \CZ. \label{nonempty}
\end{equation}
(Here, we are using the fact that $Q^+ \subset 3Q$ for any cube $Q \subset \R^n$.)

\begin{lem}[Good Geometry] \label{gg}
If $Q,Q' \in \CZ$ satisfy $Q \leftrightarrow Q'$, then $\frac{1}{2} \delta_Q \leq \delta_{Q'} \leq 2 \delta_Q$.
\end{lem}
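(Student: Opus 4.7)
The plan is to argue by contradiction. Suppose, without loss of generality, that $\delta_{Q'} < \tfrac{1}{2}\delta_Q$. Since sidelengths of dyadic cubes are integer powers of $2$, this forces $\delta_{Q'} \leq \tfrac{1}{4}\delta_Q$, and hence $\delta_{(Q')^+} = 2\delta_{Q'} \leq \tfrac{1}{2}\delta_Q \leq \tfrac{1}{2}$. In particular $\delta_{(Q')^+} < \delta_{Q^\circ} = 1$, so by dyadic nesting (and the fact that $(Q')^+ \cap Q^\circ \supseteq Q' \neq \emptyset$), we have $(Q')^+ \subsetneq Q^\circ$. The goal is to prove that $(Q')^+$ is OK; since $(Q')^+$ is a dyadic cube properly containing $Q'$ and contained in $Q^\circ$, this will contradict $Q' \in \CZ$ and complete the proof.

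The first key step is the geometric inclusion $3(Q')^+ \subset 3Q$. Since $Q \leftrightarrow Q'$, their centers are within $\tfrac{1}{2}(\delta_Q + \delta_{Q'})$ of each other; the center of $(Q')^+$ is within $\tfrac{1}{2}\delta_{Q'}$ of the center of $Q'$; and every point of $3(Q')^+$ lies within $\tfrac{3}{2}\delta_{(Q')^+} = 3\delta_{Q'}$ of the center of $(Q')^+$. Adding these, any point of $3(Q')^+$ is within
\[ \tfrac{1}{2}\delta_Q + \delta_{Q'} + 3\delta_{Q'} \;=\; \tfrac{1}{2}\delta_Q + 4\delta_{Q'} \;\leq\; \tfrac{3}{2}\delta_Q \]
of the center of $Q$, using $\delta_{Q'} \leq \tfrac{1}{4}\delta_Q$. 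This is exactly the reach of $3Q$, giving the inclusion. Consequently $E \cap 3(Q')^+ \subseteq E \cap 3Q$.

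The second step transfers the OK property from $Q$ to $(Q')^+$, splitting into the two cases of the definition. If $\#(E \cap 3Q) \leq 1$, then immediately $\#(E \cap 3(Q')^+) \leq 1$. Otherwise, there exist $\oA < \cA$ and, for each $x \in E \cap 3Q$, an $(\oA, x, \epsilon_0, 30\delta_Q)$-basis $(P_\alpha^x)_{\alpha \in \oA}$ of $\sigma(x, E \cap 3Q)$. Fix $x \in E \cap 3(Q')^+ \subseteq E \cap 3Q$. Since the vanishing condition $\phi|_{E \cap 3(Q')^+} = 0$ is weaker than $\phi|_{E \cap 3Q} = 0$, we have $\sigma(x, E \cap 3Q) \subseteq \sigma(x, E \cap 3(Q')^+)$, so each $P_\alpha^x$ still lies in the appropriately scaled version of the larger set. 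Since $30\delta_{(Q')^+} \leq 30\delta_Q$, Remark \ref{rem1} shows that $(P_\alpha^x)_{\alpha \in \oA}$ also forms an $(\oA, x, \epsilon_0, 30\delta_{(Q')^+})$-basis for $\sigma(x, E \cap 3(Q')^+)$. Either way, $(Q')^+$ is OK, producing the desired contradiction.

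The only real obstacle is the geometric inclusion in the second paragraph: it is essential to exploit the dyadic structure to upgrade $\delta_{Q'} < \tfrac{1}{2}\delta_Q$ to $\delta_{Q'} \leq \tfrac{1}{4}\delta_Q$, since otherwise the constants in the computation $\tfrac{1}{2}\delta_Q + 4\delta_{Q'} \leq \tfrac{3}{2}\delta_Q$ do not close. Everything else is routine, using only the monotonicity of $\sigma(x,\cdot)$ in the set and the monotonicity of bases in the scale $\delta$.
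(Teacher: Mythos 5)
Your proof is correct and is essentially the paper's own argument, merely with the roles of $Q$ and $Q'$ swapped (the paper takes $Q'$ to be the larger cube and shows $Q^+$ is OK, citing the dyadic nesting $3Q^+ \subset 3Q'$ and (\ref{sec_cz}.\ref{nonempty}) to dispatch the $\#(E\cap 3Q') \le 1$ case, while you take $Q$ larger, show $3(Q')^+ \subset 3Q$ by a direct distance estimate, and conclude $(Q')^+$ is OK in both cases). The only hair worth noting is that your $|\cdot|_\infty$ computation literally yields containment in $\cl(3Q)$ rather than the half-open $3Q$; for dyadic cubes with the half-open convention the strict inclusion does hold, but it is the dyadic rigidity (which the paper invokes with ``Since $Q^+, Q'$ are dyadic cubes, it follows that $3Q^+ \subset 3Q'$'') rather than the metric bound alone that finishes it.
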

\begin{proof}
For the sake of contradiction, suppose that $Q,Q' \in \CZ$ satisfy
\begin{equation*}
\cl(Q) \cap \cl(Q') \neq \emptyset \;\; \mbox{and} \;\; 4 \delta_Q \leq \delta_{Q'}.
\end{equation*} 
Since $Q^+$,$Q'$ are dyadic cubes, it follows that $3Q^+ \subset 3 Q'$. Since $Q'$ is OK, either 
\begin{itemize}
\item[(a)] $\#(3Q' \cap E) \leq 1$ or 
\item[(b)] $\sigma(x,E \cap 3Q')$ contains an $(\oA,x,\epsilon_0,30\delta_{Q'})$-basis for each $x \in 3Q' \cap E$ and some $\oA < \cA$.
\end{itemize}

Note that (a) implies that $\#(3Q^+ \cap E) \leq 1$, which contradicts \eqref{nonempty}. Thus it remains to consider (b).

By definition of $\sigma(\cdot,\cdot)$, we have $\sigma(x,E \cap 3Q') \subset \sigma(x,E \cap 3Q^+)$ for every $x \in E \cap 3Q^+$. Therefore, (b) implies that $\sigma(x,E \cap 3Q^+)$ contains an $(\oA,x,\epsilon_0,30 \delta_{Q'})$-basis for each $x \in 3Q^+ \cap E$. Since $\delta_{Q^+} \leq \delta_{Q'}$, we may replace $\delta_{Q'}$ by $\delta_{Q^+}$ in the previous statement, while retaining validity. It follows that $Q^+$ is OK, which contradicts $Q \in \CZ$.
\end{proof}

\begin{lem}[More Good Geometry]\label{moregg} For each $Q \in \CZ$, the following properties hold.
\begin{itemize}
\item If $Q' \in \CZ$ is such that $(1.3)Q' \cap (1.3)Q\neq \emptyset$ then $Q \leftrightarrow Q'$. Consequently, each point $x \in \R^n$ belongs to at most $C(n)$ of the cubes $(1.3)Q$ with $Q \in \CZ$.
\item If $Q' \in \CZ$ is such that $(1.1)Q' \cap 10Q \neq \emptyset$ then $\delta_{Q'} \leq 50 \delta_{Q}$.
\item If $Q' \in \CZ$ is such that $(1.1)Q' \cap 100Q \neq \emptyset$ then $\delta_{Q'} \leq 10^3 \delta_{Q}$.
\item If $\cl(Q) \cap \partial Q^\circ \neq \emptyset$ then $\delta_Q \geq \frac{1}{20} \delta_{Q^\circ}$.
\end{itemize}
\end{lem}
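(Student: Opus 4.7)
The unifying observation is: if $R \subset Q^\circ$ is OK and $S \subset Q^\circ$ is a dyadic cube with $3S \subset 3R$ and $\delta_S \leq \delta_R$, then $S$ is OK. Indeed, the first alternative of the OK definition transfers because $E \cap 3S \subset E \cap 3R$; for the second alternative, $\sigma(x, E \cap 3S) \supseteq \sigma(x, E \cap 3R)$ at each $x \in 3S \cap E \subset 3R \cap E$, and Remark \ref{rem1} converts an $(\oA, x, \epsilon_0, 30\delta_R)$-basis into an $(\oA, x, \epsilon_0, 30\delta_S)$-basis using $\delta_S \leq \delta_R$. The plan in parts 1, 2, 3 is to assume the conclusion fails, verify this template's hypotheses with $S$ the parent of a CZ cube, and so contradict the defining property of $\CZ$. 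Part 4 applies the first alternative directly.

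\textbf{Part 1.} Suppose $(1.3)Q \cap (1.3)Q' \neq \emptyset$ with disjoint closures; take $\delta_{Q'} \leq \delta_Q$ after symmetry. In any separating coordinate the edges of $Q, Q'$ are integer multiples of $\delta_{Q'}$, so $\dist(Q, Q') \geq \delta_{Q'}$ in sup-norm. Any point of the $(1.3)$-overlap yields $\dist(Q, Q') \leq 0.15(\delta_Q + \delta_{Q'})$. Combining and invoking dyadicity gives $\delta_{Q'} \leq \delta_Q/8$, whereupon a short center-separation computation shows $3(Q')^+ \subset 3Q$. The template with $R = Q$, $S = (Q')^+$ then makes $(Q')^+$ OK, contradicting $Q' \in \CZ$. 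The bounded-overlap corollary follows by choosing the unique CZ cube $Q^\ast$ containing $x$ (for $x \in Q^\circ$): any $Q$ with $x \in (1.3)Q$ must satisfy $Q \leftrightarrow Q^\ast$ by the neighbor claim, and Lemma \ref{gg} plus a volume count limits $Q^\ast$ to $\leq C(n)$ neighbors; the case $x \notin Q^\circ$ is absorbed by part 4.

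\textbf{Parts 2 and 3.} These run in parallel. Assume $\delta_{Q'}$ exceeds the stated bound. A point of $(1.1)Q' \cap kQ$ (with $k = 10$ or $100$) gives $|c_Q - c_{Q'}|_\infty \leq (k/2)\delta_Q + 0.55\delta_{Q'}$, which after passing to $Q^+$ (centers differ by $\delta_Q/2$) produces $3Q^+ \subset 3Q'$ provided $\delta_{Q'}/\delta_Q$ exceeds roughly $9$ for part 2 or $57$ for part 3 — both much weaker than our assumption. Since also $\delta_{Q^+} = 2\delta_Q \leq \delta_{Q'}$, the template with $R = Q'$, $S = Q^+$ forces $Q^+$ to be OK, contradicting $Q \in \CZ$.

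\textbf{Part 4.} Suppose $\delta_Q < \delta_{Q^\circ}/20$. Since $\cl(Q^+) \supset \cl(Q)$ meets $\partial Q^\circ$, the center of $Q^+$ lies within $\delta_{Q^+}/2 = \delta_Q$ of $\partial Q^\circ$, so every point of $3Q^+$ lies within $3\delta_Q + \delta_Q = 4\delta_Q < \delta_{Q^\circ}/5$ of $\partial Q^\circ$. On the other hand, $E \subset \frac{1}{8}Q^\circ$ by (\ref{sec_ind}.\ref{cube}) forces $\dist(E, \partial Q^\circ) \geq \frac{7}{16}\delta_{Q^\circ}$. Hence $3Q^+ \cap E = \emptyset$, so $Q^+$ satisfies the first alternative of OK, contradicting $Q \in \CZ$.

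\textbf{Main obstacle.} The only delicate step is the dyadic distance bound $\dist(Q, Q') \geq \delta_{Q'}$ in part 1: without it, one cannot squeeze $\delta_{Q'} \leq \delta_Q/8$ out of the $(1.3)$-overlap, and the entire template collapses. Once that is secured, each of the four assertions reduces to a routine center-separation inequality, with the constants $50$, $10^3$, and $1/20$ serving as comfortable margins rather than sharp thresholds.
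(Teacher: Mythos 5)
Your proof is correct, but it takes a genuinely different route from the paper's. The paper's treatment is shorter and more hierarchical: for the first bullet it invokes Lemma~\ref{gg} directly to get the strong buffer bound $\dist(Q,Q')\geq\max\{\delta_Q/2,\delta_{Q'}/2\}$ (because every CZ cube touching the larger of $Q,Q'$ has comparable sidelength), whence $(1.3)Q\cap(1.3)Q'\neq\emptyset$ is already impossible; the second and third bullets are then reduced to the first bullet plus Lemma~\ref{gg}; and the fourth bullet is a one-line count $9Q\cap E=\emptyset$ against (\ref{sec_cz}.\ref{nonempty}). You instead state and re-use a single unifying ``template'' --- if $R$ is OK and $S$ is a dyadic cube with $3S\subset3R$ and $\delta_S\leq\delta_R$, then $S$ is OK --- which is really the core mechanism hidden inside the paper's proof of Lemma~\ref{gg}, and you apply it freshly in each bullet by exhibiting an OK-ness-transferring containment $3Q^{+}\subset3Q'$ (or $3(Q')^{+}\subset3Q$). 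The tradeoff: your Part~1 uses a weaker purely dyadic distance bound $\dist(Q,Q')\geq\min\{\delta_Q,\delta_{Q'}\}$ and so must compensate with an extra pass through the template, making it longer than the paper's version, whereas your Parts~2 and~3 are arguably more self-contained since they don't route through Part~1 at all. Both approaches are sound; the paper's gains brevity by leaning on Lemma~\ref{gg} as a black box, while yours gains transparency by making the single geometric mechanism explicit and reusing it.
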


\begin{proof}
Let $Q \in \CZ$ be fixed. We start with the first bullet point.  Suppose that $Q' \in \CZ$ does not neighbor $Q$. Put $\delta = \max\{\delta_Q/2,\delta_{Q'}/2\}$. Then Lemma \ref{gg} implies that $\dist(Q,Q') \geq \delta$ (the CZ cubes that neighbor the larger cube have sidelength at least $\delta$, providing a buffer between $Q$ and $Q'$ of this width). Thus,
$$(1.3)Q \cap (1.3)Q' \subset B(Q,(0.3)\delta) \cap B(Q',(0.3)\delta) = \emptyset.$$
This completes the proof of the first statement of the first bullet point. The second statement is immediate from Lemma \ref{gg}. We pass to the second bullet point.

For the sake of contradiction, suppose that there exists $Q' \in \CZ$ with $(1.1)Q' \cap 10Q \neq \emptyset$ and $\delta_{Q'} > 50 \delta_{Q}$. Note that
$$(1.1)Q' \cap 10Q \neq \emptyset \implies \dist(Q,Q') \leq (0.05) \delta_{Q'} + (4.5) \delta_{Q}.$$ 
Since $\delta_{Q'} > 50 \delta_Q$ it follows that $\dist(Q,Q') < (0.05 + .09) \delta_{Q'} < (0.15) \delta_{Q'}.$ Therefore, $(1.3) Q \cap (1.3) Q' \neq \emptyset$. However, this contradicts $\delta_{Q'} > 50 \delta_{Q}$ in view of the first bullet point and Lemma \ref{gg}. This completes the proof of the second bullet point.

The proof of the third bullet point is analogous to the above.

Finally, we prove the fourth bullet point. For the sake of contradiction, suppose that $\cl (Q) \cap \partial Q^\circ \neq \emptyset$ and $\delta_Q < \delta_{Q^\circ}/20$. Therefore, $ 9Q \subset \R^n \setminus (1/8)Q^\circ$. Because $E \subset (1/8)Q^\circ$, we have $9Q \cap E = \emptyset$. However, this contradicts (\ref{nonempty}). This proves the fourth bullet point, and completes the proof of Lemma \ref{moregg}
\end{proof}

\section{Paths to Keystone Cubes I}
\setcounter{equation}{0}

\label{PTKC}

In this section, constants called $c_{G},c,c^{\prime },C,C^{\prime }$, etc.
depend only on the dimension $n$. They are \textquotedblleft
controlled\textquotedblright\ constants. The lower case letters denote small (controlled) constants while the
upper case letters denote large (controlled) constants. $A$ is a constant to be picked
later. We assume $A$ is greater than a large enough controlled constant (\textquotedblleft large $A$
assumption\textquotedblright ).

We derive our main proposition in the following setting.
\begin{itemize}
\item We are given a CZ decomposition: $\R^n$ is partitioned into a collection $CZ$ of
dyadic cubes, such that $Q,Q^{\prime }\in CZ$ and $(1+10c_{G})Q\cap
(1+10c_{G})Q^{\prime }\neq \emptyset $ $\implies $ $\frac{1}{64}\delta
_{Q}\leq \delta _{Q^{\prime }}\leq 64\delta _{Q}$ (\textquotedblleft good
geometry\textquotedblright ).
\item We are given a finite set $E\subset \R^n$ with cardinality $N=\#(E) \geq 2$.
\item The CZ cubes are related to the set $E$ as follows: $\#(E\cap 9Q)\geq
2 $ for each $Q\in CZ$.

\end{itemize}

\begin{defn}
\label{keystonecube1}A cube $Q \in CZ$ is called a keystone cube if for
any $ Q^{\prime }\in CZ$ with $Q^{\prime }\cap 100 Q\not=\emptyset $, we
have $\delta _{Q^{\prime }}\geq \delta _{Q}$.
\end{defn}

This section is devoted to a proof of the following proposition:

\begin{prop}
\label{pathassignment} We can find a subset $CZ_{\spec}\subset CZ$,
and an assignment to each $Q\in CZ$ of a finite sequence $\mathcal{S}%
_{Q}=(Q_1,Q_{2},\ldots ,Q_{L})$ of CZ cubes (with length $L$ depending on $Q$), such that the following properties hold.

\begin{enumerate}[(i)]
\item $CZ_{\spec}$ contains at most $C \cdot N$ distinct cubes.

\item $Q_1=Q$, $Q_L$ is a keystone cube; $(1+c_G)Q_l \cap (1+c_G) Q_{l+1} \neq \emptyset$
for every $1 \leq l < L$; and 
$$\delta_{Q_k} \leq C \cdot(1-c)^{k - l}\delta_{Q_l} \;\; \mbox{for} \; 1 \leq l \leq k \leq L.$$

\item Let $Q,Q^{\prime }\in CZ \setminus CZ_{\spec}$, and let $%
\mathcal{S}_Q = (Q_1,\ldots,Q_L)$, $\mathcal{S}_{Q^{\prime }} = (Q^{\prime
}_1,\ldots,Q^{\prime }_{L^{\prime }})$. 

If $(1+c_G) Q \cap (1+c_G)Q^{\prime
}\neq \emptyset$, then $Q_L= Q^{\prime }_{L^{\prime }}$.
\end{enumerate}
\end{prop}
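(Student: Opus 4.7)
The plan is to construct $\mathcal{S}_Q$ by iteratively descending through $(1+c_G)$-touching chains in $CZ$ until a keystone is reached, and to set $CZ_{\spec}$ as the $(1+c_G)$-boundary between regions assigned to different keystones. I would organize the argument in four steps.

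First (preliminaries), I would prove two geometric facts by adapting Lemma 5.3. (a) The crude bound $\#(CZ) \leq C(n)\cdot N$: each $Q \in CZ$ has $\#(E \cap 9Q) \geq 2$, and by good geometry the sets $\{9Q : Q \in CZ\}$ have bounded overlap. (b) Any $R \in CZ$ with $R \cap 100Q \neq \emptyset$ satisfies $\delta_R \leq C\delta_Q$, with at most $C(n, C_2)$ such $R$ of sidelength $\geq \delta_Q / C_2$ for any fixed $C_2 \geq 1$.

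Next I would establish the key bounded-length descent: for every non-keystone $Q$, there exists a $(1+c_G)$-touching chain $Q = R_0, R_1, \ldots, R_J$ in $CZ$ with $J \leq J_0$ controlled, $\delta_{R_i} \leq C_1\delta_Q$ for all $i$, and $\delta_{R_J} \leq \delta_Q/2$. Since $Q$ is not keystone, some $Q'' \in CZ \cap 100Q$ has $\delta_{Q''} \leq \delta_Q/2$. The chain is built by navigating the subgraph of CZ cubes in $100Q$ of sidelength $\geq \delta_Q / C_2$ (a set of bounded cardinality by (b)) until reaching a cube that is $(1+c_G)$-adjacent to some $R \in CZ$ with $\delta_R \leq \delta_Q / 2$, then taking one final transition step into $R$. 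With this in hand, I would define $\pi(Q) := R_J$ for non-keystone $Q$ and $\pi(Q) := Q$ for keystone $Q$, and form $\mathcal{S}_Q$ by concatenating the chain $Q \to \pi(Q)$ with $\mathcal{S}_{\pi(Q)}$. Since $\delta_{\pi(Q)} \leq \delta_Q/2$ at each non-keystone step, the iteration terminates (sidelengths are bounded below by the minimum sidelength occurring in $CZ$). Within each segment of length at most $J_0$ sidelengths vary by at most a factor $C_1$, while across consecutive segments they are at least halved; combining these gives property (ii) with $c := 1 - 2^{-1/J_0}$ and $C := 2 C_1$.

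Finally, let $\kappa(Q)$ denote the terminal (keystone) entry of $\mathcal{S}_Q$, and set
$$CZ_{\spec} := \{Q \in CZ : \exists\, Q' \in CZ \text{ with } (1+c_G)Q \cap (1+c_G)Q' \neq \emptyset \text{ and } \kappa(Q) \neq \kappa(Q')\}.$$
Property (iii) is then immediate by construction: if $Q, Q' \in CZ \setminus CZ_{\spec}$ are $(1+c_G)$-touching, then $Q_L = \kappa(Q) = \kappa(Q') = Q'_{L'}$. Property (i) follows from the trivial inclusion $CZ_{\spec} \subseteq CZ$ combined with the preliminary bound $\#(CZ) \leq CN$.

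The main obstacle will be justifying the bounded length $J_0$ in the descent step. A naive straight-line construction through $100Q$ can traverse arbitrarily many tiny intermediate cubes, so the chain length is not controlled a priori. The remedy is the navigation-through-big-cubes idea above, but this requires verifying two things: that the subgraph of $CZ$ cubes in $100Q$ of sidelength $\geq \delta_Q/C_2$, connected under the $(1+c_G)$-touching relation, contains a chain from $Q$ to a cube $(1+c_G)$-adjacent to some smaller $R \in CZ$; and that an appropriate choice of $C_2$ and $C_1$ makes the chain length depend only on $n$. This connectedness uses that $CZ$ partitions $\mathbb{R}^n$ so that the pockets of small cubes inside $100Q$ cannot separate $Q$ from all other small cubes without a controlled-size "gatekeeper" cube from the big-cube subgraph bordering each pocket.
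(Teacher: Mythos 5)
Your construction of the paths (the descent-through-adjacent-cubes idea, the termination by sidelength halving, and the exponential decay estimate for property (ii)) is in the right spirit and is close to what the paper does with its ``junior partner'' chains in Lemma \ref{lem10}. Property (iii) is also fine: it holds by fiat once $CZ_{\spec}$ is defined maximally as you do.

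The fatal gap is property (i). Your argument is: $\#(CZ_{\spec}) \leq \#(CZ) \leq CN$. Both inequalities are worthless here. In the setting of section \ref{PTKC}, $CZ$ is a dyadic decomposition of all of $\R^n$, so $\#(CZ) = \infty$. And even restricting attention to a compact region, $\#(CZ)$ is \emph{not} bounded by $C\cdot N$: take $E$ to consist of two points at distance $2^{-k}$; the CZ condition $\#(E\cap 9Q)\geq 2$ then forces roughly $k$ dyadic scales of cubes near these two points, so $\#(CZ)$ grows without bound while $N=2$. The sets $\{9Q : Q\in CZ\}$ have bounded overlap only among cubes of comparable scale, not across scales, so your ``bounded overlap'' reasoning in preliminary (a) is wrong.

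The hard content of the proposition is precisely to show that the ``boundary'' set $CZ_{\spec}$, as you have defined it, can be \emph{replaced or refined} so as to have only $O(N)$ cubes. With your construction, in which each cube independently chooses a descent chain to some keystone with no coordination, there is no mechanism forcing a cube and its $(1+c_G)$-neighbor to end up at the same keystone, so $CZ_{\spec}$ can contain a $1$-parameter worth of cubes at every scale around a cluster of two points; that is $\Theta(k)$ cubes, not $O(1)$. The paper handles exactly this by (a) classifying cubes as \emph{interstellar} or not, and bounding the non-interstellar ones by $C(A)\cdot N$ via the well-separated pairs decomposition (Lemma \ref{lemA}); (b) introducing \emph{clusters} and their \emph{halos}, showing distinct halos are disjoint (Lemma \ref{lem3}) and every interstellar cube's inflation lies in exactly one halo (Lemma \ref{lem1}); and (c) routing every non-privileged cube in a given halo $H(S)$ \emph{through a single chosen cube $\hat{Q}_S$} (Lemma \ref{lem8}), so that all paths from within a halo terminate at the same keystone. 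This coordination is what lets $CZ_{\spec}$ consist only of non-interstellar plus privileged cubes, and hence have at most $C\cdot N$ members. Your proposal never installs this coordination, and your bound on $\#(CZ_{\spec})$ is simply false as stated; the use of something like the WSPD seems essential to get the linear-in-$N$ count.

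A secondary weakness, independent of the above: your ``navigation through big cubes'' step asserts without proof that the $(1+c_G)$-adjacency subgraph of CZ cubes of sidelength $\geq \delta_Q/C_2$ in $100Q$ connects $Q$ to a gatekeeper cube adjacent to something of sidelength $\leq \delta_Q/2$. The paper's Lemma \ref{lem10} instead walks a straight segment from $Q$ to its nearest junior partner $Q^1$, using the minimality of the distance to $Q^1$ to guarantee that every intermediate cube has sidelength $\geq \delta_Q$ and hence the segment crosses only $O(1)$ cubes. That argument is more elementary and avoids the unverified connectedness claim. You should adopt something like it, but the main issue remains the missing halo/cluster machinery for (i).
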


We begin the proof of Proposition \ref{pathassignment}.

\begin{defn}[Clusters] A subset $S\subset E$ is a cluster if it satisfies $%
\#(S)\geq 2$ and $\mbox{dist}(S,E \setminus S)\geq A^{3}\mbox{diam}(S)$,
where the left-hand side $=\infty $ if $E \setminus S=\emptyset $.
\end{defn}

\begin{defn}[Representatives] For each cluster $S$ we pick a representative $%
x(S) \in S$. We write $Q(S)$ to denote the CZ cube containing $x(S)$.
\end{defn}

\begin{defn}[Halos] For each cluster $S$, we define the halo of $S$ to be 
\begin{equation*}
H(S)=\left\{ x\in \mathbb{R}^{n}:A\cdot \mbox{diam}(S)<\lvert x-x(S)\rvert<A^{-1}%
\mbox{dist}(S,E\setminus S)\right\} .
\end{equation*}%
Again, $\mbox{dist}(S,E\setminus S)=\infty $ if $S=E$.
\end{defn}

\begin{defn}[Interstellar cubes] $Q\in CZ$ is interstellar if $\mbox{diam}%
(A^{10}Q\cap E)\leq A^{-10}\delta _{Q}$ and $(1+3c_{G})Q\cap
E=\emptyset $.
\end{defn}

We recall the Well Separated Pairs Decomposition \cite{CC}.

\begin{thm}[Well Separated Pairs Decomposition]
Let $E \subset \R^n$ be finite. There exists a list of non-empty Cartesian products $E_{1}^{\prime }\times E_{1}^{\prime \prime }, E_{2}^{\prime }\times E_{2}^{\prime \prime }, \ldots, E_{\nu _{\max }}^{\prime }\times E_{\nu _{\max}}^{\prime \prime }$, each contained in $E \times E \setminus \{(x,x) : x \in E\}$, such that the following properties hold.
\begin{itemize}
\item For each $\nu=1,\ldots,\nu_{\max}$, we have $\mbox{diam}(E_{\nu }^{\prime })+\mbox{diam}(E_{\nu }^{\prime
\prime })\leq 10^{-6}\mbox{dist}(E_{\nu }^{\prime },E_{\nu }^{\prime \prime
})$.
\item Each pair $(x^{\prime },x^{\prime \prime })\in E \times E$ with $x^\prime \neq x^{\prime \prime}$
belongs to precisely one $E_\nu^\prime \times E_\nu^{\prime \prime}$.
\item $\nu_{\max} \leq C N$.
\end{itemize}
\end{thm}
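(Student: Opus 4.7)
The plan is to deduce the theorem from a \emph{fair-split tree} construction, following Callahan--Kosaraju. First I would build a binary tree $T$ whose leaves are in bijection with the points of $E$ (so $T$ has $O(N)$ nodes), in which each node $v$ carries an axis-parallel bounding box $R_v$ containing the points $E_v \subset E$ at leaves of the subtree rooted at $v$. The construction recursively bisects the current box across its longest side; after each split, each child's box is \emph{shrunk} to the smallest axis-parallel box enclosing its own points, which guarantees that $R_v$ has diameter comparable to $\mbox{diam}(E_v)$ at every node.

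Next I would generate the decomposition by a recursive procedure $\mbox{FindPairs}(u,v)$ initiated at $(\mbox{root},\mbox{root})$. The recursion does nothing when $u=v$ is a leaf; otherwise, it tests the separation condition $\mbox{diam}(R_u)+\mbox{diam}(R_v) \leq 10^{-6} \cdot \mbox{dist}(R_u,R_v)$. If the test succeeds, output the pair $(E_u,E_v)$; if it fails, let $u$ be the node whose box has the larger diameter (breaking ties by a fixed rule), and recurse on $(u_1,v)$ and $(u_2,v)$ where $u_1,u_2$ are the children of $u$ in $T$. Write $E'_\nu\times E''_\nu$ for the ordered list of output pairs, with $\nu=1,\ldots,\nu_{\max}$.

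Verification of the three claimed properties then proceeds as follows. The separation condition holds at every output pair by construction of the recursion. To see that each ordered pair $(x',x'')$ with $x'\neq x''$ is covered exactly once, observe that the pair $(x',x'')$ belongs to $E_u\times E_v$ for precisely one pair of nodes $(u,v)$ visited by the recursion, namely the unique pair at which the branch containing $(x',x'')$ terminates by successful separation; termination must occur because the diameters strictly decrease and the leaves satisfy $\mbox{diam}=0$. Mutual disjointness of the output rectangles then follows because distinct terminal pairs in the recursion tree correspond to disjoint Cartesian products.

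The main obstacle is the bound $\nu_{\max}\leq CN$. I would prove it by charging each output pair $(u,v)$ to the node with the smaller box, say $v$, and showing each node is charged at most $C(n)$ times. The point is that if $(u,v)$ is output then the parent $v^+$ of $v$ did not satisfy the separation test with respect to (the ancestor of) $u$; using the fair-split property, which ensures siblings have boxes of comparable diameter and that $\mbox{diam}(R_{v^+})\lesssim \mbox{diam}(R_{v})$, one concludes that any partner $u$ paired with $v$ has $\mbox{diam}(R_u)$ comparable to $\mbox{diam}(R_v)$ and $R_u$ lying within distance $\lesssim \mbox{diam}(R_v)$ of $R_v$. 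A standard volume-packing argument in $\R^n$ then bounds the number of such admissible boxes $R_u$ by a constant $C(n)$. Since $T$ has $O(N)$ nodes and each is charged $O(1)$ times, we obtain $\nu_{\max}\leq CN$, completing the proof.
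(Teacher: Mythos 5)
The paper does not prove this result — it is quoted from Callahan--Kosaraju \cite{CC} — so there is no in-paper argument to compare against, and I am assessing your sketch on its own. Your overall route (fair-split tree, recursive pair generation, charging each output pair to the smaller partner) is exactly the standard Callahan--Kosaraju construction, and the first two conclusions (separation at each output pair and exact cover of $E \times E$ off the diagonal) do follow along the lines you indicate.

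However, the step that establishes $\nu_{\max} \leq CN$ has a genuine gap. You invoke the fair-split property to assert that ``siblings have boxes of comparable diameter and that $\mbox{diam}(R_{v^+}) \lesssim \mbox{diam}(R_v)$.'' This is false once each child's rectangle is shrunk to the bounding box of its own points, which is precisely what you need to do to get an $O(N)$-node tree: a single bisection can leave all but one of a node's points on one side, so one child's shrunk box can have diameter zero while its sibling's is nearly the whole parent. Consequently the conclusion you draw --- that any partner $u$ of $v$ has $R_u$ of size comparable to $R_v$ lying within distance $\lesssim \mbox{diam}(R_v)$ of $R_v$ --- is unjustified, and the volume-packing count does not run as written. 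What Callahan--Kosaraju actually track are two nested rectangles per node: an ``outer'' rectangle arising from the bisection process (never shrunk), for which the fair-split condition really does force geometric decay of the longest side along any root-to-leaf path, and an ``inner'' (shrunk) bounding box used in the separation test and responsible for the linear tree size. The $O(N)$ bound is then obtained by charging each output pair to a suitably chosen ancestor identified through the outer rectangles and packing \emph{those}, not by packing the shrunk inner boxes. This two-rectangle bookkeeping is the crux of the Callahan--Kosaraju counting argument, and your sketch needs to supply it (or an equivalent substitute) before the last step can be regarded as proved.
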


Fix a representative $(x_\nu^\prime, x_\nu^{\prime \prime}) \in E_\nu^\prime \times E_\nu^{\prime \prime}$ for each $\nu=1,\ldots,\nu_{\max}$. Then for any $(x^\prime,x^{\prime \prime}) \in E \times E$ with $x^\prime \neq x^{\prime \prime}$, we have $(x^\prime, x^{\prime \prime}) \in E_\nu^\prime \times E_\nu^{\prime \prime}$ for some $\nu$. For that $\nu$, we have
\begin{align*}
\lvert x_\nu^\prime - x^{\prime}\rvert + \lvert x_\nu^{\prime \prime} - x^{\prime \prime}\rvert &\leq \diam(E_\nu^\prime) + \diam(E_\nu^{\prime \prime}) \\
& \leq 10^{-6} \dist(E_\nu^\prime, E_\nu^{\prime \prime}) \leq 10^{-6} \lvert x^\prime - x^{\prime \prime}\rvert,\\
\mbox{and similarly}& \\
\lvert x_\nu^\prime - x^\prime\rvert + \lvert x_\nu^{\prime \prime} - x^{\prime \prime}\rvert &\leq 10^{-6} \lvert x^\prime_\nu - x^{\prime \prime}_\nu\rvert.
\end{align*}

Using the Well Separated Pairs Decomposition, we prove the following.

\begin{lem}
\label{lemA} The number of non-interstellar cubes $Q \in CZ$ is at most $%
C(A)\cdot N$; here, $C(A)$ depends only on $A$ and on the dimension $n$.
\end{lem}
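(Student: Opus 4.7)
The plan is to split the non-interstellar cubes into two groups according to which of the two defining conditions fails, and bound each group separately. A cube $Q \in CZ$ is non-interstellar iff either \textbf{(a)} $\diam(A^{10}Q \cap E) > A^{-10}\delta_Q$, or \textbf{(b)} $(1+3c_G)Q \cap E \neq \emptyset$. Note these are not exclusive, but an upper bound on each class suffices.

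Class \textbf{(b)} is easy. For a fixed $x \in E$ I would count the cubes $Q \in CZ$ with $x \in (1+3c_G)Q$. If $x$ lies in $(1+3c_G)Q$ and $(1+3c_G)Q'$, then $(1+3c_G)Q \cap (1+3c_G)Q' \neq \emptyset$, so by the good geometry hypothesis $\delta_{Q'}$ and $\delta_Q$ are comparable (provided $c_G$ is small enough that $3c_G \leq 10 c_G$, which is the case). Since $CZ$ is a partition and only boundedly many dyadic cubes of comparable sidelength can have $x$ within a fixed multiple of their sidelength of their centers, each $x \in E$ contributes to at most $C$ cubes in class \textbf{(b)}. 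Summing over $x \in E$ gives at most $CN$ cubes.

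Class \textbf{(a)} is the main obstacle, and this is where the Well Separated Pairs Decomposition enters. For each $Q$ in class \textbf{(a)}, pick $x'_Q, x''_Q \in A^{10}Q \cap E$ with $|x'_Q - x''_Q| > A^{-10}\delta_Q$. Apply WSPD to the pair $(x'_Q, x''_Q)$ to get a unique index $\nu = \nu(Q) \in \{1, \ldots, \nu_{\max}\}$ with $(x'_Q, x''_Q) \in E'_\nu \times E''_\nu$, and let $(x'_\nu, x''_\nu)$ be the chosen representative. Since $|x'_\nu - x'_Q| + |x''_\nu - x''_Q| \leq 10^{-6}|x'_Q - x''_Q|$ and $\leq 10^{-6}|x'_\nu - x''_\nu|$, the four quantities $|x'_Q - x''_Q|$, $|x'_\nu - x''_\nu|$, $|x'_Q - x'_\nu|$-scale distances, etc., are all comparable (with $|x'_\nu - x''_\nu| \simeq |x'_Q - x''_Q|$), and in particular
\[ A^{-10}\delta_Q < |x'_\nu - x''_\nu| \cdot (1 + 10^{-6}) \quad \text{and} \quad |x'_\nu - x''_\nu| \leq 2 \sqrt{n} A^{10}\delta_Q, \]
so $\delta_Q$ is pinned to $|x'_\nu - x''_\nu|$ up to a factor $A^{C}$. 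Moreover $x'_\nu$ lies within $A^C \delta_Q$ of $Q$.

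To conclude, I would show that each fixed index $\nu$ is shared by at most $C(A)$ cubes $Q$ in class \textbf{(a)}. The point is that such cubes are dyadic, pairwise disjoint (they come from the partition $CZ$), have sidelengths in the dyadic range $[A^{-C}L, A^C L]$ with $L := |x'_\nu - x''_\nu|$, and each must lie within distance $A^C \delta_Q \leq A^{C'} L$ of $x'_\nu$. The number of dyadic cubes satisfying these bounds is at most $C(A)$ by a volume-packing count. Summing $C(A)$ over the $\nu_{\max} \leq CN$ indices bounds class \textbf{(a)} by $C(A) \cdot N$. Combining the two classes gives the claimed bound $C(A) \cdot N$.
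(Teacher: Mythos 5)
Your proposal is correct and follows essentially the same two-case split and the same use of the Well Separated Pairs Decomposition as the paper's own proof; the volume-packing count you spell out at the end is exactly what the paper compresses into the phrase that the bracketed quantity is at most $C(A)$ for each fixed $\nu$.
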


\begin{lem}
\label{lemB} The number of distinct clusters is at most $C N$.
\end{lem}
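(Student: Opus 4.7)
The plan is to show that the collection of clusters forms a \emph{laminar family} of subsets of $E$ (any two distinct clusters are disjoint, or one contains the other), and then invoke the standard fact that a laminar family on an $N$-element ground set has size at most $2N - 1$.

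For the laminarity claim, I would argue by contradiction. Suppose $S_1, S_2$ are distinct clusters with $S_1 \cap S_2 \neq \emptyset$, $S_1 \not\subset S_2$, and $S_2 \not\subset S_1$. In particular neither equals $E$, so both complements are nonempty and the cluster inequality has genuine content. Pick $x \in S_1 \cap S_2$, $y \in S_1 \setminus S_2$, and $z \in S_2 \setminus S_1$. Applying the definition of cluster to $S_1$ via the pair $(x,z)$ (with $x \in S_1$, $z \in E \setminus S_1$) gives
$$A^3 \diam(S_1) \leq \dist(S_1, E \setminus S_1) \leq |x-z|,$$
while $x, z \in S_2$ forces $|x - z| \leq \diam(S_2)$, so $\diam(S_2) \geq A^3 \diam(S_1)$. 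Swapping the roles of $S_1, S_2$ and using the pair $(x,y)$ gives the symmetric inequality $\diam(S_1) \geq A^3 \diam(S_2)$. Multiplying these two inequalities yields $1 \geq A^6$; since $\#(S_i) \geq 2$ implies $\diam(S_i) > 0$, this contradicts the large $A$ assumption.

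Once laminarity is established, the counting step is routine. One organizes the clusters into a forest under containment and augments the leaves by singletons $\{x\}$ for $x \in E$. Every cluster is an internal node, and since each cluster has at least two elements, after standard pruning each internal node has at least two leaf descendants. The number of internal nodes of such a forest is at most the number of leaves minus one, so the number of clusters is at most $N - 1 \leq C N$.

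The main content is the laminarity argument; the subsequent counting is completely standard. The argument above is quite clean precisely because the gap factor $A^3$ in the cluster definition is large — any two overlapping clusters would mutually certify that each has diameter much larger than the other, which is impossible. Unlike the proof of Lemma \ref{lemA}, no appeal to the Well Separated Pairs Decomposition is needed here.
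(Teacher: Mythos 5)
Your proof is correct, and it takes a genuinely different route from the paper. The paper's argument invokes the Well Separated Pairs Decomposition a second time: given a cluster $S$, one takes a diameter-realizing pair $x',x'' \in S$, finds the unique $\nu$ with $(x',x'') \in E'_\nu \times E''_\nu$, and shows that $S = E \cap B(x'_\nu, 10\lvert x'_\nu - x''_\nu\rvert)$; since $\nu_{\max} \leq CN$, there are at most $CN$ clusters. You instead prove that the clusters form a laminar family directly from the size-gap in the cluster definition: if two clusters overlapped without nesting, each would certify via the $A^3$ separation that the other has diameter at least $A^3$ times its own, forcing $1 \geq A^6$. The laminarity argument is more elementary and self-contained (no WSPD needed) and gives the sharp bound $N-1$ rather than an unspecified $CN$; the paper's choice is natural because it needs the WSPD anyway for Lemma~\ref{lemA}, so reusing it costs nothing. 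It is worth remarking that your laminarity observation is implicit in the paper's proof of Lemma~\ref{lem3}, where overlapping halos force the two clusters to be nested — you have extracted that nesting property in its pure form and observed it already suffices for the count.

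One small point of care in the counting step: chains of clusters $S_1 \subsetneq S_2 \subsetneq \cdots$ are possible (and the paper's halo construction anticipates this), so the forest under containment need not be bushy. The bound $\lvert\mathcal{F}\rvert \leq N - 1$ for a laminar family whose members all have size at least two is nevertheless correct and tight (consider a full chain), but the cleanest proof is the standard induction on $N$ rather than the "each internal node has two leaf descendants" heuristic, which as stated would not cover a chain without the pruning step you allude to. This is a presentational wrinkle, not a gap.
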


\begin{proof}[Proof of Lemma \protect\ref{lemA}]
The non-interstellar cubes are $Q\in CZ$ such that $(1+3c_{G})Q\cap
E\neq \emptyset $ or $\mbox{diam}(A^{10}Q\cap E)>A^{-10}\delta _{Q}$.

For each $x\in E$, there are at most $C$ distinct CZ cubes $Q$ such that 
$x\in (1+3c_{G})Q$. Therefore, there are at most $CN$ distinct CZ cubes $Q$
such that $(1+3c_{G})Q\cap E\neq \emptyset $.

Next, we consider $Q\in CZ$ such that $\mbox{diam}(A^{10}Q\cap
E)>A^{-10}\delta _{Q}$. For such $Q$, we can find $x^{\prime },x^{\prime
\prime }\in E\cap A^{10}Q$ such that $\left\vert x^{\prime }-x^{\prime
\prime }\right\vert >A^{-10}\delta _{Q}$. For some $\nu $, we have 
\begin{equation*}
\left\vert x_{\nu }^{\prime }-x^{\prime }\right\vert +\left\vert x_{\nu
}^{\prime \prime }-x^{\prime \prime }\right\vert \leq 10^{-6}\left\vert
x^{\prime }-x^{\prime \prime }\right\vert \leq CA^{10}\delta _{Q}.
\end{equation*}
Hence, $\lvert x_{\nu }^{\prime }-x_{\nu }^{\prime \prime } \rvert \geq \frac{1}{2}%
A^{-10}\delta _{Q}$, yet $x_{\nu }^{\prime },x_{\nu }^{\prime \prime }\in
CA^{10}Q$.

Therefore, the number of $Q\in CZ$ such that $\mbox{diam}(A^{10}Q\cap
E)>A^{-10}\delta _{Q}$ is at most the sum over all $\nu =1,\ldots ,\nu
_{\max }$ of 
\begin{equation*}
\left[ \mbox{the number of distinct dyadic cubes}\;\;Q\;\mbox{such that}%
\;x_{\nu }^{\prime },x_{\nu }^{\prime \prime }\in CA^{10}Q\;\mbox{and}%
\;\lvert x_{\nu }^{\prime }-x_{\nu }^{\prime \prime } \rvert\geq \frac{1}{2}%
A^{-10}\delta _{Q}.\right]
\end{equation*}

For each fixed $\nu$, the quantity in the square brackets is at most $C(A)$;
and the number of distinct $\nu=1,\ldots,\nu_{\max}$ is at most $CN$.

Thus, there are at most $C(A)N$ distinct $Q\in CZ$ such that $\mbox{diam}%
(A^{10}Q\cap E)>A^{-10}\delta _{Q}$.

The proof of Lemma \ref{lemA} is complete.
\end{proof}

\begin{proof}[Proof of Lemma \protect\ref{lemB}]
Let $S$ be a cluster. Fix $x^{\prime },x^{\prime \prime }\in S$ such that $%
|x^{\prime }-x^{\prime \prime }|=\mbox{diam}(S)$. Then any point $y\in
E\setminus S$ satisfies $\left\vert x^{\prime }-y\right\vert \geq
A^{3}|x^{\prime }-x^{\prime \prime }|$. Fix $\nu $ such that 
\begin{equation*}
\left\vert x_{\nu }^{\prime }-x^{\prime }\right\vert +\left\vert x_{\nu
}^{\prime \prime }-x^{\prime \prime }\right\vert \leq 10^{-6}\left\vert
x^{\prime }-x^{\prime \prime }\right\vert
\end{equation*}%
and 
\begin{equation*}
\left\vert x_{\nu }^{\prime }-x^{\prime }\right\vert +\left\vert x_{\nu
}^{\prime \prime }-x^{\prime \prime }\right\vert \leq 10^{-6}\lvert x_{\nu
}^{\prime }-x_{\nu }^{\prime \prime }\rvert.
\end{equation*}%
Then 
\begin{equation*}
(1-10^{-6})\lvert x^{\prime }-x^{\prime \prime } \rvert \leq \lvert x_{\nu }^{\prime }-x_{\nu
}^{\prime \prime }\rvert \leq \left( 1+10^{-6}\right) \lvert x^{\prime }-x^{\prime
\prime }\rvert.
\end{equation*}%
Any $y\in S$ satisfies 
\begin{align*}
\lvert y-x_{\nu }^{\prime }\rvert & \leq \lvert y-x^{\prime }\rvert +\lvert x^{\prime }-x_{\nu }^{\prime
} \rvert \leq \mbox{diam}(S)+10^{-6}\lvert x^{\prime }-x^{\prime \prime }\rvert \\
& =\lvert x^{\prime }-x^{\prime \prime }\rvert +10^{-6}\lvert x^{\prime }-x^{\prime \prime
}\rvert \leq 2\lvert x_{\nu }^{\prime }-x_{\nu }^{\prime \prime }\rvert .
\end{align*}%
On the other hand, any $y\in E\setminus S$ satisfies 
\begin{align*}
\lvert y-x_{\nu }^{\prime }\rvert & \geq \lvert y-x^{\prime }\rvert -\lvert x_{\nu }^{\prime }-x^{\prime
}\rvert \geq A^{3}\lvert x^{\prime }-x^{\prime \prime }\rvert -10^{-6}\lvert x^{\prime }-x^{\prime
\prime }\rvert \\
&>A\lvert x_{\nu }^{\prime
}-x_{\nu }^{\prime \prime }\rvert.
\end{align*}%
Consequently, $S=E\cap B(x_{\nu }^{\prime },10\lvert x_{\nu }^{\prime }-x_{\nu
}^{\prime \prime }\rvert )$ for some $\nu $.

Thus, every cluster $S$ arises as $E\cap B(x_{\nu }^{\prime },10\lvert x_{\nu
}^{\prime }-x_{\nu }^{\prime \prime }\rvert )$ for some $\nu =1,\ldots ,\nu _{\max
}$. Since $\nu _{\max }\leq CN$, there can be at most $CN$ distinct
clusters. The proof of Lemma \ref{lemB} is complete.
\end{proof}

\begin{lem}
\label{lem1} If $Q$ is an interstellar cube, then $(1+c_G) Q \subset H(S)$
for some cluster $S$.
\end{lem}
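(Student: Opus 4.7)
The plan is to take $S := E \cap A^{10}Q$ directly as the cluster and to verify both the cluster property and the halo containment $(1+c_G)Q \subset H(S)$ by straightforward $\ell^\infty$-geometry. The key point beyond the interstellar hypothesis itself is the standing property $\#(E \cap 9Q) \geq 2$, which supplies a convenient witness point $x_0 \in E \cap 9Q \subset S$; the interstellar diameter bound $\diam(S) \leq A^{-10}\delta_Q$ then forces every element of $S$ (in particular the prescribed representative $x(S)$) to lie within an astronomically small $A^{-10}\delta_Q$-ball around $x_0$.

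First I would verify that $S$ is a cluster. Cardinality is immediate from $9Q \subset A^{10}Q$, valid once $A$ is larger than a controlled constant. For the separation inequality, $x_0 \in 9Q$ lies at $\ell^\infty$-distance at least $(A^{10} - 9)\delta_Q/2 \geq \tfrac{1}{3}A^{10}\delta_Q$ from $\R^n \setminus A^{10}Q$, so combined with $S \subset B(x_0, A^{-10}\delta_Q)$ this yields
\[ \dist(S, E \setminus S) \;=\; \dist(S, E \setminus A^{10}Q) \;\geq\; \tfrac{1}{4}A^{10}\delta_Q, \]
which far exceeds $A^3 \diam(S) \leq A^{-7}\delta_Q$ under the large-$A$ assumption.

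Next I would verify the halo containment. Writing $\tilde x := x(S) \in S$ for the prescribed representative, the diameter bound forces $|\tilde x - x_0|_\infty \leq A^{-10}\delta_Q$, placing $\tilde x$ essentially inside $9Q$. Since $\tilde x \in E$, the interstellar condition $(1+3c_G)Q \cap E = \emptyset$ puts $\tilde x$ outside $(1+3c_G)Q$, which gives $|x - \tilde x|_\infty \geq c_G\delta_Q$ for every $x \in (1+c_G)Q$; this is the inner halo inequality since $A \diam(S) \leq A^{-9}\delta_Q \ll c_G\delta_Q$. For the outer inequality, the triangle inequality yields $|x - \tilde x|_\infty \leq 6\delta_Q < \tfrac{1}{4}A^9\delta_Q \leq A^{-1}\dist(S, E \setminus S)$ for $A$ large. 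The only conceptual subtlety — that $x(S)$ is not ours to choose — is rendered harmless by the severity of the interstellar diameter bound; no iterative growth of $S$ or pigeonhole over scales is required, and the large-$A$ assumption absorbs all the controlled constants that appear.
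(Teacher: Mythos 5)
Your proposal is correct and follows essentially the same route as the paper: you take $S = E\cap A^{10}Q$, use the interstellar diameter bound and the standing property $\#(E\cap 9Q)\geq 2$ to show $S$ is a cluster, and then use $(1+3c_G)Q\cap E=\emptyset$ together with $S\subset 13Q$ (in your version, $S\subset B(x_0,A^{-10}\delta_Q)$ with $x_0\in E\cap 9Q$) to sandwich $|x-x(S)|$ between the two halo thresholds for all $x\in(1+c_G)Q$. The extra witness point $x_0$ and the remark that $x(S)$ "is not ours to choose" are harmless clarifications rather than a different argument.
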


\begin{proof}
Let $S = E \cap A^{10} Q$. Then $\#(S) \geq \#(E \cap 9Q) \geq 2$; also $%
\mbox{diam}(S) \leq A^{-10} \delta_Q$. Since $S$ intersects $9Q$, it follows
that $S \subseteq 13Q$. On the other hand, $E \setminus S \subseteq 
\mathbb{R}^n \setminus A^{10} Q$. Consequently, $\mbox{dist}(S,E \setminus
S) \geq \mbox{dist}(13Q,\mathbb{R}^n \setminus A^{10}Q) \geq c A^{10}
\delta_Q$. Thus, $\mbox{dist}(S,E \setminus S) \geq c A^{20} \mbox{diam}%
(S) $, proving that $S$ is a cluster.

Next, let $x\in (1+c_{G})Q$. We know that $(1+3c_{G})Q\cap E=\emptyset $
since $Q$ is interstellar. Hence, $\lvert x-x(S) \rvert\geq c\delta _{Q}$ since $x(S)\in
S\subseteq E$. Therefore, $\lvert x-x(S)\rvert \geq cA^{10}\mbox{diam}(S)$. On the
other hand, $x(S)\in S\subseteq 13Q$, and $x\in (1+c_{G})Q$. It follows that 
$\lvert x-x(S) \rvert \leq C\delta _{Q}\leq C^{\prime}A^{-10}\mbox{dist}(S,E\setminus S)$%
.

Thus, $cA^{10} \mbox{diam}(S) \leq \lvert x-x(S) \rvert \leq C^{\prime}A^{-10} \mbox{dist}%
(S,E \setminus S)$. We have shown that each $x \in (1+c_G) Q$ belongs to $%
H(S)$.
\end{proof}

\begin{lem}
\label{lem2} Let $S$ be a cluster. Let $Q \in CZ$, and let $x \in (1+c_G)Q
\cap H(S)$. Then 
\begin{equation*}
\frac{1}{2} A^{-1} \Bigl[ \lvert x-x(S)\rvert + \delta_{Q(S)} \Bigr] \leq \delta_Q \leq
A \Bigl[ \lvert x-x(S)\rvert  + \delta_{Q(S)}\Bigr].
\end{equation*}
\end{lem}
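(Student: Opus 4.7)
The plan is to prove the two inequalities separately; each follows from a contradiction argument that places $Q$ and $Q(S)$ close enough for their $(1+10c_G)$-enlargements to meet, so that good geometry applies. For the upper bound $\delta_Q \leq A[\lvert x-x(S)\rvert+\delta_{Q(S)}]$, I will assume the contrary: $\lvert x-x(S)\rvert + \delta_{Q(S)} < A^{-1}\delta_Q$. Then both summands are much smaller than $\delta_Q$; in particular $\lvert x-x(S)\rvert < A^{-1}\delta_Q < c_G\delta_Q$ for large $A$, so combining with $x\in(1+c_G)Q$ yields $x(S)\in(1+3c_G)Q\subset(1+10c_G)Q$. Since $x(S)\in Q(S)\subset(1+10c_G)Q(S)$, the two $(1+10c_G)$-enlargements share the point $x(S)$, and good geometry forces $\delta_{Q(S)}\geq\delta_Q/64$, contradicting $\delta_{Q(S)}<A^{-1}\delta_Q$ once $A>64$.

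For the lower bound $\lvert x-x(S)\rvert+\delta_{Q(S)}\leq 2A\delta_Q$ I bound the two summands in turn. To bound $\lvert x-x(S)\rvert$, I use $\#(E\cap 9Q)\geq 2$ and split on whether $E\cap 9Q$ is contained in $S$. If it is, pick any $y\in E\cap 9Q\subset S$; then $\lvert y-x(S)\rvert\leq\diam(S) < A^{-1}\lvert x-x(S)\rvert$ (since $x\in H(S)$), and combining with $\lvert x-y\rvert\lesssim\delta_Q$ via the triangle inequality absorbs the $\lvert x-x(S)\rvert$ term and yields $\lvert x-x(S)\rvert\lesssim\delta_Q$. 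Otherwise there is $y\in 9Q\cap(E\setminus S)$; then $\lvert y-x(S)\rvert\geq\dist(S,E\setminus S) > A\lvert x-x(S)\rvert$, and the same triangle inequality argument yields $\lvert x-x(S)\rvert\lesssim\delta_Q/A\leq\delta_Q$. To bound $\delta_{Q(S)}$, I again argue by contradiction: if $\delta_{Q(S)}>A\delta_Q$, the estimate just obtained gives $\lvert x-x(S)\rvert\leq C\delta_Q\leq (C/A)\delta_{Q(S)}\leq 5c_G\delta_{Q(S)}$ for $A$ large. Since $x(S)\in Q(S)$, this places $x$ inside $(1+10c_G)Q(S)$; combined with $x\in(1+c_G)Q\subset(1+10c_G)Q$, the two $(1+10c_G)$-enlargements meet at $x$, so good geometry forces $\delta_{Q(S)}\leq 64\delta_Q$, contradicting $\delta_{Q(S)}>A\delta_Q$. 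Adding the two bounds gives $\lvert x-x(S)\rvert+\delta_{Q(S)}\leq C\delta_Q + A\delta_Q\leq 2A\delta_Q$ for $A$ large.

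The main obstacle is the bound $\delta_{Q(S)}\leq A\delta_Q$: one cannot directly conclude from $x\in(1+c_G)Q$ and $x(S)\in Q(S)$ that $(1+10c_G)Q$ and $(1+10c_G)Q(S)$ overlap, because the offset $\lvert x-x(S)\rvert$ is measured in units of $\delta_Q$, while the $(1+10c_G)$-enlargement of $Q(S)$ is measured in units of $\delta_{Q(S)}$. The trick is to first prove the bound $\lvert x-x(S)\rvert\leq C\delta_Q$ using the structural property $\#(E\cap 9Q)\geq 2$ together with the two defining inequalities of $H(S)$, and only then to observe that if $\delta_{Q(S)}$ were much larger than $\delta_Q$, the same offset $\lvert x-x(S)\rvert$ would become negligible relative to $\delta_{Q(S)}$, producing the overlap needed to apply good geometry.
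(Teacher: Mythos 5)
Your proposal is correct and follows essentially the same route as the paper: establish $\lvert x-x(S)\rvert \lesssim \delta_Q$ from the halo inequalities together with $E\cap 9Q\neq\emptyset$, bound $\delta_{Q(S)} \lesssim \delta_Q$ by contradiction via good geometry, and prove the reverse inequality by a second good-geometry contradiction. The only cosmetic difference is that you obtain $\lvert x - x(S)\rvert \lesssim \delta_Q$ by a case split on whether $E\cap 9Q\subset S$, whereas the paper reasons through $\dist(x,E)=\dist(x,S)\geq\tfrac{1}{2}\lvert x-x(S)\rvert$ and $\dist(x,E)\leq C\delta_Q$; these are the same triangle-inequality estimates rearranged.
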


\begin{proof}
We know that $A\cdot \mbox{diam}(S)<\lvert x-x(S) \rvert <A^{-1}\mbox{dist}%
(S,E\setminus S)$. In particular, 
\begin{equation*}
S\subset B\bigl( x,\lvert x-x(S)\rvert +\mbox{diam}(S)\bigr) \subset B\left(
x,[1+A^{-1}] \cdot \lvert x-x(S)\rvert \right) \subset B\bigl( x,2\lvert x-x(S) \rvert \bigr)
\end{equation*}%
and 
\begin{equation*}
\mbox{dist}(x,S)\geq \lvert x-x(S) \rvert-\mbox{diam}(S)>[1-A^{-1}] \cdot \lvert x-x(S)\rvert\geq \frac{1}{%
2}\lvert x-x(S)\rvert.
\end{equation*}%
Also, \ 
\begin{equation*}
\mbox{dist}(x,E\setminus S)\geq \mbox{dist}\bigl( x(S),E\setminus
S \bigr)-\lvert x-x(S)\rvert \geq \mbox{dist}(S,E\setminus S)-\lvert x-x(S)\rvert>[A-1] \cdot\lvert x-x(S)\rvert.
\end{equation*}%
In particular, 
\begin{equation*}
\mbox{dist}(x,E\setminus S)>[A-1]\cdot \lvert x-x(S)\rvert >2\lvert x-x(S)\rvert \geq \mbox{dist}(x,S).
\end{equation*}%
Therefore, $\mbox{dist}(x,E)=\mbox{dist}(x,S)\geq \frac{1}{2} \lvert x-x(S)\rvert.$
On the other hand, $%
E\cap 9Q\neq \emptyset $ since $Q \in CZ$; and $x\in (1+c_{G})Q$%
. Therefore, $\mbox{dist}(x,E)\leq C\delta _{Q}$. It follows that $\lvert x-x(S) \rvert\leq 2\mbox{dist}(x,E)\leq
C^{\prime }\delta _{Q}$.

Next, we check that $\delta_{Q(S)} \leq A \delta_Q$. In fact, suppose that $%
\delta_{Q(S)} > A \delta_Q$. Then also $\delta_{Q(S)} > c A\lvert x-x(S)\rvert$. Since $%
x(S) \in Q(S)$ by definition, it follows that $x \in (1+c_G) Q(S)$. On the
other hand, $x \in (1+c_G)Q$. Therefore, $\delta_{Q(S)}$ and $\delta_Q$
differ by at most a factor of $64$, thanks to the good geometry of the $CZ$
cubes. This contradicts our assumption that $\delta_{Q(S)} > A \delta_Q$,
completing the proof that $\delta_{Q(S)} \leq A \delta_Q$.

We now know that $\frac{1}{2} A^{-1} \bigl[\delta_{Q(S)} + \lvert x-x(S) \rvert\bigr] \leq
\delta_Q.$

Next, we show that $\delta _{Q}\leq A \bigl[ \delta _{Q(S)}+ \lvert x-x(S)\rvert \bigr]$. Indeed,
suppose that 
$$\delta _{Q}>A\bigl[ \delta _{Q(S)}+\lvert x-x(S)\rvert \bigr].$$ 
Since $x\in
(1+c_{G})Q $ and $A^{-1}\delta _{Q}>\lvert x-x(S)\rvert$, it follows that $x(S)\in
(1+2c_{G})Q$. On the other hand, $x(S)\in Q(S)$. By the good
geometry of the $CZ$ cubes, the side lengths $\delta _{Q}$ and $\delta
_{Q(S)}$ can differ at most by a factor of $64$. This contradicts our
assumption that $\delta _{Q}>A\delta _{Q(S)}$, completing the proof that $%
\delta _{Q}\leq A\bigl[ \delta _{Q(S)}+ \lvert x-x(S)\rvert \bigr]$.
\end{proof}

\begin{lem}
\label{lem3} For any two distinct clusters $S,S^{\prime }$, the halos $%
H(S),H(S^{\prime })$ are disjoint.
\end{lem}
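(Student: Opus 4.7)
The plan is to argue by contradiction: suppose some $x$ lies in $H(S) \cap H(S')$ for two distinct clusters, and derive a quantitative contradiction by a case analysis on the set-theoretic relationship between $S$ and $S'$. There are exactly three mutually exclusive cases: $S$ and $S'$ are disjoint; one strictly contains the other; or they cross (overlap without nesting). In each case, a one- or two-step chain of triangle inequalities against the huge separation factor $A$ collapses the hypothesis.

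In the disjoint case, every point of $S'$ lies in $E \setminus S$, so in particular $\dist(S, E \setminus S) \leq |x(S) - x(S')|$, and symmetrically. Feeding these into the halo definition gives $|x - x(S)|, |x - x(S')| < A^{-1} |x(S) - x(S')|$, and then the triangle inequality $|x(S) - x(S')| \leq |x - x(S)| + |x - x(S')|$ forces $|x(S) - x(S')| < 2A^{-1}|x(S) - x(S')|$, which is false for $A > 2$.

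In the nested case, say $S \subsetneq S'$, any $y \in S' \setminus S$ witnesses $\dist(S, E \setminus S) \leq |x(S) - y| \leq 2\diam(S')$ since $x(S)$ and $y$ both lie in $S'$. The halo bound then gives $|x - x(S)| < 2A^{-1}\diam(S')$, and combining this with $|x(S) - x(S')| \leq \diam(S')$ via the triangle inequality yields $|x - x(S')| < (1 + 2A^{-1})\diam(S')$. This contradicts the lower halo bound $|x - x(S')| > A \diam(S')$ for large $A$.

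In the crossing case, pick $z \in S \cap S'$ together with $y \in S \setminus S'$ and $y' \in S' \setminus S$. Then $|z - y| \leq \diam(S)$ bounds $\dist(S', E \setminus S')$, and symmetrically $|z - y'| \leq \diam(S')$ bounds $\dist(S, E \setminus S)$. Applying the cluster separation $\dist(\cdot, E \setminus \cdot) \geq A^3 \diam(\cdot)$ in both directions gives $A^3 \diam(S) \leq \diam(S')$ and $A^3 \diam(S') \leq \diam(S)$, hence $A^6 \diam(S) \leq \diam(S)$. Since $\#(S) \geq 2$ and $E$ consists of distinct points, $\diam(S) > 0$, so this is a contradiction for any $A > 1$. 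There is no genuine obstacle: once the three cases are separated, each is a short computation in which the largeness of $A$ defeats the constants.
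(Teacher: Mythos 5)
Your proof is correct, but it follows a genuinely different route from the paper's. The paper avoids any case analysis: it shows directly that a common point $x \in H(S) \cap H(S')$ forces both $S = E \cap B(x, 2|x - x(S)|)$ and $S' = E \cap B(x, 2|x - x(S')|)$, so the two clusters are traced out by concentric balls and must be nested; only the nested case then needs a contradiction, which is obtained much as you do it. Your approach instead enumerates the three a priori set-theoretic possibilities (disjoint, nested, crossing) and handles each by triangle inequalities against the large constant $A$. The disjoint case and nested case are short and correct (the stray factor of $2$ in $|x(S)-y| \leq 2\,\mathrm{diam}(S')$ is harmless slack). Your crossing case is worth noting: the argument there never uses the halo intersection at all and actually establishes the stronger, freestanding fact that two clusters can never cross, which is a clean structural observation the paper implicitly gets for free from the ball picture. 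So both proofs are valid; the paper's ball argument is slicker because it collapses the case analysis to one branch, while yours is more elementary in that it never needs the pinned-down-by-a-ball characterization of a cluster, at the cost of three cases.
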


\begin{proof}
Suppose $x\in H(S)\cap H(S^{\prime })$, with $S$ and $S^{\prime }$ distinct
clusters. Then 
$$A\cdot \mbox{diam}(S)<\lvert x-x(S) \rvert<A^{-1}\mbox{dist}%
(S,E\setminus S).$$ 
Let $R_{S}=2|x-x(S)|$. Then $$S\subset B\bigl(
x,\lvert x-x(S)\rvert+\mbox{diam}(S)\bigr) \subset B\left( x,[1+A^{-1}] \cdot \lvert x-x(S) \rvert \right)
\subset B(x,R_{S}).$$ 
For $y\in E\setminus S$, we have 
\begin{align*}
\lvert y - x \rvert& \geq \mbox{dist}(y,S)-\mbox{dist}(x,S)\geq \mbox{dist}%
(E\setminus S,S)-\mbox{dist}(x,S)\geq \mbox{dist}(E\setminus
S,S)-\lvert x-x(S) \rvert \\
& \geq \lbrack A-1] \cdot \lvert x-x(S)\rvert>2\lvert x-x(S)\rvert=R_{S}.
\end{align*}%
Therefore, $(E\setminus S)\cap B(x,R_{S})=\emptyset $. Since we observed
that $S\subset B(x,R_{S})$ and since $S\subset E$, we conclude that $%
S=B(x,R_{S})\cap E$.

Similarly, $S^{\prime }= B(x,R_{S^{\prime }}) \cap E$, where $R_{S^{\prime
}} = 2\lvert x-x(S^{\prime })\rvert$. It follows that $S \subset S^{\prime }$ or $%
S^{\prime }\subset S$. Without loss of generality, we may suppose $S \subset
S^{\prime }$. Since $S$ and $S^{\prime }$ are distinct, we can find $y \in
S^{\prime }\setminus S \subset E \setminus S$.

Note that $\mbox{diam}(S^{\prime }) \geq \lvert y-x(S) \rvert \geq \mbox{dist}(E
\setminus S,S)$. Since $x \in H(S^{\prime })$, we have $\lvert x-x(S^{\prime })\rvert >
A \cdot \mbox{diam}(S^{\prime })$, hence 
\begin{eqnarray*}
\lvert x-x(S)\rvert &>& A \cdot \mbox{diam}(S^{\prime }) - \lvert x(S)-x(S^{\prime })\rvert \geq A
\cdot \mbox{diam}(S^{\prime }) - \mbox{diam}(S^{\prime }) \\
&&(\text{since } x(S) \in S \subset S^{\prime }\text{ and } x(S^{\prime })
\in S^{\prime }) \\
&\geq& \frac{1}{2} A \cdot \mbox{diam}%
(S^{\prime }).
\end{eqnarray*}
On the other hand, since $x \in H(S)$, we have 
\begin{equation*}
\lvert x-x(S)\rvert \leq A^{-1} \mbox{dist}(S,E \setminus S) \leq A^{-1} \mbox{dist}%
(S,S^{\prime} \setminus S) \leq  A^{-1} \mbox{diam}(S^{\prime }).
\end{equation*}

Thus, $\lvert x-x(S)\rvert \geq \frac{1}{2} A \cdot \mbox{diam}(S^{\prime })$, and $%
\lvert x-x(S)\rvert \leq A^{-1} \mbox{diam}(S^{\prime })$. This contradiction shows
that we cannot have $x \in H(S) \cap H(S^{\prime })$.
\end{proof}

\begin{lem}
\label{lem4} Let $S$ be a cluster. Let $x,x^{\prime }\in H(S)$, and let $%
Q,Q^{\prime }\in CZ$, with $x \in Q$ and $x^{\prime }\in Q^{\prime }$. If $%
\lvert x-x^{\prime} \rvert \leq A^{-2} \lvert x-x(S) \rvert$, then $(1+c_G) Q \cap (1+c_G) Q^{\prime }\neq
\emptyset$.
\end{lem}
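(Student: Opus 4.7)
The plan is to deduce that $x'$ itself already lies in $(1+c_G)Q$, after which the conclusion is immediate since $x' \in Q' \subset (1+c_G)Q'$. The key input is the sidelength estimate for CZ cubes meeting the halo $H(S)$, namely Lemma \ref{lem2}.

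First, I would apply Lemma \ref{lem2} to $x \in (1+c_G)Q \cap H(S)$ to obtain
$$\delta_Q \;\geq\; \tfrac{1}{2}A^{-1}\bigl[|x-x(S)| + \delta_{Q(S)}\bigr] \;\geq\; \tfrac{1}{2}A^{-1} |x-x(S)|.$$
Combining with the hypothesis $|x-x'| \leq A^{-2} |x-x(S)|$, I get
$$|x-x'| \;\leq\; A^{-2} |x-x(S)| \;\leq\; 2 A^{-1} \delta_Q.$$
Under the large $A$ assumption (specifically $A \geq 4/c_G$), this yields $|x-x'| \leq \tfrac{1}{2} c_G \delta_Q$. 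Since $x \in Q$, the center of $Q$ lies within (sup-norm) distance $\delta_Q/2$ of $x$, so $x'$ lies within distance $\delta_Q/2 + \tfrac{1}{2} c_G \delta_Q = (1+c_G)\delta_Q/2$ of the center of $Q$; that is, $x' \in (1+c_G)Q$.

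Since we also have $x' \in Q' \subset (1+c_G)Q'$, the point $x'$ belongs to both expanded cubes, so $(1+c_G)Q \cap (1+c_G)Q' \neq \emptyset$. The only obstacle is purely bookkeeping: verifying that the large-$A$ threshold required here (essentially $A \geq 4/c_G$) is absorbed by the standing large-$A$ assumption of the section. No induction, sparsification, or appeal to the basis machinery of earlier sections is needed — the statement is a soft geometric consequence of Lemma \ref{lem2} alone.
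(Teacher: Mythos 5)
Your proof is correct and is essentially the paper's own argument: apply Lemma \ref{lem2} to $x$ to convert the hypothesis $|x-x'|\leq A^{-2}|x-x(S)|$ into $|x-x'|\leq 2A^{-1}\delta_Q$, then invoke the large-$A$ assumption to conclude $x'\in(1+c_G)Q$, which already exhibits a common point of the two expanded cubes. The bookkeeping threshold you identify ($A\gtrsim 1/c_G$) is indeed absorbed by the section's standing assumption that $A$ exceeds a large controlled constant.
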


\begin{proof}
By Lemma \ref{lem2}, we have $\lvert x^{\prime} - x \rvert \leq A^{-2} \left[ \lvert x-x(S)\rvert + \delta_{Q(S)}%
\right] \leq 2 A^{-1} \delta_Q$. Since $x \in Q$, it follows that $x^{\prime
}\in (1+c_G)Q$. Also $x^{\prime }\in Q^{\prime }\subset (1+c_G) Q^{\prime }$.
\end{proof}

\begin{lem}
\label{lem5} Fix a cluster $S$. Let $x,x^{\prime }\in H(S)$, with $\lvert x-x(S) \rvert
\geq \lvert x^{\prime }-x(S) \rvert$. Then there exist a finite sequence of points $%
x_1,x_2,\ldots,x_L \in H(S)$, and a positive integer $L_*$, with the
following properties:

\begin{itemize}
\item $x_1=x$ and $x_L = x^{\prime }$.

\item $\lvert x_{l+1} - x(S) \rvert \leq \lvert x_l - x(S) \rvert$ for $l=1,\ldots,L-1$.

\item $\lvert x_l - x_{l+1}\rvert \leq A^{-2} \lvert x_l - x(S) \rvert$ for $l=1,\ldots,L-1$.

\item $\lvert x_{l+L_*} - x(S)\rvert \leq (1-A^{-3}) \lvert x_l - x(S) \rvert$ for $1 \leq l \leq
L-L_*$.

\item $L_* \leq A^3$.
\end{itemize}
\end{lem}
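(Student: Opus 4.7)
The plan is to construct $x_1,\ldots,x_L$ as a two-phase discrete path from $x$ to $x'$. Write $r := |x - x(S)|$, $r' := |x' - x(S)|$, $u := (x - x(S))/r$, $u' := (x' - x(S))/r'$, and let $\theta \in [0,\pi]$ denote the angle between $u$ and $u'$. I will introduce the auxiliary point $y := x(S) + r\,u'$, which lies both on the sphere of radius $r$ around $x(S)$ and on the ray from $x(S)$ through $x'$. The first phase is the great-circle arc from $x$ to $y$ on this sphere, discretized into $m - 1 \le 2A^2$ equal angular sub-arcs (each of angular step $\le \pi/(2A^2)$), producing arc points $x_1 = x, x_2, \ldots, x_m = y$ whose chord lengths are all $\le A^{-2} r$. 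The second phase will descend radially from $y$ to $x'$: set $x_{m+k} := x(S) + r_k u'$ with $r_k := r (1 - A^{-2}/2)^k$ so long as $r_k > r'$, and then declare $x_L := x'$. Each radial step then has length $r_l - r_{l+1} \le A^{-2} r_l/2$, with the foreshortened final step even smaller.

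Bullets (i)--(iii) will follow directly: $x_1 = x$ and $x_L = x'$; the function $l \mapsto |x_l - x(S)|$ is constant on the arc and strictly decreasing on the ray, so monotonicity holds; and every chord obeys $|x_l - x_{l+1}| \le A^{-2} |x_l - x(S)|$. Each $x_l$ satisfies $|x_l - x(S)| \in [r', r] \subset \bigl(A\,\diam(S),\, A^{-1}\dist(S, E \setminus S)\bigr)$, hence $x_l \in H(S)$. For bullets (iv)--(v) I would choose $L_* := 2A^2 + 1$, which is at least $m$ and at most $A^3$ under the large-$A$ assumption. The elementary inequality $(1 - A^{-2}/2) \le (1 - A^{-3})$ (valid for $A \ge 2$) will show that a single ray step already supplies the required multiplicative decrease. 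For $l \le m$, we have $l + L_* > m$, so $x_{l+L_*}$ lies on the ray with at least one radial step completed, yielding $|x_{l+L_*} - x(S)| \le (1 - A^{-2}/2)\, r = (1 - A^{-2}/2)\,|x_l - x(S)|$. For $l > m$, both $x_l$ and $x_{l+L_*}$ lie on the ray and the same estimate applies (or $|x_{l+L_*} - x(S)| = r'$, which is no larger).

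The main obstacle is reconciling monotonicity with the multi-step geometric decay across all regimes of the ratio $r/r'$ and the angular separation $\theta$. A uniform spiral that advances both radially and tangentially at each step forces one of the two step parameters below $A^{-3}$ whenever $\log(r/r')$ and $\theta$ differ vastly in magnitude, which in turn drives $L_*$ above $A^3$. The two-phase arc-then-ray construction sidesteps this pitfall by confining all tangential motion to an initial arc of at most $2A^2 + 1$ points: by taking $L_*$ no smaller than this arc length, every $L_*$-window beginning on the arc must cross into the ray phase and therefore collect at least one geometric decrease by the factor $(1 - A^{-2}/2)$.
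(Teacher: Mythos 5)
Your sphere-then-ray construction is the same as the paper's (the paper writes $\tilde x$ for your $y$, picks arc points on $\partial B(x(S),|x-x(S)|)$ with step at most $A^{-2}|x-x(S)|$ in at most $\tfrac12 A^3$ steps, and descends radially by the factor $(1-A^{-3})$, with $L_* = L_1+3$), but your verification of the fourth bullet has a genuine gap in the choice of $L_*$. You set $L_* = 2A^2+1 \ge m$ and argue, for $1 \le l \le m$, that $x_{l+L_*}$ lies on the ray ``with at least one radial step completed,'' hence $|x_{l+L_*}-x(S)| \le (1-A^{-2}/2)r$. That inference requires the step entering the ray phase to be a \emph{full} geometric step $r_k \to r_{k+1}$; but when $K=0$ --- i.e.\ when $(1-A^{-2}/2)r \le r' < r$ --- the entire ray phase is the single truncated step $y \to x'$, and the ratio $r'/r$ can be arbitrarily close to $1$, in particular larger than $1-A^{-3}$. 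This edge case is reachable: when the angle between $u$ and $u'$ is near $\pi$, your discretization gives $m = 2A^2+1 = L_*$, and taking $r' = (1-\tfrac12 A^{-3})r$ forces $L = m+1$, $L - L_* = 1$, and the $l=1$ instance of the fourth bullet fails. The paper avoids this by building slack into $L_*$ (setting $L_* = L_1+3$, so any window that starts on the arc meets at least one full radial decrease before reaching $x_L$); you should take $L_* \ge m+1$, not merely $L_* \ge m$.

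There is also a subsidiary arithmetic slip: $m-1 \le 2A^2$ sub-arcs of angular width at most $\pi/(2A^2)$ produce chords of length up to $(\pi/2)\,A^{-2}r$, exceeding the third bullet's bound $A^{-2}|x_l - x(S)|$ by a factor of $\pi/2$; you need on the order of $\pi A^2$ arc points. Note also that the paper's $|\cdot|$ is the $\ell^\infty$ norm, so $\partial B(x(S),r)$ is a cube boundary rather than a Euclidean sphere and the discretization count is $C(n)\,A^2$; this is harmless --- with $L_*$ adjusted upward accordingly one still has $L_* \le A^3$ --- but it does mean the ``great circle'' picture should be replaced by a Lipschitz path along the shell.
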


\begin{proof}
Define a point $\tilde{x}\in \R^n$ such that 
\begin{equation*}
\tilde{x}-x(S)=\lvert x-x(S) \rvert\cdot \frac{x^{\prime }-x(S)}{\lvert x^{\prime }-x(S)\rvert }.
\end{equation*}%
Then $\tilde{x}\in H(S)$ since $\lvert \tilde{x}-x(S) \rvert=\lvert x-x(S)\rvert$ and $x\in H(S)$.
We have 
\begin{equation}
\tilde{x}-x(S)=T\cdot [x^{\prime }-x(S)]\;\;\mbox{for some }T\geq 1.  \label{ex}
\end{equation}%
We pick points $x_{1},\ldots ,x_{L_{1}}$ in $\partial
B\bigl(x(S),\lvert x-x(S)\rvert \bigr)$ such that 
\begin{equation}
\begin{aligned}
&x_{1}=x;\;x_{L_{1}}=\tilde{x}; \\
&\lvert x_{l+1}-x_{l}\rvert \leq 
A^{-2}\lvert x-x(S) \rvert=A^{-2} \lvert x_{l}-x(S)\rvert \;\;\mbox{for}\;1\leq l<L_{1};\;\mbox{and}%
\;L_{1}\leq \frac{1}{2}A^{3}.
\end{aligned} \label{pound}
\end{equation}

We then pick positive real numbers $T_{L_1}, T_{L_1 + 1}, \ldots,T_{L-1},T_L$
with the properties: 
\begin{align*} &T_{L_1} = T \; \mbox{ as in \eqref{ex}}; \; T_L=1; \; T_{l+1} =
(1-A^{-3})T_l \; \mbox{for} \; L_1 \leq l \leq L-2; \; \mbox{and} \\
&(1-A^{-3})T_{L-1} \leq T_{L} \leq T_{L-1}.
\end{align*}

Define the points $x_{L_1+1},\ldots,x_L \in \mathbb{R}^n$ by setting 
\begin{equation}  \label{ex2}
x_l - x(S) = T_l \cdot [x^{\prime }- x(S)] \;\; \mbox{for} \;
l=L_1+1,\ldots,L.
\end{equation}
Note that \eqref{ex2} holds for $l=L_1$ also, and that $\lvert x_l - x(S)\rvert =
T_l \cdot \lvert x^{\prime }-x(S) \rvert$. We have $1 \leq T_l \leq T$ for each $%
l=L_1,\ldots,L$; therefore $\lvert x^{\prime }-x(S) \rvert \leq \lvert x_l - x(S)\rvert \leq
\lvert x-x(S) \rvert$ for each such $l$. Since $x,x^{\prime }\in H(S)$, it follows that
each $x_l$ also belongs to $H(S)$.

We have 
\begin{equation}  \label{pound2}
\lvert x_l-x_{l+1}\rvert \leq A^{-3} \lvert x_l - x(S)\rvert \;\; \mbox{for} \; L_1 \leq l \leq L-1;
\end{equation}
\begin{equation*}
\lvert x_{l+1}-x(S)\rvert \leq \lvert x_l - x(S) \rvert \;\; \mbox{for} \; L_1 \leq l \leq L-1; \;\mbox{and}
\end{equation*}
\begin{equation*}
\lvert x_{l+1} - x(S) \rvert = (1-A^{-3}) \lvert x_l-x(S) \rvert \;\; \mbox{for} \; L_1 \leq l \leq
L-2.
\end{equation*}

We have now defined $x_1,\ldots,x_L$. Note that $x_1 = x$ and $x_L =
x^{\prime }$, which is the first bullet point in Lemma \ref{lem5}.

We know that $\lvert x_{l+1} - x(S)\rvert = \lvert x_l - x(S)\rvert$ for $1 \leq l \leq L_1-1$,
and $\lvert x_{l+1}-x(S)\rvert \leq \lvert x_l - x(S)\rvert$ for $L_1 \leq l \leq L-1$; therefore $%
\lvert x_{l+1}-x(S)\rvert \leq \lvert x_l - x(S)\rvert$ for $1 \leq l \leq L-1$. This establishes
the second bullet point of Lemma \ref{lem5}. Also, $\lvert x_{l+1} - x(S)\rvert  \leq
(1-A^{-3})\lvert x_l-x(S)\rvert $ for $L_1 \leq l \leq L-2$. The last two estimates
together show that $\lvert x_{l + (L_1+3)} - x(S)\lvert  \leq (1-A^{-3})\lvert x_l - x(S)\rvert $
for $1 \leq l \leq L - (L_1 + 3)$. Here, $L_1 + 3 \leq \frac{1}{2}A^3 + 3 <
A^3$. Thus, we have proven the last two bullet points of Lemma \ref{lem5}.
The third bullet point in Lemma \ref{lem5} is immediate from \eqref{pound}
and \eqref{pound2}. We have verified all the conclusions of Lemma \ref{lem5}.
\end{proof}

\begin{lem}
\label{lem6} Fix a cluster $S$. Let $Q,Q^{\prime }\in CZ$, and let $%
x,x^{\prime }\in H(S)$, with $x \in Q$, $x^{\prime }\in Q^{\prime }$. Assume 
\begin{equation*}
\lvert x-x(S)\rvert  \geq \lvert x^{\prime }-x(S)\rvert  \geq \delta_{Q(S)}.
\end{equation*}

Then there exist cubes $Q_1,Q_2,\ldots,Q_L \in CZ$, with the following
properties.

\begin{itemize}
\item $Q_1=Q$ and $Q_L = Q^{\prime }$.

\item $(1+c_G) Q_l \cap (1+c_G) Q_{l+1} \neq \emptyset$ for all $%
l=1,2,\ldots,L-1$.

\item $\delta_{Q_k} \leq C(A) (1-c(A))^{k-l} \delta_{Q_l}$ for $1
\leq l \leq k \leq L$; here, $0<c(A)<1$ and $C(A)>0$ are constants
depending only on $A$ and on the dimension $n$.
\end{itemize}
\end{lem}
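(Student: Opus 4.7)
The plan is to combine Lemmas \ref{lem2}, \ref{lem4} and \ref{lem5} in a natural way: apply Lemma \ref{lem5} to obtain a chain of points $x_1, \ldots, x_L \in H(S)$ with $x_1 = x$, $x_L = x'$, then take $Q_l$ to be the CZ cube containing $x_l$.

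First, the points $x_l$ produced by Lemma \ref{lem5} satisfy $|x_l - x_{l+1}| \leq A^{-2} |x_l - x(S)|$, so Lemma \ref{lem4} immediately yields $(1+c_G)Q_l \cap (1+c_G)Q_{l+1} \neq \emptyset$, giving bullet points (i) and (ii). The choice $Q_1 = Q$, $Q_L = Q'$ is forced because $x \in Q$ and $x' \in Q'$.

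The main work is bullet point (iii). By the second bullet of Lemma \ref{lem5}, $|x_l - x(S)|$ is non-increasing in $l$, so $|x_l - x(S)| \geq |x_L - x(S)| = |x' - x(S)| \geq \delta_{Q(S)}$ for every $l$. Then Lemma \ref{lem2} applied at $x_l \in (1+c_G) Q_l \cap H(S)$ gives
$$
c \cdot A^{-1} \cdot |x_l - x(S)| \leq \delta_{Q_l} \leq C \cdot A \cdot |x_l - x(S)|
\qquad (1 \leq l \leq L),
$$
so the sidelengths $\delta_{Q_l}$ are comparable (up to an $A$-dependent factor) to the distances $|x_l - x(S)|$. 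Therefore it suffices to show geometric decay of $|x_l - x(S)|$.

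To get geometric decay, I would iterate the fourth bullet of Lemma \ref{lem5}: applying it $q$ times yields $|x_{l + q L_*} - x(S)| \leq (1 - A^{-3})^q |x_l - x(S)|$. Writing $k - l = q L_* + r$ with $0 \leq r < L_* \leq A^3$, and using monotonicity (second bullet of Lemma \ref{lem5}) for the remainder $r$, we obtain
$$
|x_k - x(S)| \leq (1 - A^{-3})^{\lfloor (k-l)/L_* \rfloor} \cdot |x_l - x(S)| \leq C(A) \cdot (1 - c(A))^{k - l} |x_l - x(S)|,
$$
where $c(A), C(A) > 0$ depend only on $A$ and $n$. Converting back to sidelengths via the two-sided bound above yields
$$
\delta_{Q_k} \leq C'(A) \cdot (1-c(A))^{k-l} \delta_{Q_l},
$$
proving (iii). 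The main subtlety, and the only non-routine point, is ensuring $|x_l - x(S)| \geq \delta_{Q(S)}$ along the whole chain so that Lemma \ref{lem2} gives $\delta_{Q_l} \simeq_A |x_l - x(S)|$ without the $\delta_{Q(S)}$ term spoiling the geometric decay; this is precisely what the hypothesis $|x' - x(S)| \geq \delta_{Q(S)}$ secures.
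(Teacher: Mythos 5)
Your proof is correct and follows essentially the same route as the paper: apply Lemma \ref{lem5} to produce the chain of points, let $Q_l$ be the CZ cube containing $x_l$, get adjacency from Lemma \ref{lem4}, use monotonicity of $|x_l - x(S)|$ together with the hypothesis $|x'-x(S)| \geq \delta_{Q(S)}$ so that Lemma \ref{lem2} gives $\delta_{Q_l} \simeq_A |x_l - x(S)|$, and then iterate the fourth bullet of Lemma \ref{lem5} for the geometric decay. The only cosmetic difference is how the remainder modulo $L_*$ is handled: the paper uses the factor-$64$ control on consecutive sidelengths, while you use the monotonicity bullet directly, and both are fine.
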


\begin{proof}
Pick a sequence $x_1,x_2,\ldots,x_L \in H(S)$ and an integer $L_*$, as in
Lemma \ref{lem5}. For each $l$, let $Q_l$ be the CZ cube
containing $x_l$. In particular, $Q_1=Q$ and $Q_L = Q^{\prime }$, since $x_1
= x \in Q$ and $x_L = x^{\prime }\in Q^{\prime }$. For each $l=1,2,\ldots,L-1$ , we have $\lvert x_{l+1}-x_l\rvert  \leq A^{-2} \lvert x_l - x(S)\rvert $. Since also $%
x_l \in Q_l$ and $x_{l+1} \in Q_{l+1}$, Lemma \ref{lem4} tells us that $%
(1+c_G) Q_l \cap (1+c_G)Q_{l+1} \neq \emptyset$. In particular, $%
\delta_{Q_{l+1}}$ and $\delta_{Q_l}$ differ by at most a factor of $64$.

Lemma \ref{lem5} gives $\lvert x_{l+1} - x(S)\rvert  \leq \lvert x_l - x(S)\rvert $ for $%
l=1,2,\ldots,L-1$; hence, $\lvert x_l-x(S)\rvert  \geq \lvert x^{\prime }-x(S)\rvert  \geq
\delta_{Q(S)}$. Hence, by Lemma \ref{lem2} (and the fact that $x_l \in Q_l$%
), $\delta_{Q_l}$ differs by at most a factor of $2A$ from $%
\delta_{Q(S)} + \lvert x_l - x(S)\rvert $, which in turn differs by at most a factor of $%
2$ from $\lvert x_l-x(S)\rvert $.

Therefore, 
\begin{equation*}
\frac{1}{4} A^{-1} \lvert x_l - x(S)\rvert  \leq \delta_{Q_l} \leq 4A \lvert x_l-x(S)\rvert ,\;\; %
\mbox{for each} \;l=1,2,\ldots,L.
\end{equation*}

The fourth bullet point of Lemma \ref{lem5} now gives 
\begin{equation*}
\delta_{Q_{L_* j}} \leq (4A)^2 \cdot(1-A^{-3})^{j-i} \delta_{Q_{L_*
i}} \;\; \mbox{for} \; 1 \leq L_* i \leq L_* j \leq L.
\end{equation*}

Since also $\delta_{Q_{l+1}}$ and $\delta_{Q_l}$ differ by at most a factor
of $64$ ($1 \leq l \leq L-1$), and since $1 \leq L_* \leq A^3$, it follows
that 
\begin{equation*}
\delta_{Q_k} \leq C(A) \cdot (1-c(A))^{k-l} \delta_{Q_l} \;\; %
\mbox{for} \; 1 \leq l \leq k \leq L.
\end{equation*}
\end{proof}

Fix a cluster $S$. For each $Q \in CZ$ such that $Q \cap H(S) \neq \emptyset$
we fix a point $x(Q,S) \in Q \cap H(S)$.

\begin{lem}
\label{lem7} Given $R \geq \delta_{Q(S)}$, there are at most $C A^{2n}
\left(R/\delta_{Q(S)} \right)^n$ distinct cubes $Q \in CZ$ such that $Q \cap
H(S) \neq \emptyset$ and $\lvert x(Q,S) - x(S)\rvert  \leq R$.
\end{lem}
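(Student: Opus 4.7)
The plan is a volume-packing argument. Fix a cube $Q\in CZ$ with $Q\cap H(S)\neq\emptyset$ and $|x(Q,S)-x(S)|\leq R$. By construction $x(Q,S)\in Q\cap H(S)\subset (1+c_G)Q\cap H(S)$, so Lemma \ref{lem2} applies with $x=x(Q,S)$ and gives
\[
\tfrac12 A^{-1}\bigl[\,|x(Q,S)-x(S)|+\delta_{Q(S)}\bigr]\;\leq\;\delta_Q\;\leq\;A\bigl[\,|x(Q,S)-x(S)|+\delta_{Q(S)}\bigr].
\]
The lower bound yields $\delta_Q\geq \tfrac12 A^{-1}\delta_{Q(S)}$, so each such cube has volume at least $cA^{-n}\delta_{Q(S)}^{\,n}$. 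The upper bound, together with $|x(Q,S)-x(S)|\leq R$ and $R\geq\delta_{Q(S)}$, gives $\delta_Q\leq 2AR$.

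Next I would confine all such cubes to a common ball. Since $x(Q,S)\in Q$ and $\diam(Q)=\sqrt{n}\,\delta_Q\leq C\delta_Q\leq C' AR$, every such $Q$ is contained in $B\bigl(x(S),\,R+C'AR\bigr)\subset B\bigl(x(S),\,C'' AR\bigr)$.

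Finally I invoke disjointness: the cubes of $CZ$ partition $\R^n$, so any subcollection has total volume equal to the sum of individual volumes. Summing the lower volume bound over the cubes in question and comparing with the volume of the enclosing ball,
\[
(\text{number of such }Q)\cdot cA^{-n}\delta_{Q(S)}^{\,n}\;\leq\;\mathrm{vol}\bigl(B(x(S),C''AR)\bigr)\;\leq\;C'''(AR)^n,
\]
which rearranges to the desired bound $CA^{2n}(R/\delta_{Q(S)})^n$. The only step that requires any care is justifying $Q\subset B(x(S),C''AR)$ uniformly, but this follows immediately from the two inequalities above; no serious obstacle is anticipated.
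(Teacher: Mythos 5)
Your proof is correct and follows the same volume-packing argument as the paper: use Lemma \ref{lem2} to get the lower bound $\delta_Q \gtrsim A^{-1}\delta_{Q(S)}$ (hence a volume lower bound) and the upper bound $\delta_Q \lesssim AR$ (hence containment in a ball of radius $\sim AR$ about $x(S)$), then invoke pairwise disjointness of the CZ cubes. The only cosmetic note is that $|\cdot|$ is the sup norm here, so $\diam(Q) = \delta_Q$ rather than $\sqrt{n}\,\delta_Q$, but this changes nothing.
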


\begin{proof}
For each $Q$ as in the statement of the lemma, we know from Lemma \ref{lem2}
that 
$$\delta_Q \leq A \cdot \left[ \lvert x(Q,S)-x(S)\rvert  + \delta_{Q(S)}\right] \leq
A \cdot[2R],$$ 
and therefore $Q \subset B(x(Q,S),CAR) \subset
B(x(S),C^{\prime }AR)$.

On the other hand, the CZ cubes are pairwise disjoint, and each CZ cube such that $Q \cap H(S) \neq \emptyset$ has
volume at least $\left( 2A\right) ^{-n}\delta _{Q(S)}^{n}$, by Lemma \ref%
{lem2}. The conclusion of Lemma \ref{lem7} follows at once.
\end{proof}

We say that $Q \in CZ$ is \textit{privileged} for the cluster $S$ (or \emph{$S$-privileged}), provided that $%
Q \cap H(S) \neq \emptyset$ and $\lvert x(Q,S)-x(S)\rvert  \leq \delta_{Q(S)}$.
According to Lemma \ref{lem7}, there are at most $C A^{2n}$ privileged cubes
for a given cluster $S$. 

Moreover, Lemma \ref{lem7} shows that, if there are CZ
cubes $Q$ such that 
\begin{equation}Q\cap H(S)\neq \emptyset \; \mbox{and} \; \lvert x(Q,S) - x(S)\rvert  > \delta_{Q(S)}, \label{fef2}
\end{equation}
then there exists $\hat{Q}_{S}\in CZ$ such that $\hat{Q}_{S}\cap H(S)\neq \emptyset $, $\lvert x(\hat{Q}%
_{S},S)-x(S)\rvert >\delta _{Q(S)}$, and $\lvert x(\hat{Q}_{S},S)-x(S)\rvert  \leq \lvert x(Q,S)-x(S)\rvert $ for any $Q\in CZ$ for which \eqref{fef2} holds.

For each such cluster $S$, we pick such a $\hat{Q}_S$.

\begin{lem}
\label{lem8} Let $S$ be a cluster, and let $Q \in CZ$. Suppose $Q \cap H(S)
\neq \emptyset$, and suppose $Q$ is not privileged for $S$. Then there
exists a finite sequence of cubes $Q_1,Q_2,\ldots,Q_L \in CZ$ with the
following properties:

\begin{itemize}
\item $Q_1 = Q$ and $Q_L = \hat{Q}_S$.

\item $(1+c_G) Q_l \cap (1+c_G) Q_{l+1} \neq \emptyset$ for all $%
l=1,2,\ldots,L-1$.

\item $\delta_{Q_k} \leq C(A) \cdot (1-c(A))^{k -l} \delta_{Q_l}$
for $1 \leq l \leq k \leq L$; here, $0<c(A)<1$ and $C(A)>0$ are
constants depending only on $A$ and on the dimension $n$.
\end{itemize}
\end{lem}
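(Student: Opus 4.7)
The plan is to reduce Lemma \ref{lem8} directly to Lemma \ref{lem6}. Set $x := x(Q,S) \in Q \cap H(S)$ and $x' := x(\hat{Q}_S, S) \in \hat{Q}_S \cap H(S)$. Both points lie in $H(S)$, and $Q, \hat{Q}_S \in CZ$ with $x \in Q$, $x' \in \hat{Q}_S$.

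To apply Lemma \ref{lem6}, I must verify $|x - x(S)| \geq |x' - x(S)| \geq \delta_{Q(S)}$. The second inequality is immediate from the defining property of $\hat{Q}_S$, which satisfies $|x(\hat{Q}_S,S) - x(S)| > \delta_{Q(S)}$. For the first inequality, note that since $Q$ is not privileged for $S$ and $Q \cap H(S) \neq \emptyset$, we have $|x(Q,S) - x(S)| > \delta_{Q(S)}$, i.e., $Q$ is one of the cubes witnessing the condition in \eqref{fef2}. By the minimality property used to select $\hat{Q}_S$, we therefore have $|x(\hat{Q}_S, S) - x(S)| \leq |x(Q,S) - x(S)|$, which is exactly $|x' - x(S)| \leq |x - x(S)|$.

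The hypotheses of Lemma \ref{lem6} are now met, so it produces cubes $Q_1,\ldots,Q_L \in CZ$ with $Q_1 = Q$, $Q_L = \hat{Q}_S$, with $(1+c_G)Q_l \cap (1+c_G)Q_{l+1} \neq \emptyset$ for $1 \leq l \leq L-1$, and with $\delta_{Q_k} \leq C(A)(1-c(A))^{k-l}\delta_{Q_l}$ for $1 \leq l \leq k \leq L$. These are precisely the three conclusions demanded by Lemma \ref{lem8}, so the proof is complete.

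There is no real obstacle here, since Lemma \ref{lem6} already does all the work of stringing together CZ neighbors along a geodesic in the halo; the only thing to check is the numerical hypothesis, and that is guaranteed by the very definition of $\hat{Q}_S$ as the ``innermost'' non-privileged CZ cube meeting $H(S)$.
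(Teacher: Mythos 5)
Your argument is correct and is exactly the paper's proof: the authors prove Lemma~\ref{lem8} with the single line ``Lemma~\ref{lem6} applies, with $Q$ as in the present lemma, $Q' = \hat{Q}_S$, $x = x(Q,S)$, $x' = x(\hat{Q}_S,S)$.'' You have merely made explicit the verification that $\lvert x - x(S)\rvert \geq \lvert x' - x(S)\rvert \geq \delta_{Q(S)}$, which the paper leaves to the reader; that verification is carried out correctly using the fact that non-privileged cubes meeting $H(S)$ satisfy (\ref{fef2}) and the minimality clause defining $\hat{Q}_S$.
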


\begin{proof}
Lemma \ref{lem6} applies, with $Q$ as in the present lemma, $Q^{\prime }= 
\hat{Q}_S$, $x=x(Q,S)$, $x^{\prime }=x(\hat{Q}_S,S)$.
\end{proof}

\begin{lem}
\label{lem9} Let $Q,Q^{\prime }\in CZ$ be interstellar cubes, and suppose $%
(1+c_G)Q \cap (1+c_G)Q^{\prime }\neq \emptyset$. Then there exists a cluster 
$S$ such that: $(1+c_G) Q \subset H(S)$ and $(1+c_G)Q^{\prime }\subset H(S)$%
; and for any cluster $S^{\prime }\neq S$, the cubes $(1+c_G) Q$ and $%
(1+c_G)Q^{\prime }$ are both disjoint from $H(S^{\prime })$.
\end{lem}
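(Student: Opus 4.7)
\textbf{Proof plan for Lemma \ref{lem9}.} The plan is to reduce the whole statement to Lemma \ref{lem1} (every interstellar cube has its $(1+c_G)$-dilate contained in some halo) and Lemma \ref{lem3} (distinct halos are disjoint). No new geometry is needed.

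First, apply Lemma \ref{lem1} to the interstellar cube $Q$ to produce a cluster $S$ with $(1+c_G)Q \subset H(S)$, and separately to $Q'$ to produce a cluster $S''$ with $(1+c_G)Q' \subset H(S'')$. I would then argue $S=S''$ as follows: by hypothesis there is a point $p \in (1+c_G)Q \cap (1+c_G)Q'$, so $p \in H(S) \cap H(S'')$; by Lemma \ref{lem3}, this forces $S=S''$. Setting $S$ to be this common cluster gives the first conclusion, namely $(1+c_G)Q \cup (1+c_G)Q' \subset H(S)$.

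For the second conclusion, fix any cluster $S' \neq S$. If there existed a point $q \in (1+c_G)Q \cap H(S')$, then since $(1+c_G)Q \subset H(S)$ we would have $q \in H(S) \cap H(S')$, again contradicting Lemma \ref{lem3}; hence $(1+c_G)Q \cap H(S') = \emptyset$. The identical argument with $Q$ replaced by $Q'$ gives $(1+c_G)Q' \cap H(S') = \emptyset$.

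There is no real obstacle here: the only mild check is that the cluster $S$ produced by Lemma \ref{lem1} applied to $Q$ is genuinely the same as the one produced when applied to $Q'$, but this is forced immediately by the disjointness of halos together with the assumed overlap $(1+c_G)Q \cap (1+c_G)Q' \neq \emptyset$. Thus the lemma is a short two-line consequence of Lemmas \ref{lem1} and \ref{lem3}, and the proof writes itself once these two ingredients are in hand.
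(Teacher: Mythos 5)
Your argument is correct and is exactly the paper's: the authors' entire proof of Lemma \ref{lem9} reads ``Immediate from Lemmas \ref{lem1} and \ref{lem3},'' and what you have written is simply that one line spelled out. Nothing further is needed.
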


\begin{proof}
Immediate from Lemmas \ref{lem1} and \ref{lem3}.
\end{proof}

\begin{lem}
\label{lem10} Let $Q \in CZ$. Then there exists a finite sequence $%
Q_1,Q_2,\ldots,Q_L$ of $CZ$ cubes, such that

\begin{itemize}
\item $Q_1 = Q$ and $Q_L$ is a keystone cube.

\item $(1+c_G) Q_l \cap (1+c_G)Q_{l+1} \neq \emptyset$ for $l=1,2,\ldots,L-1$.

\item $\delta_{Q_k} \leq C(A) \cdot(1-c(A))^{k-l} \delta_{Q_l}$ for $%
1 \leq l \leq k \leq L$.
\end{itemize}
\end{lem}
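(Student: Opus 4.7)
The plan is to prove this by strong induction on $\delta_Q$. Since $E$ is finite, the CZ decomposition admits only finitely many distinct sidelengths, with a minimum value $\delta_{\min} > 0$; any CZ cube of sidelength $\delta_{\min}$ is automatically keystone, because no strictly smaller CZ cube exists to violate the defining property of Definition \ref{keystonecube1}. This handles the base case with the trivial one-term sequence. Similarly, if $Q$ is already keystone, we set $L=1$ and $Q_1 = Q$.

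For the inductive step, assume $Q$ is not keystone, so there exists $Q^*\in CZ$ with $Q^*\cap 100 Q\neq\emptyset$ and $\delta_{Q^*} < \delta_Q$. My plan is to construct a chain of the stated form from $Q$ to some intermediate $\tilde Q \in CZ$ with $\delta_{\tilde Q} \leq (1-c)\delta_Q$, and then concatenate with the chain from $\tilde Q$ to a keystone cube supplied by the induction hypothesis. Concatenation preserves the good-neighbor condition $(1+c_G)Q_l\cap(1+c_G)Q_{l+1}\neq\emptyset$ trivially, and the geometric-decay bound $\delta_{Q_k} \leq C(A)(1-c(A))^{k-l}\delta_{Q_l}$ survives with a single universal inflation of the constant $C(A)$, since every concatenation only multiplies the constant by a universal factor and only boundedly many concatenations (in terms of $A$ and $n$) are needed.

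The heart of the argument is the construction of the chain from $Q$ to $\tilde Q$, which invokes the halo/cluster machinery built up in Lemmas \ref{lem1}--\ref{lem8}. If $Q$ is interstellar, Lemma \ref{lem1} embeds $(1+c_G)Q$ into the halo $H(S)$ of some cluster $S$. If $Q$ is not privileged for $S$, Lemma \ref{lem8} immediately produces the desired chain from $Q$ to $\hat Q_S$, and by the defining choice of $\hat Q_S$ together with Lemma \ref{lem2} we have $\delta_{\hat Q_S}\leq (1-c)\delta_Q$ up to a universal good-geometry factor. If $Q$ is privileged for $S$, the privileged cubes for $S$ form a bounded-cardinality cluster-core around $x(S)$ (Lemma \ref{lem7}), and we chain within this core via single-step segments of good-geometry neighbors; since $Q$ is not keystone, a strictly smaller privileged cube or an escape to a sub-cluster is available within a bounded number of steps. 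If $Q$ is not interstellar, the failure condition $(1+3c_G)Q\cap E\neq\emptyset$ or $\diam(A^{10}Q\cap E) > A^{-10}\delta_Q$, combined with $Q$ being non-keystone, forces a smaller CZ cube meeting $(1+c_G)Q$ by the good-geometry property, and we take a one-step chain directly.

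The main obstacle is the privileged case: when $Q$ lies deep inside a cluster's central region, neither Lemma \ref{lem6} nor Lemma \ref{lem8} directly produces a strictly smaller cube, since $\delta_Q$ is already comparable to $\delta_{Q(S)}$ by Lemma \ref{lem2}. Resolving this requires the observation that there are at most $CA^{2n}$ privileged cubes for a given $S$ by Lemma \ref{lem7}, so at most $CA^{2n}$ stalled steps can occur within a single cluster's central region before we either reach a keystone cube or escape via $\hat Q_S$ to a strictly smaller scale; the constants in the final decay bound are then controlled by absorbing this bounded number of stalled steps into $C(A)$.
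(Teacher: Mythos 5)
Your proposal diverges substantially from the paper's proof, and unfortunately the divergence introduces several genuine gaps. The paper's argument for this lemma is completely self-contained and does \emph{not} invoke the cluster/halo machinery of Lemmas \ref{lem1}--\ref{lem8} at all (those feed into the finer Lemma \ref{lem11}, which is proved \emph{after} and \emph{using} Lemma \ref{lem10}). Instead, the paper's proof is a short greedy argument: if $Q$ is not keystone, among all $Q' \in CZ$ with $Q' \cap 100Q \neq \emptyset$ and $\delta_{Q'} \leq \tfrac12\delta_Q$, pick the one $Q^1$ at minimal distance from $Q$; a straight segment from $\cl(Q)$ to $\cl(Q^1)$ stays in $100Q$, and any CZ cube $Q^{1,k}$ it crosses is strictly closer to $Q$ than $Q^1$ and hence (by minimality) \emph{cannot} satisfy the smallness condition, so $\delta_{Q^{1,k}} \geq \delta_Q$; being comparable and crossing a segment of length $O(\delta_Q)$, there are only $O(1)$ such cubes. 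One then iterates junior partners $Q \to Q^1 \to Q^2 \to \cdots$, each halving the scale and staying nested, which must terminate by local finiteness. Your skeleton (induction on scale, build a bounded chain to a strictly smaller cube, concatenate) matches the \emph{shape} of this argument, but your way of producing the smaller cube does not actually work.

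Concretely, the gaps are as follows. \textbf{(Non-privileged interstellar case.)} Your claim that $\delta_{\hat{Q}_S} \leq (1-c)\delta_Q$ ``by the defining choice of $\hat{Q}_S$ together with Lemma \ref{lem2}'' is false. Lemma \ref{lem2} gives $\delta_{\hat{Q}_S} \simeq_A \delta_{Q(S)}$ and $\delta_Q \simeq_A |x(Q,S)-x(S)| + \delta_{Q(S)}$; when $Q$ is only \emph{barely} non-privileged, i.e.\ $|x(Q,S)-x(S)|$ exceeds $\delta_{Q(S)}$ by an arbitrarily small margin, these two quantities are $A$-comparable and $\delta_{\hat{Q}_S}$ can even be \emph{larger} than $\delta_Q$. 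Lemma \ref{lem8}'s geometric decay bound only gives decay along the chain; it doesn't force the endpoint to be smaller, because the chain may be short. \textbf{(Privileged case.)} Your assertion that ``a strictly smaller privileged cube or an escape to a sub-cluster is available within a bounded number of steps'' is not supported by any lemma in the paper and is not clear as stated; the privileged cubes are all $A$-comparable to $\delta_{Q(S)}$, so there is no canonical strict decrease among them. \textbf{(Non-interstellar case.)} You claim that non-keystone plus non-interstellar ``forces a smaller CZ cube meeting $(1+c_G)Q$.'' This is simply not so: non-keystone only guarantees a smaller cube somewhere in $100Q$, which is far from the neighbor zone $(1+c_G)Q$; good geometry constrains the neighbors to be within a factor of $64$ but certainly permits all of them to be $\geq \delta_Q$. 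Precisely this obstacle is what the paper's minimal-distance segment argument is designed to surmount, and your proposal has no substitute for it.
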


\begin{proof}
Let $x \in \R^n$, and let $Q^x$ be the CZ cube containing $x$. Then any CZ cube that meets $(1+c_G)Q^x$ is a dyadic cube of sidelength at least $\delta_{Q^x}/64$. Hence, $x$ has a neighborhood that meets only finitely many CZ cubes. Consequently, every compact set meets only finitely many CZ cubes.

If $Q$ is a keystone cube, then the conclusion of the lemma holds with $L=1$ and $Q_1 = Q$. 

Suppose $Q$ is not a keystone cube. Then there exist cubes $Q' \in CZ$ with
\begin{equation} \label{hum}
Q' \cap 100Q \neq \emptyset \; \mbox{and} \; \delta_{Q'} \leq \frac{1}{2}\delta_Q.
\end{equation} 
There are only finitely many such $Q'$. Pick $Q^1 \in CZ$ that satisfies \eqref{hum} and has minimal distance to $Q$ among cubes satisfying \eqref{hum}. Let $\mathfrak{s} : [0,1] \rightarrow \R^n$ be an affine map with
\begin{equation} \label{eq_49}
\mathfrak{s}(0) \in \cl(Q), \; \mathfrak{s}(1) \in \cl(Q^1) \; \mbox{and} \; \lvert \mathfrak{s}(1) - \mathfrak{s}(0) \rvert = \dist(Q^1,Q).
\end{equation}
Since $Q^1$ meets $100Q$ and the point $\mathfrak{s}(1) \in \cl(Q^1)$ has a minimal distance to $Q$, it follows that $\mathfrak{s}(1) \in \cl(100Q)$. Also, since $\mathfrak{s}(0) \in \cl(Q)$, we have $\mathfrak{s}((0,1)) \subset 100 Q$.

The bounded set $\mathfrak{s}((0,1))$ meets only finitely many CZ cubes $Q^{1,1},\ldots,Q^{1,K}$. Thus, 
\begin{align}
& Q^{1,k} \cap 100 Q \neq \emptyset, \; \mbox{and} \label{eq_50} \\
& \dist(Q^{1,k},Q) < \dist(Q^1,Q) \;\; \mbox{for} \; k=1,\ldots,K. \label{eq_51}
\end{align}
(Here, we use \eqref{eq_49} to prove \eqref{eq_51}.) Reordering the cubes $Q^{1,1},\ldots,Q^{1,K}$ if necessary, we can arrange that
\begin{equation}
\begin{aligned}
 &(1+c_G) Q^{1,1} \cap (1+c_G) Q \neq \emptyset \; \mbox{and} \; (1+c_G) Q^{1,K} \cap (1+c_G) Q^1 \neq \emptyset; \; \mbox{and} \\
&(1+c_G) Q^{1,k} \cap (1+c_G) Q^{1,k+1} \neq \emptyset \;\; \mbox{for} \; k=1,\ldots, K-1.
\end{aligned}
\label{eq_53}
\end{equation}

From \eqref{eq_51} and the definition of $Q^1$, each $Q^{1,k}$ must not satisfy \eqref{hum}, hence $\delta_{Q^{1,k}} \geq \delta_Q$, thanks to \eqref{eq_50}. Since each $Q^{1,k}$ meets $100Q$, good geometry shows that $\delta_{Q^{1,k}} \leq C' \delta_Q$, as in the proof of Lemma \ref{moregg}. Therefore,
\begin{equation}
\label{eq_54} \delta_{Q^{1,k}} \simeq \delta_Q \; \mbox{for} \; k=1,\ldots,K, \;\; \mbox{and} \; K \leq C.
\end{equation}

We call the cube $Q^1 \in CZ$ a \emph{junior partner} of $%
Q $; any sequence $( Q^{1,1}, \ldots, Q^{1,K})$ of CZ cubes that satisfies \eqref{eq_53} and \eqref{eq_54} is said to \emph{join} $Q$ with $Q^1$. Since $Q^1$ meets $100 Q$ and satisfies $\delta_{Q^1} \leq \frac{1}{2}\delta_Q$, we have
\begin{equation*}
C'' \cdot Q^1 \subset C'' \cdot Q \; \text{ whenever } Q^1 \text{ is a junior
partner to } Q.
\end{equation*}
Now, either $Q^1$ is a keystone cube or it has a junior partner 
$Q^2$. In the latter case, either $Q^2$ is a
keystone cube or it has a junior partner $Q^3$.
Continue in this way, either forever, or until we arrive at a keystone cube.

If the above process continued indefinitely, then we would have a sequence
of $CZ$ cubes $Q^1,Q^2,Q^3, \ldots$ with each $Q^{j+1}$ being a junior
partner to $Q^{j}$. That would imply that $\delta_{Q^{j+1}}\leq \frac{1}{%
2}\delta_{Q^{j}}$ and $C'' \cdot Q^{j+1} \subset C'' \cdot Q^{j}$ for each $j$.
Thus, the cubes $Q^j$ would shrink to a single point as $j
\rightarrow \infty$; however, this contradicts the fact that every point has
a neighborhood that meets only finitely many CZ cubes. Thus, the above process of successively
passing to junior partners must stop after finitely many steps. Accordingly,
starting from any $Q \in CZ$, we obtain a finite sequence $Q^1,Q^2,\ldots,
Q^{J}$ of $CZ$ cubes such that $Q^{j+1}$ is a junior partner of $%
Q^j$ for $1\leq j \leq J-1$, and $Q^{J}$ is a keystone
cube. We now join each $Q^{j}$ with $Q^{j+1}$ through a sequence of $CZ$
cubes. Concatenating these
sequences, we obtain a sequence satisfying the conclusions of Lemma \ref{lem10}.
\end{proof}

\begin{lem}
\label{lem11} There exists a set $CZ_{\spec}$, consisting of at most $%
C(A)\cdot N$ distinct CZ cubes, for which the following holds. 
We can associate to each $Q \in CZ$ a finite sequence $\cS_Q =
(Q_1,Q_2,\ldots,Q_L)$ of CZ cubes, with the following properties.

\begin{itemize}
\item $Q_1 = Q$ and $Q_L$ is a keystone cube.

\item $(1+c_G) Q_l \cap (1+c_G)Q_{l+1} \neq \emptyset$ for $l=1,2,\ldots,L-1$.%

\item $\delta_{Q_k} \leq C(A) \cdot(1-c(A))^{k-l} \delta_{Q_l}$ for $%
1 \leq l \leq k \leq L$.

\item Let $Q,Q^{\prime }\in
CZ \setminus CZ_{\spec}$, and suppose $(1+c_G) Q \cap (1 +
c_G)Q^{\prime }\neq \emptyset$. Let $\cS_Q=(Q_1,\ldots,Q_L) $ and $\cS%
_{Q^{\prime }} = (Q^{\prime }_1,\ldots,Q^{\prime }_{L^{\prime
}})$ be the finite sequences of CZ cubes associated to $Q$ and to $Q^{\prime
} $, respectively. Then $Q_L = Q^{\prime }_{L^{\prime }}$.
\end{itemize}
\end{lem}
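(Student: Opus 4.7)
The plan is to take $CZ_{\spec}$ to be the union of all non-interstellar CZ cubes together with all CZ cubes that are privileged for some cluster. The cardinality bound is then immediate: Lemma \ref{lemA} bounds the non-interstellar cubes by $C(A) \cdot N$, while Lemma \ref{lemB} produces at most $C N$ distinct clusters and Lemma \ref{lem7} contributes at most $C A^{2n}$ privileged cubes per cluster, so $\#(CZ_{\spec}) \leq C(A) \cdot N$. For $Q \in CZ_{\spec}$ no coordination with neighbors is required, so I would simply define $\cS_Q$ to be any path produced by Lemma \ref{lem10}.

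The interesting case is $Q \in CZ \setminus CZ_{\spec}$. Such a $Q$ is interstellar, so Lemma \ref{lem1} places $(1+c_G) Q$ inside the halo $H(S)$ of some cluster $S = S(Q)$, and the disjointness of halos (Lemma \ref{lem3}) makes $S(Q)$ uniquely determined. Because $Q$ is not privileged for any cluster, in particular not for $S(Q)$, the cube $\hat Q_{S(Q)}$ exists (it is witnessed by $Q$ itself), and Lemma \ref{lem8} produces a CZ-path $(P_1,\ldots,P_K)$ from $Q$ to $\hat Q_{S(Q)}$ with the good-geometry and geometric-decay properties. I would then fix once and for all, for each cluster $S$ for which $\hat Q_S$ exists, a canonical path $\cS^\star_S = (R_1,\ldots,R_M)$ from $\hat Q_S$ to a keystone cube $K_S$ by applying Lemma \ref{lem10} to $\hat Q_S$. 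The assignment $\cS_Q$ is then defined to be the concatenation of the Lemma \ref{lem8} path with $\cS^\star_{S(Q)}$ (identifying the common endpoint $\hat Q_{S(Q)}$).

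The first two bullet points for $\cS_Q$ follow directly from Lemmas \ref{lem8} and \ref{lem10}. The third bullet requires checking decay across the join $\hat Q_{S(Q)}$: combining $\delta_{\hat Q_{S(Q)}} \leq C(A)(1-c(A))^{K-l} \delta_{P_l}$ from Lemma \ref{lem8} with $\delta_{R_k} \leq C(A)(1-c(A))^{k-1} \delta_{\hat Q_{S(Q)}}$ from Lemma \ref{lem10} yields the desired inequality across the concatenation point with a controlled constant. The fourth bullet, which is the real substance of the lemma, uses Lemma \ref{lem9}: if $Q, Q' \in CZ \setminus CZ_{\spec}$ satisfy $(1+c_G)Q \cap (1+c_G)Q' \neq \emptyset$, then both are interstellar and Lemma \ref{lem9} produces a single cluster $S$ with $(1+c_G)Q \subset H(S)$ and $(1+c_G)Q' \subset H(S)$. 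Uniqueness of $S(\cdot)$ then forces $S(Q) = S(Q') = S$, so the canonical tails $\cS^\star_{S(Q)}$ and $\cS^\star_{S(Q')}$ are literally the same sequence of cubes, and in particular both $\cS_Q$ and $\cS_{Q'}$ terminate at the common keystone cube $K_S$.

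The main obstacle I would expect is not any individual calculation -- Lemmas \ref{lem1}--\ref{lem10} have already done the heavy lifting -- but rather identifying the correct definition of $CZ_{\spec}$. The two inclusions play complementary roles: excluding non-interstellar cubes is what permits Lemma \ref{lem9} to force $S(Q) = S(Q')$ for neighboring $Q, Q' \notin CZ_{\spec}$, while excluding privileged cubes is exactly what guarantees both that $\hat Q_{S(Q)}$ exists and that Lemma \ref{lem8} is applicable. Once $CZ_{\spec}$ is chosen this way, the remaining work is bookkeeping about concatenating paths.
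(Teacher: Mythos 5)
Your proposal is correct and follows the paper's own proof essentially verbatim: the same definition of $CZ_{\spec}$ (non-interstellar cubes plus privileged cubes), the same use of Lemma \ref{lem10} for special cubes, the same construction for non-special $Q$ via Lemma \ref{lem8} followed by a once-and-for-all tail path from $\hat{Q}_{S(Q)}$, and the same appeal to Lemma \ref{lem9} to show neighboring non-special cubes share a cluster. Your observation that the two exclusions in $CZ_{\spec}$ serve exactly these two complementary roles is also the heart of the argument.
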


\begin{proof}
First, we define the collection $CZ_{\spec}$. It consists of all
non-interstellar cubes $Q\in CZ$, together with all $Q\in CZ$ that are
privileged for some cluster $S$. If $Q \in CZ_{\spec}$, then we say that $Q$ is ``special.'' We have seen that
there are at most $C(A)N$ non-interstellar cubes and at most $CN$ distinct clusters (see Lemmas \ref{lemA}, \ref{lemB}). 
Since there are at most $C(A)$ privileged cubes for each given cluster,
it follows that $CZ_{\spec}$ consists of at most $C(A)N$ distinct
CZ cubes.

Next, we define the sequence $\cS_Q = (Q_1,Q_2,\ldots Q_L)$ for each CZ cube 
$Q$. If $Q \in CZ_{\spec}$, we just pick any finite sequence as in
Lemma \ref{lem10}. Then $\cS_Q$ satisfies the first three bullet points in the
statement of Lemma \ref{lem11}. It remains to define the $\cS_Q$ for all $Q
\in CZ \setminus CZ_{\spec}$, and to prove that our $\cS_Q$ and $%
CZ_{\spec}$ have the properties asserted in Lemma \ref{lem11}.

For each cluster $S$ such that there exist cubes $Q \in CZ$
that are not privileged for $S$ but that meet $H(S)$, we have picked out a cube $\hat{Q}_S$ in the discussion
following Lemma \ref{lem7}. Applying Lemma \ref{lem10} to $\hat{Q}_S$, we
obtain a finite sequence $\hat{Q}^1_S,\hat{Q}^2_S,\ldots,\hat{Q}^{L(S)}_S$
of CZ cubes such that

\begin{itemize}
\item $\hat{Q}^1_S = \hat{Q}_S$, $\hat{Q}^{L(S)}_S$ is a keystone cube.

\item $(1+c_G) \hat{Q}^l_S \cap (1+c_G) \hat{Q}^{l+1}_S \neq \emptyset$ for $%
l=1,2,\ldots,L(S)-1$.

\item $\delta _{\hat{Q}_{S}^{\nu }}\leq C\left( A\right) \cdot (1-c\left(
A\right) )^{\nu -\mu }\delta _{\hat{Q}_{S}^{\mu }}$ for $1\leq \mu \leq \nu
\leq L(S)$.
\end{itemize}

For each $Q \in CZ$ that is not privileged for $S$ but that meets $H(S)$, we define a sequence $%
Q_1,Q_2,\ldots,Q_L$ as in Lemma \ref{lem8}. Thus, $Q_1=Q$, $Q_L=\hat{Q}_S$, $%
(1+c_G)Q_l \cap (1+c_G) Q_{l+1} \neq \emptyset$ for $1 \leq l \leq L-1$; and 
$\delta_{Q_k} \leq C(A) \cdot (1-c(A))^{k-l} \delta_{Q_l}$ for $1
\leq l \leq k \leq L$.

Unless $Q$ is special (in which case we have already defined $\cS(Q)$), we
then define $\cS(Q)$ to be the sequence 
\begin{equation*}
\cS(Q)=\left( Q_{1},Q_{2},\ldots ,Q_{L},\hat{Q}_{S}^{1},\hat{Q}%
_{S}^{2},\ldots ,\hat{Q}_{S}^{L(S)}\right) .
\end{equation*}%
This is well-defined, since each $Q\in CZ\backslash CZ_{\spec}$ is
as above for one and only one $S$ (see Lemma \ref{lem9} with $Q^{\prime }=Q$%
).

Since $Q_L = \hat{Q}_S = \hat{Q}^1_S$, one checks easily that $\cS(Q)$
satisfies the first three bullet points in the statement of Lemma \ref{lem11}.

Moreover, if $Q$ and $Q^{\prime }$ are any two non-special $CZ$ cubes that
meet $H(S)$, then the finite sequences $\cS(Q)$ and $\cS(Q^{\prime })$ both
end with the finite sequence $\hat{Q}^1_S,\hat{Q}^1_S,\ldots,\hat{Q}%
^{L(S)}_S $. In particular, the sequences $\cS(Q)$ and $\cS(Q^{\prime })$
both end with the same cube, namely $\hat{Q}^{L(S)}_S$.

The above observation applies to any $Q ,Q^{\prime }\in CZ \setminus CZ_{%
\spec}$ such that $(1+c_G)Q \cap (1+c_G)Q^{\prime }\neq \emptyset$.
Indeed, Lemma \ref{lem9} gives a cluster $S$ such that $(1+c_G)Q,(1+c_G)Q^{%
\prime }\subset H(S)$. The cluster $S$ admits non-special cubes $Q^{\prime
\prime }$ that intersect $H(S)$; indeed, we may take $Q^{\prime \prime }= Q$
or $Q^{\prime }$. Hence $\cS(Q)$ and $\cS(Q^{\prime })$ end with the same
cube, by the observation in the preceding paragraph.

The conclusions of Lemma \ref{lem11} are now obvious.
\end{proof}

\begin{proof}[Proof of Proposition \protect\ref{pathassignment}]
We simply take $A$ in Lemma \ref{lem11} to be a large enough constant
determined by the dimension $n$.
\end{proof}

\section{Paths to Keystone Cubes II}\label{sec_PTKC2}
\setcounter{equation}{0}

We place ourselves back in the setting of section \ref{sec_cz}.  In particular, $\CZ$ is a dyadic decomposition of the cube $Q^\circ = (0,1]^n$. We define the collection of keystone cubes for $\CZ$ by
\begin{equation}\CZ_{\ky} = \bigl\{Q \in \CZ : \delta_{Q'} \geq \delta_Q \; \mbox{for every} \; Q' \in \CZ \; \mbox{that meets} \; 100Q \bigr\}, \label{keystonecube2} \end{equation}
which will also be denoted by
$$\CZ_{\ky} = \{Q^\sharp_1,\ldots,Q^\sharp_{\mu_{\max}}\}.$$ 
\begin{lem}\label{moregeom}
For each $\mu=1,\ldots,\mu_{\max}$, there are at most $C'$ indices $\mu' \in \{1,\ldots,\mu_{\max}\}$ with $10 Q^\sharp_{\mu'} \cap 10 Q^\sharp_\mu \neq \emptyset$.
\end{lem}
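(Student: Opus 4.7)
Here is my plan for proving Lemma \ref{moregeom}.

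\textbf{Strategy.} Fix a keystone cube $Q^\sharp_\mu$. I will show that any keystone cube $Q^\sharp_{\mu'}$ with $10 Q^\sharp_{\mu'} \cap 10 Q^\sharp_\mu \neq \emptyset$ must in fact have $\delta_{Q^\sharp_{\mu'}} = \delta_{Q^\sharp_\mu}$. Once this equality is established, a straightforward volume-packing argument caps the number of such $\mu'$ by a constant $C'$ depending only on $n$. The proof uses only the definition \eqref{keystonecube2} of keystone cube, applied symmetrically.

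\textbf{Step 1: equal sidelengths.} Suppose $10 Q^\sharp_{\mu'} \cap 10 Q^\sharp_\mu \neq \emptyset$, and let $c_\mu, c_{\mu'}$ denote the centers. Intersecting a point in both cubes gives, in the max-norm,
\[
|c_{\mu'} - c_\mu|_\infty \;\leq\; 5\,\delta_{Q^\sharp_{\mu'}} + 5\,\delta_{Q^\sharp_\mu}.
\]
Without loss of generality, assume $\delta_{Q^\sharp_{\mu'}} \geq \delta_{Q^\sharp_\mu}$, so the above is bounded by $10\,\delta_{Q^\sharp_{\mu'}}$. Since $100 Q^\sharp_{\mu'}$ is the $50\,\delta_{Q^\sharp_{\mu'}}$-neighborhood (max-norm) of $c_{\mu'}$, the center $c_\mu$ itself lies in $100 Q^\sharp_{\mu'}$; hence $Q^\sharp_\mu \cap 100 Q^\sharp_{\mu'} \neq \emptyset$. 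Since $Q^\sharp_{\mu'} \in \CZ_{\ky}$ and $Q^\sharp_\mu \in \CZ$, the definition \eqref{keystonecube2} forces $\delta_{Q^\sharp_\mu} \geq \delta_{Q^\sharp_{\mu'}}$. Combined with the WLOG assumption, this yields $\delta_{Q^\sharp_{\mu'}} = \delta_{Q^\sharp_\mu} =: \delta$.

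\textbf{Step 2: counting by volume.} Let $\mu'$ range over indices with $10 Q^\sharp_{\mu'} \cap 10 Q^\sharp_\mu \neq \emptyset$; by Step 1, each corresponding cube $Q^\sharp_{\mu'}$ has sidelength $\delta$ and center $c_{\mu'}$ satisfying $|c_{\mu'} - c_\mu|_\infty \leq 10 \delta$. In particular,
\[
Q^\sharp_{\mu'} \subset c_\mu + [-\tfrac{21}{2}\delta,\tfrac{21}{2}\delta]^n.
\]
Because the $Q^\sharp_{\mu'}$ are distinct cubes in the Calder\'on--Zygmund decomposition $\CZ$, they are pairwise disjoint, each of volume $\delta^n$. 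Comparing to the volume $(21\delta)^n$ of the enclosing box gives at most $21^n$ such indices, so we may take $C' = 21^n$, completing the proof.

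\textbf{Main obstacle.} There isn't really one: the lemma is geometric bookkeeping. The only mildly subtle point is the symmetric use of the keystone property — one must observe that $10 Q^\sharp_{\mu'} \cap 10 Q^\sharp_\mu \neq \emptyset$ is enough (after reducing to the larger of the two) to place the center of the smaller inside $100 Q^\sharp_{\mu'}$, so that the keystone inequality fires in the needed direction.
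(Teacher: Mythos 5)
Your proof is correct and takes essentially the same route as the paper: use a WLOG reduction to place the smaller keystone cube inside $100$ times the larger one, invoke the keystone property to force equal sidelengths, and finish by packing. The paper leaves the volume-counting step implicit; you spell it out, but the substance is identical.
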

\begin{proof}
Suppose that $Q^\sharp, \hQ^\sharp \in \CZ_{\ky}$ satisfy $10 Q^\sharp \cap 10\hQ^\sharp \neq \emptyset$. Without loss of generality, we may assume that $\delta_{Q^\sharp} \geq \delta_{\hQ^\sharp}$. Therefore, $100 Q^\sharp \cap \hQ^\sharp \neq \emptyset$. By the definition of keystone cubes, we have $\delta_{\hQ^\sharp} \geq \delta_{Q^\sharp}$. Thus, $\delta_{Q^\sharp} = \delta_{\hQ^\sharp}$ whenever $10 Q^\sharp \cap 10\hQ^\sharp \neq \emptyset$. The conclusion of Lemma \ref{moregeom} follows immediately.
\end{proof}

By applying Proposition \ref{pathassignment} to the current setting we prove the following.

\begin{prop}
\label{keygeom} To each cube $Q_\nu \in \CZ$ ($\nu=1,\ldots,\nu_{\max}$), we can assign a finite sequence $%
\cS_\nu=(Q_{\nu,1},Q_{\nu,2},\ldots,Q_{\nu,L_\nu})$ of cubes from $\CZ$, such that the following properties hold.

\begin{itemize}

\item[\textbf{(K1)}] $Q_{\nu,1}=Q_\nu$ and $Q_{\nu,L_\nu}$ is a keystone cube for $\CZ$; $Q_{\nu,l} \leftrightarrow Q_{\nu,l+1}$ $\;\;$ ($1 \leq l \leq L_\nu-1$); and 
$$\delta_{Q_{\nu,k}} \leq C\cdot (1-c)^{k-l}\delta_{Q_{\nu,l}} \;\;\; (1 \leq l \leq k \leq L_\nu).$$

\item[\textbf{(K2)}]  Let $\mathcal{K}:\CZ \rightarrow \CZ$ be defined by 
$\key(Q_\nu)=Q_{\nu,L_\nu}$. Then 
\begin{equation*}
\# \left\{ (\nu, \nu') \in \{1,\ldots,\nu_{\max}\}^2: Q_\nu \leftrightarrow Q_{\nu'} \; \mbox{and} \;
\key(Q_\nu) \neq \key(Q_{\nu'}) \right\} \leq C\cdot N.
\end{equation*}

\item[\textbf{(K3)}] $\key(Q_\nu) = Q_{\nu}$ for any $Q_\nu \in \CZ_{\ky}$.

\end{itemize}
\end{prop}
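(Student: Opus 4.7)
We derive Proposition \ref{keygeom} by applying Proposition \ref{pathassignment} of section \ref{PTKC} to our CZ decomposition and then enforcing (K3) by a small modification of the resulting paths.

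First I would verify the standing hypotheses of section \ref{PTKC} in our setting: good geometry holds by Lemma \ref{gg} (upon choosing $c_G$ smaller than the bound there), the cube--set relation $\#(E \cap 9Q) \geq 2$ for $Q \in \CZ$ holds by (\ref{sec_cz}.\ref{nonempty}), and Definition \ref{keystonecube1} coincides in this specific setting with (\ref{keystonecube2}). Proposition \ref{pathassignment} then supplies a subcollection $CZ_{\spec} \subset \CZ$ of size at most $C N$ together with sequences $\mathcal{S}_Q = (Q_1,\ldots,Q_L)$ satisfying (i)--(iii). Property (K1) is then immediate from Proposition \ref{pathassignment}(ii), after using the first bullet of Lemma \ref{moregg} to equate the neighbor relation $Q \leftrightarrow Q'$ with the condition $(1+c_G)Q \cap (1+c_G)Q' \neq \emptyset$ up to universal constants.

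Next I would override the path assignment on keystone cubes: set $\mathcal{S}_Q := (Q)$ for every $Q \in \CZ_{\ky}$. This preserves (K1) trivially and secures (K3) by construction.

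To verify (K2), I observe that by Proposition \ref{pathassignment}(iii), whenever $Q_\nu, Q_{\nu'} \in \CZ \setminus (CZ_{\spec} \cup \CZ_{\ky})$ are neighbors, the (unmodified) sequences $\mathcal{S}_{Q_\nu}$ and $\mathcal{S}_{Q_{\nu'}}$ terminate at a common keystone cube, so $\key(Q_\nu) = \key(Q_{\nu'})$. Hence any ``bad pair'' must have at least one endpoint in $CZ_{\spec} \cup \CZ_{\ky}$; since each cube has at most $C$ neighbors by good geometry, the total count of bad pairs is at most $C \cdot \#(CZ_{\spec} \cup \CZ_{\ky})$. Proposition \ref{pathassignment}(i) gives $\#(CZ_{\spec}) \leq C N$, and the remaining task is the bound $\#(\CZ_{\ky}) \leq C N$.

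The main obstacle is this last count. I would prove it by noting that every non-interstellar keystone cube lies in $CZ_{\spec}$ by definition, while every interstellar keystone cube $Q^\sharp$ is associated by Lemma \ref{lem1} with a cluster $S$ satisfying $(1+c_G) Q^\sharp \subset H(S)$; since $x(S) \in S \subset 13 Q^\sharp$, the keystone property forces $\delta_{Q(S)} \gtrsim \delta_{Q^\sharp}$, while one has $|x(Q^\sharp, S) - x(S)| \lesssim \delta_{Q^\sharp} \lesssim \delta_{Q(S)}$. Thus $Q^\sharp$ falls within the scope of the volume-packing argument of Lemma \ref{lem7}, with $R$ taken to be a universal multiple of $\delta_{Q(S)}$, so each cluster admits only $O(1)$ associated interstellar keystone cubes. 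Combined with the cluster bound $\lesssim N$ from Lemma \ref{lemB}, this yields $\#(\CZ_{\ky}) \lesssim N$ and completes the proof.
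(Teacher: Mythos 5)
Your plan founders at the very first step: Proposition \ref{pathassignment} is stated for a Calder\'on--Zygmund decomposition that partitions all of $\R^n$, and its internal machinery (the cluster/halo constructions, Lemma \ref{lem10}'s affine path through $100Q$, ``the CZ cube containing $x$'' for arbitrary $x \in \R^n$, etc.) uses that hypothesis throughout. The collection $\CZ$ of section \ref{sec_cz} only partitions the unit cube $Q^\circ$, so you cannot invoke Proposition \ref{pathassignment} directly. This is precisely the issue that section \ref{sec_PTKC2} spends its effort on: the paper constructs an enlarged decomposition $CZ^+ = \CZ \cup \overline{CZ}$ of all of $\R^n$ satisfying the standing hypotheses of section \ref{PTKC}, applies Proposition \ref{pathassignment} to $CZ^+$, and then faces a second problem you have not addressed --- the resulting paths $\widehat{\cS}_\nu$ may pass through cubes of $\overline{CZ}$ lying outside $Q^\circ$, which are not members of $\CZ$ and hence cannot appear in the sequences required by (K1). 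The paper resolves this by detecting the first and last $\overline{CZ}$ cube on each offending path and splicing in a short bypass $\mathcal{E}_\nu$ consisting of boundary cubes of $\CZ$ (made possible by the fourth bullet of Lemma \ref{moregg}, which forces boundary cubes to have side $\geq \delta_{Q^\circ}/20$). You would also need to keep track of the distinction between keystone cubes for $CZ^+$ and keystone cubes for $\CZ$, for which the paper proves the inclusion $CZ^+_{\ky} \subset \CZ_{\ky}$; your remark that Definition \ref{keystonecube1} ``coincides'' with (\ref{keystonecube2}) glosses over this.

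Setting aside that gap, your route to the keystone bound $\#(\CZ_{\ky}) \leq C N$ via interstellar cubes, clusters, and the packing Lemma \ref{lem7} is more elaborate than what is needed and imports internal notions from the proof of Proposition \ref{pathassignment} that are not exposed in its statement. The paper's argument is considerably shorter: by (\ref{sec_cz}.\ref{nonempty}) each $Q \in \CZ_{\ky}$ satisfies $E \cap 10Q \neq \emptyset$, so one may choose $y_Q \in E \cap 10Q$, and Lemma \ref{moregeom} (two keystone cubes with $10Q^\sharp \cap 10\widehat{Q}^\sharp \neq \emptyset$ have equal sidelength) immediately shows the assignment $Q \mapsto y_Q$ has fibers of universally bounded size. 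I would recommend adopting this direct count and concentrating your effort on the $CZ^+$ extension and the path re-routing, which is where the actual content of the proof lies.
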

\begin{proof}
First we embed $\CZ$ (a dyadic decomposition of $Q^\circ$) into a dyadic decomposition $CZ^+$ of the whole $\R^n$. 

Define the collection of dyadic cubes
\begin{equation}
\label{ocz} 
\begin{aligned}
\overline{CZ} =&\; \big[ \; \{Q \subset \R^n \; \mbox{dyadic} : \; \delta_Q = 1, \; 3Q^+ \supset Q^\circ\} \\
 & \cup \{Q \subset \R^n \; \mbox{dyadic} : \; \delta_Q \geq 2, \; 3Q^+ \supset Q^\circ, \; 3Q \not\supset Q^\circ\} \;\big] \setminus \{Q^\circ\}.
\end{aligned}
\end{equation}
We establish the following claims.

\textbf{Claim 1:} $\overline{CZ} $ partitions $\R^n \setminus Q^\circ$ into dyadic cubes.

\textbf{Claim 2:} If $Q \in \overline{CZ} $ then $9 Q \supset Q^\circ$.

\textbf{Claim 3:} If $Q,Q' \in \overline{CZ}$ satisfy $\cl(Q) \cap \cl(Q') \neq \emptyset$, then $1/2 \leq \delta_Q/\delta_{Q'} \leq 2$.

\textbf{Claim 4:} If $Q \subset \R^n$ is dyadic and satisfies $\delta_Q =1$, $\cl(Q) \cap \partial Q^\circ \neq \emptyset$ and $Q \neq Q^\circ$, then $Q \in \overline{CZ}$.

\underline{Proof of Claim 1:} For each $x \in \R^n \setminus Q^\circ$, let $Q \subset \R^n$ be the smallest dyadic cube with $x \in Q$, $3Q^+  \supset Q^\circ$ and $\delta_{Q} \geq 1$. Then $Q \neq Q^\circ$, since $x \notin Q^\circ$. If $\delta_{Q} = 1$, then $Q \in \overline{CZ}$. On the other hand, if $\delta_{Q} \geq 2$, then $3Q \not\supset Q^\circ$ since $Q$ is minimal, hence also $Q \in \overline{CZ}$. In either case, $Q \in \overline{CZ}$ and $x \in Q$. Thus, $\overline{CZ}$ covers $\R^n \setminus Q^\circ$.

We now prove that the collection $\overline{CZ}$ is pairwise disjoint. For the sake of contradiction, suppose that $Q,Q' \in \overline{CZ}$ are distinct with $Q \cap Q' \neq \emptyset$. Since $Q,Q'$ are dyadic, either $Q \subsetneq Q'$ or $Q' \subsetneq Q$. Without loss of generality, $Q \subsetneq Q'$. Therefore, $Q^+ \subset  Q'$. It follows that $\delta_{Q'} \geq 2 \delta_Q \geq 2$. Also, since $Q \in \overline{CZ}$, we have $3Q^+ \supset Q^\circ$, hence $3Q' \supset Q^\circ$. Thus, $Q' \notin \overline{CZ}$, yielding the desired contradiction.

Obviously, each $Q \in \overline{CZ}$ is disjoint from $Q^\circ$, which completes the proof of Claim 1.

\underline{Proof of Claim 2:} Let $Q \in \overline{CZ}$. Then $9Q \supset 3 Q^+ \supset Q^\circ$.

\underline{Proof of Claim 3:} For the sake of contradiction, suppose that $Q, Q' \in \overline{CZ}$ satisfy $\cl(Q) \cap \cl(Q') \neq \emptyset$ and $\delta_{Q'} \geq 4 \delta_Q$. It follows that $3Q^+ \subset 3Q'$. Therefore, since $Q^\circ \subset 3Q^+$, we have $Q^\circ \subset 3Q'$. Note that $\delta_{Q'} \geq 4$. Therefore, $Q' \notin \overline{CZ}$, yielding the desired contradiction.

\underline{Proof of Claim 4:} For any cube $Q$ that satisfies $\cl(Q) \cap \partial Q^\circ \neq \emptyset$ and $\delta_Q \geq 1$, we have $3Q^+ \supset Q^\circ$. If $Q$ is also dyadic with $\delta_Q = 1$ and $Q \neq Q^\circ$, then we have $Q \in \overline{CZ}$ by definition.

Let us define 
$$CZ^+ = \CZ \cup \overline{CZ}.$$ From Claim 1 and the fact that $\CZ$ is a dyadic decomposition of $Q^\circ$, it follows that $CZ^+$ is a dyadic decomposition of $\R^n$.

The following is immediate from Claims 3,4, the Good Geometry of $\CZ$ (Lemma \ref{gg}) and the last bullet point in Lemma \ref{moregg}.
$$\mbox{If} \; Q, Q' \in CZ^+\; \mbox{satisfy} \; \cl(Q) \cap \cl(Q') \neq \emptyset, \; \mbox{then} \; \delta_Q/\delta_{Q'} \in [1/64,64].$$
Therefore,
\begin{equation} \label{gg0} 
\mbox{if} \; Q,Q' \in CZ^+ \; \mbox{satisfy} \; (1+10^{-5})Q \cap (1 + 10^{-5})Q' \neq \emptyset,\; \mbox{then} \; \cl(Q) \cap \cl(Q') \neq \emptyset.
\end{equation}
Thus, by the last two lines,
\begin{equation} \label{gga} \mbox{if} \; Q, Q' \in CZ^+\; \mbox{satisfy} \; (1+10^{-5})Q \cap (1 + 10^{-5})Q' \neq \emptyset, \; \mbox{then} \; \delta_Q/\delta_{Q'} \in [1/64,64].
\end{equation}
It follows that $CZ^+$ satisfies the first bullet point at the beginning of section \ref{PTKC} with $c_G := 10^{-6}$.

From (\ref{sec_cz}.\ref{nonempty}), we have $\#(E \cap 9Q) \geq 2$ for every $Q \in \CZ$. Also, from Claim 3 and $E \subset Q^\circ$ we have $\#(E \cap 9Q) = \#(E) \geq 2$ for every $Q \in \overline{CZ}$. Thus, $CZ^+$ and $E \subset \R^n$ satisfy the second and third bullet points at the beginning of section \ref{PTKC}.

Define the keystone cubes for $CZ^+$ as in section \ref{PTKC}:
$$CZ^+_{\ky} = \{Q \in CZ^+ : \delta_{Q'} \geq \delta_Q \; \mbox{for every} \; Q' \in CZ^+ \; \mbox{that meets} \; 100Q\}.$$
Since each $Q \in \overline{CZ}$ satisfies $\delta_Q \geq 1$ and $3Q^+ \supset Q^\circ$, there must exist cubes $Q' \in  \CZ$ with $Q' \cap 100Q \neq \emptyset$ and $\delta_{Q'} \leq \frac{1}{2} \delta_Q$. (Here, we also use the fact that $\CZ$ partitions $Q^\circ$ into cubes of sidelength $\leq \frac{1}{2}$.) Consequently,
\begin{equation}
CZ^+_{\ky} \subset \CZ_{\ky}. \label{keycontain}
\end{equation}

From Proposition \ref{pathassignment}, we find a subcollection $CZ^+_\spec \subset CZ^+$, and an assignment to each $Q_\nu \in \CZ$ ($\nu=1,\ldots,\nu_{\max}$) of a sequence $\widehat{\cS}_{\nu} = (\hQ_{\nu,1},\ldots,\hQ_{\nu,L_\nu})$ of cubes, such that the following properties hold.
\begin{itemize}
\item[(a)] $CZ^+_\spec$ contains at most $C \cdot N$ distinct cubes.
\item[(b)] $\hQ_{\nu,1}=Q_\nu$ and $\hQ_{\nu,L_\nu} \in CZ^+_{\ky}$; each $\hQ_{\nu,k}$ belongs to $CZ^+$;
\begin{equation} \label{label0}
(1+10^{-6})\hQ_{\nu,l} \cap (1+10^{-6}) \hQ_{\nu,l+1} \neq \emptyset \;\; \mbox{for every} \; 1 \leq l < L_\nu; \; \mbox{and}
\end{equation}
\begin{equation} \label{label1}
\delta_{\hQ_{\nu,k}} \leq C \cdot(1-c)^{k - l}\delta_{\hQ_{\nu,l}} \; 
\mbox{for} \; 1 \leq l \leq k \leq L_\nu.
\end{equation}

\item[(c)] If $Q_\nu,Q_{\nu'} \in \CZ \setminus CZ_{\spec}^+$ satisfy $(1+10^{-6}) Q_\nu \cap (1+10^{-6}) Q_{\nu'} \neq \emptyset$, then $\hQ_{\nu,L_\nu} = \hQ_{\nu',L_{\nu'}}$.
\end{itemize}

Suppose that $Q_\nu \in \CZ \setminus \CZ_{\ky}$ is such that $\widehat{\mathcal{S}}_\nu$ contains at least one cube from $\overline{CZ}$. In this case, we define $a(\nu)$ to be the first index $a \in \{1,\ldots,L_\nu\}$ such that $\hQ_{\nu,a} \in \overline{CZ}$, and define $b(\nu)$ to be the last index $b \in \{1,\ldots,L_\nu\}$ such that $\hQ_{\nu,b} \in \overline{CZ}$.

Note that $\hQ_{\nu,1} = Q_\nu \in \CZ$, and $\hQ_{\nu,L_\nu} \in \CZ$, thanks to (b) and \eqref{keycontain}. Therefore, $1 < a(\nu) \leq b(\nu) < L_\nu$. Since $\hQ_{\nu,a(\nu)} \in \overline{CZ}$ and $\hQ_{\nu,a(\nu)-1} \in \CZ$, by \eqref{gg0} and \eqref{label0} it follows that the cube $\hQ_{\nu,a(\nu)-1}$ must touch the boundary of $Q^\circ$. Likewise, $\hQ_{\nu,b(\nu)+1} \in \CZ$ must touch the boundary of $Q^\circ$.

In view of the last bullet point in Lemma \ref{moregg}, we may choose a sequence of $\CZ$ cubes $\mathcal{E}_\nu = (Q_{\nu,a(\nu)-1},\ldots,Q_{\nu,s(\nu)+1})$ of bounded length (i.e., $s(\nu) - a(\nu) \leq C$) that connects $\hQ_{\nu,a(\nu)-1}$ with $\hQ_{\nu,b(\nu)+1}$ such that each of the $Q_{\nu,k}$ touches the boundary of $Q^\circ$. That is, we can arrange for
\begin{align}
&Q_{\nu,a(\nu)-1} = \hQ_{\nu,a(\nu)-1} \; \mbox{and} \; Q_{\nu,s(\nu)+1} = \hQ_{\nu,b(\nu)+1}; \label{label2} \\
&(1+10^{-6})Q_{\nu,k} \cap (1+10^{-6}) Q_{\nu,k+1} \neq \emptyset \; \mbox{for all} \; a(\nu)-1 \leq k \leq s(\nu); \label{label3}\\
&Q_{\nu,k} \in \CZ \; \mbox{for each} \; k=a(\nu)-1, \ldots, s(\nu)+1;\; \mbox{and} \label{label4}\\
&s(\nu) - a(\nu) \leq C. \label{label5}
\end{align}
Define the sequence of $\CZ$ cubes
$$\cS_\nu = (\hQ_{\nu,1},\hQ_{\nu,2},\ldots ,\hQ_{\nu,a(\nu)-2},Q_{\nu,a(\nu)-1},\ldots,Q_{\nu,s(\nu)+1}, \hQ_{\nu,b(\nu)+2}, \ldots, \hQ_{\nu,L_\nu}).$$ 
Note that the starting cube $\hQ_{\nu,1}$ and the terminating cube $\hQ_{\nu,L_\nu}$ of the sequence $\mathcal{S}_\nu$ have not been changed, thanks to \eqref{label2}. Also, note that consecutive cubes from $\cS_\nu$ are neighbors (they have intersecting closures), due to \eqref{gg0},\eqref{label0},\eqref{label2},\eqref{label3}. Thanks to \eqref{label1},\eqref{label5} and the Good Geometry of the $\CZ$ cubes, the sequence $\cS_\nu$ satisfies the inequality from \textbf{(K1)}. Thus, we have defined $\cS_\nu$ for each $Q_\nu \in \CZ \setminus \CZ_{\ky}$ such that $\widehat{\cS}_\nu$ contains cubes from $\overline{CZ}$, and we have proven that $\cS_\nu$ satisfies \textbf{(K1)}.

For any cube $Q_\nu \in \CZ \setminus \CZ_{\ky}$ such that the path $\widehat{\mathcal{S}}_\nu$ contains no cubes from $\overline{CZ}$, we simply take $\mathcal{S}_\nu = \widehat{\cS}_\nu$. Then \textbf{(K1)} holds in view of (b).

For any cube $Q_\nu \in \CZ_{\ky}$, we take $L_\nu = 1$ and $\cS_\nu = (Q_{\nu,1}) = (Q_\nu)$. Clearly, \textbf{(K1)} holds in this case.

Thus, we have defined $\cS_\nu$ for each $\nu$, and established \textbf{(K1)} in all cases. Also, \textbf{(K3)} clearly holds. Thanks to Proposition \ref{pathassignment} and the fact that each $Q_\nu$ has boundedly many neighbors, we obtain
\begin{align*}
& \;\;\;\; \# \left\{ (Q_\nu,Q_{\nu'}) \in \left[\CZ \setminus \CZ_{\ky}\right]^2 : \; Q_\nu \leftrightarrow Q_{\nu'}, \;
Q_{\nu,L_\nu} \neq Q_{\nu',L_{\nu'}} \right\} \leq C \cdot \#(CZ_{\spec}^+).
\end{align*}
Note that $\#(CZ_{\spec}^+) \leq C \cdot N$ by definition. Similarly, we have
$$\# \left\{ (Q_\nu,Q_{\nu'}) \in \CZ_{\ky} \times \CZ: Q_\nu \leftrightarrow Q_{\nu'}\right\} \leq C \cdot \#(\CZ_{\ky}).$$
Thanks to (\ref{sec_cz}.\ref{nonempty}), to each $Q \in \CZ_{\ky}$ we may assign a point $y_Q \in E \cap 10 Q$. Lemma \ref{moregeom} shows that the preimage of each $y \in E$ has cardinality at most a universal constant. Thus, $\#(\CZ_{\ky}) \leq C \cdot N$, which together with the previous two lines establishes \textbf{(K2)}.

This completes the proof of Proposition \ref{keygeom}.
\end{proof}

Recall that $\{Q^\sharp_1,\ldots,Q^\sharp_{\mu_{\max}}\}$ denotes the keystone cubes. Using the map $\key : \CZ \rightarrow \CZ_{\ky}$ from Proposition \ref{keygeom} we produce a map on indices $\kappa : \{1,\ldots,\nu_{\max}\} \rightarrow \{1,\ldots,\mu_{\max}\}$ defined by $\key(Q_\nu) = Q^\sharp_{\kappa(\nu)}$.

\section{Representatives}\label{sec_rep}
\setcounter{equation}{0}
Since $Q_\nu$ is OK, the subset $E \cap 3Q_\nu$ lies on the zero set of a nondegenerate smooth function with small norm. Using this fact we prove the next result.

\begin{lem}
\label{pointprop}
For each $\nu=1,\ldots,\nu_{\max}$, there exists $\hx_\nu \in \frac{1}{2} Q_\nu$, such that $\dist(\widehat{x}_\nu,E) \geq c' \delta_{Q_\nu}$.
\end{lem}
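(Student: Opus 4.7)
Write $\delta = \delta_{Q_\nu}$. I argue by cases on the distribution of $E$ near $Q_\nu$, with the main case handled by contradiction using the ``thinness'' of the zero set of the Sobolev function supplied by the OK condition.

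If $E \cap \frac{1}{2}Q_\nu = \emptyset$, the center of $Q_\nu$ has distance $\geq \delta/4$ from $E \cap 3Q_\nu$ and $\geq \delta$ from $E \setminus 3Q_\nu$, hence works. Otherwise fix $x_0 \in E \cap \frac{1}{2}Q_\nu$. If $E \cap 3Q_\nu = \{x_0\}$, a volume argument in $\frac{1}{2}Q_\nu$ selects $\hx_\nu$ at $\ell^\infty$-distance $\geq c\delta$ from $x_0$, while points of $E \setminus 3Q_\nu$ are automatically at distance $\geq \delta$ from $\frac{1}{2}Q_\nu$. In the remaining case $\#(E \cap 3Q_\nu) \geq 2$, the OK condition provides a label $\oA < \cA$ (necessarily non-empty by the definition of the order, since $\cA \neq \cM$) and an $(\oA, x_0, \epsilon_0, 30\delta)$-basis $(P_\alpha)_{\alpha \in \oA}$ for $\sigma(x_0, E \cap 3Q_\nu)$. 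Set $\alpha_0 := \min \oA$ and let $\varphi \in \LR$ realize $P_{\alpha_0}$: $J_{x_0}\varphi = P_{\alpha_0}$, $\varphi|_{E \cap 3Q_\nu} = 0$, and $\|\varphi\|_{\LR} \leq \epsilon_0(30\delta)^{n/p+|\alpha_0|-m}$.

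Suppose for contradiction that every point of $\frac{1}{2}Q_\nu$ lies within $c'\delta$ of $E$, for $c'$ a small universal constant to be fixed. Since $\partial^{\alpha_0}P_{\alpha_0}(x_0) = 1$ and $P_{\alpha_0}\in \cP$, a rescaling and norm-equivalence argument on the finite-dimensional space $\cP$ produces $y^* \in \frac{1}{2}Q_\nu$ with $|P_{\alpha_0}(y^*)| \geq c\delta^{|\alpha_0|}$. The Sobolev inequality (\ref{sec_not}.\ref{SET}) at base point $x_0$ gives $|\varphi(y^*) - P_{\alpha_0}(y^*)| \leq C|y^*-x_0|^{m-n/p}\|\varphi\|_{\LR} \leq C\epsilon_0\delta^{|\alpha_0|}$, so (taking $\epsilon_0$ small) $|\varphi(y^*)| \geq \frac{c}{2}\delta^{|\alpha_0|}$.

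For the matching upper bound, let $U$ be the cube of side $C_0 c'\delta$ centered at $y^*$, with $C_0 = C_0(m,n)$ large and $c'$ small enough that $U \subset Q_\nu$. The density hypothesis forces the $c'\delta$-net $E \cap 3Q_\nu$ to populate a bounded dilate of $U$ densely, and a greedy selection exploiting the finite dimension $D$ of $\cP$ extracts $\{y_1, \ldots, y_D\} \subset E \cap 3Q_\nu$ inside a bounded dilate of $U$ that is unisolvent for $\cP$ (no nonzero $P \in \cP$ vanishes on them) with controlled interpolation constant. Let $R := J_{y_1}\varphi \in \cP$. The Sobolev inequality (\ref{sec_not}.\ref{SET}) together with $\varphi(y_i) = 0$ gives $|R(y_i)| \leq C(c'\delta)^{m-n/p}\|\varphi\|_{\LR}$ for each $i$; unisolvency then upgrades this to $\sup_U|R| \leq C'(c'\delta)^{m-n/p}\|\varphi\|_{\LR}$; and a final Sobolev comparison of $\varphi$ with $R$ on $U$ yields $|\varphi(y^*)| \leq C''(c'\delta)^{m-n/p}\|\varphi\|_{\LR} \leq C''\epsilon_0(c')^{m-n/p}\delta^{|\alpha_0|}$. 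Combining with the lower bound, $c/2 \leq C''\epsilon_0(c')^{m-n/p}$, which fails once $c'$ is below a universal threshold. The main obstacle is the discrete-geometry extraction of the unisolvent set $\{y_i\}$ with bounded interpolation constant inside a bounded dilate of $U$; this is handled by greedy selection together with the finite dimension of $\cP$.
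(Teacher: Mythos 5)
Your setup matches the paper's until the point where you must turn ``\,$\varphi$ is large at $y^*$ yet vanishes on a dense subset of $E$\,'' into a contradiction; there you take a genuinely different road, and as written it has two gaps. The first is the parenthetical ``(taking $\epsilon_0$ small).'' You cannot shrink $\epsilon_0$: it equals $\min\{\epsilon(\cA') : \cA' < \cA\}$ and is already fixed by the inductive hypothesis, while the constant $c$ you compare it against is a Bernstein constant. The Sobolev error $C\epsilon_0|y^*-x_0|^{m-n/p}\delta^{n/p+|\alpha_0|-m}$ has to be beaten by choosing $|y^*-x_0|\sim c_0\delta_{Q_\nu}$ with $c_0$ a small universal constant, so that the factor $c_0^{\,m-n/p-|\alpha_0|}$ (a positive power) defeats $C\epsilon_0$; the paper sets $\delta=c_0\delta_{Q_\nu}$ for precisely this reason. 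That fix is available to you but is not in your write-up.

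The second gap is the one you flag yourself: the greedy extraction, from the $c'\delta$-net, of a $D$-point unisolvent set for $\cP$ in a small cube with a controlled Lebesgue constant. That is the load-bearing element of your contradiction and is roughly as much work as what it replaces. You are driven into it because taking $\alpha_0=\min\oA$ gives \emph{no} control on $\partial^\beta P_{\alpha_0}(x_0)$ for $\beta<\alpha_0$ with $\beta\notin\oA$ (neither \textbf{(B2)} nor \textbf{(B3)} reaches those indices), so a priori $\varphi$ can have an enormous gradient and you must use Lagrange interpolation to bound it off the vanishing set. The paper sidesteps all of this with a one-line renormalization: pick $\alpha'\in\cM$ maximizing $|\partial^{\alpha'}P_{\alpha_0}(y_\nu)|\,\delta_{Q_\nu}^{|\alpha'|}$ and replace $P_{\alpha_0}$ by $P = P_{\alpha_0}/\partial^{\alpha'}P_{\alpha_0}(y_\nu)$. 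Then $|\partial^\beta P(y_\nu)|\leq\delta_{Q_\nu}^{|\alpha'|-|\beta|}$ for \emph{every} $\beta\in\cM$, which combined with $\|\varphi\|_{\LR}\lesssim\epsilon_0\delta_{Q_\nu}^{n/p+|\alpha'|-m}$ and the Sobolev inequality gives $|\nabla\varphi|\lesssim\delta_{Q_\nu}^{|\alpha'|-1}$ on $3Q_\nu$. After that no contradiction argument or unisolvent set is needed: $|\varphi(x_\delta)|\gtrsim\delta_{Q_\nu}^{|\alpha'|}$ at one point and a uniform gradient bound force $\dist(x_\delta,E\cap 3Q_\nu)\gtrsim\delta_{Q_\nu}$ directly by the mean value inequality. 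I would adopt the renormalization and drop the unisolvency machinery.
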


\begin{proof}
Fix $ \nu \in \{1,\ldots,\nu_{\max}\}$. If $ E \cap (1/4)Q_\nu = \emptyset$, then we take $\hx_\nu$ to be the center of $Q_\nu$ and reach the desired conclusion. Thus, we may assume that $E \cap (1/4)Q_\nu \neq \emptyset$. Let $y_\nu \in E \cap (1/4)Q_\nu$ be fixed. 

If $\#(E \cap 3Q_\nu) \leq 1$ then the conclusion of the lemma is obvious. Thus we may assume that $\#(E \cap 3Q_\nu) \geq 2$. Thus, because $Q_\nu$ is OK there exists an $(\oA,y_\nu,\epsilon_0,30\delta_{Q_\nu})$-basis for $\sigma(y_\nu ,E \cap 3{Q_\nu})$, for some $\oA < \cA$. Since $\oA < \cA$ and the empty set is maximal under the order $<$, we may choose a multi-index $\alpha_0 \in \oA$. From the definition of an $(\oA,y_\nu,\epsilon_0,30\delta_{Q_\nu})$-basis (see \textbf{(B1)},\textbf{(B2)}), we have a polynomial
$$P_{\alpha_0} \in C\epsilon_0 \delta_{Q_\nu}^{n/p + |\alpha_0| - m} \cdot \sigma(y_\nu, E \cap 3{Q_\nu})\; \; \mbox{with} \;\; \partial^{\alpha_0} P_{\alpha_0}(y_\nu) = 1.$$ 
Choose $\alpha' \in \cM$ with
$$\lvert \partial^{\alpha'} P_{\alpha_0}(y_\nu) \rvert \delta_{Q_\nu}^{|\alpha'|} \geq \lvert \partial^{\beta} P_{\alpha_0}(y_\nu) \rvert \delta_{Q_\nu}^{|\beta|} \;\;\; \mbox{for all} \; \beta \in \cM.$$
Placing $\beta=\alpha_0$ in the inequality above, we obtain
$$\lvert \partial^{\alpha'} P_{\alpha_0}(y_\nu)\rvert \geq \lvert \partial^{\alpha_0} P_{\alpha_0}(y_\nu) \rvert \delta_{Q_\nu}^{|\alpha_0|-|\alpha'|} = \delta_{Q_\nu}^{|\alpha_0|-|\alpha'|}.$$ 
Define $P = \left[\partial^{\alpha'} P_{\alpha_0}(y_\nu) \right]^{-1} \cdot P_{\alpha_0}$. Then the above three lines imply that
\begin{align*}
& P \in C\epsilon_0 \delta_{Q_\nu}^{n/p + |\alpha'| - m} \cdot \sigma(y_\nu,E \cap 3{Q_\nu}); \; \partial^{\alpha'} P(y_\nu) = 1;\; \mbox{and} \\
& \lvert \partial^\beta P(y_\nu) \rvert \leq \delta_{Q_\nu}^{|\alpha'|-|\beta|} \quad \mbox{for all} \; \beta \in \cM. 
\end{align*}
By definition of $\sigma(y_\nu,E \cap 3{Q_\nu})$, there exists $\varphi \in \LR$ with 
\begin{subequations}
\begin{align}
\label{pp1} & \varphi = 0 \; \mbox{on} \; E \cap 3{Q_\nu}  \; \mbox{and} \; J_{y_\nu} (\varphi) = P; \\ 
& \label{pp2} \lvert \partial^{\beta} \varphi(y_\nu) \rvert \leq \delta_{Q_\nu}^{|\alpha'|-|\beta|} \;\; \mbox{for all} \; \beta \in \cM; \; \mbox{and} \\
& \label{pp3} \|\varphi\|_{\LR} \leq C \epsilon_0 \delta_{Q_\nu}^{n/p + |\alpha'| - m}. 
\end{align}
\end{subequations}

Applying Bernstein's inequality to the polynomial $P \in \cP$, we have
$$ \max_{B(y_\nu,\delta)} \lvert \partial^{\alpha'} P \rvert \lesssim \delta^{-|\alpha'|} \max_{B(y_\nu,\delta)} \lvert P\rvert \quad (\delta > 0).$$
Since $\partial^{\alpha'} P(y_\nu) = 1$, the left-hand side is bounded from below by $1$. Therefore, for every $\delta > 0$ there exists $x_\delta \in B(y_\nu,\delta)$ with $\lvert P(x_\delta) \rvert \geq c \cdot \delta^{|\alpha'|}.$ By the Sobolev inequality,
\begin{align*}\lvert (\varphi - P)(x_\delta)\rvert &= \lvert (\varphi - J_{y_\nu} (\varphi))(x_\delta)\rvert \lesssim \|\varphi\|_{\LR} \lvert x_\delta - y_\nu\rvert^{m-n/p} \\
& \lesssim \delta_{Q_\nu}^{n/p + |\alpha'| - m}\delta^{m-n/p}, \;\;\mbox{thanks to} \; \eqref{pp3} \; \mbox{and} \; |x_\delta - y_\nu| \leq \delta.
\end{align*}
Thus,
\begin{align*}
\lvert \varphi(x_\delta) \rvert \geq \lvert P(x_\delta) \rvert - \lvert (\varphi - P)(x_\delta) \rvert &\geq c \cdot \delta^{|\alpha'|} - C \delta_{Q_\nu}^{n/p + |\alpha'| - m} \delta^{m - n/p} \\
& = \delta^{|\alpha'|} \cdot \left[c - C \left(\frac{\delta}{\delta_{Q_\nu}} \right)^{m - |\alpha'| - n/p} \right].
\end{align*}
We now set $\delta = c_0 \delta_{Q_\nu}$, for some small universal constant $c_0<1/8$, so that $\lvert \varphi(x_\delta) \rvert \geq c'' \cdot \delta_{Q_\nu}^{|\alpha'|}$ and
$$ x_\delta \in B(y_\nu,\delta) \subset B(y_\nu,\delta_{Q_\nu}/8) \subset (1/2){Q_\nu}, \;\; \mbox{since} \;\; y_\nu \in (1/4){Q_\nu}.$$

Also, \eqref{pp2}, \eqref{pp3} and the Sobolev inequality imply that $\lvert \nabla \varphi \rvert \leq C \delta_{Q_\nu}^{|\alpha'|-1}$ on $3{Q_\nu}$. Since $\varphi=0$ on $E \cap 3{Q_\nu}$, we have
$$c'' \cdot \delta_{Q_\nu}^{|\alpha'|} \leq \lvert \varphi(x_\delta) \rvert = \lvert \varphi(x_\delta) - \varphi(x) \lvert \leq C \delta_{Q_\nu}^{|\alpha'|-1}\cdot \lvert x_\delta - x \rvert, \;\; \mbox{for every} \; x \in E \cap 3{Q_\nu}.$$
Hence, $\dist(x_\delta,E \cap 3{Q_\nu}) \geq c' \cdot \delta_{Q_\nu}$. Thus the conclusion of Lemma \ref{pointprop} holds with $\widehat{x}_\nu = x_\delta$.
\end{proof}

We have indexed the CZ cubes $\CZ$ and the subcollection of keystone cubes in an arbitrary manner.  Without loss of generality, we may put in place several new indexing assumptions. First, we may assume that $Q_1$ contains the point $z \in (1/8)Q^\circ$. If $Q_1$ happens to be keystone, we also assume that $Q^\sharp_1 = Q_1$. To summarize, we have
\begin{equation} \label{index_ass} z \in Q_1 \; \mbox{and if} \; Q_1 \; \mbox{is keystone, then} \; Q_1 = Q^\sharp_1.\end{equation}
We make no further assumptions on the indexing of $Q_\mu^\sharp$ or $Q_\nu$. We now define $\mu_{\min} = 2$ if $Q_1$ is keystone and $\mu_{\min} = 1$ otherwise. Thus, $\{Q^\sharp_{\mu_{\min}},\ldots,Q^\sharp_{\mu_{\max}}\} = \{Q_1^\sharp,\ldots,Q_{\mu_{\max}}^\sharp\} \setminus \{Q_1\}$.

For each $\nu=1,\ldots,\nu_{\max}$, we define the representative basepoint for the CZ cube $Q_\nu$:
\begin{equation*}
x_1 = z; \; \mbox{and}\; x_\nu = \hx_\nu \; \mbox{for} \; \nu = 2,\ldots, \nu_{\max}.
\end{equation*}
Similarly, for each  $\mu=1,\ldots,\mu_{\max}$, we define the representative basepoint for the keystone cube $Q^\sharp_\mu$:
\begin{equation*}
x^\sharp_\mu = x_\nu,\; \mbox{where} \; \nu \in \{1,\ldots,\nu_{\max}\} \; \mbox{is such that} \; Q^\sharp_\mu = Q_\nu.
\end{equation*}
Denote the collection of basepoints $E' := \{x_1,\ldots,x_{\nu_{\max}}\}$, and denote the collection of keystone basepoints $E^\sharp := \{x_1^\sharp,\ldots,x_{\mu_{\max}}^\sharp\}$.

\begin{lem} \label{basegeom} The following properties hold.
\begin{itemize}
\item $x_{1} \in Q_1$ and $x_\nu \in (1/2)Q_\nu$ \qquad for $\nu=2,\ldots,\nu_{\max}$.
\item $\dist(x_\nu,E) \gtrsim \delta_{Q_\nu}$ \qquad\qquad\qquad for $\nu=2,\ldots,\nu_{\max}$.
\item $x_\nu \in 0.99 Q^\circ$ \qquad\qquad\qquad\quad\;\; for $\nu=1,\ldots,\nu_{\max}$.
\item $|x_\nu-x_{\nu'}| \geq \delta_{Q_\nu}/8$ \qquad\qquad\quad\; for $\nu, \nu' = 1,\ldots, \nu_{\max}$.
\end{itemize}
\end{lem}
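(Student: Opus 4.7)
The first bullet is immediate from the construction: $x_1 = z \in Q_1$ by the indexing convention (\ref{sec_rep}.\ref{index_ass}), and for $\nu \geq 2$, $x_\nu = \hx_\nu \in (1/2)Q_\nu$ by the very definition of $\hx_\nu$ in Lemma \ref{pointprop}. Similarly, the second bullet is immediate: for $\nu \geq 2$, $\dist(x_\nu, E) = \dist(\hx_\nu, E) \gtrsim \delta_{Q_\nu}$ by Lemma \ref{pointprop}.

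For the third bullet, the case $\nu = 1$ is trivial because $z \in (1/8)Q^\circ \subset 0.99\, Q^\circ$. For $\nu \geq 2$, I split on the size of $\delta_{Q_\nu}$, with threshold a small universal constant. For $\delta_{Q_\nu}$ small, (\ref{sec_cz}.\ref{nonempty}) supplies a point $y \in E \cap 9Q_\nu$; since $E \subset (1/8)Q^\circ$, the coordinates of $y$ lie in $(7/16, 9/16]$. Combining $y \in 9Q_\nu$ with $x_\nu \in (1/2)Q_\nu$ yields $|x_\nu - y| \leq (19/4)\delta_{Q_\nu}$, which confines each coordinate of $x_\nu$ to $(0.005, 0.995]$ once $\delta_{Q_\nu}$ is below the universal threshold. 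For $\delta_{Q_\nu}$ not small (so $\delta_{Q_\nu} \in \{1/2, 1/4, 1/8\}$), I use the dyadic structure of $Q_\nu \subset Q^\circ$: each coordinate minimum of $Q_\nu$ is either $0$ or at least $\delta_{Q_\nu}$, so $x_\nu \in (1/2)Q_\nu$ pushes every coordinate of $x_\nu$ at least $\delta_{Q_\nu}/4 \geq 1/32$ away from the faces $\{0\}$ and $\{1\}$ of $\partial Q^\circ$; in particular each coordinate lies in $[1/32, 31/32] \subset (0.005, 0.995]$.

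For the fourth bullet, interpreted with $\nu \neq \nu'$, I carry out a case analysis. When both $\nu, \nu' \geq 2$, both basepoints lie in the ``interior halves'' of disjoint CZ cubes, so any segment from $x_\nu$ to $x_{\nu'}$ must exit $(1/2)Q_\nu$ through $\partial Q_\nu$, yielding $|x_\nu - x_{\nu'}| \geq \delta_{Q_\nu}/4 \geq \delta_{Q_\nu}/8$. When $\nu = 1$ and $\nu' \geq 2$ (and by swapping roles for the opposite case), $x_{\nu'}$ still lies in $(1/2)Q_{\nu'}$ and $z \in Q_1$ is outside $Q_{\nu'}$, so the same exit argument gives $|z - x_{\nu'}| \geq \delta_{Q_{\nu'}}/4$. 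If $Q_{\nu'}$ is adjacent to $Q_1$, good geometry (Lemma \ref{gg}) gives $\delta_{Q_{\nu'}} \geq \delta_{Q_1}/2$, so $\delta_{Q_{\nu'}}/4 \geq \delta_{Q_1}/8$ and the bullet follows. The main obstacle is the remaining subcase where $Q_{\nu'}$ is not adjacent to $Q_1$: here I establish the geometric fact that for any $R \in \CZ$ with $R \neq Q_1$ and $\dist(R, Q_1) < \delta_{Q_1}/2$, the cube $R$ must be adjacent to $Q_1$. This rests on Lemma \ref{gg}: every cube adjacent to $Q_1$ has sidelength at least $\delta_{Q_1}/2$, so the union of $Q_1$ together with its adjacent cubes tiles a neighborhood of $Q_1$ in $Q^\circ$ of max-norm width $\geq \delta_{Q_1}/2$. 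Thus if $Q_{\nu'}$ is non-adjacent to $Q_1$, then $\dist(Q_{\nu'}, Q_1) \geq \delta_{Q_1}/2$, and $|z - x_{\nu'}| \geq \delta_{Q_1}/2 \geq \delta_{Q_1}/8$.
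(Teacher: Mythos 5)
Your proposal is correct, and the first, second, and fourth bullets match the paper's route (which treats the fourth as ``an easy consequence of the first bullet point and Good Geometry,'' without the case analysis you write out; your non-adjacency estimate $\dist(Q_{\nu'},Q_1) \geq \delta_{Q_1}/2$ is exactly the buffer fact already proved inside the first bullet point of Lemma \ref{moregg}, so you could simply cite that lemma rather than re-derive it). The third bullet, however, is handled by a genuinely different decomposition. The paper splits on whether $\cl(Q_\nu)$ touches $\partial Q^\circ$: if it touches, the fourth bullet of Lemma \ref{moregg} gives $\delta_{Q_\nu} \geq \delta_{Q^\circ}/20$, so $(1/2)Q_\nu \subset 0.99Q^\circ$; if it does not, the boundary-touching cubes form a $\delta_{Q^\circ}/20$-wide buffer, so $Q_\nu \subset 0.99Q^\circ$ outright. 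You instead split on the magnitude of $\delta_{Q_\nu}$: for small sidelength you pull $x_\nu$ toward $E \subset (1/8)Q^\circ$ via the $E\cap 9Q_\nu \neq \emptyset$ fact from (\ref{sec_cz}.\ref{nonempty}), and for large sidelength you use only the dyadic grid positions of $Q_\nu$ inside $Q^\circ$. Both are sound; yours avoids the buffer argument of Lemma \ref{moregg} at the price of invoking $E$'s placement, while the paper's route is independent of where $E$ sits inside $Q^\circ$. One small inaccuracy: you assert the ``large sidelength'' case corresponds to $\delta_{Q_\nu} \in \{1/2,1/4,1/8\}$, but $\delta_{Q_\nu}=1/16$ also needs handling; since your dyadic estimate gives each coordinate at distance $\geq \delta_{Q_\nu}/4$ from $\{0,1\}$, it actually works for every $\delta_{Q_\nu} \geq 1/32$, which overlaps with the range where your small-sidelength estimate applies, so no gap results.
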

\begin{proof}
The first and second bullet points follow immediately from Lemma \ref{pointprop}. Now we pass to the third bullet point. From Lemma \ref{moregg}, we have $\delta_{Q_\nu} \geq \delta_{Q^\circ}/20$ for any $Q_\nu$ that touches the boundary of $Q^\circ$, and hence $(1/2)Q_\nu \subset 0.99 Q^\circ$ for such cubes.

On the other hand, if $\cl(Q_{\nu'}) \cap \partial Q^\circ = \emptyset$ then $\dist(Q_{\nu'},\partial Q^\circ) \geq \delta_{Q^\circ}/20$. (The cubes $Q_\nu$ that touch the boundary of $Q^\circ$ provide a buffer between $Q_{\nu'}$ and $Q^\circ$ of width $\delta_{Q^\circ}/20$.) Therefore, $Q_{\nu'} \subset 0.99 Q^\circ$ for such cubes. Thus, $x_\nu \in (1/2) Q_\nu \subset 0.99 Q^\circ$ for each $\nu=2,\ldots,\nu_{\max}$. Since $x_1 = z \in (1/8)Q^\circ$ by definition, we have established the third bullet point.

Finally, the fourth bullet point is an easy consequence of the first bullet point and the Good Geometry of $\CZ$. This concludes the proof of Lemma \ref{basegeom}.
\end{proof}

Henceforth a polynomial written $P_\nu$, $R_\nu$, etc., will always denote a jet at $x_\nu$. Similarly, for any subcollection $E'' = \{x_{\nu_1},\ldots,x_{\nu_s}\} \subset E'$, we naturally identify tuplets of polynomials $(P_{\nu_1},\ldots,P_{\nu_s})$, $(R_{\nu_1},\ldots,R_{\nu_s})$, etc., with Whitney fields on $E''$.

For $P \in \cP$, recall the norm defined in (\ref{sec_not}.\ref{scalednorm}):
$$\left|P\right|_{x,\delta} =  \left( \sum_{|\alpha| \leq m-1} |\partial^\alpha P(x)|^p \cdot \delta^{n + (|\alpha| - m)p} \right)^{1/p} \qquad (x\in \R^n, \; \delta >0).$$ 
For each $\nu=1,\ldots,\nu_{\max}$, we denote
$$\left|P\right|_\nu := \left|P\right|_{x_\nu,\delta_{Q_\nu}}.$$
Note that $|x^\sharp_{\kappa(\nu)} - x_{\nu}| \leq C' \delta_{Q_\nu}$, since $x_\nu \in Q_\nu$, $x_{\kappa(\nu)}^\sharp \in Q^\sharp_{\kappa(\nu)}$ and \textbf{(K1)} from Proposition \ref{keygeom} holds (recall that $\key(Q_\nu) = Q^\sharp_{\kappa(\nu)} $). Thus, (\ref{sec_not}.\ref{ti0}) yields
\begin{equation}\label{ti}
\left|P\right|_\nu \simeq \left| P \right|_{x^\sharp_{\kappa(\nu)}, \delta_{Q_\nu}} = \Bigl( \sum_{|\alpha| \leq m-1} \bigl\lvert \partial^\alpha P\bigl(x^\sharp_{\kappa(\nu)} \bigr) \bigr\rvert^p \cdot \delta_{Q_\nu}^{n + (|\alpha| - m)p} \Bigr)^{1/p}.
\end{equation}
Similarly, if $\nu' \in \{1,\ldots,\nu_{\max}\}$ is such that $|x_{\nu'}-x_{\nu}| \leq C \delta_{Q_\nu}$ and $\delta_{Q_\nu}/\delta_{Q_{\nu'}} \in [C^{-1},C]$, then by (\ref{sec_not}.\ref{ti0}) we have
\begin{equation} \label{eq_1111}
\left|P\right|_\nu \simeq \left|P\right|_{\nu'}. 
\end{equation}
In particular, \eqref{eq_1111} holds if $\nu \leftrightarrow \nu'$.

\section{A Partition of Unity}\label{sec_pu}
\setcounter{equation}{0}
Thanks to the properties of $Q_1,\ldots,Q_{\nu_{\max}}$ and $x_1,\ldots,x_{\nu_{\max}}$ established in Lemmas \ref{gg}, \ref{moregg} and \ref{basegeom}, there exists a partition of unity $\{\theta_\nu\}_{\nu=1}^{\nu_{\max}} \subset C^\infty(Q^\circ)$ that satisfies the following properties. (We leave the construction as an exercise for the interested reader.) 
\begin{align*}
&\mathbf{(POU1)}\; 0 \leq \theta_\nu \leq 1.\\
&\mathbf{(POU2)}\; \theta_\nu \; \mbox{vanishes} \; \mbox{on} \; Q^\circ \setminus (1.1)Q_\nu.
 \\
&\mathbf{(POU3)}\; |\partial^\alpha \theta_\nu| \leq C \delta_{Q_\nu}^{-|\alpha|} \; \mbox{whenever} \; |\alpha| \leq m.\\
&\mathbf{(POU4)}\; \theta_\nu \equiv 1 \; \mbox{near} \;x_\nu,\; \mbox{and} \; \theta_\nu \equiv 0 \; \mbox{near} \; \{ x_{\nu'} : \nu' = 1,\ldots, \nu_{\max}, \; \nu' \neq \nu \}.\\
&\mathbf{(POU5)}\; \sum_{\nu=1}^{\nu_{\max}} \theta_\nu \equiv 1 \; \mbox{on} \; Q^\circ.\\
\end{align*}

\begin{lem}[Patching Estimate]
\label{whittrick}
Let $G_\nu \in L^{m,p}(1.1Q_\nu)$ be given for each $\nu=1,\ldots,\nu_{\max}$, and define
$$G(x):= \sum_{\nu=1}^{\nu_{\max}} G_\nu(x) \theta_\nu(x) \quad (x \in Q^\circ).$$ 
Then
\begin{equation*}
\|G\|_{L^{m,p}(Q^\circ)}^p \lesssim \sum_{\nu=1}^{\nu_{\max}} \|G_\nu\|^p_{L^{m,p}(1.1Q_\nu)} + \sum_{\nu \leftrightarrow \nu'} \left|J_{x_{\nu}} (G_\nu) - J_{x_{\nu'}} (G_{\nu'}) \right|_\nu^p.
\end{equation*}
\end{lem}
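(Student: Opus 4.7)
My plan is the standard Whitney patching argument, carried out one Calder\'on-Zygmund cube at a time. Since $Q^\circ$ is partitioned by $\{Q_\nu\}$, it suffices to bound $\int_{Q_\nu} |\partial^\alpha G|^p$ for each $\nu$ and each $|\alpha| = m$, and then sum. To this end, set $P_\nu := J_{x_\nu}(G_\nu)$ and use $\sum_{\nu'}\theta_{\nu'} \equiv 1$ from \textbf{(POU5)} to rewrite, on $Q_\nu$,
$$G - P_\nu \;=\; \sum_{\nu' \leftrightarrow \nu} (G_{\nu'} - P_{\nu'})\theta_{\nu'} \;+\; \sum_{\nu' \leftrightarrow \nu}(P_{\nu'} - P_\nu)\theta_{\nu'},$$
where \textbf{(POU2)} together with the good geometry of Lemma \ref{moregg} restricts both sums to the $O(1)$ neighbors of $Q_\nu$. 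Applying $\partial^\alpha$ kills $P_\nu$ and splits the task into two Leibniz-type estimates, one for each sum.

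For the first sum, Leibniz's rule expands $\partial^\alpha[(G_{\nu'} - P_{\nu'})\theta_{\nu'}]$ into terms $\partial^\beta(G_{\nu'} - P_{\nu'}) \cdot \partial^{\alpha-\beta}\theta_{\nu'}$ with $\beta \leq \alpha$. The top-derivative term ($\beta = \alpha$) equals $\partial^\alpha G_{\nu'} \cdot \theta_{\nu'}$ and is pointwise dominated by $|\partial^\alpha G_{\nu'}|$; for $|\beta| < m$, the Sobolev inequality $(\ref{sec_not}.\ref{SET})$ on the cube $(1.1)Q_{\nu'}$, combined with the good-geometry bound $|x - x_{\nu'}| \lesssim \delta_{Q_{\nu'}} \simeq \delta_{Q_\nu}$ for $x \in Q_\nu \cap \supp\theta_{\nu'}$, gives
$$|\partial^\beta(G_{\nu'} - P_{\nu'})(x)| \lesssim \delta_{Q_\nu}^{m - |\beta| - n/p} \|G_{\nu'}\|_{L^{m,p}((1.1)Q_{\nu'})}.$$
Matching this against $|\partial^{\alpha-\beta}\theta_{\nu'}| \lesssim \delta_{Q_\nu}^{-(m-|\beta|)}$ from \textbf{(POU3)} and integrating the $p^{\mathrm{th}}$ power over $Q_\nu$ (volume $\simeq \delta_{Q_\nu}^n$) produces a per-summand bound $\lesssim \|G_{\nu'}\|_{L^{m,p}((1.1)Q_{\nu'})}^p$.

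For the second sum, note that $P_{\nu'} - P_\nu \in \cP$ has degree $\leq m-1$, so Leibniz produces only terms with $|\beta| < m$. Taylor expansion at $x_\nu$ together with the definition of $|\cdot|_\nu$ in $(\ref{sec_not}.\ref{scalednorm})$ yields $|\partial^\beta(P_{\nu'} - P_\nu)(x)| \lesssim \delta_{Q_\nu}^{m - |\beta| - n/p}|P_{\nu'} - P_\nu|_\nu$ for $x \in Q_\nu$; combining with the \textbf{(POU3)} estimate on $\partial^{\alpha-\beta}\theta_{\nu'}$ and integrating over $Q_\nu$ gives a contribution $\lesssim |P_{\nu'} - P_\nu|_\nu^p = |J_{x_\nu}(G_\nu) - J_{x_{\nu'}}(G_{\nu'})|_\nu^p$. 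Summing both types of contributions over $\nu$, over $|\alpha|=m$, and over the boundedly many neighbors $\nu' \leftrightarrow \nu$ then delivers the two terms on the right-hand side of the claimed estimate. I do not anticipate any genuine obstacle: the content is routine Leibniz-Sobolev bookkeeping, and the norm $|\cdot|_\nu$ is engineered precisely so that the scaling exponents line up automatically. The only step requiring a little care is the reduction to neighbors of $Q_\nu$ via \textbf{(POU2)} and Lemma \ref{moregg}, and the identification of scales $\delta_{Q_\nu} \simeq \delta_{Q_{\nu'}}$ inside the Sobolev estimate.
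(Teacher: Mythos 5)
Your argument is correct and is, in essence, the same Whitney patching estimate as the paper's, just carried out with a slightly different algebraic decomposition: where the paper fixes $\nu'$ and writes $G = \sum_\nu (G_\nu - G_{\nu'})\theta_\nu + G_{\nu'}$ on $Q_{\nu'}$, you fix $\nu$ and subtract the jet $P_\nu = J_{x_\nu}(G_\nu)$, splitting further into ``function-minus-its-jet'' terms (handled by the Sobolev inequality $(\ref{sec_not}.\ref{SET})$ on each $1.1Q_{\nu'}$) and ``jet-difference'' terms (handled by the definition of $|\cdot|_\nu$). These are the same estimates repackaged; both routes invoke the same ingredients (POU1--5, good geometry from Lemma~\ref{moregg}, and the local Sobolev inequality), and the reduction to $O(1)$ neighbors via \textbf{(POU2)} and Lemma~\ref{moregg} is correctly identified as the point requiring care.
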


\begin{proof}
Fix $\nu' \in \{1,\ldots,\nu_{\max}\}$. Since $\sum \theta_\nu = 1$ on $Q^\circ$, for each $x \in Q_{\nu'}$ we have
$$G(x) = G(x) - G_{\nu'}(x) \left[\sum_{\nu=1}^{\nu_{\max}} \theta_\nu(x)\right] + G_{\nu'}(x) = \sum_{\nu=1}^{\nu_{\max}} (G_{\nu} - G_{\nu'})(x) \theta_\nu(x) + G_{\nu'}(x).$$
For each $x \in Q_{\nu'}$ there are at most a bounded number of $\theta_\nu$ that do not vanish in a neighborhood of $x$ (because $\mbox{supp}(\theta_\nu) \subset (1.1)Q_\nu$ and from the Good Geometry of the CZ cubes). Thus, by taking $m^\th$ derivatives and integrating $p^\th$ powers over $Q_{\nu'}$, we obtain
\begin{align}
\label{r0}
\|G\|^p_{L^{m,p}(Q_{\nu'})} \lesssim \|G_{\nu'}\|^p_{L^{m,p}(Q_{\nu'})} + \sum_{|\alpha+\beta| = m} \sum_{\nu=1}^{\nu_{\max}} \int_{Q_{\nu'}} \left| \partial^\beta(G_\nu - G_{\nu'})(x) \right|^p \left|\partial^\alpha \theta_\nu(x)\right|^p dx.
\end{align}

First we consider a term from the sum in \eqref{r0} when $\nu$ is fixed, $|\beta| = m$ and $\alpha=0$. Since $|\theta_\nu| \leq 1$ and $\theta_\nu$ is supported in $(1.1)Q_\nu$, we have
\begin{equation}
\label{r1}
\int_{Q_{\nu'}} \left|\partial^\beta(G_\nu - G_{\nu'})(x)\right|^p \left|\theta_\nu(x)\right|^p dx \lesssim \|G_\nu\|^p_{L^{m,p}(1.1Q_\nu)} + \|G_{\nu'}\|^p_{L^{m,p}(Q_{\nu'})}.
\end{equation}
The remaining terms in the sum arise when $|\beta| \leq m-1$ and $|\alpha| =m-|\beta| \geq 1$. Since $|\partial^\alpha \theta_\nu| \lesssim \delta_{Q_\nu}^{-|\alpha|}$ and $\partial^\alpha \theta_\nu$ is supported in $(1.1)Q_\nu$, the sum of these terms is controlled by
\begin{equation}
\label{r2}
C \sum_{\substack{ 1 \leq \nu \leq \nu_{\max} \\ 1.1Q_\nu \cap Q_{\nu'} \neq \emptyset}} \sum_{|\beta| \leq m-1} \sup_{x \in 1.1Q_\nu \cap Q_{\nu'}} \left|\partial^\beta (G_\nu - G_{\nu'})(x)\right|^p \delta_{Q_\nu}^n \delta_{Q_\nu}^{-(m-|\beta|)p}.
\end{equation}
Because $|x-x_\nu| \lesssim \delta_{Q_\nu}$ whenever $x \in (1.1)Q_\nu$, and $|x - x_{\nu'}| \lesssim \delta_{Q_{\nu'}}$ whenever $x \in Q_{\nu'}$, the Sobolev inequality (\ref{sec_not}.\ref{SET})  implies that
\begin{align}
\left|\partial^\beta (G_\nu - G_{\nu'})(x)\right| &\lesssim \left|\partial^\beta (J_{x_\nu}(G_\nu) - J_{x_{\nu'}}(G_{\nu'}))(x)\right| + \|G_\nu\|_{L^{m,p}(1.1Q_\nu)} \delta_{Q_\nu}^{m-|\beta|-n/p} \notag{} \\
& + \|G_{\nu'}\|_{L^{m,p}(Q_{\nu'})} \delta_{Q_{\nu'}}^{m-|\beta|-n/p} \qquad (\beta \in \cM). \label{noname}
\end{align}
Now summing \eqref{noname} over all $\nu$ with $\nu \leftrightarrow \nu'$, and using Good Geometry of the CZ cubes, we find that \eqref{r2} is bounded by
\begin{equation*}
C' \sum_{\substack{1 \leq \nu \leq \nu_{\max} \\ \nu \leftrightarrow \nu'}} \left[ \sup_{x \in 1.1Q_\nu \cap Q_{\nu'}} \sum_{|\beta| \leq m-1} \delta_{Q_\nu}^{n-(m-|\beta|)p} \left|\partial^\beta (J_{x_\nu}(G_\nu) - J_{x_{\nu'}}(G_{\nu'}))(x)\right|^p  + \|G_\nu\|_{L^{m,p}(1.1Q_\nu)}^p \right]. 
\end{equation*}
(Recall that $\nu' \leftrightarrow \nu'$ and thus the term from \eqref{noname} that contains $\|G_{\nu'}\|_{L^{m,p}(1.1Q_{\nu'})}$ appears above as is required.) Applying (\ref{sec_not}.\ref{ti0}) (recall that $\left|\cdot\right|_\nu = \left|\cdot\right|_{x_\nu,\delta_{Q_\nu}}$), we may bound the above by
\begin{equation}
\label{r3}
C''\sum_{\substack{1 \leq \nu \leq \nu_{\max} \\ \nu \leftrightarrow \nu'}} \left[ \left|J_{x_\nu}(G_\nu) - J_{x_{\nu'}}(G_{\nu'})\right|_\nu^p  + \|G_\nu\|_{L^{m,p}(1.1Q_\nu)}^p \right].
\end{equation}
Hence, by inserting the bounds \eqref{r1} and \eqref{r3} in \eqref{r0}, and using that each CZ cube has boundedly many neighbors, we have
\begin{equation}
\label{r4} \|F\|_{L^{m,p}(Q_{\nu'})}^p \lesssim \sum_{\substack{ 1 \leq \nu \leq \nu_{\max} \\ \nu \leftrightarrow \nu'}}\left[\left|J_{x_\nu}(G_\nu) - J_{x_{\nu'}}(G_{\nu'})\right|_\nu^p + \|G_\nu\|_{L^{m,p}(1.1Q_\nu)}^p \right].
\end{equation}
We now sum \eqref{r4} over $\nu' = 1,\ldots,\nu_{\max}$. Since the CZ cubes partition $Q^\circ$, and every CZ cube has a bounded number of neighbors, we have
\begin{align*}
\|F\|_{L^{m,p}(Q^\circ)}^p &\lesssim \sum_{ \nu \leftrightarrow \nu'}\left[\left|J_{x_\nu}(G_\nu) - J_{x_{\nu'}}(G_{\nu'}) \right|_\nu^p + \|G_\nu\|_{L^{m,p}(1.1Q_\nu)}^p \right] \\
& \lesssim \sum_{\nu \leftrightarrow \nu'} \left|J_{x_\nu}(G_\nu) - J_{x_{\nu'}}(G_{\nu'})\right|_\nu^p + \sum_{\nu=1}^{\nu_{\max}} \|G_\nu\|_{L^{m,p}(1.1Q_\nu)}^p,
\end{align*}
as required.
\end{proof}

\begin{lem}
\label{patch2}
Let $0<a<1$, a cube $Q \subset \R^n$ and $F \in L^{m,p}(Q)$ be given. Then there exists $G \in \LR$ which depends linearly on $F$ and satisfies 
$$G = F \; \mbox{on} \; aQ \;\; \mbox{and} \;\; \|G\|_{\LR} \leq C \cdot (1-a)^{-m} \cdot \|F\|_{L^{m,p}(Q)}.$$ 
Moreover, suppose that $F=T(f,\vP)$ for some linear map $T: L^{m,p}(E_1;E_2) \rightarrow L^{m,p}(Q)$ with $\Omega$-assisted bounded depth, for some $\Omega \subset [L^{m,p}(E_1)]^*$. Then one may take $G=T'(f,\vP)$ for some linear map $T' : L^{m,p}(E_1;E_2) \rightarrow \LR$ with $\Omega$-assisted bounded depth.
\end{lem}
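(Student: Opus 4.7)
The plan is to construct $G$ by the standard cutoff-and-polynomial-subtraction trick. First I would pick a cutoff $\theta \in C_c^\infty(\R^n)$ with $\theta \equiv 1$ on $aQ$, $\supp(\theta) \subset Q$, and $|\partial^\alpha \theta| \leq C(1-a)^{-|\alpha|}\delta_Q^{-|\alpha|}$ for $|\alpha| \leq m$, and let $y$ denote the center of $Q$. The candidate is
\[
G := J_y F + \theta \cdot (F - J_y F),
\]
which is manifestly linear in $F$, satisfies $G = F$ on $aQ$ (since $\theta \equiv 1$ there), and agrees with the polynomial $J_y F$ off $\supp(\theta)$, so $G$ extends smoothly to $\R^n$ with vanishing $m^\th$-order derivatives outside $Q$.

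For the norm bound, the key point is that $J_y F$ has degree $\leq m-1$, so $\partial^\alpha G = \partial^\alpha[\theta (F - J_y F)]$ whenever $|\alpha| = m$, and consequently $\|G\|_{L^{m,p}(\R^n)} = \|G\|_{L^{m,p}(Q)}$. I would expand by Leibniz and estimate each factor $\partial^\gamma(F - J_y F)$ using the Sobolev inequality (\ref{sec_not}.\ref{SET}) on $Q$, obtaining $|\partial^\gamma(F - J_y F)(x)| \leq C\delta_Q^{m-|\gamma|-n/p}\|F\|_{L^{m,p}(Q)}$ for $x \in Q$ and $|\gamma| \leq m-1$. Combined with the cutoff bounds, each Leibniz term integrated to the $p^\th$ power over $Q$ contributes at most $C(1-a)^{-|\beta|p}\|F\|_{L^{m,p}(Q)}^p$ (the volume factor $\delta_Q^n$ cancels the negative powers of $\delta_Q$); for the exceptional split $\beta = 0$, $\gamma = \alpha$ one simply uses $|\theta| \leq 1$ and integrates $|\partial^\alpha F|^p$ directly. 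Summing yields the desired bound $\|G\|_{L^{m,p}(\R^n)} \leq C(1-a)^{-m}\|F\|_{L^{m,p}(Q)}$.

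For the assisted-depth conclusion, set $T'(f,\vP) := G$ with $F = T(f,\vP)$. For each $x \in \R^n$, the jet $J_x G$ admits the explicit formula
\[
J_x G \;=\; J_y F \;+\; J_x(\theta) \odot_x (J_x F - J_y F),
\]
interpreted so that for $x \notin \supp(\theta)$ the second summand vanishes. Thus $J_x T'(f,\vP)$ is a fixed linear combination of the two jet maps $(f,\vP) \mapsto J_x T(f,\vP)$ (needed only when $x \in Q$) and $(f,\vP) \mapsto J_y T(f,\vP)$, both of which have $\Omega$-assisted bounded depth by the hypothesis on $T$. Since fixed linear combinations of $\Omega$-assisted bounded depth maps are again of $\Omega$-assisted bounded depth (with at worst a doubled depth constant), $T'$ has $\Omega$-assisted bounded depth. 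The only step requiring any care is tracking the $(1-a)^{-m}$ dependence through the Leibniz expansion; everything else is routine.
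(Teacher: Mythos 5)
Your construction $G = J_yF + \theta\,(F - J_yF)$ is literally the same function as the paper's $G = \theta F + (1-\theta)J_yF$, the Leibniz/Sobolev estimate for the seminorm bound matches the paper's, and your jet formula $J_xG = J_yF + (J_x\theta)\odot_x(J_xF - J_yF)$ is an equivalent rewriting of the paper's $\psi_x(J_xT(f,\vP)) + \phi_x(J_yT(f,\vP))$. This is correct and is the paper's proof.
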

\begin{proof}
Let $y$ denote the center of $Q$, and fix a cutoff function $\theta \in C^\infty_0(Q)$ that satisfies 
\begin{align} 
\label{p401} & \quad\;\;\theta \equiv 1\; \mbox{on} \; aQ, \; \mbox{and} \\
\label{p402} &|\partial^\alpha \theta| \lesssim ((1-a)\delta_Q)^{-|\alpha|} \; \mbox{when} \; |\alpha| \leq m.
\end{align}
Define $G = \theta F + (1-\theta) \cdot J_y (F) \in \LR$. Then $G$ depends linearly on $F$, while \eqref{p401} implies that $G = F$ on $aQ$. 

Since $G$ matches an $(m-1)^{\rst}$ degree polynomial on the complement of $Q$, we have 
\begin{align*}
\|G\|^p_{\LR} &= \|G\|^p_{L^{m,p}(Q)} \lesssim \|F\|^p_{L^{m,p}(Q)} + \|(1 - \theta) \cdot (J_y (F) - F)\|^p_{L^{m,p}(Q)} \\
&\lesssim \|F\|^p_{L^{m,p}(Q)} + \sum_{|\beta| \leq m-1} \delta_{Q}^{n} \cdot ((1-a)\delta_Q)^{-(m-|\beta|)p} \cdot \sup_{x \in Q} | \partial^\beta (F - J_y (F))(x)|^p.
\end{align*}
(In the last inequality, the $m^\th$ order derivatives that fall on $(J_y(F)-F)$ have been raised to the $p^\th$ power and integrated over $Q$; these terms are incorporated into $\|F\|^p_{L^{m,p}(Q)}$.) Thus, using the Sobolev inequality, we have $\|G\|_{\LR} \lesssim (1-a)^{-m} \cdot \|F\|_{L^{m,p}(Q)}$. It remains to analyze this construction from the perspective of assisted bounded depth linear maps. 

Suppose that $F = T(f,\vP)$ depends linearly on some data $(f,\vP) \in L^{m,p}(E_1;E_2)$, where $T$ has $\Omega$-assisted bounded depth for some $\Omega \subset [L^{m,p}(E_1)]^*$. Then the function $G$ is given by the linear map $T'(f,\vP) := \theta T(f,\vP) + (1-\theta) \cdot J_y (T(f,\vP))$. Thus, for each point $x \in \R^n$ there exist linear maps $\psi_x, \phi_x : \cP \rightarrow \cP$, such that
\begin{equation} \label{jets0}
J_x[T'(f,\vP)] = \left\{
\begin{array}{lr}
\psi_x(J_x[T(f,\vP)]) + \phi_x(J_y[T(f,\vP)]) &: x \in Q. \\
\phi_x(J_y[T(f,\vP)]) &: x \notin Q. \\ 
\end{array}
\right.
\end{equation}
These maps are defined by $\psi_x(P) = J_x[\theta P]$ and $\phi_x(P) = J_x[(1-\theta) P]$ for each $P \in \cP$. Note that \eqref{jets0} follows from the containment $\supp(\theta) \subset Q$ and the observation that $ J_x[\theta H] = \psi_x(J_x H)$ and $J_x[(1 - \theta) H] = \phi_x(J_x H)$ for each function $H$ that is $C^{m-1}$ on a neighborhood of $x$.

Because $T$ has $\Omega$-assisted bounded depth, \eqref{jets0} implies that $T'$ has $\Omega$-assisted bounded depth. This concludes the proof of Lemma \ref{patch2}.
\end{proof}

\section{Local Extension Operators}\label{sec_loc}
\setcounter{equation}{0}
In this section we apply the inductive hypothesis \textbf{(IH)} to construct local extension operators for functions defined on subsets of $E \cap 3Q_\nu$.

Fix $\nu \in \{1,\ldots,\nu_{\max}\}$ and $\oE_\nu \subset E \cap 3Q_\nu$.

Suppose that $\#(\oE_\nu \cup \{x_\nu\}) \geq 3$. Since $\oE_\nu \subset 3Q_\nu$ and $x_\nu \in Q_\nu$ we have $10 \cdot \diam(\oE_\nu \cup \{x_\nu\}) \leq 30 \delta_{Q_\nu}$. Also, $\#(E \cap 3Q_\nu) \geq \#(\oE_\nu) \geq 2$. By definition, since $Q_\nu$ is OK and $\#(E \cap 3Q_\nu) \geq 2$, there exists $\cA_\nu < \cA$ such that for all $x \in E \cap 3Q_\nu$ we have that
\begin{align*}
&\sigma(x,E \cap 3Q_\nu) \; \mbox{contains an} \; (\cA_\nu,x,\epsilon_0,30 \delta_{Q_\nu})\mbox{-basis} \implies \; (\mbox{from} \; \sigma(x,E \cap 3Q_\nu) \subset \sigma(x,\oE_\nu)) \\
&\sigma(x,\oE_\nu) \; \mbox{contains an} \; (\cA_\nu,x,\epsilon_0,30 \delta_{Q_\nu})\mbox{-basis} \implies \; (\mbox{from the remark in section \ref{poly_basis}})\\
&\sigma(x,\oE_\nu) \; \mbox{contains an} \; (\cA_\nu,x,\epsilon_0,10\cdot \diam(\oE_\nu \cup \{x_\nu\}))\mbox{-basis}.
\end{align*}
Thus (\ref{sec_ind}.\ref{eq_600}) holds with $\hE = \oE_\nu$, $\hx = x_\nu$ and $\widehat{\cA} = \cA_\nu$. From the inductive hypothesis \textbf{(IH)}, it follows that the Extension Theorem for $(\oE_\nu,x_\nu)$ holds, as long as $\#(\oE_\nu \cup \{x_\nu\}) \geq 3$. 

On the other hand, if $\#(\oE_\nu \cup \{x_\nu\}) \leq 2$ then Lemma \ref{small} implies the Extension Theorem for $(\oE_\nu,x_\nu)$.

In any case, for each $\nu=1,\ldots,\nu_{\max}$, and each $\oE_\nu \subset E \cap 3Q_\nu$, there exists $(T_{\nu}, M_\nu, \Omega_\nu)$ with the following properties:

\begin{align*}
&\mathbf{(L1)}\; T_\nu:L^{m,p}(\oE_\nu;x_{\nu}) \rightarrow \LR \;\; \mbox{is a linear extension operator}.\\
&\mathbf{(L2)}\; \|(f_\nu,P_\nu)\|_{L^{m,p}(\oE_\nu,x_{\nu})} \leq \|T_\nu(f_\nu,P_\nu)\|_{\LR} \leq C \|(f_\nu,P_\nu)\|_{L^{m,p}(\oE_\nu,x_{\nu})}, \; \mbox{and}\\
&\mathbf{(L3)}\; C^{-1} M_\nu(f_\nu,P_\nu) \leq \|T_\nu(f_\nu,P_\nu)\|_{\LR} \leq C M_\nu(f_\nu,P_\nu) \;\; \mbox{for each} \; (f_\nu,P_\nu).\\
&\mathbf{(L4)}\; \sum_{\omega \in \Omega_\nu} \sp(\omega) \leq C \cdot \#[\oE_\nu].\\
&\mathbf{(L5)}\; T_\nu \; \mbox{has} \; \Omega_\nu\mbox{-assisted bounded depth}.\\
&\mathbf{(L6)} \; \mbox{There exists a collection of linear functionals} \; \Xi_\nu \subset [L^{m,p}(\oE_\nu;x_\nu)]^*, \; \mbox{so that} \notag{}\\
&\qquad\qquad \mathbf{(a)} \; \mbox{each functional in}\; \Xi_\nu \; \mbox{has}\; \Omega_\nu\mbox{-assisted bounded depth},\notag{}\\
&\qquad\qquad \mathbf{(b)} \; \# (\Xi_\nu) \leq C \cdot \#(\oE_\nu), \; \mbox{and} \notag{} \\
&\qquad\qquad \mathbf{(c)} \; M_\nu(f_\nu,P_\nu) = \left(\sum_{\xi \in \Xi_\nu} |\xi(f_\nu,P_\nu)|^p \right)^{1/p} \;\; \mbox{for each} \; (f_\nu,P_\nu). 
\end{align*}

We now outline the content of sections \ref{sec_key}-\ref{sec_inf}. In section \ref{sec_key}, we compute the jet $R^\sharp_\mu$ of a near optimal extension of $f|_{E \cap 9Q^\sharp_\mu}$ that is suitably consistent with the polynomial $P$. We can arrange that $R^\sharp_\mu$ depends linearly on the local data and $P$. We then define $R_1 = P$ and $R_\nu = R^\sharp_{\kappa(\nu)}$ for each $\nu=2,\ldots,\nu_{\max}$.

We must show that $\vR = (R_1,\ldots,R_{\nu_{\max}})$ is the jet on $E'$ of some near optimal extension of $(f,P)$. To do so we proceed by several steps. In section \ref{sec_aux}, we produce some new local estimates on the coefficients of the auxiliary polynomials $P^x_\alpha$. (Recall that the $P^x_\alpha$ are Taylor polynomials of linear combinations of the $\varphi_\alpha$, which vanish on $E$ and have small $\LR$ seminorm.) These new estimates complement the unit-scale estimates on $P^x_\alpha$ from section \ref{sec_ind}, and are localized to the lengthscale $\delta_Q$ for the cube $Q \in \CZ$ that contains $x$.

In section \ref{sec_ess}, through an estimate on the size of $\sigma(x^\sharp_\mu, E \cap 9Q^\sharp_\mu)$ (or dually, through an estimate on the appropriate trace semi-norm), we bound the error between our guess $R^\sharp_\mu \in \cP$ and any hypothetical jet $R'$ of a near optimal extension of $f|_{E \cap 9Q^\sharp_\mu}$. Thus, we establish that $R^\sharp_\mu \in \cP$ was almost uniquely determined from its constraints and near minimality.

In section \ref{sec_ac}, we prove a certain Sobolev inequality that bounds the variance of the derivatives of an $F \in L^{m,p}(\R^n)$ along the paths introduced in section \ref{sec_PTKC2}. This inequality and our assumption that the approximate monomials $\{\varphi_\alpha \}_{\alpha \in \cA}$ vanish on $E$ are then used to prove the existence of a near optimal extension of $(f,P)$ whose jet on $E'$ satisfies two additional constraints, termed Constancy Along Paths and Coherence with $P$. It will be the case that $\vR$ is Constant Along Paths and Coherent with $P$.

In section \ref{sec_mod}, we patch together the local extension operators to build an extension operator $T: L^{m,p}(E;E') \rightarrow \LR$; along the way we develop a formula for the $L^{m,p}(E;E')$ trace seminorm. In section \ref{sec_jets}, we use the results from sections \ref{sec_ess} and \ref{sec_ac}, and the formula for the trace seminorm from section \ref{sec_mod}, to prove that $\vR$ is the jet on $E'$ of some near optimal extension. At this point, we are ready to tackle the main theorems.

In section \ref{plstow}, we prove the Extension Theorem for $(E,z)$. In section \ref{sec_pf}, we prove Theorem 1 for finite $E$, as well as Theorems 2 and 3; we also prove an extension theorem for the inhomogeneous Sobolev space. Finally, in section \ref{sec_inf} we use a Banach limit to deduce Theorem \ref{thm1} for infinite sets from the finite case.

Until the end of section \ref{sec_pf}, we fix some (arbitrary) data $(f,P) \in L^{m,p}(E;z)$.

\section{Extension Near the Keystone Cubes} \label{sec_key}
\setcounter{equation}{0}

We begin with a simple lemma concerning the minimization of the $\ell^p$-norm subject to linear constraints. We will work in $R^{N_0+k}$ and denote a vector in $\R^{N_0 + k}$ by $(w,w')$ with $w \in \R^{N_0}$, $w' \in \R^k$.
\begin{lem}
\label{lpmin}
Given integers $N_0,k \geq0$ and linear functionals $\lambda_1,\ldots,\lambda_L$ on $\R^{N_0 + k}$, there exists a linear map $\xi: \R^{N_0} \rightarrow \R^k$, so that for each $w \in \R^{N_0}$ we have
$$\sum_{i=1}^L \lvert \lambda_i(w, \xi(w)) \rvert^p \leq C \inf_{w' \in \R^k} \left\{ \sum_{i=1}^L \lvert \lambda_i(w, w') \rvert^p \right\}, \;\; \mbox{with} \; C=C(k,p).$$
\end{lem}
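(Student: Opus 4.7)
The plan is to reformulate the minimization as finding a bounded linear projection onto a finite-dimensional subspace of $\R^L$ equipped with the $\ell^p$-norm. Writing each linear functional in the form $\lambda_i(w, w') = \lambda_i(w, 0) + \lambda_i(0, w')$, I define the linear map $w \mapsto v_w \in \R^L$ by $(v_w)_i := \lambda_i(w, 0)$, and the linear map $\Phi : \R^k \to \R^L$ by $\Phi(w')_i := \lambda_i(0, w')$. Setting $S := \Phi(\R^k)$, which is a subspace of $\R^L$ of dimension $d \leq k$, the quantity to be minimized equals $\|v_w + \Phi(w')\|_p^p$, and the infimum over $w'$ equals $\dist_p(v_w, S)^p$, where the distance is measured in the $\ell^p$-norm (and $-S = S$).

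The first step is to construct a linear projection $\pi : \R^L \to S$ whose $\ell^p$-operator norm is bounded in terms of $d$ alone. By Auerbach's lemma applied to the $d$-dimensional normed space $(S, \|\cdot\|_p)$, there exist a basis $s_1, \ldots, s_d$ of $S$ and biorthogonal functionals $s_1^*, \ldots, s_d^* \in S^*$ satisfying $\|s_j\|_p = \|s_j^*\|_{(S,\ell^p)^*} = 1$ and $s_i^*(s_j) = \delta_{ij}$. Extending each $s_j^*$ to a norm-one functional $\widetilde{s}_j^*$ on $(\R^L,\|\cdot\|_p)$ via Hahn-Banach and setting
\[
\pi(v) := \sum_{j=1}^d \widetilde{s}_j^*(v) \, s_j
\]
yields a linear projection onto $S$ with $\|\pi\|_{\ell^p \to \ell^p} \leq d \leq k$.

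Next, I would pick a linear right-inverse $\Phi^{\dagger}: S \to \R^k$ for $\Phi$ (obtained by inverting $\Phi$ restricted to a linear complement of $\ker \Phi$ in $\R^k$) and define
\[
\xi(w) := -\Phi^{\dagger}(\pi(v_w)).
\]
This map is linear in $w$, since the maps $w \mapsto v_w$, $\pi$, and $\Phi^{\dagger}$ are all linear. Moreover, $\Phi(\xi(w)) = -\pi(v_w)$, and therefore
\[
\sum_{i=1}^L |\lambda_i(w, \xi(w))|^p = \|v_w + \Phi(\xi(w))\|_p^p = \|(I - \pi)(v_w)\|_p^p.
\]

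Finally, I would establish the quasi-optimality. For any $w' \in \R^k$, the vector $s := -\Phi(w') \in S$ satisfies $(I - \pi)(s) = 0$ (since $\pi$ is a projection onto $S$), hence $(I - \pi)(v_w) = (I - \pi)(v_w - s)$, so that
\[
\|(I - \pi)(v_w)\|_p \leq \|I - \pi\|_{\ell^p \to \ell^p} \cdot \|v_w + \Phi(w')\|_p \leq (1 + k) \cdot \|v_w + \Phi(w')\|_p.
\]
Taking the infimum over $w'$ and raising to the $p^{\th}$ power yields the conclusion with $C = (1 + k)^p$, which depends only on $k$ and $p$. The main obstacle is to obtain a projection whose operator norm depends only on $\dim S$ rather than on the ambient dimension $L$ (or on $N_0$); this is precisely what Auerbach's lemma provides, and the rest of the argument is bookkeeping.
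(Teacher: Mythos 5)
Your proof is correct, and it takes a genuinely different route from the paper's. The paper reduces by induction to the case $k=1$, writes the objective as $\sum_i |\widehat{\lambda}_i(w)/a_i - w'|^p\,|a_i|^p$, and shows that the explicit weighted mean $w' = \sum_i \widehat{\lambda}_i(w)\,|a_i|^p/a_i \big/ \sum_i |a_i|^p$ is within a bounded factor of optimal by a Jensen--H\"older estimate; iterating $k$ times produces a constant roughly $2^{pk}$. You instead identify the infimum as the $\ell^p$-distance from $v_w$ to the subspace $S = \Phi(\R^k)$ and invoke Auerbach's lemma plus Hahn--Banach to manufacture a linear projection $\pi: \R^L \to S$ with $\|\pi\|_{\ell^p \to \ell^p} \leq \dim S \leq k$, which handles all $k$ in one step and yields the sharper constant $(1+k)^p$. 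Both constants depend only on $k$ and $p$, which is all the paper needs, so either works; yours is conceptually cleaner and scales better in $k$, while the paper's is more elementary and self-contained (no appeal to Auerbach or Hahn--Banach). One small note: the case where all $a_i = 0$ (respectively, where $\Phi \equiv 0$, i.e.\ $S = \{0\}$) is where the paper tacitly sets $w'=0$; your construction handles it uniformly since $\pi = 0$ and $C = 1$ in that case.
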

\begin{proof}
It suffices to assume that $k=1$, for by iteratively applying this result we obtain the full version of the lemma. Write $\lambda_i(w,w') = \widehat{\lambda}_i(w) - a_i w'$, where $\widehat{\lambda}_i : \R^{N_0} \rightarrow \R$ is a linear functional and $a_i \in \R$ for each $i=1,\ldots,L$. The expression to be minimized is
$$\sum_{i=1}^L \left| \widehat{\lambda}_i(w) - a_i w' \right|^p.$$
By removing those terms that are independent of $w'$, we may assume that $a_i \neq 0$ for all $i=1,\ldots,L$, and rewrite the expression as
\begin{equation} \label{rf} \sum_{i=1}^L \left|\frac{\widehat{\lambda}_i(w)}{a_i} - w' \right|^p |a_i|^p.
\end{equation}
If $L \neq 0$, then a standard application of H\"older's inequality shows that 
$$w'  = \xi(w) := \frac{ \widehat{\lambda}_1(w) |a_1|^{p}/ a_1 + \cdots + \widehat{\lambda}_L(w) |a_L|^{p} / a_L}{ |a_1|^p + \cdots + |a_L|^p}$$ minimizes \eqref{rf} to within a factor of $2^{p+1}$. If $L=0$, we simply take $w'=0$.
\end{proof}

\begin{lem}
Let $Q^\sharp_\mu$ be a given keystone cube. Set $f^\sharp_{\mu} = f|_{E \cap 9Q^\sharp_\mu}$. There exists a polynomial $R_\mu^\sharp \in \cP$ with the following properties :
\begin{equation}
R_\mu^\sharp \; \mbox{depends linearly on} \; f_\mu^\sharp \; \mbox{and} \; P;
\end{equation}
\begin{equation}\label{jetop1} 
\partial^\alpha R_\mu^\sharp(x_{\mu}^\sharp) = \partial^\alpha P(x_{\mu}^\sharp) \; \mbox{for every} \; \alpha \in \cA;
\end{equation}
and
\begin{align} \label{jetop2}
&\|(f_\mu^\sharp,R^\sharp_\mu)\|_{L^{m,p}(E \cap 9 Q_\mu^\sharp; x_{\mu}^\sharp)} \leq \\
&\qquad C  \inf \left\{ \|(f_\mu^\sharp,R')\|_{L^{m,p}(E \cap 9 Q^\sharp_\mu;x_{\mu}^\sharp)}: R' \in \cP, \; \partial^\alpha R'(x_{\mu}^\sharp) = \partial^\alpha P(x_{\mu}^\sharp) \; \mbox{for every} \; \alpha \in \cA \right\}.\notag{}
\end{align}
\label{jetop}
\end{lem}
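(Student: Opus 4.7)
The plan is to invoke the Extension Theorem for $(E \cap 9Q_\mu^\sharp, x_\mu^\sharp)$ established in Section \ref{sec_loc}, and then to minimize the resulting trace seminorm over $R' \in \cP$ subject to the linear constraints $\partial^\alpha R'(x_\mu^\sharp)=\partial^\alpha P(x_\mu^\sharp)$ ($\alpha \in \cA$), using Lemma \ref{lpmin} to guarantee that the minimizer depends linearly on the input data.

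First, I would apply properties \textbf{(L1)}--\textbf{(L6)} with $\oE_\mu := E \cap 9Q_\mu^\sharp$ and basepoint $x_\mu^\sharp$, obtaining $(T_\mu, M_\mu, \Omega_\mu, \Xi_\mu)$. From \textbf{(L1)}--\textbf{(L3)} together with the definition of the trace seminorm, the functional $M_\mu$ is comparable to $\|\cdot\|_{L^{m,p}(E \cap 9Q_\mu^\sharp; x_\mu^\sharp)}$, and by \textbf{(L6)} it is given by
$$
M_\mu(g, R')^p = \sum_{\xi \in \Xi_\mu} |\xi(g, R')|^p,
$$
i.e.\ as a sum of $p$-th powers of finitely many linear functionals of $(g, R')$.

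Next, I would set up coordinates for the constrained minimization. Parametrize a polynomial $R' \in \cP$ by the Taylor coefficients $(\partial^\alpha R'(x_\mu^\sharp))_{\alpha \in \cM}$. The constraint (\ref{jetop1}) fixes the $|\cA|$ coordinates with $\alpha \in \cA$; the remaining $D - |\cA|$ coordinates $(\partial^\beta R'(x_\mu^\sharp))_{\beta \in \cM \setminus \cA}$ are free. Grouping the fixed data as $w = \big(f_\mu^\sharp, (\partial^\alpha P(x_\mu^\sharp))_{\alpha \in \cA}\big) \in \R^{N_0}$ (with $N_0 = \#(E \cap 9 Q_\mu^\sharp) + |\cA|$) and the free parameters as $w' = (\partial^\beta R'(x_\mu^\sharp))_{\beta \in \cM \setminus \cA} \in \R^{k}$ (with $k = D - |\cA|$), each $\xi \in \Xi_\mu$ becomes a linear functional $\lambda_i$ on $\R^{N_0+k}$. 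I then apply Lemma \ref{lpmin} to produce a linear map $w \mapsto w^\ast(w)$ such that
$$
\sum_{i} |\lambda_i(w, w^\ast(w))|^p \leq C \inf_{w' \in \R^k} \sum_{i} |\lambda_i(w, w')|^p,
$$
with $C=C(k,p)$ depending only on $m,n,p$.

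Finally, I would define $R_\mu^\sharp \in \cP$ via $\partial^\alpha R_\mu^\sharp(x_\mu^\sharp) = \partial^\alpha P(x_\mu^\sharp)$ for $\alpha \in \cA$ and $(\partial^\beta R_\mu^\sharp(x_\mu^\sharp))_{\beta \in \cM \setminus \cA} = w^\ast(w)$. Then $R_\mu^\sharp$ depends linearly on $(f_\mu^\sharp, P)$ because $w$ does and $w^\ast$ is linear, and (\ref{jetop1}) holds by construction. The estimate (\ref{jetop2}) follows by combining the equivalence $M_\mu \simeq \|\cdot\|_{L^{m,p}(E \cap 9Q_\mu^\sharp; x_\mu^\sharp)}$ with the near-minimality from Lemma \ref{lpmin}: the sum $\sum_\xi |\xi(f_\mu^\sharp, R_\mu^\sharp)|^p$ is within a universal factor of its infimum over admissible $R'$, and this sum is equivalent to the $p$-th power of the trace seminorm. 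The proof is essentially bookkeeping once Lemma \ref{lpmin} is available; there is no substantial obstacle, as the dimensions $N_0, k$ and the number of functionals are all bounded in a way that respects the constants $C(m,n,p)$.
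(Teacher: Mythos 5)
Your plan has a genuine gap in its first step: properties \textbf{(L1)}--\textbf{(L6)} from section \ref{sec_loc} are only established for subsets $\oE_\nu \subset E \cap 3Q_\nu$ with basepoint $x_\nu$, because they rest on $Q_\nu$ being OK, which gives the $(\cA_\nu,x,\epsilon_0,30\delta_{Q_\nu})$-basis for $\sigma(x, E\cap 3Q_\nu)$. The set $E \cap 9Q_\mu^\sharp$ is \emph{not} a subset of $E \cap 3Q_\mu^\sharp$, so you cannot simply ``apply \textbf{(L1)}--\textbf{(L6)} with $\oE_\mu := E\cap 9Q_\mu^\sharp$'' to get a formula $M_\mu(g,R')^p = \sum_{\xi\in\Xi_\mu}|\xi(g,R')|^p$ for the trace seminorm on $L^{m,p}(E\cap 9Q_\mu^\sharp;x_\mu^\sharp)$. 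That formula is precisely what must be constructed, and it is the main content of the proof rather than a citable fact.

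The paper's proof handles this by exploiting the keystone property: the CZ cubes $Q_{\nu_1},\ldots,Q_{\nu_K}$ that meet $100 Q_\mu^\sharp$ all have sidelength comparable to $\delta_{Q_\mu^\sharp}$ and there are at most $C$ of them (see (\ref{sec_key}.\ref{ls})). One forms the intersections $S_{\nu_i} = E \cap 9Q_\mu^\sharp \cap 3Q_{\nu_i} \subset E\cap 3Q_{\nu_i}$, where \textbf{(L1)}--\textbf{(L6)} \emph{do} apply, patches the local extensions with the partition of unity and Lemma \ref{whittrick}, and shows that $\|(f_\mu^\sharp, (P_{\nu_i})_{i=1}^K)\|_{L^{m,p}(E\cap 9Q_\mu^\sharp; E'_\mu)}^p$ is comparable to $\sum_i M_{\nu_i}(f_{\nu_i},P_{\nu_i})^p + \sum_{i,j}|P_{\nu_i}-P_{\nu_j}|_{\nu_i}^p$. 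Only after this bounded-$\ell^p$ representation is in hand can Lemma \ref{lpmin} be applied --- first to eliminate the auxiliary jets $(P_{\nu_i})_{i\geq 2}$ and get a formula purely in $(f_\mu^\sharp,R')$, and then a second time to choose $R'=R_\mu^\sharp$ subject to the constraints. So the overall shape of your argument (reduce to constrained $\ell^p$ minimization, invoke Lemma \ref{lpmin}) is right, but the crucial step of producing the $\ell^p$ formula on $E\cap 9Q_\mu^\sharp$ via patching over boundedly many comparable CZ cubes is missing, and without it the rest of the argument has nothing to minimize.
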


\begin{proof}
The main content of the proof consists of showing that the trace seminorm 
\begin{equation} \label{appx0} \|(f_\mu^\sharp,R')\|_{L^{m,p}(E \cap 9Q_\mu^\sharp ; x_{\mu}^\sharp)}\end{equation}
is given up to a universal constant factor by an expression of the type
\begin{equation}\label{appx} \left( \left| \lambda_1(f_\mu^\sharp,R') \right|^p + \cdots + \left| \lambda_{L}(f_\mu^\sharp,R')\right|^p \right)^{1/p},\end{equation}
where $\lambda_1,\ldots,\lambda_L$ are some linear functionals. 

Denote by $CZ^\sharp_\mu$  the collection of cubes $Q \in \CZ$ such that $Q \cap 100Q_\mu^\sharp \neq \emptyset$. We write $CZ_\mu^\sharp = \{Q_{\nu_1}, \ldots, Q_{\nu_K}\}$, where $Q_{\nu_1} = Q_\mu^\sharp$. Hence also $x_{\nu_1} = x^\sharp_\mu$.
By definition of the keystone cubes and Lemma \ref{moregg}, we have
\begin{equation}
\label{ls}
\delta_{Q^\sharp_\mu} \leq \delta_{Q_{\nu_i}} \leq 10^3 \delta_{Q^\sharp_\mu} \;\; \mbox{for each} \; i=1,\ldots,K,\; \mbox{and hence} \; K \leq C'.
\end{equation}

Let $R' \in \cP$, $P_{\nu_i} \in \cP$ ($i=2,\ldots,K$) be arbitrary polynomials. Set $P_{\nu_1} = R'$. For each $i = 1,\ldots, K$, we define the following objects:
\begin{itemize}
\item The subset $S_{\nu_i} = E \cap 9Q_\mu^\sharp \cap 3Q_{\nu_i}$ and the function $f_{\nu_i} = f|_{S_{\nu_i}}$. 
\item The map $M_{\nu_i} : L^{m,p}(S_{\nu_i}; x_{\nu_i}) \rightarrow \R_+$ that satisfies \textbf{(L1-6)} for the subset $S_{\nu_i} \subset E \cap 3Q_{\nu_i}$ and the representative $x_{\nu_i} \in Q_{\nu_i}$ (together with some linear map $T_{\nu_i} : L^{m,p}(S_{\nu_i}; x_{\nu_i}) \rightarrow \LR$ and some collection of linear functionals $\Omega_{\nu_i} \subset [L^{m,p}(S_{\nu_i})]^*$).
\end{itemize}

From \textbf{(L2)}, \textbf{(L3)}, \textbf{(L6)}, for any $i=1,\ldots K$, we have:
\begin{align} \label{eee}
M_{\nu_i}(f_{\nu_i},P_{\nu_i}) &= \left(\sum_{j=1}^{L_i} |\lambda_i^j(f_{\nu_i},P_{\nu_i})|^p \right)^{1/p} \simeq \|(f_{\nu_i},P_{\nu_i})\|_{L^{m,p}(S_{\nu_i};x_{\nu_i})} \\
& \mbox{for some linear functionals} \; \lambda_i^1,\ldots,\lambda_i^{L_i} \in [L^{m,p}(S_{\nu_i};x_{\nu_i})]^*. \notag{}
\end{align}
Thus, there exists $F_{\nu_i} \in \LR$ with
\begin{align} 
& F_{\nu_i} = f_{\nu_i} \; \mbox{on} \; S_{\nu_i} \; \mbox{and} \; J_{x_{\nu_i}} (F_{\nu_i}) = P_{\nu_i}; \;\mbox{and}\label{eq_1321} \\
&\|F_{\nu_i}\|_{\LR} \simeq \|(f_{\nu_i},P_{\nu_i})\| \simeq M_{\nu_i}(f_{\nu_i},P_{\nu_i}). \label{eq_1322}
\end{align}

Suppose that $Q \in \CZ$ satisfies $Q \cap 100 Q^\sharp_\mu = \emptyset$ and $(1.1)Q \cap 9Q^\sharp_\mu \neq \emptyset$. Hence, $\delta_{Q} \geq 100 \delta_{Q^\sharp_\mu}$. However, this contradicts the second bullet point in Lemma \ref{moregg}. Therefore, $Q \notin CZ_\mu^\sharp \implies (1.1)Q \subset \R^n \setminus 9Q_\mu^\sharp$. Thus, by \textbf{(POU2)} we have
\begin{equation}
\theta_\nu \equiv 0 \;\; \mbox{on} \;\; 9 Q^\sharp_\mu \cap Q^\circ \quad \mbox{for} \; \nu \in \{1,\ldots,\nu_{\max}\} \setminus \{\nu_1,\ldots,\nu_K\} .\label{eq_1110}
\end{equation} 
Define $F^\circ \in L^{m,p}(Q^\circ)$ by
$$F^\circ(x) := \sum_{i=1}^K F_{\nu_i}(x) \theta_{\nu_i}(x) + \sum_{\nu \not \in \{\nu_1,\ldots,\nu_K\} } R'(x) \theta_\nu(x) \qquad (x \in Q^\circ).$$
From \textbf{(POU2)}, \eqref{eq_1321} and the definition of $S_{\nu_i}$, we have $F_{\nu_i} = f_{\nu_i}$ on $\supp(\theta_{\nu_i}) \cap E \cap 9Q^\sharp_\mu$. Hence, from \textbf{(POU5)} and \eqref{eq_1110}, we have $F^\circ = f^\sharp_\mu$ on $E \cap 9 Q^\sharp_\mu$ (recall that $E \subset (1/8)Q^\circ$). 

From \textbf{(POU4)} and \eqref{eq_1321}, we find that $J_{x_{\nu_i}} (F^\circ) = P_{\nu_i}$ for each $i=1,\ldots,K$. To summarize, we have
\begin{equation} \label{ext4}
F^\circ = f_\mu^\sharp \; \mbox{on} \; E \cap 9Q^\sharp_\mu \; \mbox{and} \; J_{x_{\nu_i}} (F^\circ) = P_{x_{\nu_i}} \; \mbox{for each} \; i=1,\ldots,K.
\end{equation} 

We prepare to estimate the seminorm $\|F^\circ\|_{L^{m,p}(Q^\circ)}$ using Lemma \ref{whittrick}. 

Take $G_\nu = F_{\nu}$ when $\nu \in \{\nu_1,\ldots,\nu_K\}$, and $G_\nu = R'$ when $\nu \in \{1,\ldots,\nu_{\max}\} \setminus \{\nu_1,\ldots,\nu_K\}$. Lemma \ref{whittrick} provides the estimate:
\begin{equation*}
\|F^\circ\|^p_{L^{m,p}(Q^\circ)} \lesssim \sum_{\nu=1}^{\nu_{\max}} \|G_\nu\|^p_{L^{m,p}(1.1Q_\nu)} + \sum_{\nu \leftrightarrow \nu'} \left|J_{x_{\nu}} (G_\nu) - J_{x_{\nu'}} (G_{\nu'})\right|_\nu^p.
\end{equation*} 
The terms with $\nu \notin \{\nu_1,\ldots, \nu_K\}$ and $\nu, \nu' \notin \{\nu_1,\ldots,\nu_K\}$ in the first and second sum vanish, respectively. In view of $\eqref{eq_1321}$, the remaining terms in the second sum are of the form $\left|P_\nu - P_{\nu'}\right|^p_{\nu}$ and $\left|P_\nu - R'\right|^p_\nu$ ($\nu, \nu' \in \{\nu_1,\ldots,\nu_K\}$). (Here, we also use (\ref{sec_rep}.\ref{eq_1111}).) Thus,
\begin{align*}
\|F^\circ\|^p_{L^{m,p}(Q^\circ)} &\lesssim \sum_{i=1}^{K} \|F_{\nu_i}\|^p_{L^{m,p}(1.1Q_{\nu_i})} + \sum_{i,j=1}^K \left|P_{\nu_i} - P_{\nu_j}\right|_{\nu_i}^p\\
&\simeq \sum_{i=1}^{K} M_{\nu_i}(f_{\nu_i},P_{\nu_i})^p + \sum_{i,j=1}^K \left|P_{\nu_i} - P_{\nu_j}\right|_{\nu_i}^p \quad (\mbox{thanks to} \;\eqref{eq_1322}).
\end{align*}
Lemma \ref{patch2} yields $F \in \LR$ such that $F= F^\circ$ on $0.99 Q^\circ$  and $\|F\|_{\LR}\leq C \|F^\circ\|_{L^{m,p}(Q^\circ)}$. Therefore,
\begin{equation} \label{eq_3336}
\|F\|_{\LR}^p \lesssim \sum_{i=1}^{K} M_{\nu_i}(f_{\nu_i},P_{\nu_i})^p + \sum_{i,j=1}^K \left|P_{\nu_i} - P_{\nu_i}\right|_{\nu_i}^p.\end{equation}
Recall that $E \cup E' \subset 0.99 Q^\circ$ (see (\ref{sec_ind}.\ref{cube}) and Lemma \ref{basegeom}). Thus, \eqref{ext4} yields
\begin{equation}\label{ext5}
F = f^\sharp_\mu \; \mbox{on}\; {E \cap 9Q^\sharp_\mu} \; \mbox{and} \; J_{x_{\nu_i}} (F) = P_{\nu_i} \; \mbox{for each}  \; i=1,\ldots,K.
\end{equation}

Let $H \in \LR$ be an arbitrary function that satisfies
$$\circledast \qquad H = f^\sharp_\mu \; \mbox{on} \; E \cap 9Q^\sharp_\mu \; \mbox{and} \; J_{x_{\nu_i}} (H) = P_{\nu_i}, \; \mbox{for each} \; i=1,\ldots,K.$$ 
From \eqref{eq_1322} and the definition of the trace seminorm, we have
$$M_{\nu_i}(f_{\nu_i},P_{\nu_i})^p \simeq \|(f_{\nu_i},P_{\nu_i})\|^p \leq \|H\|^p_{\LR}, \;\; \mbox{for each} \; i=1,\ldots,K,$$ 
while the Sobolev inequality implies that
$$\left|P_{\nu_i}-P_{\nu_j}\right|^p_{\nu_i} = \left|J_{x_{\nu_i}}(H) - J_{x_{\nu_j}}(H)\right|^p_{\nu_i} \lesssim \|H\|^p_{\LR}, \;\; \mbox{for each} \; i,j=1,\ldots,K.$$ 
(Here, we use \eqref{ls} and $x_{\nu_i} \in Q_{\nu_i}$.) Summing these two estimates over $i,j \in \{1,\ldots,K\}$, and using \eqref{ls}, provides a bound on the right-hand side of \eqref{eq_3336} by $C \|H\|^p_{\LR}$. We now take $E'_\mu := \{x_{\nu_1},\ldots,x_{\nu_K}\}$.  Taking the infimum in this estimate, with respect to $H \in \LR$ that satisfy $\circledast$, we have
$$  \sum_{i=1}^{K} M_{\nu_i}(f_{\nu_i},P_{\nu_i})^p + \sum_{i,j=1}^K \left|P_{\nu_i} - P_{\nu_j}\right|_{\nu_i}^p\lesssim \left\| \left(f^\sharp_\mu, (P_{\nu_i})_{i=1}^K \right) \right\|^p_{L^{m,p}(E \cap 9Q^\sharp_\mu;E'_\mu)}.$$ 
Together with \eqref{eq_3336} and \eqref{ext5}, this tells us that
$$ \sum_{i=1}^{K} M_{\nu_i}(f_{\nu_i},P_{\nu_i})^p + \sum_{i,j=1}^K \left|P_{\nu_i} - P_{\nu_j}\right|_{\nu_i}^p\simeq \left\| \left(f^\sharp_\mu, (P_{\nu_i})_{i=1}^K \right) \right\|^p_{L^{m,p}(E \cap 9Q^\sharp_\mu;E'_\mu)}.$$
For every $R' \in \cP$, the definition of the trace seminorm gives
\begin{equation*}
\left\|\left(f^\sharp_\mu, R'\right)\right\|^p_{L^{m,p}(E \cap 9Q^\sharp_\mu; x^\sharp_\mu)} = \inf \left\{ \left\|\left(f^\sharp_\mu, (P_{\nu_i})_{i=1}^K \right)\right\|^p_{L^{m,p}(E \cap 9Q^\sharp_\mu;E'_\mu)}: (P_{\nu_i})_{i=1}^K \in Wh(E'_\mu),\; P_{\nu_1} = R' \right\}.
\end{equation*}
Thus, by the last two lines and \eqref{eee}, we have
\begin{align*}
\left\|\left(f^\sharp_\mu, R'\right)\right\|^p_{L^{m,p}(E \cap 9Q^\sharp_\mu; x^\sharp_\mu)} \simeq \inf \left\{ \sum_{i=1}^{K} \sum_{j=1}^{L_i} |\lambda_i^j(f_{\nu_i},P_{\nu_i})|^p + \sum_{i,j=1}^K \left|P_{\nu_i} - P_{\nu_j}\right|_{\nu_i}^p : (P_{\nu_i})_{i=1}^K \in Wh(E'_\mu),\; P_{\nu_1} = R' \right\}.
\end{align*}

By an application of Lemma \ref{lpmin}, we can solve for $(P_{\nu_i})_{i=2}^K$ that depends linearly on $(f^\sharp_\mu,R')$ and minimizes the expression inside the infimum to within the multiplicative factor $C(p,k)$; note that we are solving for $k=\dim(\cP) \cdot( K - 1)$ real variables. In view of \eqref{ls}, the constant $C(k,p)$ from the lemma is universal and we have shown that
\begin{equation} \label{eq_1120}
\|(f^\sharp_\mu,R')\|^p_{L^{m,p}(E \cap 9Q^\sharp_\mu;x_{\mu}^\sharp)} \simeq \sum_{i=1}^L |\lambda_i(f^\sharp_\mu,R')|^p,
\end{equation}
for some integer $L$ and some linear functionals $\lambda_1,\ldots,\lambda_L$.

We apply Lemma \ref{lpmin} once more to solve for $R' = R^\sharp_\mu \in \cP$ that depends linearly on $(f^\sharp_\mu,P)$, and minimizes the right-hand side of \eqref{eq_1120} to within a multiplicative factor $C(p,k)$, subject to the condition [$\partial^\alpha R^\sharp_\mu(x^\sharp_\mu) = \partial^\alpha P(x^\sharp_\mu)$ for all $\alpha \in \cA$]. In this case, we solve for the remaining $k=\dim(\cP) - \#(\cA)$ coefficients of the polynomial $R^\sharp_\mu$. Again, the constant $C(k,p)$ is universal. It follows that $R^\sharp_\mu$ satisfies \eqref{jetop1} and \eqref{jetop2}. This concludes the proof of Lemma \ref{jetop}.
\end{proof}

\section{Estimates for Auxiliary Polynomials}\label{sec_aux}
\setcounter{equation}{0}

Recall that for each $x \in Q^\circ$ we defined polynomials $(P_\alpha^x)_{\alpha \in \cA}$ that satisfy (\ref{sec_ind}.\ref{mainprops})-(\ref{sec_ind}.\ref{mainprops4}).
\begin{lem}
\label{lem201}
Let $Q \in \CZ$ be fixed. Then, the following estimate holds:
\begin{equation*}
|\partial^\beta P^y_\alpha(y)| \leq C \delta_Q^{|\alpha| - |\beta|} \qquad (\alpha \in \cA, \; \beta \in \cM,\; y \in Q).
\end{equation*}
\end{lem}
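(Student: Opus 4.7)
The claim splits into two cases depending on the relationship between $|\beta|$ and $|\alpha|$. The easy case is $|\beta|\geq|\alpha|$: since $\delta_Q\leq\delta_{Q^\circ}=1$, we have $\delta_Q^{|\alpha|-|\beta|}\geq 1$, and the unit-scale bound $|\partial^\beta P^y_\alpha(y)|\leq C'$ from \eqref{mainprops4} already yields the desired estimate up to a universal constant. The whole content of the lemma is therefore the case $|\beta|<|\alpha|$, where we must upgrade the crude bound $\leq C'$ to the much stronger $\leq C\delta_Q^{|\alpha|-|\beta|}$.

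The plan for this hard case is to run a scale-$\delta_Q$ variant of the construction in Section~\ref{aux}, exploiting the fact that we are now allowed to choose a base point close to $y$. By (\ref{sec_cz}.\ref{nonempty}) we may pick some $x_0\in E\cap 9Q$, so that $|x_0-y|\lesssim\delta_Q$ for any $y\in Q$. The hypothesis (\ref{sec_plan}.\ref{type11}) of the Main Lemma provides, for each $x\in E$, an $(\cA,x,\epsilon,1)$-basis for $\sigma(x,E)$; by Remark~\ref{rem1}, the very same polynomials $\widetilde P^x_\alpha$ form an $(\cA,x,\epsilon,\delta_Q)$-basis, i.e.\ they satisfy the sharper estimates $\widetilde P^x_\alpha\in\epsilon\,\delta_Q^{n/p+|\alpha|-m}\cdot\sigma(x,E)$ and $|\partial^\beta\widetilde P^x_\alpha(x)|\leq\epsilon\,\delta_Q^{|\alpha|-|\beta|}$ for $\beta>\alpha$. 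Choosing $\varphi'_\alpha\in\LR$ realizing $\widetilde P^{x_0}_\alpha$ (at scale $\delta_Q$) and setting $\hat P^y_\alpha:=J_y\varphi'_\alpha$, the Sobolev inequality (\ref{sec_not}.\ref{SET}) applied between $x_0$ and $y$ now comes with the genuinely small factor $|x_0-y|^{m-|\beta|-n/p}\lesssim\delta_Q^{m-|\beta|-n/p}$, and repeating the computation in (\ref{t_1})--(\ref{t_3}) verbatim yields (\ref{sec_plan}.\ref{trans1})--(\ref{sec_plan}.\ref{trans3}) at scale $\delta_Q$. Lemma~\ref{weaktostrong} then produces scale-$\delta_Q$ basis polynomials $\widehat P^y_\alpha$ for $\sigma(y,E)$ satisfying
\[
|\partial^\beta\widehat P^y_\alpha(y)|\leq C\,\delta_Q^{|\alpha|-|\beta|}\qquad(\alpha\in\cA,\;\beta\in\cM).
\]

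It remains to compare the two bases $(P^y_\alpha)_{\alpha\in\cA}$ (unit-scale, from Section~\ref{aux}) and $(\widehat P^y_\alpha)_{\alpha\in\cA}$ (scale-$\delta_Q$, just constructed) of $\sigma(y,E)$. Both satisfy $\partial^\gamma(\cdot)(y)=\delta_{\alpha\gamma}$ on $\cA$, so the transition matrix $B=(B_{\alpha\alpha'})$ defined by $P^y_\alpha=\sum_{\alpha'\in\cA}B_{\alpha\alpha'}\widehat P^y_{\alpha'}+(\text{terms orthogonal to }\cA)$ is near-triangular; the orthogonality plus the scale-$\delta_Q$ estimates on $\widehat P^y_{\alpha'}$ and the unit-scale estimates on $P^y_\alpha$ force $|B_{\alpha\alpha'}|\lesssim\delta_Q^{|\alpha|-|\alpha'|}$ for $\alpha'\leq\alpha$. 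Plugging back in and differentiating gives, for each $\beta\in\cM$,
\[
|\partial^\beta P^y_\alpha(y)|\leq\sum_{\alpha'\in\cA}|B_{\alpha\alpha'}|\cdot|\partial^\beta\widehat P^y_{\alpha'}(y)|\lesssim\sum_{\alpha'}\delta_Q^{|\alpha|-|\alpha'|}\cdot\delta_Q^{|\alpha'|-|\beta|}\lesssim\delta_Q^{|\alpha|-|\beta|},
\]
completing the estimate.

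The main obstacle is the comparison step: one must show that the linear change of basis between $(P^y_\alpha)$ and $(\widehat P^y_\alpha)$ has coefficients controlled by the correct powers of $\delta_Q$, so that the scale-$\delta_Q$ derivative bounds genuinely transfer. Everything else is either a direct appeal to \eqref{mainprops4} (easy case), a mechanical rerun of the Section~\ref{aux} argument with the better base point $x_0\in E\cap 9Q$ in place of the unit-scale $x_0$ used there, or an invocation of Lemmas~\ref{techlem}/\ref{weaktostrong}; the real work is this bookkeeping of the transition matrix, which amounts to the same kind of near-triangular linear algebra already carried out in Lemma~\ref{weaktostrong}.
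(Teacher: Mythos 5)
Your split into the easy case $|\beta|\geq|\alpha|$ (immediate from (\ref{sec_ind}.\ref{mainprops4}) and $\delta_Q\leq 1$) and the hard case $|\beta|<|\alpha|$ is fine, and the overall strategy — a direct synthesis of a scale-$\delta_Q$ basis — is genuinely different from the paper's argument, which is a one-line proof by contradiction: if the bound failed with $C=\epsilon_1^{-D-1}$, Lemma~\ref{techlem} applied at scale $30\delta_{Q^+}$ with $E_1=E\cap 3Q^+$ and $E_2=(E\cap 3Q^+)\cup\{y\}$ would show that $Q^+$ is OK, contradicting $Q\in\CZ$. Unfortunately, your synthetic route has two real gaps.

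First, the claim that ``repeating the computation in (\ref{sec_ind}.\ref{t_1})--(\ref{sec_ind}.\ref{t_3}) verbatim'' at scale $\delta_Q$ yields (\ref{sec_plan}.\ref{trans1})--(\ref{sec_plan}.\ref{trans3}) is circular. At unit scale that computation needs (\ref{sec_ind}.\ref{a3}), the bound $|\partial^\beta\widetilde P^{x_0}_\alpha(x_0)|\leq C$ for \emph{all} $\beta\in\cM$ (not just $\beta>\alpha$), which was obtained by a contradiction argument via Lemma~\ref{techlem}. The Taylor-expansion step from $x_0$ to $y$ produces terms $\partial^{\beta+\gamma}\widetilde P^{x_0}_\alpha(x_0)\cdot(y-x_0)^\gamma$ with $\beta+\gamma\notin\cA$ and $\beta+\gamma\leq\alpha$, about which the scale-$\delta_Q$ basis axioms \textbf{(B1)}--\textbf{(B3)} say nothing: one needs $|\partial^{\beta+\gamma}\widetilde P^{x_0}_\alpha(x_0)|\lesssim\delta_Q^{|\alpha|-|\beta+\gamma|}$, which is exactly a scale-$\delta_Q$ analogue of (\ref{sec_ind}.\ref{a3}), i.e.\ the estimate you set out to prove. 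The unit-scale bound $\leq C$ only gives $C\delta_Q^{|\gamma|}=C\delta_Q^{|\beta+\gamma|-|\beta|}\geq C\delta_Q^{|\alpha|-|\beta|}$, which is too large. So the hypothesis (\ref{sec_plan}.\ref{trans3}) of Lemma~\ref{weaktostrong}, in its rescaled form $|\partial^\beta(\cdot)(y)|\leq R\,\delta_Q^{|\alpha|-|\beta|}$, is never actually established.

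Second, the comparison step is broken. Since both $(P^y_\alpha)_{\alpha\in\cA}$ and $(\widehat P^y_\alpha)_{\alpha\in\cA}$ satisfy the biorthogonality \textbf{(B2)} at $y$, applying $\partial^\gamma(\cdot)(y)$ for $\gamma\in\cA$ to the proposed identity $P^y_\alpha=\sum_{\alpha'}B_{\alpha\alpha'}\widehat P^y_{\alpha'}+(\text{terms}$ $\text{vanishing on }\cA)$ forces $B_{\alpha\gamma}=\delta_{\alpha\gamma}$, so $B$ is the \emph{identity}, not merely near-triangular. All the work is then pushed into the remainder $R_\alpha=P^y_\alpha-\widehat P^y_\alpha$, which indeed vanishes on $\cA$ but about whose derivatives $\partial^\beta R_\alpha(y)$ for $\beta\in\cM\setminus\cA$ you have established no estimate whatsoever; the display $|\partial^\beta P^y_\alpha(y)|\leq\sum_{\alpha'}|B_{\alpha\alpha'}|\,|\partial^\beta\widehat P^y_{\alpha'}(y)|$ silently drops it. So even granting the first paragraph's construction, the final bound does not follow.

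The paper's contradiction argument via Lemma~\ref{techlem} avoids both problems entirely: it never attempts to compare two bases and it gets the ``all $\beta$'' bound for free by showing that its failure would make $Q^+$ OK.
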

\begin{proof}
Define $E^+ := (E \cap 3Q^+) \cup \{y\}$, and let $x \in E^+$ be arbitrary. From (\ref{sec_ind}.\ref{mainprops1})-(\ref{sec_ind}.\ref{mainprops4}), it follows that the collection $(P^x_\alpha)_{\alpha \in \cA}$ forms an $(\cA,x,C\epsilon,1)$-basis for $\sigma(x,E)$. Set $C' = 30^{m} C$. Then, by the definition of $(\cA,x,C \epsilon,1)$-bases,  $(P^x_\alpha)_{\alpha \in \cA}$ forms an $(\cA,x,C'\epsilon,30)$-basis for $\sigma(x,E)$. Since the dyadic cube $Q^+ \subset Q^\circ$ satisfies $\delta_{Q^+} \leq \delta_{Q^\circ} = 1$, by the remark in section \ref{poly_basis}, $(P^x_\alpha)_{\alpha \in \cA}$ forms an $(\cA,x,C'\epsilon,30\delta_{Q^+})$-basis for $\sigma(x,E)$, hence also for $\sigma(x,E \cap 3Q^+)$.

Define $\epsilon_1 = \min\{c_1,\epsilon_0/C_1\}$, where $c_1$ and $C_1$ are the constants from Lemma \ref{techlem}. For the sake of contradiction, suppose that
$$\max \bigl\{ \lvert \partial^\beta P^{\hy}_\alpha(\hy) \rvert \cdot (30 \delta_{Q^+})^{|\beta|-|\alpha|} : \hy \in E^+, \alpha \in \cA, \beta \in \cM \bigr\} >\epsilon_1^{-D-1}.$$
We assume that $\epsilon \leq \epsilon_1^{2D+2}/C'$. Then the hypotheses of Lemma \ref{techlem} hold with the parameters
$$ \left(\epsilon_1,\epsilon_2,\delta,\cA,E_1,E_2,(\widetilde{P}^x_\alpha)_{\alpha \in \cA,x \in E_2}\right) := \left(\epsilon_1,C' \epsilon,30 \delta_{Q^+},\cA,E \cap 3Q^+,E^+,(P^x_\alpha)_{\alpha \in \cA,x \in E^+}\right) .$$
Thus, there exists $\oA < \cA$, such that $\sigma(x,E \cap 3Q^+)$ contains an $(\oA,x,C_1\epsilon_1,30\delta_{Q^+})$-basis for all $x \in E^+$. Since $C_1 \epsilon_1 \leq \epsilon_0$, we deduce that $Q^+$ is OK. This contradicts $Q \in \CZ$, and completes the proof of Lemma \ref{lem201}.
\end{proof}

\section{Estimates for Local Extensions}\label{sec_ess}
\setcounter{equation}{0}

\begin{lem}
Let $Q \in \CZ \setminus \{Q_1\}$ and let $x_Q$ be the representative point for $Q$. Then for every $P \in \cP$, we have
\begin{equation} \label{ess1}
\|(0|_{E\cap 1.1Q},P)\|_{L^{m,p}(E \cap 1.1Q;x_Q)} \leq \|(0|_E,P)\|_{L^{m,p}(E;x_Q)} \lesssim  \left|P\right|_{x_Q,\delta_Q}.
\end{equation}
Moreover, if $P \in \cP$ satisfies $\partial^\alpha P(x_Q) = 0$ for all $\alpha \in \mathcal{A}$, then
\begin{equation} \label{ess2}
\|(0|_{E \cap 9 Q}, P)\|_{L^{m,p}(E \cap 9 Q; x_Q)} \simeq \left|P\right|_{x_Q,\delta_Q}.
\end{equation}
\label{esslem}
\end{lem}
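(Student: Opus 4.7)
My plan is to treat the three inequalities separately. \textbf{The first inequality of \eqref{ess1}} is the monotonicity of the trace seminorm: any $F$ extending $(0|_E,P)$ also extends $(0|_{E\cap 1.1Q},P)$ since $E\cap 1.1Q\subset E$, so the infimum over the larger class of admissible $F$ is at most the infimum over the smaller one.

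\textbf{The upper bound} $\|(0|_E,P)\|\lesssim |P|_{x_Q,\delta_Q}$ in \eqref{ess1} would be proved by explicit construction. Since $Q\neq Q_1$, Lemma \ref{basegeom} yields $\dist(x_Q,E)\gtrsim \delta_Q$. Choose $\theta\in C_c^\infty(B(x_Q,c\delta_Q))$ with $\theta\equiv 1$ near $x_Q$ and $|\partial^\gamma\theta|\lesssim \delta_Q^{-|\gamma|}$, with $c$ so small that $B(x_Q,c\delta_Q)\cap E=\emptyset$. Then $F:=\theta P$ vanishes on $E$ and satisfies $J_{x_Q}F=P$. Since $\partial^\alpha P\equiv 0$ for $|\alpha|\geq m$, only terms with $|\gamma|\geq 1$ survive in the Leibniz expansion of $\partial^\alpha(\theta P)$ for $|\alpha|=m$. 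Combining $|\partial^\gamma\theta|\lesssim \delta_Q^{-|\gamma|}$ with a Taylor expansion of $P$ around $x_Q$ yields $\|F\|_{L^{m,p}}^p\lesssim \sum_{|\sigma|\leq m-1}|\partial^\sigma P(x_Q)|^p\delta_Q^{n+(|\sigma|-m)p}=|P|_{x_Q,\delta_Q}^p$. The upper bound in \eqref{ess2} then follows from \eqref{ess1} and the monotonicity $\|(0|_{E\cap 9Q},P)\|\leq\|(0|_E,P)\|$.

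\textbf{The lower bound} $|P|_{x_Q,\delta_Q}\lesssim \|(0|_{E\cap 9Q},P)\|$ in \eqref{ess2} is the main content, and the hypothesis $\partial^\alpha P(x_Q)=0$ for $\alpha\in\cA$ is essential. Given $F$ extending $(0|_{E\cap 9Q},P)$ with $\|F\|_{L^{m,p}}\leq M$, I would first localize: multiply $F$ by a cutoff $\theta\equiv 1$ on $1.5Q$ supported in $2Q$. Since $E\cap 2Q\subset E\cap 9Q$ (where $F=0$) and $\supp\theta\subset 2Q$, the product $\tilde F:=\theta F$ vanishes on all of $E$; moreover $J_{x_Q}\tilde F=P$ and a Leibniz/Sobolev/Taylor computation gives $\|\tilde F\|_{L^{m,p}}^p\lesssim M^p+|P|_{x_Q,\delta_Q}^p$, i.e.\ $P\in C(M+|P|_{x_Q,\delta_Q})\sigma(x_Q,E)$. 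I argue by contradiction: suppose $|P|_{x_Q,\delta_Q}=\Lambda\geq KM$ for $K$ to be chosen large. Pick $\beta^*\in\cM\setminus\cA$ maximizing $|\partial^\sigma P(x_Q)|\delta_Q^{n/p+|\sigma|-m}$ (it exists because $\Lambda>0$ and derivatives of $P$ at $\cA$-indices vanish), and normalize $\tilde P:=P/\partial^{\beta^*}P(x_Q)$. Then $\partial^{\beta^*}\tilde P(x_Q)=1$, $\partial^\alpha\tilde P(x_Q)=0$ for $\alpha\in\cA$, $|\partial^\gamma\tilde P(x_Q)|\leq \delta_Q^{|\beta^*|-|\gamma|}$ for $\gamma\in\cM$, and $\tilde P\in C\delta_Q^{n/p+|\beta^*|-m}\sigma(x_Q,E)$. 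With $\oA:=\{\alpha\in\cA:\alpha<\beta^*\}\cup\{\beta^*\}$ (satisfying $\oA<\cA$), the collection $(P^{x_Q}_\alpha)_{\alpha\in\cA,\,\alpha<\beta^*}\cup\{\tilde P\}$ is a weak $\oA$-basis candidate at $(x_Q,\delta_Q)$ for $\sigma(x_Q,E)$, the estimates on the $P^{x_Q}_\alpha$'s being supplied by section~\ref{sec_ind} and Lemma~\ref{lem201}.

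To reach a contradiction, I would transport this basis from $x_Q$ to every $y\in E\cap 3Q^+=E\cap 9Q$. Using $\tilde P=J_{x_Q}\tilde F$ with $\tilde F\in L^{m,p}(\R^n)$ vanishing on $E$, define $\tilde P^y_{\beta^*}:=J_y\tilde F$ for $y\in Q^\circ$ and combine with $J_y\varphi_\alpha$ (for the section~\ref{sec_ind} functions $\varphi_\alpha$, $\alpha\in\cA\cap\{<\beta^*\}$). A Sobolev comparison tracking how jets change with basepoint --- patterned exactly on the passage from $\widetilde{P}^{x_0}_\alpha$ to $\widehat{P}^x_\alpha$ in section~\ref{sec_ind} --- shows these polynomials form weak $\oA$-bases at $(y,30\delta_{Q^+})$ for $\sigma(y,E)\subset\sigma(y,E\cap 3Q^+)$ uniformly in $y$. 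A scale-$30\delta_{Q^+}$ application of Lemma~\ref{weaktostrong} then promotes these into genuine $(\oA,y,\epsilon_0,30\delta_{Q^+})$-bases of $\sigma(y,E\cap 3Q^+)$ at every $y\in E\cap 3Q^+$, contradicting the fact that $Q^+$ is not OK (built into $Q\in\CZ$). \textbf{The main obstacle} is exactly this transport step: the basis parameter of $\tilde P$ at scale $\delta_Q$ is only $O(1)$, not small, so making Lemma~\ref{weaktostrong} deliver the $\epsilon_0$-smallness required at the end requires a delicate balance between the smallness of $\epsilon$ from section~\ref{sec_ind} and the freedom in choosing $K$ large, together with careful bookkeeping of the transported derivative bounds on a ball of radius $\simeq\delta_{Q^+}$ around $x_Q$.
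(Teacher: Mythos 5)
Your treatment of the monotonicity step and the upper bound in \eqref{ess1}--\eqref{ess2} (explicit cutoff $\theta P$ with $\dist(x_Q,E)\gtrsim\delta_Q$) matches the paper, and the strategy for the lower bound (contradiction, normalize $P$, extract a weak $\oA$-basis with $\oA<\cA$, transport it to points of $E\cap 9Q$, conclude $Q^+$ is OK) is also the paper's. But there are two places where the argument as written breaks.

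\textbf{The localization step loses the required smallness.} You multiply $F$ by a cutoff supported in $2Q$ to get $\tilde F$ vanishing on all of $E$, at the cost of the estimate $\|\tilde F\|^p\lesssim M^p+\Lambda^p$ where $\Lambda=|P|_{x_Q,\delta_Q}$. Under the contradiction hypothesis $\Lambda\geq KM$ this gives $P\in C\Lambda\cdot\sigma(x_Q,E)$ with $C$ a universal constant --- no matter how large $K$. But to set up a $(\oA,\cdot,\epsilon_0,\cdot)$-basis you ultimately need the membership constant to be small, and $C$ is not. The paper avoids this by never passing to $\sigma(x_Q,E)$: the original $F$ (which vanishes only on $E\cap 9Q$) is already a witness that $P\in 2M\cdot\sigma(x_Q,E\cap 9Q)$, and since $M\leq\Lambda/K$ this is genuinely small after normalization. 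The space $\sigma(x_Q,E\cap 9Q)$ is exactly what is needed, because the contradiction target is that $Q^+$ is OK, which involves $\sigma(x,E\cap 3Q^+)\supset\sigma(x,E\cap 9Q)$.

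\textbf{The simple maximum does not give the decay at higher-order indices.} Choosing $\beta^*$ that maximizes $|\partial^\sigma P(x_Q)|\,\delta_Q^{n/p+|\sigma|-m}$ only yields $|\partial^\gamma\tilde P(x_Q)|\leq\delta_Q^{|\beta^*|-|\gamma|}$ for every $\gamma\in\cM$, with \emph{no} additional smallness factor when $\gamma>\beta^*$. But condition \textbf{(B3)} of a labeled basis requires an $\epsilon$-gain there, and without it neither Lemma~\ref{weaktostrong} nor any direct verification can promote your candidate to an $(\oA,x,\epsilon_0,\cdot)$-basis; the near-triangularity hypothesis fails. The paper's proof does not use Lemma~\ref{techlem} or Lemma~\ref{weaktostrong} at this point at all. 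Instead it performs the pigeonhole on the nested sets $\Delta_\ell=\{\alpha:|\partial^\alpha P(x_Q)|\delta_Q^{n/p+|\alpha|-m}\in(\epsilon_1^\ell,1]\}$ to find $\ell_*\leq D$ with $\Delta_{\ell_*}=\Delta_{\ell_*+1}$, and takes $\oa$ maximal in $\Delta_{\ell_*}$; this forces both $|\partial^\oa P(x_Q)|\delta_Q^{\cdots}\geq\epsilon_1^{\ell_*}$ and $|\partial^\beta P(x_Q)|\delta_Q^{\cdots}\leq\epsilon_1^{\ell_*+1}$ for $\beta>\oa$, giving exactly the multiplicative $\epsilon_1$-gap that the simple max cannot supply. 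You flag the $O(1)$ basis parameter of $\tilde P$ as the ``main obstacle'' to be resolved by a ``delicate balance'' between $\epsilon$ and $K$; in fact no such balance resolves it --- the $\Delta_\ell$ pigeonhole, not a choice of constants, is what closes the gap.
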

\begin{proof} Since $Q \neq Q_1$ we have that $x_Q \neq x_1=z$. Thus, by the properties of the representative points listed in Lemma \ref{basegeom}, there exists $\theta \in C^\infty_0(Q)$ with 
\begin{align}
\label{p701} & \theta \equiv 0 \; \mbox{in a neighborhood of} \; E; \\ 
\label{p702} & \theta \equiv 1 \; \mbox{in a neighborhood of} \; x_Q; \; \mbox{and} \\
\label{p703} & |\partial^\alpha \theta| \lesssim \delta_{Q}^{-|\alpha|} \; \mbox{when} \; |\alpha| \leq m.
\end{align} 
Define $H := \theta P$. By a straightforward calculation using $\supp(\theta) \subset Q$ and \eqref{p703}, we have
$$\|H\|^p_{\LR} \lesssim \sum_{|\alpha| \leq m-1} |\partial^\alpha P(x_Q)|^p \delta_Q^{n - (m-|\alpha|)p} = \left[ \left|P\right|_{x_Q,\delta_Q} \right]^p.$$ 
Moreover, \eqref{p701} and \eqref{p702} show that $H$ is a competitor in the infimum that defines
$$\|(0|_E, P)\|_{L^{m,p}(E;x_Q)} := \inf \left\{ \|G\|_{\LR} : G|_{E} = 0, \; J_{x_Q} G = P \right\},$$ 
and therefore
$$\|(0|_E, P)\|_{L^{m,p}(E;x_Q)} \leq \|H\|_{\LR} \lesssim \left|P\right|_{x_Q,\delta_Q}.$$
Finally, by definition of the trace seminorm we have 
\begin{equation}
\label{eq_1410}
\|(0|_{E \cap (1.1)Q}, P)\|_{L^{m,p}(E \cap 1.1Q;x_Q)} \leq \|(0|_{E \cap 9Q}, P)\|_{L^{m,p}(E \cap 9Q;x_Q)} \leq \|(0|_E,P)\|_{L^{m,p}(E;x_Q)}. 
\end{equation}
This completes the proof of \eqref{ess1}.

To prove \eqref{ess2}, we let $P \in \cP$ be given with 
\begin{equation} \label{feffy} \partial^\alpha P(x_Q) = 0 \;\mbox{for all} \;\alpha \in \mathcal{A},
\end{equation} 
and establish the reverse inequality 
\begin{equation} \label{reffy}\|(0|_{E \cap 9Q},P)\|_{L^{m,p}(E\cap 9Q;x_Q)} \geq c \left|P\right|_{x_Q,\delta_Q}.
\end{equation}
From the definition of $\sigma(\cdot,\cdot)$ and the definition of the trace seminorm, we have
\begin{equation} \label{reform}
\bigl\{P \in \cP : \|(0|_{E \cap 9Q},P)\|_{L^{m,p}(E\cap 9Q;x_Q)} \leq 1/2 \bigr\} \subset \sigma(x_Q, E \cap 9 Q).
\end{equation}
Let $\epsilon_1>0$ be some small universal constant, to be determined by the end of the proof. For the sake of contradiction, suppose that \eqref{reffy} fails with $c = \epsilon_1^{D+1}$, for some $P \in \cP$ that satisfies \eqref{feffy}. Upon rescaling $P$ and using \eqref{reform}, we see that
\begin{align} 
\label{eq:700} &P \in C\epsilon_1^{D+1} \cdot \sigma\left(x_Q,E \cap 9Q\right); \; \mbox{and}\\
\label{eq:701} &\max_{\beta \in \cM} \left\{ \left|\partial^\beta P(x_Q)\right| \delta_{Q}^{n/p + |\beta| - m} \right\}= 1.
\end{align}
For each integer $\ell \geq 0$, we define
$$\Delta_\ell = \left\{\alpha \in \cM : |\partial^\alpha P(x_Q)| \delta_Q^{\frac{n}{p} + |\alpha| - m} \in (\epsilon_1^\ell,1]\right\}.$$ Note that $\Delta_{\ell} \subset \Delta_{\ell + 1}$ for each $\ell\geq 0$ and that $\Delta_\ell \neq \emptyset$ for $\ell \geq 1$ by \eqref{eq:701}. Since $\cM$ contains $D$ elements, there exists $0 \leq \ell_* \leq D$ with $\Delta_{\ell_*} = \Delta_{\ell_* + 1} \neq \emptyset$. Let $\oa \in \cM$ be the maximal element of $\Delta_{\ell_*}$. Thus,\begin{equation}\label{eq_23}
|\partial^\oa P(x_Q)| \delta_{Q}^{n/p + |\oa| - m} \geq \epsilon_1^{\ell_*} .
\end{equation}
Moreover, for every $\beta \in \cM$ with $\beta > \oa$, we have $\beta \notin \Delta_{\ell_*} = \Delta_{\ell_*+1}$, and hence
\begin{equation}\label{eq_24}
|\partial^\beta P(x_Q)| \delta_{Q}^{n/p + |\beta| - m} \leq \epsilon_1^{\ell_*+1} \quad (\beta \in \cM, \beta > \oa).
\end{equation}

Thanks to \eqref{eq_23}, we may define $P_{\oa} := \left[\partial^\oa P(x_Q)\right]^{-1} P$. Hence,
\begin{equation}\label{e88}
\partial^\oa P_\oa(x_Q) = 1,
\end{equation} 
while \eqref{eq:700},\eqref{eq_23} and $\ell_* \leq D$ imply that
$$P_{\oa} \in C \epsilon_1 \delta_{Q}^{|\oa| + n/p - m} \cdot \sigma(x_Q,E \cap 9 {Q}).$$
Thus, there exists $\varphi_{\oa} \in \LR$ with 
\begin{align}
\label{p801} &\varphi_\oa = 0 \; \mbox{on} \; E \cap 9{Q} ; \\
\label{p802} &J_{x_Q} (\varphi_\oa) = P_\oa; \; \mbox{and}\\
\label{p803} &\|\varphi_\oa\|_{\LR} \leq C \epsilon_1 \delta_{{Q}}^{|\oa|+n/p-m}.
\end{align}
From \eqref{eq:701},\eqref{eq_23} and \eqref{eq_23},\eqref{eq_24}, we obtain
\begin{align}
&|\partial^\beta P_\oa(x_Q)| \leq \epsilon_1^{-D} \delta_{Q}^{|\oa| - |\beta|} \quad (\beta \in \cM), \;\mbox{and}\label{e90}\\
&|\partial^\beta P_\oa(x_Q)| \leq \epsilon_1 \delta_{Q}^{|\oa| - |\beta|} \quad\;\;\; (\beta \in \cM, \beta > \oa), \label{e91}
\end{align}
while \eqref{feffy} and \eqref{eq_23} yield
\begin{equation}
\oa \notin \cA. \label{nottt}
\end{equation}

Let $x \in E \cap 9{Q}$ be arbitrary. Define $P^x_\oa = J_{x}( \varphi_\oa)$. Thus,
\begin{align}& \Bigl\lvert \partial^\beta P^x_\oa(x) - \sum_{|\gamma| \leq m-1-|\beta|} \frac{1}{\gamma!} \partial^{\beta + \gamma}P_{\oa}(x_Q)\cdot (x-x_Q)^\gamma \Bigr\rvert = \lvert \partial^\beta P^x_\oa(x) - \partial^\beta P_\oa(x)\rvert  \notag{} \\ 
& \;\; = \lvert \partial^\beta \varphi_\oa(x) - \partial^\beta J_{x_Q} (\varphi_\oa)(x) \rvert \leq C' \epsilon_1 \cdot \delta_{Q}^{|\oa|+n/p-m} \cdot \lvert x-x_Q \rvert^{m-n/p-|\beta|} \notag{} \\
& \leq C'' \epsilon_1 \cdot \delta_{Q}^{|\oa| - |\beta|} \qquad\qquad\qquad (\beta \in \cM). \label{mama}
\end{align}
(Here, we have used \eqref{p802}; \eqref{p803} and the standard Sobolev inequality; and $\lvert x-x_Q\rvert \leq C \delta_{Q}$.)
By \eqref{p801},\eqref{p803};  inserting \eqref{e88},\eqref{e91} into \eqref{mama}; and inserting \eqref{e90} into \eqref{mama}, we have
\begin{align}
& \label{e89}   P^x_\oa \in C_3 \epsilon_1 \cdot \delta_{Q}^{|\oa|+n/p-m} \cdot \sigma(x,E \cap 9{Q});& &  \\
&\label{e92}  \lvert \partial^\beta P^x_\oa(x) - \delta_{\beta\oa} \rvert \leq C_3 \epsilon_1 \cdot \delta_{Q}^{|\oa|-|\beta|}%
& &(\beta \in \cM, \beta \geq \oa); \;\mbox{and}\\
&\label{e93}  \lvert \partial^\beta P^x_\oa(x) \rvert \leq C_3 \epsilon_1^{-D} \cdot \delta_{Q}^{|\oa|-|\beta|}%
& &(\beta \in \cM).
\end{align}

We recall the defining properties (\ref{sec_ind}.\ref{mainprops1})-(\ref{sec_ind}.\ref{mainprops4}) of the polynomials $P^{x}_\alpha$:
\begin{subequations}
\begin{align} \label{pip}
& \;P^x_\alpha \in C'\epsilon \cdot \sigma(x,E) &&(\alpha \in \cA);\\
&\; \partial^\beta P^x_\alpha(x) = \delta_{\alpha \beta} && (\alpha,\beta \in \cA);\label{pip2}\\
& \lvert \partial^\beta P^x_\alpha(x)\rvert \leq C'\epsilon &&(\alpha \in \cA, \; \beta \in \cM, \; \beta>\alpha); \;\mbox{and}\label{pip3} \\
& \lvert \partial^\beta P^x_\alpha(x)\rvert \leq C' && (\alpha \in \cA,\; \beta \in \cM). \label{pip4}
\end{align}
\end{subequations}
Let $Q' \in \CZ$ be such that $x \in Q'$. Then $9Q \cap Q' \neq \emptyset$, since $x \in 9Q$. Thus $\delta_{Q'} \leq 50 \delta_Q$, thanks to the second bullet point in Lemma \ref{moregg}. Also, $\lvert \partial^\beta P^x_\alpha(x) \rvert \leq C \delta_{Q'}^{|\alpha|-|\beta|}$ for $\alpha \in \cA$,  $\beta \in \cM$, thanks to Lemma \ref{lem201}. Thus, we have $\lvert \partial^\beta P^x_\alpha(x) \rvert \leq C' \delta_Q^{|\alpha| - |\beta|}$ for every $\alpha \in \cA$ and $\beta \in \cM$ with $|\beta| \leq |\alpha|$. In combination with \eqref{pip4} this gives
\begin{equation}
\label{pip5} \lvert \partial^\beta P^x_\alpha(x) \rvert \leq C' \delta_Q^{|\alpha| - |\beta|} \quad (\alpha \in \cA, \; \beta \in \cM).
\end{equation}
(Here, we use that $\delta_Q \leq 1$ since $Q \subset Q^\circ = (0,1]^n$.)

Define the polynomial
$$ \widetilde{P}^x_\oa = P^x_\oa - \sum_{\alpha \in \cA , \alpha < \oa} \left[\partial^\alpha P^x_\oa(x)\right] \cdot P_\alpha^x.$$

From \eqref{e89},\eqref{e93},\eqref{pip}, we obtain
\begin{align}
\widetilde{P}^x_{\oa} &\in  \Bigl[ C_3 \epsilon_1 \cdot \delta_{Q}^{|\oa|+n/p-m}  + \sum_{\alpha \in \cA , \alpha < \oa} \Bigl[ C_3 \epsilon_1^{-D} \cdot \delta_Q^{|\oa|-|\alpha|} \Bigr] \cdot C' \epsilon \Bigr] \cdot \sigma(x,E \cap 9{Q}) \notag{}\\
& \subset C_4 \left[\epsilon_1 + \epsilon \epsilon_1^{-D}\right] \cdot \delta_{Q}^{|\oa|+n/p - m} \cdot \sigma(x,E \cap 9{Q}). \label{alm1} 
\end{align}
(Here, we have also used $\sigma(x,E) \subset \sigma(x, E \cap 9Q)$ and $|\alpha| \leq m - 1 < m-n/p$.)

For each $\beta \in \cA$, $\beta < \oa$, by \eqref{pip2} we have 
\begin{equation} \label{alm2}
\partial^\beta \widetilde{P}^x_{\oa}(x) = \partial^\beta P_{\oa}^x(x) - \partial^\beta P_{\oa}^x(x) = 0.
\end{equation}

Let $\beta \in \cM$ with $\beta \geq \oa$ be given. Note that if $\alpha \in \cA$ and $\alpha < \oa$, then $\alpha < \beta$. We estimate
\begin{align}
|\partial^\beta \widetilde{P}^x_\oa(x) - \delta_{\beta \oa}| &\leq |\partial^\beta P^x_\oa(x) - \delta_{\beta \oa}| \;  + \sum_{\alpha \in \cA , \alpha < \oa} | \partial^\alpha P^x_{\oa}(x)| \cdot |\partial^{\beta} P_\alpha^x(x)| \notag{} \\%
&  \leq C_3 \epsilon_1 \cdot \delta_{Q}^{|\oa|-|\beta|} \quad\;+ \sum_{\alpha \in \cA, \alpha < \oa} \left[ C_3 \epsilon_1^{-D} \cdot \delta_{Q}^{|\oa|-|\alpha|} \right] \cdot C' \epsilon \;\;\; (\mbox{thanks to} \; \eqref{e92},\eqref{e93},\eqref{pip3}) \notag{} \\%
&\leq C_4 \left[\epsilon_1 + \epsilon_1^{-D} \epsilon \right] \cdot \delta_Q^{|\oa|-|\beta|} \qquad\qquad (\mbox{since} \; \alpha < \beta \implies  |\alpha| \leq |\beta|). \label{alm3}
\end{align}

On the other hand, consider $\beta \in \cM$ arbitrary. We estimate
\begin{align}
|\partial^\beta \widetilde{P}^x_\oa(x)| &\leq |\partial^\beta P^x_\oa(x) | \quad\;\;\; + \sum_{\alpha \in \cA , \alpha < \oa} | \partial^\alpha P^x_{\oa}(x)| \cdot |\partial^{\beta} P_\alpha^x(x)| \notag{}\\%
&  \leq C_3 \epsilon_1^{-D}\delta_{Q}^{|\oa|-|\beta|} \; + \sum_{\alpha \in \cA, \alpha < \oa} \left[C_3 \epsilon_1^{-D} \cdot \delta_{Q}^{|\oa|-|\alpha|} \right] \cdot C' \delta_Q^{|\alpha|-|\beta|} \;\;\;\; (\mbox{thanks to} \; \eqref{e93},\eqref{pip5})\notag{} \\%
&\leq  C_4 \epsilon_1^{-D} \cdot \delta_Q^{|\oa| - |\beta|}. \label{alm4}
\end{align}

Define $\oA = \{\alpha \in \cA : \alpha <\oa\} \cup \{\oa\}$. From \eqref{nottt} and the definition of the order on multi-index sets, we have
\begin{equation} \label{lesss}
\oA < \cA.
\end{equation}

For small enough $\epsilon$ and $\epsilon_1$ such that $\epsilon \leq \epsilon_1^{D+1}$, by \eqref{alm3} we have $|\partial^\oa \widetilde{P}^x_\oa(x)| \geq 1/2$. Thus, the following polynomial is well-defined:
$$\widehat{P}^x_\oa = \left[\partial^\oa \widetilde{P}^x_\oa(x)\right]^{-1} \cdot \widetilde{P}^x_\oa.$$
Thanks to \eqref{alm1}-\eqref{alm4}, these properties are immediate:
\begin{subequations}
\begin{align}
&\hP_{\oa}^x \in 2 C_4 \cdot \left[\epsilon_1 + \epsilon \epsilon_1^{-D}\right] \cdot \delta_{Q}^{|\oa|+n/p - m} \sigma(x,E \cap 9{Q}); && \label{palm1} \\
& \partial^{\beta} \hP^x_{\oa}(x) = \delta_{\beta \oa} && (\beta \in \oA); \label{palm2}\\
& |\partial^\beta \hP^x_\oa(x)| \leq 2C_4 \cdot  \left[\epsilon_1 + \epsilon_1^{-D} \epsilon \right] \cdot \delta_{Q}^{|\oa| - |\beta|} && (\beta \in \cM, \; \beta > \oa); \;\mbox{and}\label{palm3} \\
& |\partial^\beta \hP^x_\oa(x)| \leq 2C_4 \cdot \epsilon_1^{-D} \cdot \delta_{Q}^{|\oa|-|\beta|} && (\beta \in \cM). \label{palm4}
\end{align}
\end{subequations}

For each $\alpha \in \oA \setminus \{\oa\}$, we define the polynomial
$$\widehat{P}^x_\alpha = P^{x}_\alpha - \left[\partial^\oa P^{x}_\alpha(x) \right] \cdot \widehat{P}^x_\oa.$$
Then, we have
\begin{equation}
\partial^\oa \widehat{P}^x_\alpha(x) = \partial^{\oa} P^x_\alpha(x) - \left[ \partial^{\oa} P^x_\alpha(x)  \right] \cdot \partial^{\oa} \hP^x_{\oa}(x) = 0 \;\;\; (\mbox{thanks to} \; \eqref{palm2}). \label{psalm1}
\end{equation}
For $ \beta \in \oA \setminus \{\oa\}$, we have 
\begin{equation}
\partial^\beta \widehat{P}^x_\alpha(x) = \partial^{\beta} P^x_\alpha(x) -  \left[ \partial^{\oa} P^x_\alpha(x)\right] \cdot \partial^{\beta} \hP^x_{\oa}(x) = \delta_{\beta \alpha} \;\;\;(\mbox{thanks to} \; \eqref{pip2}, \eqref{palm2}). \label{psalm2}
\end{equation}
Note that $\alpha < \oa$, because $\alpha \in \oA \setminus \{\oa\}$. Thus, for each $\beta \in \cM$ with $\beta > \alpha$, we obtain
\begin{align}
|\partial^\beta \widehat{P}^x_\alpha(x)| &\leq C' \epsilon + \left[C' \epsilon \right] \cdot  2C_4 \epsilon_1^{-D} \cdot\delta_Q^{|\oa| - |\beta|} \quad (\mbox{thanks to} \; \eqref{pip3}, \eqref{palm4}) \notag{}\\
& \leq C_5 \epsilon \cdot \epsilon_1^{-D} \cdot \delta_Q^{|\alpha|-|\beta|} \;\;\; (\mbox{since} \; |\alpha| \leq |\oa|, \; |\alpha| \leq |\beta|). \label{psalm3}
\end{align}
Finally, from \eqref{pip}, \eqref{pip4}, \eqref{palm1}, we obtain
\begin{equation*}\widehat{P}^x_\alpha \in \left[ C' \epsilon + \left[C'\right] \cdot 2C_4 \cdot (\epsilon_1 + \epsilon \epsilon_1^{-D}) \cdot \delta_{Q}^{|\oa|+n/p - m} \right]\cdot \sigma(x,E \cap 9{Q}). \end{equation*}
We may assume that $\epsilon \leq \epsilon_1$. Note that $|\alpha| \leq |\oa|$, $ |\alpha| + n/p - m \leq 0$ and $\delta_Q \leq 1$. Therefore,
\begin{equation} \widehat{P}^x_\alpha \in C_5 \cdot \left[\epsilon_1 + \epsilon \epsilon_1^{-D}\right] \cdot \delta_{Q}^{|\alpha|+n/p - m}\cdot \sigma(x,E \cap 9{Q}). \label{psalm4} \end{equation}

Recall that $x \in E \cap 9Q$ was arbitrary. Thus, from \eqref{palm1},\eqref{palm2},\eqref{palm3} and \eqref{psalm1}-\eqref{psalm4}, it follows that $(\hP^x_\alpha)_{\alpha \in \oA}$ forms an $(\oA,x,C_6 \cdot[\epsilon_1 + \epsilon_1^{-D} \epsilon],\delta_{{Q}^+})$-basis for $\sigma(x,E\cap 9{Q})$ for each $x \in E \cap 9{Q}$, hence for $\sigma(x,E\cap 3{Q}^+)$ for each  $x \in E \cap 3{Q}^+$. (Here, we use that $E \cap 3{Q}^+ \subset E \cap 9{Q}$.)

Fix $\epsilon_1$ small enough so that the preceding arguments hold, and so that $\epsilon_1 \leq \epsilon_0/2C_6$. We may assume that $\epsilon \leq \epsilon_1^{D+1}$. Therefore, $\sigma(x,E \cap 3 {Q}^+)$ contains an $(\oA,x,\epsilon_0,\delta_{{Q}^+})$-basis for each $x \in E \cap 3 {Q}^+$. Since $\oA < \cA$, by definition the cube ${Q}^+$ is OK, which contradicts the hypothesis that ${Q} \in \CZ$. This completes the proof of \eqref{reffy}, which was the unverified inequality in \eqref{ess2}. The proof of Lemma \ref{esslem} is now complete.
\end{proof}

\section{The Jet of a Near Optimal Extension I}\label{sec_ac}
\setcounter{equation}{0}

\subsection{Another Sobolev Inequality}

Proposition \ref{keygeom} produces for each $Q_\nu$ an associated keystone cube $\key(Q_\nu) = Q^\sharp_{\kappa(\nu)}$ and a finite sequence of CZ cubes $\cS_\nu = (Q_{\nu,1},\ldots,Q_{\nu,L_\nu})$, such that
\begin{equation}
\label{t0}
\begin{aligned}
&Q_\nu = Q_{\nu,1} \leftrightarrow \cdots \leftrightarrow Q_{\nu,L_\nu}= Q^\sharp_{\kappa(\nu)}, \;\mbox{with} \\
&\delta_{Q_{\nu,k}} \leq C (1-c)^{k-j} \delta_{Q_{\nu,j}} \;\; \mbox{for all} \; 1 \leq j \leq k \leq L_\nu,\; \mbox{where} \\
& \quad\quad C>0 \; \mbox{and} \; 0<c<1 \; \mbox{are universal constants}.
\end{aligned}
\end{equation}
\begin{lem}
\label{globsob}
Let $F \in L^{m,p}(Q^\circ)$. Then
$$\sum_{\nu=1}^{\nu_{\max}} \left|J_{x_\nu} (F) - J_{x^\sharp_{\kappa(\nu)}} (F)\right|_\nu^p \lesssim \|F\|^p_{L^{m,p}(Q^\circ)}.$$
\end{lem}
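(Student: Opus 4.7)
The strategy is to telescope along the paths $\cS_\nu = (Q_{\nu,1},\dots,Q_{\nu,L_\nu})$ from Proposition~\ref{keygeom}, apply a Sobolev estimate at each step, and absorb the combinatorial overhead using the geometric decay of cube sizes along the path. Let $x_{\nu,k}$ denote the representative basepoint of $Q_{\nu,k}$, so that $x_{\nu,1}=x_\nu$ and $x_{\nu,L_\nu}=x^\sharp_{\kappa(\nu)}$, and begin from the telescoping identity
$$J_{x_\nu}(F) - J_{x^\sharp_{\kappa(\nu)}}(F) = \sum_{k=1}^{L_\nu-1}\bigl[J_{x_{\nu,k}}(F) - J_{x_{\nu,k+1}}(F)\bigr].$$
For each consecutive pair, the Sobolev inequality (\ref{sec_not}.\ref{SET}) applied on the union $U_{\nu,k} := (1+c_0)Q_{\nu,k} \cup (1+c_0)Q_{\nu,k+1}$ (a union of two non-degenerate axis-parallel boxes with an interior point in common) yields
$$\bigl|J_{x_{\nu,k}}(F) - J_{x_{\nu,k+1}}(F)\bigr|_{x_{\nu,k},\delta_{Q_{\nu,k}}}^p \lesssim \|F\|_{L^{m,p}(U_{\nu,k})}^p.$$

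Next I will convert to $|\cdot|_\nu = |\cdot|_{x_\nu,\delta_{Q_\nu}}$. Since \textbf{(K1)} yields $|x_\nu - x_{\nu,k}| \lesssim \sum_j \delta_{Q_{\nu,j}} \lesssim \delta_{Q_\nu}$, inequality (\ref{sec_not}.\ref{ti0}) allows me to translate the basepoint from $x_{\nu,k}$ to $x_\nu$ at scale $\delta_{Q_\nu}$. Moreover, because each monomial weight $\delta^{n+(|\alpha|-m)p}$ has negative exponent when $|\alpha|\leq m-1$ and $\delta_{Q_{\nu,k}}/\delta_{Q_\nu} \leq C(1-c)^{k-1}$, the scale inflation actually produces a gain $(1-c)^{(k-1)(p-n)}$ (using $p>n$). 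Combining,
$$\bigl|J_{x_{\nu,k}}(F) - J_{x_{\nu,k+1}}(F)\bigr|_\nu^p \lesssim (1-c)^{(k-1)(p-n)} \|F\|_{L^{m,p}(U_{\nu,k})}^p.$$
A weighted H\"older inequality with weights $w_k = (1-c)^{\gamma(k-1)}$ for sufficiently small $\gamma>0$ (chosen so that the weights are summable while enough decay remains on the $a_k^p$-side) converts the telescoping sum of norms into a controlled $p$-th-power sum:
$$\bigl|J_{x_\nu}(F) - J_{x^\sharp_{\kappa(\nu)}}(F)\bigr|_\nu^p \lesssim \sum_{k=1}^{L_\nu-1}(1-c)^{\gamma'(k-1)} \|F\|_{L^{m,p}(U_{\nu,k})}^p$$
for some $\gamma'>0$.

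Summing over $\nu$ and bounding $U_{\nu,k} \subset (1+c_0)Q_{\nu,k} \cup (1+c_0)Q_{\nu,k+1}$, I swap the order of summation over $(\nu,k)$ and $Q \in \CZ$ to arrive at
$$\sum_\nu \bigl|J_{x_\nu}(F) - J_{x^\sharp_{\kappa(\nu)}}(F)\bigr|_\nu^p \lesssim \sum_{Q \in \CZ} \|F\|_{L^{m,p}((1+c_0)Q)}^p \cdot W(Q),$$
where $W(Q) := \sum_{(\nu,k):\, Q_{\nu,k}=Q} (1-c)^{\gamma'(k-1)}$; the contribution from $Q_{\nu,k+1}=Q$ reduces to the same form after a shift in $k$.

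The main obstacle, and the combinatorial heart of the proof, is to show $W(Q) \lesssim 1$ uniformly in $Q$. The Good Geometry bound of Lemma~\ref{gg}, in the form $\delta_{Q_{\nu,k}} \geq 2^{-(k-1)}\delta_{Q_\nu}$, converts the geometric weight into a polynomial scale-ratio weight: $(1-c)^{\gamma'(k-1)} \lesssim (\delta_Q/\delta_{Q_\nu})^{\eta}$ for some $\eta>0$. For each dyadic scale $s \geq c\delta_Q$, any $\nu$ with $\delta_{Q_\nu}=s$ and $Q \in \cS_\nu$ must have $Q_\nu$ within distance $\lesssim s$ of $Q$, and pairwise disjoint dyadic cubes of sidelength $s$ in a ball of radius $\lesssim s$ number at most a universal constant. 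Summing the resulting geometric series over dyadic scales $s \geq c\delta_Q$ yields $W(Q) \lesssim 1$. Finally, the bounded overlap of the dilates $\{(1+c_0)Q : Q \in \CZ\}$ (Lemma~\ref{moregg}, first bullet) gives $\sum_Q \|F\|_{L^{m,p}((1+c_0)Q)}^p \lesssim \|F\|_{L^{m,p}(Q^\circ)}^p$, completing the proof.
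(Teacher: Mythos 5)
Your proof is correct and follows essentially the same approach as the paper: telescope along the paths from Proposition~\ref{keygeom}, apply a weighted H\"older inequality that exploits the geometric decay $\delta_{Q_{\nu,k}} \lesssim (1-c)^{k}\delta_{Q_\nu}$, then absorb the overcounting by summing over dyadic scales and using bounded overlap. The only cosmetic difference is that you track the scale-change gain as $(1-c)^{(k-1)(p-n)}$ directly (exploiting $p>n$), whereas the paper introduces a free parameter $\epsilon' \in (0,1-n/p)$ in the H\"older step; the combinatorial heart (bounding what you call $W(Q)$ by a constant via the dyadic-scale count) is identical to the paper's final counting step.
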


\begin{proof}
By applying (\ref{sec_rep}.\ref{ti}), we have
\begin{align}
\label{t1} X:=\sum_{\nu=1}^{\nu_{\max}} \left|J_{x_\nu}(F) - J_{x^\sharp_{\kappa(\nu)}}(F)\right|_\nu^p &\lesssim  \sum_{\nu=1}^{\nu_{\max}} \sum_{|\alpha| \leq m-1}\left\lvert \partial^\alpha \left[J_{x_\nu}(F) -  J_{x^\sharp_{\kappa(\nu)}}(F) \right](x^\sharp_{\kappa(\nu)}) \right|^p \delta_{Q_\nu}^{n - (m-|\alpha|)p}.
\end{align}

Let $x_{\nu,k}$ denote the representative point for $Q_{\nu,k}$. We fix some universal constant $\epsilon' \in (0, 1-n/p)$. Note that $x_{\nu,L_\nu} = x_{\kappa(\nu)}^\sharp$, since $Q_{\nu,L_{\nu}} = Q^\sharp_{\kappa(\nu)}$. Thus the right-hand side of \eqref{t1} is given by
\begin{align}
&\sum_{\nu=1}^{\nu_{\max}} \sum_{|\alpha| \leq m-1}\delta_{Q_\nu}^{n-(m-|\alpha|)p} \left| \sum_{k=1}^{L_\nu-1} \partial^\alpha \left[ J_{x_{\nu,k}}(F) - J_{x_{\nu,k+1}}(F)\right](x_{\kappa(\nu)}^\sharp) \cdot \delta_{Q_{\nu,k}}^{\frac{n}{p} - (m-|\alpha|) + \epsilon'} \cdot \delta_{Q_{\nu,k}}^{-\frac{n}{p} + (m-|\alpha|) - \epsilon'} \right|^p \notag{}\\
\leq &\sum_{\nu=1}^{\nu_{\max}} \sum_{|\alpha| \leq m-1} \delta_{Q_\nu}^{n-(m-|\alpha|)p} \left[ \sum_{k=1}^{L_\nu-1} \left|\partial^\alpha \left[ J_{x_{\nu,k}}(F) - J_{x_{\nu,k+1}}(F)\right](x_{\kappa(\nu)}^\sharp)\right|^p \cdot\delta_{Q_{\nu,k}}^{n - (m-|\alpha|)p + \epsilon'p}  \right] \notag{} \\
& \qquad\qquad\qquad\qquad\;\;\cdot \left[ \sum_{k=1}^{L_\nu - 1} \delta_{Q_{\nu,k}}^{-\frac{np'}{p} + (m-|\alpha|)p' - \epsilon'p'}\right]^{p/p'}\notag{}\\
&\left(\mbox{by H\"older's inequality}; \;\mbox{here,} \; p' \; \mbox{is the dual exponent to} \; p, \; \mbox{so that} \; \frac{1}{p'} + \frac{1}{p}=1\right).\notag{}
\end{align}
From \eqref{t0}, we have $\delta_{Q_{\nu,k}} \leq C (1-c)^k \delta_{Q_\nu}$. Thus, using $\epsilon' < 1-n/p$, we obtain
\begin{align}
X &\lesssim \sum_{\nu=1}^{\nu_{\max}} \sum_{|\alpha| \leq m-1} \delta_{Q_\nu}^{n - (m-|\alpha|)p} \cdot \delta_{Q_\nu}^{-n + (m-|\alpha|)p - \epsilon' p} \cdot\left[ \sum_{k=1}^{L_\nu-1}  \left|\partial^\alpha \left[J_{x_{\nu,k}}(F) - J_{x_{\nu,k+1}}(F)\right](x_{\kappa(\nu)}^\sharp)\right|^p \cdot \delta_{Q_{\nu,k}}^{n - (m-|\alpha|)p +\epsilon'p} \right] \notag{} \\
& = \sum_{\nu=1}^{\nu_{\max}} \delta_{Q_\nu}^{-\epsilon' p} \sum_{k=1}^{L_\nu-1} \sum_{|\alpha| \leq m-1} \left| \partial^\alpha \left[ J_{x_{\nu,k}}(F) - J_{x_{\nu,k+1}}(F)\right](x^\sharp_{\kappa(\nu)}) \right|^p \cdot \delta_{Q_{\nu,k}}^{n-(m-|\alpha|)p + \epsilon' p} \notag{}\\
& = \sum_{\nu=1}^{\nu_{\max}} \delta_{Q_\nu}^{-\epsilon' p} \sum_{k=1}^{L_\nu-1} \delta_{Q_{\nu,k}}^{\epsilon' p} \left\{\sum_{|\alpha| \leq m-1} \left| \partial^\alpha \left[ J_{x_{\nu,k}}(F) - J_{x_{\nu,k+1}}(F)\right](x^\sharp_{\kappa(\nu)}) \right|^p \cdot \delta_{Q_{\nu,k}}^{n-(m-|\alpha|)p} \right\}. \label{t2}
\end{align}
Thanks to \eqref{t0} it follows that $\lvert x_{\nu,k} - x^\sharp_{\kappa(\nu)} \rvert \leq C \delta_{Q_{\nu,k}}$ (recall that $x_{\nu,k} \in Q_{\nu,k}$ and $x^\sharp_{\kappa(\nu)} \in Q^\sharp_{\kappa(\nu)}$). Therefore, continuing from \eqref{t2} and applying (\ref{sec_not}.\ref{ti0}), we have
\begin{align}
X &\lesssim \sum_{\nu=1}^{\nu_{\max}} \delta_{Q_\nu}^{- \epsilon' p} \sum_{k=1}^{L_\nu - 1} \delta_{Q_{\nu,k}}^{\epsilon' p} \left\{ \sum_{|\alpha| \leq m-1} \left|\partial^\alpha \left[ J_{x_{\nu,k}}(F) - J_{x_{\nu,k+1}}(F) \right](x_{\nu,k}) \right|^p \delta_{Q_{\nu,k}}^{n-(m-|\alpha|)p} \right\}\notag{} \\
	& = \sum_{\onu \leftrightarrow \onu'} \sum_{\nu=1}^{\nu_{\max}} \sum_{k=1}^{L_\nu-1} \delta_{Q_\nu}^{-\epsilon' p} \delta_{Q_{\onu}}^{\epsilon' p} \left| J_{x_{\onu}}(F) - J_{x_{\onu'}}(F)\right|^p_{x_{\onu},\delta_{Q_{\onu}}} \mathbbm{1}_{\tiny Q_{\onu} = Q_{\nu,k} \;\wedge\; Q_{\onu'} = Q_{\nu,k+1}  }.\label{t3}
\end{align}

Thanks to \eqref{t0}, any given $Q_{\onu}$ can arise as $Q_{\nu,k}$ (for fixed $\nu$) for at most $C$ distinct $k$. Thus, from \eqref{t3} we obtain
\begin{equation*}
 X \lesssim % 
\sum_{\onu \leftrightarrow \onu'} \sum_{\nu=1}^{\nu_{\max}} \left( \delta_{Q_{\onu}} / \delta_{Q_\nu} \right)^{\epsilon' p} \cdot %
\left| J_{x_{\onu}}(F) - J_{x_{\onu'}}(F) \right|^p_{x_{\onu},\delta_{Q_{\onu} }} \cdot %
\mathbbm{1}_{\tiny \exists k \; \mbox{s.t.} \; Q_{\onu} = Q_{\nu,k} \; \wedge \; Q_{\onu'} = Q_{\nu,k+1} }.
\end{equation*}
If there exists $k$ such that $Q_{\onu} = Q_{\nu,k}$, then $\delta_{Q_\nu} \geq c \delta_{Q_{\onu}}$ and $Q_{\onu} \subset C Q_\nu$, as follows from \eqref{t0}. Therefore, we obtain
\begin{equation*}
X \lesssim %
\sum_{\onu \leftrightarrow \onu'} \left| J_{x_{\onu}}(F) - J_{x_{\onu'}}(F) \right|^p_{x_{\onu},\delta_{Q_{\onu}} } \cdot  \sum \left\{ \left( \frac{\delta_{Q_{\onu}} }{ \delta_{Q} } \right)^{\epsilon' p} : \; \mbox{dyadic} \; Q \; \mbox{s.t.} \; \delta_Q \geq c \delta_{Q_{\onu}} \;\mbox{and} \;Q_{\onu} \subset C Q\right\}.
\end{equation*}
The inner sum is at most $C$, for each fixed $\onu$; hence,
\begin{align*}
X &\lesssim \sum_{\onu \leftrightarrow \onu'} \left| J_{x_{\onu}}(F) - J_{x_{\onu'}}(F) \right|^p_{x_{\onu},\delta_{Q_{\onu}} } \lesssim \sum_{\onu \leftrightarrow \onu'} \|F\|^p_{L^{m,p}([1.1 Q_{\onu} \cup 1.1 Q_{\onu'}] \cap Q^\circ)}\\
& (\mbox{thanks to} \; (\ref{sec_not}.\ref{SET}); \; \mbox{we also use} \; x_{\onu} \in Q_{\onu}, \; x_{\onu'} \in Q_{\onu'} \;\mbox{and the Good Geometry of the CZ cubes})\\
& \lesssim \|F\|^p_{L^{m,p}(Q^\circ)} \\
& (\mbox{thanks to the first bullet point in Lemma \ref{moregg} and Good Geometry of the CZ cubes}).
\end{align*}
This completes the proof of Lemma \ref{globsob}.
\end{proof}

\subsection{New Constraints on Extensions}

Every Whitney field $\vec{P} \in Wh(E')$ has a natural restriction, denoted $\vP^\sharp \in Wh(E^\sharp)$, to the keystone representative points $E^\sharp = \{x^\sharp_{\mu} : \mu=1,\ldots,\mu_{\max}\}$. That is,
\begin{equation} \label{conv1} P^\sharp_\mu = P_\nu,\; \mbox{where} \; \nu \in \{1,\ldots,\nu_{\max}\} \; \mbox{is such that} \; Q_\nu = Q^\sharp_\mu.
\end{equation} 

\begin{defn}
Let $\vP = (P_\nu)_{\nu=1}^{\nu_{\max}} \in \Wh(E')$ and $P \in \cP$. We say that $\vP$ is Coherent with $P$ provided that
$$P_{1} = P,\;\; \mbox{and} \;\;  \partial^\alpha P^\sharp_{\mu}(x^\sharp_\mu) = \partial^\alpha P(x^\sharp_{\mu}) \;\;\; \mbox{for} \; \mu=1,\ldots,\mu_{\max}, \; \mbox{and} \; \alpha \in \cA.$$
We say that $\vP$ is Constant Along Paths provided that
$$P_{\nu} = P^\sharp_{\kappa(\nu)} \;\; \mbox{for} \; \nu=2,\ldots,\nu_{\max}.$$
Similarly, we say that a function $F \in \LR$ is Coherent with $P$ when $J_{E'}(F)$ is Coherent with $P$, while we say that $F$ is Constant Along Paths when $J_{E'}(F)$ is Constant Along Paths.
\end{defn}

The main result of this section states that there always exist near optimal extensions of the data $(f,P)$ that are Coherent with $P$ and Constant Along Paths.
\begin{prop}
\label{special} Given $(f,P) \in L^{m,p}(E;z)$, there exists $\widehat{F} \in \LR$ with the following properties:
\begin{enumerate}[(i)]
\item $\widehat{F}$ extends the data $(f,P)$.
\item $\|\widehat{F}\|_{\LR} \leq C \cdot \|(f,P)\|_{L^{m,p}(E;z)}$.
\item $\widehat{F}$ is Coherent with $P$.
\item $\widehat{F}$ is Constant Along Paths.
\end{enumerate}
\end{prop}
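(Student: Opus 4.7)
Fix any near-optimal extension $F_0 \in \LR$ of $(f,P)$, so $F_0 = f$ on $E$, $J_z(F_0) = P$, and $\|F_0\|_{\LR} \leq C \|(f,P)\|_{L^{m,p}(E;z)}$. For each keystone representative $x^\sharp_\mu$ define a target jet
\[
R^\sharp_\mu(y) := J_{x^\sharp_\mu}(F_0)(y) + \sum_{\alpha \in \cA} \partial^\alpha\bigl(P - J_{x^\sharp_\mu} F_0\bigr)(x^\sharp_\mu) \cdot \frac{(y - x^\sharp_\mu)^\alpha}{\alpha!},
\]
which matches $P$ on the $\cA$-derivatives at $x^\sharp_\mu$ while agreeing with $J_{x^\sharp_\mu}(F_0)$ in the remaining derivatives. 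When $Q_1$ is a keystone cube we have $x^\sharp_1 = z$, so the correcting sum vanishes and $R^\sharp_1 = P$. Set $R_1 := P$ and $R_\nu := R^\sharp_{\kappa(\nu)}$ for $\nu \geq 2$; property \textbf{(K3)} of Proposition \ref{keygeom} ensures these definitions are consistent whenever $Q_\nu$ is itself keystone.

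Next, localize the corrections with standard cutoffs. By Lemmas \ref{pointprop} and \ref{basegeom}, $\dist(x_\nu, E) \gtrsim \delta_{Q_\nu}$ for $\nu \geq 2$ and $|x_\nu - x_{\nu'}| \geq \delta_{Q_\nu}/8$ for distinct $\nu, \nu'$. Pick $\chi_\nu \in C^\infty_0(\R^n)$ supported in $B(x_\nu, c\delta_{Q_\nu})$ for a small universal $c$, with $\chi_\nu \equiv 1$ near $x_\nu$ and $|\partial^\beta \chi_\nu| \lesssim \delta_{Q_\nu}^{-|\beta|}$, arranged so that $\{\supp \chi_\nu\}_{\nu \geq 2}$ are pairwise disjoint and disjoint from $E \cup \{z\}$. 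Define
\[
\widehat{F} := F_0 + \sum_{\nu \geq 2} \chi_\nu \bigl(R_\nu - J_{x_\nu}(F_0)\bigr).
\]
Since each correction is supported away from $E \cup \{z\}$ and equals $1$ near $x_\nu$, we obtain $\widehat{F}|_E = f$, $J_z(\widehat{F}) = P$, and $J_{x_\nu}(\widehat{F}) = R_\nu$ for every representative. This gives (i); (iii) follows because $\partial^\alpha R^\sharp_\mu(x^\sharp_\mu) = \partial^\alpha P(x^\sharp_\mu)$ for $\alpha \in \cA$ by construction; and (iv) follows because $R_\nu = R^\sharp_{\kappa(\nu)}$ by construction.

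The norm bound (ii) is the technical heart of the proof. A Leibniz calculation using the cutoff bounds and support disjointness yields
\[
\|\widehat{F} - F_0\|_{\LR}^p \lesssim \sum_{\nu \geq 2} \bigl| R^\sharp_{\kappa(\nu)} - J_{x_\nu}(F_0) \bigr|_\nu^p.
\]
Split via the triangle inequality
\[
\bigl|R^\sharp_{\kappa(\nu)} - J_{x_\nu}(F_0)\bigr|_\nu \leq \bigl|R^\sharp_{\kappa(\nu)} - J_{x^\sharp_{\kappa(\nu)}}(F_0)\bigr|_\nu + \bigl|J_{x^\sharp_{\kappa(\nu)}}(F_0) - J_{x_\nu}(F_0)\bigr|_\nu.
\]
The sum of $p^\text{th}$ powers of the second term is bounded by $C\|F_0\|_{\LR}^p$ via the path-based Sobolev inequality Lemma \ref{globsob}. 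The sum of the first term is the hard part: $R^\sharp_\mu - J_{x^\sharp_\mu}(F_0)$ has Taylor coefficients at $x^\sharp_\mu$ supported on $\cA$, each of size $\lesssim \|F_0\|_{\LR}|z - x^\sharp_\mu|^{m-|\alpha|-n/p}$ by the standard Sobolev inequality (\ref{sec_not}.\ref{SET}). Here the monotonicity of $\cA$ established at the end of Section \ref{sec_ind} becomes essential: for $\beta \in \cA$ the set $\{\alpha \in \cA : \alpha \geq \beta\}$ coincides with the full upper set of $\beta$ in $\cM$, so the partial Taylor sum collapses to a complete Taylor sum, and $\partial^\beta(R^\sharp_\mu - J_{x^\sharp_\mu} F_0)(x_\nu)$ rewrites as $\partial^\beta(J_z F_0 - J_{x^\sharp_\mu} F_0)(x_\nu)$, a controllable jet difference of $F_0$; for $\beta \notin \cA$ the monotonicity forces the sum to vanish identically or to remain confined within the upper set of a single element of $\cA$. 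Combining these identities with the geometric path decay \textbf{(K1)} and careful summation over each keystone basin $\{\nu : \kappa(\nu) = \mu\}$ then yields $\lesssim \|F_0\|^p_{\LR}$, completing (ii). The keystone-basin summation is where I anticipate the most delicate bookkeeping.
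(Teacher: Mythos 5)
There is a genuine gap in the norm bound (ii), concentrated in the ``first term'' of your triangle inequality split. The correction polynomial $R^\sharp_{\mu} - J_{x^\sharp_\mu}(F_0)$ has Taylor coefficients at $x^\sharp_\mu$ supported on $\cA$, of size $|c_\alpha| \lesssim \|F_0\|\,|z - x^\sharp_\mu|^{m-|\alpha|-n/p}$ by the Sobolev inequality. But when you insert this polynomial into the small-scale seminorm $|\cdot|_\nu = |\cdot|_{x_\nu,\delta_{Q_\nu}}$, each coefficient $c_\alpha$ is weighted by $\delta_{Q_\nu}^{n/p + |\alpha| - m}$, so the contribution scales like $\|F_0\|\,(|z - x^\sharp_\mu|/\delta_{Q_\nu})^{m-|\alpha|-n/p}$. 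That exponent is positive, and the ratio $|z - x^\sharp_\mu|/\delta_{Q_\nu}$ is unbounded (a keystone cube $Q^\sharp_\mu$ can be tiny and far from $z$, and \textbf{(K1)} forces $\delta_{Q_\nu} \lesssim$ a constant times $\delta_{Q^\sharp_\mu}$, so the ratio is roughly $|z-x^\sharp_\mu|/\delta_{Q^\sharp_\mu}$). Summing over the keystone basin $\{\nu : \kappa(\nu) = \mu\}$ and then over $\mu$ only makes it worse. Your restatement of $\partial^\beta(R^\sharp_\mu - J_{x^\sharp_\mu}F_0)$ as $\partial^\beta(J_z F_0 - J_{x^\sharp_\mu}F_0)$ is algebraically correct (it's exactly the monotonicity computation), but it does not help: this is a global jet difference of $F_0$ between $z$ and $x^\sharp_\mu$, and Lemma \ref{globsob} only controls jet differences along CZ paths from $x_\nu$ to its \emph{own} keystone $x^\sharp_{\kappa(\nu)}$, never the difference from $z=x_1$ to an arbitrary keystone $x^\sharp_\mu$ with $\mu \neq \kappa(1)$.

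The missing idea is the entire point of the label $\cA$ hypothesis. The Coherence modification cannot be achieved by an arbitrary local prescription of $\cA$-derivatives; it must be made using the helper functions $\varphi_{\nu,\alpha} = \sum_\gamma A^\nu_{\alpha\gamma}\varphi_\gamma$, which \emph{vanish on $E$} and have $\|\varphi_\gamma\|_{\LR} \leq \epsilon$. These exist precisely because $\sigma(x,E)$ contains an $(\cA,x,\epsilon,1)$-basis; they allow one to zero out the $\cA$-derivatives of $F$ at $x_\nu$ by subtracting $\sum_\alpha \partial^\alpha F(x_\nu)\varphi_{\nu,\alpha}$ at total $\LR$ cost $\lesssim \epsilon\|F\|$, not on a per-cube basis with small cutoffs. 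The paper's proof keeps the two modifications separate: stage one produces $\widetilde{F} = \sum_\nu \theta_\nu F_\nu$ with $F_\nu = F - \sum_\alpha \partial^\alpha F(x_\nu)\varphi_{\nu,\alpha}$ (Coherence), and the patching estimate plus monotonicity handles the boundary terms (equations \eqref{eq203}--\eqref{eq205}); stage two uses small cutoffs and Lemma \ref{globsob} for Constant Along Paths. Your proposal conflates the two stages and replaces the $\varphi$-based correction with a cutoff-based one, and it is exactly that replacement which is unbounded. You do correctly identify that monotonicity of $\cA$ is essential and that Lemma \ref{globsob} handles the Constant-Along-Paths part; but without the $\varphi_{\nu,\alpha}$ the estimate for Coherence fails.
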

\begin{proof} By subtracting $P|_E$ from $f$ and subtracting $P$ from the $\widehat{F}$ that we seek, we may assume that $P=0$. For the remainder of the proof of Proposition \ref{special}, $\|(\cdot , \cdot)\|$ denotes the trace seminorm on $L^{m,p}(E;z)$.

First we draw some immediate conclusions from subsection \ref{aux}. Denote the $(C,C\epsilon)$ near-triangular matrix $A^\nu = A^{x_\nu}$ from (\ref{sec_ind}.\ref{mainprops}) for each $\nu=1,\ldots,\nu_{\max}$. Then (\ref{sec_ind}.\ref{p101})-(\ref{sec_ind}.\ref{p103}) and (\ref{sec_ind}.\ref{mainprops})-(\ref{sec_ind}.\ref{mainprops4}) imply that the following properties are satisfied by the function
\begin{align}
\label{eq_11} & \varphi_{\nu,\alpha} := \sum_{\gamma \in \cA} A^\nu_{\alpha \gamma} \cdot \varphi_\gamma \;\;\; (\alpha \in \cA) :\\
\label{eq:400a} &\varphi_{\nu,\alpha} = 0 \; \mbox{on} \; E,\\
\label{eq:401a} &J_{x_{\nu}}(\varphi_{\nu,\alpha}) = P^{x_{\nu}}_{\alpha}, \; \mbox{and}\\
\label{eq:402} &\partial^\beta \varphi_{\nu,\alpha}(x_{\nu}) = \delta_{\alpha \beta} \quad\quad (\beta \in \cA).
\end{align}
Note that (\ref{sec_ind}.\ref{p103}) states that
\begin{equation}
\|\varphi_\gamma\|_{\LR} \leq \epsilon \qquad (\gamma \in \cA). \label{eq:401}
\end{equation}
From \eqref{eq:401a} and Lemma \ref{lem201}, we also have
\begin{equation} \label{eq:404a}
|\partial^\beta \varphi_{\nu,\alpha}(x_{\nu})| = |\partial^\beta P^{x_\nu}_\alpha(x_{\nu})| \leq C \delta_{Q_\nu}^{|\alpha|-|\beta|} \quad (\alpha \in \cA, \beta \in \cM, \nu=1,\ldots,\nu_{\max}).
\end{equation}
Since the inverse of a near-triangular matrix is near-triangular with comparable parameters, by \eqref{eq_11} we have
\begin{equation}
\varphi_{\nu',\alpha} = \sum_{\beta \in \cA} \omega^{\nu \nu'}_{\alpha\beta} \varphi_{\nu,\beta}, \quad \mbox{where} \; |\omega^{\nu \nu'}_{\alpha \beta}| \leq C \; \mbox{for} \; \alpha, \beta \in \cA, \; \nu,\nu' \in \{1,\ldots,\nu_{\max}\}. \label{doubtroub}
\end{equation} 

Let $F \in \LR$ satisfy 
\begin{align}
\label{p502} &F= f \; \mbox{on} \; E \; \; \mbox{and} \;\; J_{z} (F) = 0; \; \mbox{and}\\
\label{p503} &\|F\|_{\LR} \leq 2 \|(f,0)\|.
\end{align}
We first modify $F$ to a function $\widetilde{F}$ that is Coherent with $0 \in \cP$. Since $J_{z} (F) = 0$ and $|z-x_\nu| \leq 1$ for $\nu=1,\ldots,\nu_{\max}$, the Sobolev inequality (\ref{sec_not}.\ref{SET}) yields a basic estimate on the size of the derivatives:
\begin{equation}
\label{eq200}
\lvert \partial^\alpha F(x_\nu) \rvert = \lvert \partial^\alpha (F-J_{z} (F)) (x_\nu) \rvert \lesssim \|F\|_{\LR} \quad (\alpha \in \cM, \nu=1,\ldots,\nu_{\max}).
\end{equation}

We define
\begin{equation}
\label{stuffy} F_\nu := F - \sum_{\alpha \in \cA} \partial^\alpha F(x_{\nu})\varphi_{\nu,\alpha} \;\;\; \mbox{for each} \; \nu=1,\ldots,\nu_{\max}.
\end{equation}
Therefore,
\begin{equation}
\label{stff} F_\nu \oeq{\eqref{eq:400a}, \eqref{p502}} f \; \mbox{on} \; E \;\; \mbox{and} \;\; \partial^\beta F_\nu(x_{\nu}) \oeq{\eqref{eq:402}} 0 \;\; \mbox{for all} \; \beta \in \cA.
\end{equation}
Note that $x_1 = z$. Thanks to \eqref{p502} and \eqref{stuffy}, it follows that $F_{1} = F$.

Recall the partition of unity $\{\theta_\nu\}_{\nu=1}^{\nu_{\max}}$ that satisfies \textbf{(POU1-5)}. We define $\widetilde{F} \in L^{m,p}(Q^\circ)$ by
$$\widetilde{F}(x) = \sum_{\nu=1}^{\nu_{\max}} F_\nu(x) \theta_\nu(x) \quad (x \in Q^\circ).$$ 
The following properties hold:
\begin{equation}
\begin{aligned}
&\widetilde{F} = f \; \mbox{on} \; E \qquad ( \mbox{thanks to \textbf{(POU5)} and \eqref{stff}} ). \\
&J_{z} \widetilde{F}  = J_{x_1} \widetilde{F}  = J_{x_1} F_{1} = J_{x_1} F \oeq{} 0 \qquad (\mbox{thanks to \textbf{(POU4)} and \eqref{p502}}). \\
&\partial^\beta \widetilde{F}(x_{\nu}) = \partial^\beta F_\nu(x_{\nu}) = 0 \;\; \mbox{for} \; \nu=1,\ldots,\nu_{\max}, \; \beta \in \cA \quad (\mbox{thanks to \textbf{(POU4)} and \eqref{stff}}).
\end{aligned}
\label{eq206}
\end{equation}
Thus, $\widetilde{F}$ extends the data $(f,0)$ and $\widetilde{F}$ is Coherent with $0 \in \cP$. We now turn to estimating the seminorm $\|\widetilde{F}\|_{L^{m,p}(Q^\circ)}$. A straightforward application of Lemma \ref{whittrick} shows that
\begin{align}
\|\widetilde{F}\|^p_{L^{m,p}(Q^\circ)} &\lesssim \sum_{\nu=1}^{\nu_{\max}}\|F_\nu\|^p_{L^{m,p}(1.1Q_\nu)} + \sum_{ \nu \leftrightarrow \nu'} \sum_{\ob \in \cM} \left\lvert \partial^{\ob} (F_\nu - J_{x_{\nu'}}F_{\nu'})(x_{\nu}) \right\rvert^p \delta_{Q_\nu}^{n+(|\ob|-m)p}. \notag{} \\ 
&\lesssim \sum_{ \nu \leftrightarrow \nu'} \left[ \|F_\nu\|^p_{L^{m,p}(1.1Q_\nu \cup 1.1 Q_{\nu'})} +  \sum_{\ob \in \cM} \left\lvert \partial^{\ob} (F_\nu - F_{\nu'})(x_{\nu}) \right\rvert^p \delta_{Q_\nu}^{n+(|\ob|-m)p} \right], \label{nnn} \end{align}
thanks to the Sobolev inequality (\ref{sec_not}.\ref{SET}). (Here, (\ref{sec_not}.\ref{SET}) applies because $x_\nu \in Q_\nu$, $x_{\nu'} \in Q_{\nu'}$ and $\nu \leftrightarrow \nu'$.)

Let $\nu, \nu' \in \{1,\ldots,\nu_{\max}\}$ satisfy $\nu \leftrightarrow \nu'$. To start, we bound the terms from the innermost sum on the right-hand side of \eqref{nnn}. Using (\ref{doubtroub}) and \eqref{stuffy}, we write
\begin{equation}
F_{\nu'} = F - \sum_{\alpha,\beta \in \cA}  \partial^\alpha F(x_{\nu'}) \omega^{\nu \nu'}_{\alpha \beta} \varphi_{\nu,\beta}.\label{eq202}
\end{equation}
Since $\cA$ is monotonic, if $\gamma \in \cA$ and $|\gamma'| \leq m-1-|\gamma|$, then $\gamma + \gamma' \in \cA$, and hence $\partial^{\gamma + \gamma'} (F_{\nu'})(x_{\nu'})= 0$, thanks to \eqref{stff}. Thus by a Taylor expansion, we have
\begin{equation}
\label{eq203} \partial^\gamma J_{x_{\nu'}} (F_{\nu'})(x_{\nu}) = \sum_{|\gamma'| \leq m-1 - |\gamma|} \mbox{coeff}(\gamma,\gamma') \cdot \partial^{\gamma+\gamma'} F_{\nu'}(x_{\nu'}) \cdot (x_{\nu} - x_{\nu'})^{\gamma'} = 0 \quad (\gamma \in \cA).
\end{equation}
For each $\ob \in \cM$, we may now bound
\begin{align}
\lvert \partial^{\ob} (F_\nu &- F_{\nu'})(x_{\nu}) \rvert = \Bigl\lvert \sum_{\beta \in \cA}  \partial^\beta F(x_{\nu}) \partial^{\ob} \varphi_{\nu,\beta}(x_{\nu}) - \sum_{\alpha, \beta \in \cA}  \partial^\alpha F(x_{\nu'})\omega^{\nu\nu'}_{\alpha \beta}  \partial^{\ob} \varphi_{\nu,\beta}(x_{\nu}) \Bigr\rvert \notag{} \\ 
& (\mbox{from \eqref{stuffy} and \eqref{eq202}}) \notag{}\\
&\qquad\qquad\quad \leq \sum_{\beta \in \cA} \left| \partial^\ob \varphi_{\nu,\beta}(x_{\nu}) \right| \left[ \Bigl\lvert \partial^\beta F(x_{\nu}) - \sum_{\alpha \in \cA} \partial^\alpha F(x_{\nu'})  \omega^{\nu\nu'}_{\alpha \beta}  \Bigr\rvert \right]  \notag{} \\
&\qquad\qquad\quad = \sum_{\beta \in \cA} \left| \partial^{\ob} \varphi_{\nu,\beta}(x_{\nu}) \right| \left[ \Bigl\lvert \partial^\beta F_{\nu'}(x_{\nu}) \Bigr\rvert \right] \notag{}\\
& (\mbox{from \eqref{eq:402} and \eqref{eq202}}) \notag{}\\
&\qquad\qquad\quad = \sum_{\beta \in \cA} \left| \partial^\ob \varphi_{\nu,\beta}(x_{\nu}) \right| \left[ \Bigl\lvert \partial^\beta (F_{\nu'} - J_{x_{\nu'}} (F_{\nu'}) )(x_{\nu}) \Bigr\rvert \right]  \notag{}\\
&(\mbox{from \eqref{eq203}}) \notag{}\\
& \qquad\qquad\quad \lesssim \sum_{\beta \in \cA} \delta_{Q_\nu}^{|\beta| - |\ob|} \delta_{Q_\nu}^{m-n/p-|\beta|} \|F_{\nu'}\|_{L^{m,p}(1.1 Q_\nu \cup 1.1 Q_{\nu'})} \notag{} \\
& (\mbox{from \eqref{eq:404a} and the Sobolev inequality (\ref{sec_not}.\ref{SET})} ) \notag{} \\
& \qquad\qquad\quad \lesssim \delta_{Q_\nu}^{m-n/p-|\ob|} \|F_{\nu'}\|_{L^{m,p}(1.1 Q_\nu \cup 1.1 Q_{\nu'})}. \label{eq204}
\end{align}
Next, we use $|A^\nu_{\alpha\gamma}| \leq C$, \eqref{eq_11}, \eqref{eq200}, \eqref{stuffy}, the first bullet point in Lemma \ref{moregg} and the Good Geometry of the CZ cubes, to obtain
\begin{align}
\label{eq204a}
&\sum_{\nu \leftrightarrow \nu'} \|F_\nu\|^p_{L^{m,p}(1.1Q_\nu \cup 1.1 Q_{\nu'})} \lesssim \sum_{\nu \leftrightarrow \nu'} \left[\|F\|^p_{L^{m,p}(1.1Q_\nu \cup 1.1 Q_{\nu'})} + \sum_{\gamma \in \cA} \|\varphi_{\gamma}\|_{L^{m,p}(1.1Q_\nu \cup 1.1 Q_{\nu'})}^p \|F\|^p_{\LR} \right] \notag{} \\
& \lesssim  \|F\|^p_{\LR} + \|F\|^p_{\LR} \sum_{\gamma \in \cA} \|\varphi_{\gamma}\|^p_{\LR} \olesssim{\eqref{eq:401}} \|F\|^p_{\LR}.
\end{align}
Finally, inserting \eqref{eq204} and \eqref{eq204a} into \eqref{nnn}, we have
\begin{equation}\label{eq205}
\|\widetilde{F}\|_{L^{m,p}(Q^\circ)} \lesssim \|F\|_{\LR}.
\end{equation}

Next we modify $\widetilde{F}$ to be Constant Along Paths, without ruining \eqref{eq206} or the control on the seminorm. For each $\nu=2,\ldots,\nu_{\max}$, there exists $\otheta_\nu \in C^\infty_0(Q_\nu)$ that satisfies
\begin{align}
\label{p601} & \otheta_\nu \equiv 1 \; \mbox{in a neighborhood of}  \; x_{\nu}; \\
\label{p602} & \otheta_\nu \equiv 0 \; \mbox{in a neighborhood of} \; E \cup \{x_{\nu'} : \nu' \in \{1,2,\ldots,\nu_{\max}\}, \; \nu' \neq \nu\}; \;\mbox{and}\\
\label{p603} & \lvert \partial^\alpha \otheta_\nu \rvert \lesssim \delta_{Q_\nu}^{-|\alpha|} \; \mbox{when} \; |\alpha| \leq m.
\end{align} 
(Here, the conditions on $E'$ from Lemma \ref{basegeom} are used.) We define $\overline{F} \in L^{m,p}(Q^\circ)$ by
$$\overline{F} = \widetilde{F} + \sum_{\nu=2}^{\nu_{\max}} \otheta_\nu \cdot \left[J_{x_{\kappa(\nu)}^\sharp}(\widetilde{F}) - J_{x_{\nu}}(\widetilde{F}) \right].$$
It is simple to check that

\begin{itemize}
\item[(a)] $\overline{F} = \widetilde{F} + 0 = f$ on $E$; this follows from \eqref{eq206} and \eqref{p602}.
\item[(b)] $\overline{F}$ is Coherent with $0 \in \cP$; in particular, $J_z \overline{F} = 0$; this follows from \eqref{eq206}, \eqref{p602} and \textbf{(K3)} from Proposition \ref{keygeom}, which implies that $x^\sharp_{\kappa(\nu)} = x_\nu$ when $Q_{\nu}$ is keystone.
\item[(c)] $J_{x_{\nu}} (\overline{F}) = J_{x_{\kappa(\nu)}^\sharp}(\overline{F})$ for $\nu=2,\ldots,\nu_{\max}$; this follows from \eqref{p601} and \eqref{p602} together with \textbf{(K3)} from Proposition \ref{keygeom}.
\end{itemize}

We now estimate the seminorm
\begin{align*}
&\|\overline{F}\|^p_{L^{m,p}(Q^\circ)} \lesssim \|\widetilde{F}\|^p_{L^{m,p}(Q^\circ)} + \sum_{\nu=2}^{\nu_{\max}} \sup_{x \in Q_\nu} \left[ \sum_{|\alpha + \beta| = m} \lvert \partial^\alpha \otheta_\nu(x) \rvert^p \left|\partial^{\beta} \left( J_{x_\nu} (\widetilde{F}) - J_{x^\sharp_{\kappa(\nu)}} (\widetilde{F})\right) (x)\right|^p \right] \delta_{Q_\nu}^n \\
&(\mbox{from} \; \supp(\otheta_\nu) \subset Q_\nu) \\
&\qquad\qquad\;\lesssim \|\widetilde{F}\|^p_{L^{m,p}(Q^\circ)} + \sum_{\nu=2}^{\nu_{\max}} \sup_{x \in Q_\nu} \left[ \sum_{\substack{|\beta| \leq m}} \delta_{Q_\nu}^{n-(m-|\beta|)p} \left|\partial^{\beta} \left(J_{x_\nu} (\widetilde{F}) - J_{x^\sharp_{\kappa(\nu)}}(\widetilde{F}) \right) (x) \right|^p \right]\\
&(\mbox{from} \;\eqref{p603})\\
&\qquad\qquad\;\lesssim \|\widetilde{F}\|^p_{L^{m,p}(Q^\circ)} + \sum_{\nu=2}^{\nu_{\max}}  \left|J_{x_\nu} (\widetilde{F}) - J_{x^\sharp_{\kappa(\nu)}} (\widetilde{F})\right|_\nu^p \\ 
&(\mbox{from} \; (\ref{sec_not}.\ref{ti0});\;\mbox{recall that} \; \left|P\right|_\nu = \left|P\right|_{x_\nu,\delta_{Q_\nu}} \;\mbox{and}  \; |x_\nu - x| \leq C \delta_{Q_\nu}\; \mbox{for any}\; x \in Q_\nu)\\
&\qquad\qquad\;\lesssim \|\widetilde{F}\|^p_{L^{m,p}(Q^\circ)}\\
&(\mbox{from Lemma} \; \ref{globsob}).
\end{align*}

By Lemma \ref{patch2}, there exists a function $\widehat{F} \in \LR$ with $\widehat{F} = \overline{F}$ on $0.99Q^\circ$ and with $\|\widehat{F}\|_{\LR} \lesssim \|\overline{F}\|_{L^{m,p}(Q^\circ)}$. Thus, we have
$$\|\widehat{F}\|_{\LR} \lesssim \|\overline{F}\|_{L^{m,p}(Q^\circ)} \lesssim \|\widetilde{F}\|_{L^{m,p}(Q^\circ)} \olesssim{\eqref{eq205}} \|F\|_{\LR} \oleq{\eqref{p503}} 2 \|(f,0)\|.$$ 
It follows from $E \cup E' \subset 0.99 Q^\circ$ and (a)-(c) that $\widehat{F}$ extends the data $(f,0)$, $\widehat{F}$ is Coherent with $0 \in \cP$ and $\widehat{F}$ is Constant Along Paths. This concludes the proof of Proposition \ref{special}.
\end{proof}

\section{A Constrained Problem} \label{sec_mod}
\setcounter{equation}{0}

For each $\nu = 1,\ldots,\nu_{\max}$, we define $E_\nu = E \cap (1.1)Q_\nu$. For a given function $f: E \rightarrow \R$, we denote $f_\nu = f|_{E_\nu}$ here and throughout.

\begin{prop}
There exist a linear operator $T: L^{m,p}(E;E') \rightarrow \LR$ and a collection of linear functionals $\Omega' \subset [\LE]^*$, such that the following hold.
\begin{itemize}
\item $T$ is an extension operator.
\item $T$ is bounded.
\item $\sum \bigl\{ \sp(\omega') : {\omega' \in \Omega'} \bigr\}  \leq C \cdot \#(E)$.
\item $T$ has $\Omega'$-assisted bounded depth.
\end{itemize}
Moreover,
\begin{equation} \label{modnorm}
\|(f,\vP)\|^p \simeq \sum_{\nu=1}^{\nu_{\max}} \|(f_\nu,P_\nu)\|^p + \sum_{\nu \leftrightarrow \nu'} \left\lvert P_{\nu} - P_{\nu'} \right\rvert_\nu^p.
\end{equation}
Above, $\|(f_\nu,P_\nu)\|$ denotes the trace seminorm $\|(f_\nu,P_\nu)\|_{L^{m,p}(E_\nu;x_\nu)}$ and $\|(f,\vP)\|$ denotes the trace seminorm $\|(f,\vP)\|_{L^{m,p}(E;E')}$.
\label{modprop}
\end{prop}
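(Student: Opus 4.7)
The plan is to construct $T$ by patching the local extension operators $T_\nu$ of section \ref{sec_loc} using the partition of unity $\{\theta_\nu\}$ of section \ref{sec_pu}. For each $\nu$, let $(T_\nu,M_\nu,\Omega_\nu,\Xi_\nu)$ be as in \textbf{(L1)}--\textbf{(L6)} for the subset $E_\nu = E \cap (1.1)Q_\nu$ and the representative point $x_\nu$. Given $(f,\vP)\in L^{m,p}(E;E')$, set $F_\nu := T_\nu(f_\nu,P_\nu) \in \LR$, so that $F_\nu = f_\nu$ on $E_\nu$ and $J_{x_\nu}(F_\nu) = P_\nu$ by \textbf{(L1)}. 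Define $F^\circ := \sum_{\nu} \theta_\nu F_\nu$ on $Q^\circ$ and apply Lemma \ref{patch2} with $a$ slightly less than $1$ (say $a=0.99$) to obtain a function $T(f,\vP) \in \LR$ that agrees with $F^\circ$ on $0.99 Q^\circ \supset E \cup E'$ and depends linearly on $(f,\vP)$. Properties \textbf{(POU2)}, \textbf{(POU4)}, \textbf{(POU5)} together with $F_\nu = f_\nu$ on $E_\nu$ and $J_{x_\nu}(F_\nu) = P_\nu$ guarantee that $T(f,\vP)$ extends $(f,\vP)$.

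Next I would establish the upper bound in \eqref{modnorm}. The Patching Estimate (Lemma \ref{whittrick}) applied to $F^\circ$, together with \textbf{(L2)} which gives $\|F_\nu\|_{L^{m,p}(1.1Q_\nu)}^p \leq \|F_\nu\|_{\LR}^p \lesssim \|(f_\nu,P_\nu)\|^p$, and the identity $J_{x_\nu}(F_\nu) = P_\nu$, yields
$$\|T(f,\vP)\|_{\LR}^p \; \lesssim \; \|F^\circ\|_{L^{m,p}(Q^\circ)}^p \; \lesssim \; \sum_{\nu} \|(f_\nu,P_\nu)\|^p + \sum_{\nu \leftrightarrow \nu'} \left|P_\nu - P_{\nu'}\right|_\nu^p.$$
Because $T(f,\vP)$ is an extension of $(f,\vP)$, this also bounds the trace seminorm $\|(f,\vP)\|^p$ from above, and provides the boundedness of $T$.

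For the matching lower bound, let $H \in \LR$ be any extension of $(f,\vP)$. The restriction $H|_{E_\nu} = f_\nu$ and $J_{x_\nu}(H) = P_\nu$ show by definition of the trace seminorm that $\|(f_\nu,P_\nu)\| \leq \|H\|_{\LR}$. For each pair of neighbors $\nu \leftrightarrow \nu'$, the Sobolev Inequality (\ref{sec_not}.\ref{SET}) applied to $H$ on the nondegenerate rectangular union $1.1 Q_\nu \cup 1.1 Q_{\nu'}$ (which is admissible thanks to the Good Geometry of Lemma \ref{gg}), combined with the scaled-norm inequality (\ref{sec_not}.\ref{ti0}), gives $\left|P_\nu - P_{\nu'}\right|_\nu^p \lesssim \|H\|_{L^{m,p}(1.1Q_\nu \cup 1.1 Q_{\nu'})}^p$. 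Summing over $\nu$ and over neighboring pairs, using that each CZ cube has boundedly many neighbors and that the cubes $\{1.1 Q_\nu\}$ have bounded overlap (first bullet of Lemma \ref{moregg}), gives
$$\sum_{\nu} \|(f_\nu,P_\nu)\|^p + \sum_{\nu \leftrightarrow \nu'} \left|P_\nu - P_{\nu'}\right|_\nu^p \; \lesssim \; \|H\|_{\LR}^p.$$
Taking the infimum over $H$ establishes the $\gtrsim$ direction of \eqref{modnorm}.

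Finally, for assisted bounded depth, I would set $\Omega' := \bigcup_\nu \Omega_\nu$. Since each $x \in E$ belongs to $(1.1) Q_\nu$ for boundedly many $\nu$ (first bullet of Lemma \ref{moregg}), we have $\sum_\nu \#(E_\nu) \leq C\cdot \#(E)$, so \textbf{(L4)} yields $\sum_{\omega' \in \Omega'} \sp(\omega') \leq C \cdot \#(E)$. At any point $x \in \R^n$, only boundedly many $\theta_\nu$ are nonzero near $x$ (by \textbf{(POU2)} and Good Geometry), and each jet $J_x(F_\nu)$ has $\Omega_\nu$-assisted bounded depth by \textbf{(L5)}; Lemma \ref{patch2} preserves this property, so $T$ has $\Omega'$-assisted bounded depth. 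The only mildly delicate point is verifying that composition with the finite sum $\sum_\nu \theta_\nu$ respects the depth count, but this is immediate because the composition at a fixed $x$ only combines a bounded number of jets, each already of $\Omega'$-assisted bounded depth. This completes the proof.
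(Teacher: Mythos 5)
Your construction of $T$ and the treatment of the upper bound, the assists $\Omega'$, the depth count, and the $\left\lvert P_\nu - P_{\nu'}\right\rvert_\nu$ terms of the lower bound all follow the paper's route. There is, however, a genuine gap in your lower bound for $\sum_\nu \|(f_\nu,P_\nu)\|^p$.

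You assert that $\|(f_\nu,P_\nu)\| \leq \|H\|_{\LR}$ for each $\nu$ and then ``sum over $\nu$ \dots using bounded overlap'' to deduce $\sum_\nu \|(f_\nu,P_\nu)\|^p \lesssim \|H\|_{\LR}^p$. But the inequality you invoked for each $\nu$ is a \emph{global} bound, with no locality in it; summing it over $\nu$ yields $\nu_{\max}\cdot \|H\|_{\LR}^p$, and $\nu_{\max}$ can be arbitrarily large. Bounded overlap of the cubes $\{(1.1)Q_\nu\}$ cannot rescue this step, because there is no local seminorm on the right-hand side to be controlled by overlap. Contrast this with the $\left\lvert P_\nu - P_{\nu'}\right\rvert_\nu$ terms, where you correctly produce $\|H\|_{L^{m,p}(1.1Q_\nu \cup 1.1Q_{\nu'})}^p$ as the local quantity before summing; the same localization must be built into the $\|(f_\nu,P_\nu)\|$ bound before you sum.

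The paper's fix is to observe that $E_\nu \cup \{x_\nu\}$ sits inside $(0.9)\cdot(1.3)Q_\nu$ and then to invoke Lemma \ref{patch2} with $Q = (1.3)Q_\nu$ and $a = 0.1$: this produces from $H|_{(1.3)Q_\nu}$ a competitor $G \in \LR$ agreeing with $H$ on $(0.13)Q_\nu$ (hence on $E_\nu \cup \{x_\nu\}$) with $\|G\|_{\LR} \lesssim \|H\|_{L^{m,p}(1.3Q_\nu)}$, whence $\|(f_\nu,P_\nu)\| \lesssim \|H\|_{L^{m,p}(1.3Q_\nu)}$. Only with this \emph{localized} bound does the bounded-overlap argument (Lemma \ref{moregg}, first bullet) close the sum and give $\sum_\nu \|(f_\nu,P_\nu)\|^p \lesssim \|H\|_{\LR}^p$. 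Insert this cutoff step and your proof is complete.
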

\begin{proof}
For each $\nu = 1,\ldots,\nu_{\max},$ we apply \textbf{(L1-6)} from section \ref{sec_loc} to the subset $\oE_\nu = E_\nu$. Thus there exist linear functionals $\Omega_\nu \subset [L^{m,p}(E_\nu)]^*$ and a linear map $T_\nu: L^{m,p}(E_\nu; x_\nu) \rightarrow \LR$ (with $\Omega_\nu$-assisted bounded depth), such that
\begin{align}
\label{ep00} &T_\nu(f_\nu,P_\nu) = f_\nu  \; \mbox{on} \; E_\nu \;\; \mbox{and} \;  J_{x_\nu}\left( T_\nu(f_\nu,P_\nu) \right) = P_\nu; \; \mbox{and} \\
 \label{ep01} &\|T_\nu(f_\nu,P_\nu)\|_{L^{m,p}(1.1 Q_\nu)} \leq \|T_\nu(f_\nu,P_\nu)\|_{\LR} \simeq \|(f_\nu,P_\nu)\|.
\end{align}
Using the partition of unity $\{\theta_\nu\}$ that satisfies \textbf{(POU1-5)}, we define
$$F^\circ = T^\circ(f,\vP) = \sum_{\nu=1}^{\nu_{\max}} T_\nu(f_\nu,P_\nu) \theta_\nu.$$
Of course, $T^\circ : L^{m,p}(E;E') \rightarrow L^{m,p}(Q^\circ)$ is a linear map.  From \textbf{(POU2)} and \eqref{ep00}, we have that $T_\nu(f_\nu,P_\nu) = f$ on $\supp(\theta_\nu) \cap E$. Hence, by \textbf{(POU5)}, we have $F^\circ = f$ on $E$. From \textbf{(POU4)} and \eqref{ep00}, we also have $J_{E'} F^\circ =\vP$. To summarize, we have
\begin{equation}
F^\circ = f \; \mbox{on} \; E \; \mbox{and} \; J_{E'} F^\circ = \vP. \label{ep08}
\end{equation}
From \eqref{ep00}, \eqref{ep01} and a straightforward application of Lemma \ref{whittrick}, we have
\begin{equation}
\|F^\circ\|^p_{L^{m,p}(Q^\circ)} \lesssim \sum_{\nu=1}^{\nu_{\max}}\|(f_\nu,P_\nu)\|^p + \sum_{\nu \leftrightarrow \nu'} \left|P_\nu-P_{\nu'}\right|_\nu^p. \label{ep09}
\end{equation}

We identify $[L^{m,p}(E_\nu)]^*$ with a subspace of $[L^{m,p}(E)]^*$ through the natural restriction from $L^{m,p}(E)$ to $L^{m,p}(E_\nu)$, and define
$$\Omega' = \bigcup_{\nu=1}^{\nu_{\max}} \Omega_\nu \subset [L^{m,p}(E)]^*.$$
Since $\Omega_\nu$ satisfies \textbf{(L4)} (for the subset $E_\nu = E \cap (1.1)Q_\nu$), we obtain
\begin{equation}
\sum_{\omega \in \Omega'} \sp(\omega) \leq \sum_{\nu=1}^{\nu_{\max}} \sum_{\omega \in \Omega_\nu} \sp(\omega) \leq \sum_{\nu=1}^{\nu_{\max}} C \cdot \#(E \cap (1.1)Q_\nu),
\end{equation}
which is bounded by $C' \cdot \#(E)$ due to the first bullet point in Lemma \ref{moregg}. This proves the third bullet point of Proposition \ref{modprop}.

Using \textbf{(POU2)}, for each fixed $x \in Q^\circ$ we obtain a list of cubes $Q_{\nu_1},\ldots,Q_{\nu_L} \in \CZ$ and linear maps $\psi_{1,x},\ldots, \psi_{L,x} : \cP \rightarrow \cP$, such that
\begin{equation} \label{jets1}
J_x\bigl[T^\circ(f,\vP)\bigr] = \psi_{1,x}\left(J_x\bigl [T_{\nu_1}(f_{\nu_1},P_{\nu_1}) \bigr] \right) + \cdots + \psi_{L,x}\left( J_x \bigl[T_{\nu_L}(f_{\nu_L},P_{\nu_L}) \bigr] \right).
\end{equation}

The list $Q_{\nu_1},\ldots,Q_{\nu_L}$ is composed of the CZ cubes for which $x \in (1.1)Q_{\nu_i}$ (thus, $\theta_{\nu_1},\ldots,\theta_{\nu_L}$ include those cutoff functions which do not vanish in a neighborhood of $x$), and we define $\psi_{i,x}(P) = J_x(\theta_{\nu_i} P)$ ($i = 1,\ldots,L$). The first bullet point in Lemma \ref{moregg} shows that $L$ is controlled by a universal constant. Thus, by \eqref{jets1} and the fact that $T_{\nu_i}$ has $\Omega_{\nu_i}$-assisted bounded depth, the linear map $J_x T^\circ(\cdot,\cdot) : L^{m,p}(E;E') \rightarrow \cP$ has $\Omega'$-assisted bounded depth. Therefore, $T^\circ$ has $\Omega'$-assisted bounded depth.

Next, we apply Lemma \ref{patch2} to the function $F^\circ = T^\circ(f,\vP) \in L^{m,p}(Q^\circ)$. This gives a linear map $T:L^{m,p}(E;E') \rightarrow \LR$ with $\Omega'$-assisted bounded depth, for which $F = T(f,\vP)$ satisfies $F = F^\circ$ on ${0.99 Q^\circ}$ and
\begin{equation}
\label{ep11}\|F\|^p_{\LR} \lesssim \|F^\circ\|^p_{L^{m,p}(Q^\circ)} \olesssim{\eqref{ep09}} \sum_{\nu=1}^{\nu_{\max}} \|(f_\nu,P_\nu)\|^p + \sum_{\nu \leftrightarrow \nu'} \left\lvert P_\nu-P_{\nu'} \right\rvert_\nu^p.
\end{equation}
From $E \cup E' \subset 0.99 Q^\circ$ and \eqref{ep08}, we have
\begin{equation}
\label{ep10} F = f \; \mbox{on} \; E \; \mbox{and} \; J_{E'} (F) = \vP.
\end{equation}
Thus the first and fourth bullet points in Proposition \ref{modprop} hold.

Finally, we must establish \eqref{modnorm} and prove that $\|F\|_{\LR}$ lies within a universal constant factor of the trace seminorm $\|(f,\vP)\|_{L^{m,p}(E;E')}$. Let $H \in \LR$ be an arbitrary function that satisfies 
$$\circledast \; H = f  \; \mbox{on} \;  E \;\; \mbox{and} \; \;J_{E'} H = \vP.$$ 
From the Sobolev inequality (\ref{sec_not}.\ref{SET})  and $J_{E'} H = \vP$, we have $\left\lvert P_\nu - P_{\nu'}\right\rvert_\nu^p \lesssim \|H\|_{L^{m,p}(1.1Q_\nu \cup 1.1Q_{\nu'})}^p$. Hence, by the Good Geometry of the CZ cubes, we have
\begin{equation}\label{1done}
\sum_{\nu \leftrightarrow \nu'} \left|P_\nu - P_{\nu'}\right|_\nu^p \lesssim \sum_{\nu \leftrightarrow \nu'} \|H\|_{L^{m,p}(1.1Q_\nu \cup 1.1Q_{\nu'})}^p \lesssim \|H\|_{\LR}^p.
\end{equation}
Note that $E_\nu= (1.1)Q_\nu \cap E \subset (0.9)(1.3) Q_\nu$ and $x_\nu \in Q_\nu \subset (0.9)(1.3) Q_\nu$. Applying Lemma \ref{patch2} with $Q=(1.3)Q_\nu$ and $a=0.1$ we deduce that $\|(f_\nu,P_\nu)\| \lesssim \|H\|_{L^{m,p}(1.3Q_\nu)}$. Hence, by Good Geometry of the CZ cubes, we have
\begin{equation} \label{2done}
\sum_{\nu=1}^{\nu_{\max}} \|(f_\nu,P_\nu)\|^p \lesssim \sum_{\nu=1}^{\nu_{\max}} \|H\|_{L^{m,p}(1.3Q_\nu)}^p \lesssim \|H\|_{\LR}^p.
\end{equation}
Adding together \eqref{1done} and \eqref{2done} shows that the right-hand side of \eqref{ep11} is bounded by $C\|H\|_{\LR}^p$. Taking the infimum with respect to those $H$ satisfying $\circledast$, we obtain
$$\sum_{\nu=1}^{\nu_{\max}}\|(f_\nu,P_\nu)\|^p + \sum_{\nu \leftrightarrow \nu'} \left\lvert P_\nu-P_{\nu'}\right\rvert_\nu^p \lesssim \|(f,\vP)\|^p.$$ 
The reverse inequality holds as well, thanks to \eqref{ep11} and \eqref{ep10}. Thus, \eqref{modnorm} holds. Together, \eqref{modnorm} and \eqref{ep11} imply that $\|F\|_{\LR} \leq C' \|(f,\vP)\|_{L^{m,p}(E;E')}$. Thus the second bullet point in Proposition \ref{modprop} holds. This concludes the proof of Proposition \ref{modprop}.
\end{proof}

\section{The Jet of a Near Optimal Extension II} \label{sec_jets}
\setcounter{equation}{0}
Recall our notation and assumptions on indexing in the text surrounding (\ref{sec_rep}.\ref{index_ass}). For each $\mu=\mu_{\min},\ldots, \mu_{\max}$, Lemma \ref{jetop} provides a jet $R^\sharp_\mu$ that depends linearly on $(f|_{E \cap 9Q^\sharp_\mu}, P)$ and satisfies
\begin{equation}\label{jop1} 
\partial^\alpha R_\mu^\sharp(x_{\mu}^\sharp) = \partial^\alpha P(x_{\mu}^\sharp) \; \mbox{for every} \; \alpha \in \cA
\end{equation}
and
\begin{align} \label{jop2}
&\|(f_\mu^\sharp,R^\sharp_\mu)\|_{L^{m,p}(E \cap 9 Q_\mu^\sharp; x_{\mu}^\sharp)} \leq \\
&\qquad C  \inf \left\{ \|(f_\mu^\sharp,R')\|_{L^{m,p}(E \cap 9 Q^\sharp_\mu;x_{\mu}^\sharp)}: R' \in \cP, \; \partial^\alpha R'(x_{\mu}^\sharp) = \partial^\alpha P(x_{\mu}^\sharp) \; \mbox{for every} \; \alpha \in \cA \right\}.\notag{}
\end{align}
In the case that $Q_1$ is keystone, we also define $R_1^\sharp = P$. Finally, we define $R_{1} = P$ and $R_\nu = R_{\kappa(\nu)}^\sharp$ for each $\nu=2,\ldots,\nu_{\max}$. 

It is immediate that $\vR = (R_\nu)_{\nu=1} ^{\nu_{\max}}\in \Wh(E')$ is Constant Along Paths, Coherent with $P$ and depends linearly on $(f,P)$. The main result of this section states that $\vR$ is the jet on $E'$ of a near optimal extension of $(f,P)$.
\begin{lem}
Given $(f,P) \in L^{m,p}(E;z)$, let $\vR \in Wh(E')$ be defined as above. Then,
$$\|(f,\vR)\|_{L^{m,p}(E;E')} \lesssim \|(f,P)\|_{L^{m,p}(E;z)}.$$
\label{optlem}
\end{lem}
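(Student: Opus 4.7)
The plan is to compare $\vec{R}$ to the Whitney field $\vec{\widehat{R}} := J_{E'}(\widehat{F})$ of the near-optimal extension $\widehat{F} \in \LR$ furnished by Proposition \ref{special}: $\widehat{F}$ extends $(f,P)$, is Coherent with $P$, is Constant Along Paths, and satisfies $\|\widehat{F}\|_{\LR} \lesssim \|(f,P)\|$. Consequently $\vec{\widehat{R}}$ shares the structural features of $\vec{R}$; in particular $\widehat{R}_\nu = \widehat{R}^\sharp_{\kappa(\nu)}$ for all $\nu$ and $\widehat{R}_1 = P = R_1$. I will apply Proposition \ref{modprop} to both Whitney fields and then run the triangle inequality on each local seminorm $\|(f_\nu,R_\nu)\|$ and each consistency term $|R_\nu - R_{\nu'}|_\nu$. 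Exchanging $|\cdot|_\nu$ for $|\cdot|_{\nu'}$ when $\nu \leftrightarrow \nu'$ via (\ref{sec_rep}.\ref{eq_1111}), dominating $\|(0, R_\nu - \widehat{R}_\nu)\|$ by $|R_\nu - \widehat{R}_\nu|_\nu$ using Lemma \ref{esslem} \eqref{ess1} (the $\nu=1$ contribution vanishes since $R_1 = \widehat{R}_1$), and invoking the bounded-neighbor property from Lemma \ref{moregg} to reduce the neighbor-sum to a single-index sum, the proof reduces to the single estimate
\[
\sum_{\nu=1}^{\nu_{\max}} \left|R_\nu - \widehat{R}_\nu\right|_\nu^p \lesssim \|\widehat{F}\|_{\LR}^p.
\]

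By Constant Along Paths, $R_\nu - \widehat{R}_\nu$ depends only on $\mu := \kappa(\nu)$ and equals $P_\mu := R^\sharp_\mu - \widehat{R}^\sharp_\mu$. Both \eqref{jop1} and Coherence give $\partial^\beta R^\sharp_\mu(x^\sharp_\mu) = \partial^\beta P(x^\sharp_\mu) = \partial^\beta \widehat{R}^\sharp_\mu(x^\sharp_\mu)$ for $\beta \in \cA$, whence $\partial^\beta P_\mu(x^\sharp_\mu) = 0$ for $\beta \in \cA$. Grouping by $\mu$ and applying (\ref{sec_not}.\ref{ti0})---valid because $|x_\nu - x^\sharp_\mu| \lesssim \delta_{Q_\nu}$ by the geometric summation $\sum_k (1-c)^{k-1} \delta_{Q_\nu}$ along the path $\mathcal{S}_\nu$ allowed by \textbf{(K1)}---one obtains
\[
\sum_\nu \left|R_\nu - \widehat{R}_\nu\right|_\nu^p \lesssim \sum_\mu \sum_{\beta \in \cM \setminus \cA} \left|\partial^\beta P_\mu(x^\sharp_\mu)\right|^p \sum_{\nu \in N(\mu)} \delta_{Q_\nu}^{n + (|\beta|-m)p},
\]
where $N(\mu) := \kappa^{-1}(\mu)$. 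The inner sum is then bounded by $\lesssim \delta_{Q^\sharp_\mu}^{n + (|\beta|-m)p}$ via a dyadic scale argument: writing $\delta_{Q_\nu} \simeq 2^j \delta_{Q^\sharp_\mu}$ with $j$ bounded below, the cubes in $N(\mu)$ at scale $j$ are pairwise disjoint dyadic cubes of comparable size whose centers lie inside $B(x^\sharp_\mu, C \cdot 2^j \delta_{Q^\sharp_\mu})$, so at most $C(n)$ such cubes occur at each scale; since $n + (|\beta|-m)p < 0$ (as $p > n$ and $|\beta| \leq m-1$), the geometric series in $j$ converges, yielding $\sum_{\nu \in N(\mu)} |P_\mu|_\nu^p \lesssim |P_\mu|_{x^\sharp_\mu, \delta_{Q^\sharp_\mu}}^p$.

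It remains to control $\sum_\mu |P_\mu|_{x^\sharp_\mu, \delta_{Q^\sharp_\mu}}^p$. For $\mu \geq \mu_{\min}$ the hypotheses of Lemma \ref{esslem} \eqref{ess2} hold ($Q^\sharp_\mu \neq Q_1$ and $\partial^\alpha P_\mu(x^\sharp_\mu) = 0$ for $\alpha \in \cA$), giving $|P_\mu|_{x^\sharp_\mu, \delta_{Q^\sharp_\mu}} \simeq \|(0, P_\mu)\|_{L^{m,p}(E \cap 9Q^\sharp_\mu; x^\sharp_\mu)}$; the excluded case of $\mu = 1$ with $Q_1$ itself keystone contributes $P_1 = 0$ by Coherence. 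The triangle inequality, together with the near-minimality property \eqref{jop2} applied to the competitor $\widehat{R}^\sharp_\mu$ (which is a valid competitor by Coherence), bounds $\|(0, P_\mu)\|$ by $\|(f^\sharp_\mu, \widehat{R}^\sharp_\mu)\|$; applying Lemma \ref{patch2} to $\widehat{F}|_{10 Q^\sharp_\mu}$ produces a global extension of $(f^\sharp_\mu, \widehat{R}^\sharp_\mu)$ with norm $\lesssim \|\widehat{F}\|_{L^{m,p}(10 Q^\sharp_\mu)}$, so $|P_\mu|_{x^\sharp_\mu, \delta_{Q^\sharp_\mu}}^p \lesssim \|\widehat{F}\|_{L^{m,p}(10 Q^\sharp_\mu)}^p$. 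Summing over $\mu$ via the bounded overlap of $10 Q^\sharp_\mu$ in Lemma \ref{moregeom} finishes the proof. I expect the main obstacle to be the dyadic scale argument of the middle paragraph: one must simultaneously exploit the vanishing of $\partial^\alpha P_\mu(x^\sharp_\mu)$ for $\alpha \in \cA$ (to rescue negative scaling exponents), the path-induced proximity $|x_\nu - x^\sharp_\mu| \lesssim \delta_{Q_\nu}$, and the disjointness of CZ cubes at each scale, in order to absorb the potentially large multiplicity $|N(\mu)|$ into a convergent geometric series and escape the bad keystone-per-$\nu$ bookkeeping.
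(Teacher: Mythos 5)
Your proof is correct and follows essentially the same route as the paper. The paper reduces to proving the bound $\|(f,\vR)\| \lesssim \|(f,\vP)\|$ for an \emph{arbitrary} Whitney field $\vP$ that is Constant Along Paths and Coherent with $P$, and then invokes Proposition \ref{special} only to guarantee that such a $\vP$ with near-optimal norm exists; you instead fix $\vP = J_{E'}(\widehat{F})$ from the outset. This is a presentational difference only. Beyond that, your steps line up with the paper's: Proposition \ref{modprop} plus triangle inequality plus (\ref{sec_rep}.\ref{eq_1111}) plus Lemma \ref{esslem}\eqref{ess1} to reduce to the single sum $\sum_\nu |R_\nu - \widehat{R}_\nu|_\nu^p$; Constant Along Paths plus the $\cA$-vanishing from Coherence and \eqref{jop1} plus the path-induced proximity $|x_\nu - x^\sharp_{\kappa(\nu)}| \lesssim \delta_{Q_\nu}$ to rewrite as a sum over $\mu$ weighted by dyadic lengthscales; the geometric-series absorption of the $\kappa^{-1}(\mu)$ multiplicity into $\delta_{Q^\sharp_\mu}^{n+(|\beta|-m)p}$ (which works because the exponent is negative, exactly as you note); Lemma \ref{esslem}\eqref{ess2} at the keystone level; near-minimality \eqref{jop2} with competitor $\widehat{R}^\sharp_\mu$; Lemma \ref{patch2} to localize $\widehat{F}$ to $10Q^\sharp_\mu$; and bounded overlap from Lemma \ref{moregeom}. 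Even your "expected main obstacle" --- the dyadic-scale counting --- matches the step the paper handles by counting dyadic cubes $Q$ with $Q^\sharp_\mu \subset CQ$ and $\delta_Q \geq c\delta_{Q^\sharp_\mu}$; your formulation via pairwise disjointness of $Q_\nu$ of a fixed scale with centers in a ball is an equivalent packing argument.
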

\begin{proof}
By Proposition \ref{special}, it suffices to establish the bound 
\begin{equation} \label{mbound} \|(f,\vR)\|_{L^{m,p}(E;E')} \lesssim \|(f,\vP)\|_{L^{m,p}(E;E')}\end{equation}
for each $\vP = (P_\nu)_{\nu=1}^{\nu_{\max}} \in \Wh(E')$ that is
\begin{align*}
&\mbox{\textbf{Constant Along Paths}:} \; P_\nu = P_{\kappa(\nu)}^\sharp \; \mbox{for each} \; \nu=2,\ldots,\nu_{\max}, \; \mbox{and} \\
&\mbox{\textbf{Coherent with}} \; \mathbf{P:}\; P_{1}=P \; \mbox{and} \; \partial^\alpha P_{\mu}^\sharp(x^\sharp_\mu) = \partial^\alpha P(x^\sharp_\mu) \; \mbox{for each} \; \mu=1,\ldots,\mu_{\max} , \; \alpha \in \cA.
\end{align*}
Let $\vP \in \Wh(E')$ be fixed as above. Then, we have
\begin{align}
\|(f,\vR)\|^p &\simeq \sum_{\nu=1}^{\nu_{\max}} \|(f_\nu,R_\nu)\|^p + \sum_{\nu \leftrightarrow \nu'} \left|R_\nu - R_{\nu'}\right|_\nu^p \notag{} \\
& \qquad (\mbox{from (\ref{sec_mod}.\ref{modnorm}) in Proposition \ref{modprop}})\notag{}  \\
& \lesssim \sum_{\nu=1}^{\nu_{\max}} \bigl[ \|(f_\nu,P_\nu)\|^p + \|(0|_{E_\nu},P_\nu-R_\nu)\|^p \bigr]\notag{}  \\
& \quad + \sum_{\nu \leftrightarrow \nu'} \bigl[ \left|R_\nu - P_\nu\right|_\nu^p + \left|P_\nu - P_{{\nu'}}\right|^p_\nu + \left|P_{{\nu'}} - R_{{\nu'}}\right|^p_\nu \bigr] \notag{} \\
& \qquad (\mbox{from subadditivity of the seminorms}) \notag{} \\
& \lesssim \sum_{\nu=1}^{\nu_{\max}} \bigl[\|(f_\nu,P_\nu)\|^p + \|(0|_{E_\nu},P_\nu-R_\nu)\|^p + \left|R_\nu - P_\nu \right|_\nu^p  \bigr] + \sum_{\nu \leftrightarrow {\nu'}} \left|P_\nu - P_{{\nu'}} \right|^p_\nu \notag{}  \\
& \qquad (\mbox{from (\ref{sec_rep}.\ref{eq_1111}) and the Good Geometry of the CZ cubes})\notag{} \\
& \simeq \|(f,\vP)\|^p + \sum_{\nu=1}^{\nu_{\max}} \bigl[ \|(0|_{E_\nu},R_\nu - P_\nu)\|^p + \left|R_\nu - P_\nu\right|_\nu^p \bigr] \notag{} \\
& \qquad (\mbox{from (\ref{sec_mod}.\ref{modnorm})})\notag{} \\
& \lesssim \|(f,\vP)\|^p + \sum_{\nu=2}^{\nu_{\max}} \left|R_\nu - P_\nu \right|_\nu^p \label{eq300}\\
& \qquad (\mbox{from} \; P_1 = P = R_{1} \; \mbox{and Lemma \ref{esslem}}). \notag{}
\end{align}
Define $X$ as the second term on the right-hand side of \eqref{eq300}. From the fact that $\vP$ and $\vR$ are Constant Along Paths and (\ref{sec_rep}.\ref{ti}), we have
\begin{align}
X &= \sum_{\nu=2}^{\nu_{\max}} \left|R_\nu - P_\nu\right|_\nu^p =  \sum_{\nu=2}^{\nu_{\max}} \left|R^\sharp_{\kappa(\nu)} - P^\sharp_{\kappa(\nu)}\right|_\nu^p \notag{} \\
&\lesssim \sum_{\mu=1}^{\mu_{\max}} \sum_{ \kappa(\nu)=\mu} \sum_{|\beta| \leq m-1} \left|\partial^\beta (R_\mu^\sharp - P_{\mu}^{\sharp})(x_{\mu}^{\sharp})\right|^p \delta_{Q_\nu}^{n+(|\beta|-m)p}. \label{xbd}
\end{align}
Next, we estimate the dyadic sum of lengthscales arising above.

If $Q^\sharp_\mu$ is the keystone cube arising from $Q_\nu$, then $\delta_{Q_\nu} \geq c \delta_{Q^\sharp_\mu}$ and $Q_\mu^\sharp \subset C Q_\nu$ for a large enough universal constant $C$ and a small enough universal constant $c$. Hence, for each $\mu$ and $\tau > 0$, we have
\begin{align*}
\sum_{\kappa(\nu) = \mu} \left[ \delta_{Q_\nu}\right]^{-\tau} &\leq C'(\tau,m,n,p) \cdot \sum \left\{ \left[\delta_Q\right]^{-\tau}: Q\subset \R^n \; \mbox{dyadic with} \; \left[\delta_Q \geq c \delta_{Q^\sharp_\mu} \; \mbox{and} \; Q_\mu^\sharp \subset C Q\right] \right\} \\
&\leq C''(\tau,m,n,p) \left[\delta_{Q_\mu^\sharp}\right]^{-\tau}.
\end{align*}

Plugging this inequality into \eqref{xbd}, we have
\begin{align}\label{eq_4419}
X \lesssim \sum_{\mu=1}^{\mu_{\max}}  \sum_{|\beta| \leq m-1} \left|\partial^\beta (R_\mu^\sharp - P_{\mu}^{\sharp})(x_{\mu}^{\sharp}) \right|^p \left[\delta_{Q^\sharp_\mu}\right]^{n+(|\beta|-m)p} &= \sum_{\mu=1}^{\mu_{\max}} \left|R_\mu^\sharp - P_{\mu}^{\sharp} \right|^p_{x^\sharp_\mu,\delta_{Q_\mu^\sharp}} \notag{}\\
& = \sum_{\mu=\mu_{\min}}^{\mu_{\max}} \left|R_\mu^\sharp - P_{\mu}^{\sharp} \right|^p_{x^\sharp_\mu,\delta_{Q_\mu^\sharp}},
\end{align}
since when $Q_1$ is keystone we have $P_1^\sharp = P_1 = P = R_1 = R^\sharp_1$ by Coherence with $P$. (Recall the indexing assumption (\ref{sec_rep}.\ref{index_ass}).) 

Note that $\partial^\alpha (R_\mu^{\sharp})(x^\sharp_\mu) = \partial^\alpha P(x^\sharp_\mu) = \partial^\alpha(P_\mu^{\sharp})(x_\mu^\sharp)$ for each $\mu=\mu_{\min},\ldots,\mu_{\max}$ and $\alpha \in \cA$. (This follows from Coherence with $P$.) Next, from Lemma \ref{esslem} and \eqref{eq_4419}, we obtain 
\begin{align}
X &\lesssim \sum_{\mu=\mu_{\min}}^{\mu_{\max}} \|(0|_{E \cap 9Q_\mu^\sharp},R_\mu^{\sharp} - P_{\mu}^{\sharp})\|^p \lesssim \sum_{\mu=\mu_{\min}}^{\mu_{\max}} \left[ \|(f|_{E \cap 9Q_\mu^\sharp},R_\mu^{\sharp})\|^p + \|(f|_{E \cap 9Q_\mu^\sharp},P_\mu^{\sharp})\|^p \right] \notag{} \\
& \lesssim \sum_{\mu=\mu_{\min}}^{\mu_{\max}} \|(f|_{E \cap 9 Q_\mu^\sharp},P_\mu^{\sharp})\|^p, \;\; \mbox{thanks to \eqref{jop2}}. \label{eq_4420}
\end{align}
(The seminorms above are taken in the space $L^{m,p}(E \cap 9 Q_\mu^\sharp, x^\sharp_\mu)$.)

Let $F \in \LR$ be arbitrary with $F= f$ on $E$ and $J_{E'} (F) = \vP$. Applying Lemma \ref{patch2} with $Q = 10 Q_\mu^\sharp$ and $a = 0.1$, we deduce that $\|(f|_{E \cap 9 Q_\mu^\sharp},P_\mu^{\sharp})\| \lesssim \|F\|_{L^{m,p}(10 Q^\sharp_\mu)}$. Thus, \eqref{eq_4420} yields
$$X \lesssim \sum_{\mu=1}^{\mu_{\max}} \|F\|_{L^{m,p}(10 Q^\sharp_\mu)}^p,$$
which is bounded by $C \cdot \|F\|_{\LR}^p$, thanks to Lemma \ref{moregeom}. Taking the infimum with respect to $F \in \LR$ as above, we obtain $X \lesssim \|(f,\vP)\|^p$. Thus \eqref{mbound} holds (see \eqref{eq300} and the definition of $X$ that follows), which completes the proof of Lemma \ref{optlem}.
\end{proof}

\section{The Extension Operator}
\setcounter{equation}{0}

\label{plstow}

In this section we complete the proof of the Extension Theorem for $(E;z)$. We now fix a sufficiently small universal constant $\epsilon(\cA) = \epsilon$, so that the results from previous sections hold.

Proposition \ref{modprop} provides a bounded linear extension operator $\widehat{T} : L^{m,p}(E;E') \rightarrow \LR$ and assists $\widehat{\Omega} \subset [L^{m,p}(E)]^*$, so that $\widehat{T}$ has $\widehat{\Omega}$-assisted bounded depth, and such that
\begin{equation} \label{sumsp1} \sum_{\omega \in \widehat{\Omega}} \sp(\omega) \leq C \cdot \#(E).
\end{equation}

Let $\vR \in Wh(E')$ be as defined in the previous section. In particular, $\vR$ depends linearly on $(f,P)$, $\vR$ is Constant Along Paths and $\vR$ is Coherent with $P$.

We define
$$T(f,P) = \widehat{T}(f,\vR).$$
Since $\vR$ depends linearly on $(f,P)$ and the map $\widehat{T}$ is linear, the map $T$ is clearly linear. Also, since $\widehat{T}$ is an extension operator and $R_1 = P$ (by Coherence of $\vR$ with $P$), we have 
$$T(f,P) = \widehat{T}(f,\vR) = f  \; \mbox{on} \;E\;\; \mbox{and} \;\;J_z\bigl( T(f,P) \bigr) = J_{x_1} \bigl(\; \widehat{T}(f,\vR) \bigr) = R_1 = P.$$ 
Thus, $T$ is an extension operator and we have established \textbf{(E1)} in the statement of the Extension Theorem for $(E,z)$.

We now prove \textbf{(E2)}. The first inequality in \textbf{(E2)} follows trivially from \textbf{(E1)} and the definition of the trace seminorm. Thanks to Lemma \ref{optlem} and the fact that $\widehat{T} : L^{m,p}(E;E') \rightarrow \LR$ is bounded, we have 
\begin{equation} \label{mmm} \|T(f,P)\|_{\LR} = \|\widehat{T}(f,\vR)\|_{\LR} \lesssim \|(f,\vR)\|_{L^{m,p}(E;E')} \lesssim \|(f,P)\|_{L^{m,p}(E;z)}.
\end{equation}
Since also $\|(f,P)\|_{L^{m,p}(E;z)} \leq \|T(f,P)\|_{\LR}$, we may replace every $\lesssim$ with $\simeq$ in the above. This proves that $T$ is bounded, and finishes the proof of \textbf{(E2)}.

Next we estimate the seminorm $\|T(f,P)\|_{\LR}$ and define $M(f,P)$ in order to establish conclusion \textbf{(E3)}. Let us define
\begin{align*}
&\cI := \bigl\{(\nu,\nu') \in \{2,\ldots,\nu_{\max}\}^2 : \; \nu \leftrightarrow \nu' \; \mbox{and}\; \kappa(\nu) \neq \kappa(\nu') \bigr\}; \; \mbox{and}\\
&\cI^\sharp := \bigl\{ (\kappa(\nu),\kappa(\nu')) : (\nu,\nu') \in \cI \bigr\}.
\end{align*}
From \textbf{(K2)} in Proposition \ref{keygeom}, we learn that
\begin{equation} \#(\cI^\sharp) \leq \#(\cI) \leq C \cdot \#(E). \label{k3} 
\end{equation}
Recall that $E_\nu= E \cap (1.1)Q_\nu$ and $f_\nu = f|_{E_\nu}$. It follows from the comment following \eqref{mmm} and from Proposition \ref{modprop} that
\begin{align}\|T(f,P)\|^p_{\LR} & \simeq\; \|(f,\vR)\|^p \simeq \sum_{\nu=1}^{\nu_{\max}} \|(f_\nu,R_\nu)\|^p + \sum_{\nu \leftrightarrow \nu'} \left\lvert R_\nu - R_{\nu'}\right\rvert_\nu^p \notag{}\\
&= \sum_{\nu=1}^{\nu_{\max}} \|(f_\nu,R_\nu)\|^p + \sum_{(\nu,\nu') \in \cI} \bigl|R^\sharp_{\kappa(\nu)} - R^\sharp_{\kappa(\nu')} \bigr|_\nu^p + \sum_{\substack{2 \leq \nu \leq \nu_{\max} \\ \nu \leftrightarrow 1}} \bigl|R_{\kappa(\nu)}^\sharp - R_1\bigr|_\nu^p\label{almost} \\
& \qquad (\mbox{since} \; \vR \; \mbox{is Constant Along Paths}). \notag{}
\end{align}
(Here, $\|(f_\nu,R_\nu)\|$ denotes the trace seminorm on $L^{m,p}(E_\nu;x_\nu)$.) From (\ref{sec_rep}.\ref{ti}), we deduce that
\begin{align*}
\sum_{(\nu,\nu') \in \cI} \bigl\lvert R_{\kappa(\nu)}^\sharp - R_{\kappa(\nu')}^\sharp \bigr\rvert_\nu^p 
&\simeq \sum_{(\nu,\nu') \in \cI} \sum_{\alpha \in \cM} 
\bigl\lvert \partial^\alpha \bigl( R_{\kappa(\nu)}^\sharp - R_{\kappa(\nu')}^\sharp \bigr)(x_{\kappa(\nu)}^\sharp) \bigr\rvert^p \delta_{Q_\nu}^{n+(|\alpha|-m)p} \\ 
&= \sum_{(\mu, \mu') \in \cI^\sharp} \sum_{\alpha \in \cM} \bigl\lvert \partial^\alpha \bigl(R_\mu^\sharp - R_{\mu'}^\sharp \bigr)(x_\mu^\sharp) \bigr\rvert^p \Delta_\alpha(\mu,\mu')^p,\notag{}
\end{align*}
$$\mbox{where} \;\; \Delta_\alpha(\mu,\mu')^p := \sum \left\{ \delta_{Q_\nu}^{n+(|\alpha|-m)p} : (\nu,\nu') \in \cI,\; \kappa(\nu)= \mu ,\; \kappa(\nu')=\mu' \right\}.$$
Plugging into \eqref{almost} this formula, the definition for the norm $\left|\cdot\right|_\nu$ and the formula $M_\nu(f_\nu,P_\nu) = \left( \sum_{\xi \in \Xi_\nu} \lvert \xi(f_\nu,P_\nu) \rvert^p \right)^{1/p}$ that approximates the trace semi-norm $\|(f_\nu,P_\nu)\|$ (see \textbf{(L1-6)} in section \ref{sec_loc}), we have
\begin{equation}
\label{greatstuff}
\begin{aligned} \|T(f,P)\|_{\LR}^p &\simeq \sum_{\nu=1}^{\nu_{\max}} \sum_{\xi \in \Xi_\nu} \lvert \xi(f_\nu,R_\nu) \rvert^p + \sum_{(\mu,\mu') \in \cI^\sharp} \sum_{\alpha \in \cM} \bigl\lvert \partial^\alpha (R_\mu^{\sharp} - R_{\mu'}^{\sharp})(x_{\mu}^{\sharp}) \bigr\rvert^p \Delta_\alpha(\mu,\mu')^p \\
& + \sum_{\substack{2 \leq \nu \leq \nu_{\max} \\ \nu \leftrightarrow 1}} \sum_{\alpha \in \cM}  \bigl\lvert\partial^\alpha(R_{\kappa(\nu)}^\sharp - R_1)(x_\nu)\bigr\rvert^p \delta_{Q_\nu}^{n - (m-|\alpha|)p}.
\end{aligned}
\end{equation}
We record here the fact (also following from \textbf{(L1-6)}), that there exist assists $\Omega_\nu \subset [L^{m,p}(E_\nu)]^*$, such that
\begin{equation}
\label{nicestuff1} 
\begin{aligned}
&\mbox{each functional in}\; \Xi_\nu \; \mbox{has} \; \Omega_\nu\mbox{-assisted bounded depth}, \\
&\#(\Xi_\nu) \leq C \cdot \#(E_\nu) \;\; \mbox{and} \;\; \sum_{\omega \in \Omega_\nu}  \; \sp(\omega) \leq C \cdot \#(E_\nu).
\end{aligned}
\end{equation}

We now simply take
\begin{equation}M(f,P)^p := \; \mbox{the right-hand side of \eqref{greatstuff}}. \label{defnM}
\end{equation}
Thus, $\|T(f,P)\| \simeq M(f,P)$, which establishes \textbf{(E3)}.

\subsection{Assisted Bounded Depth}
First, we define the collection of linear functionals $\Omega \subset [L^{m,p}(E)]^*$ that satisfies \textbf{(E4)}, \textbf{(E5)} and \textbf{(E6)}. Recall that $R^\sharp_\mu \in \cP$ depends linearly on $(f|_{E \cap 9 Q^\sharp_\mu}, P)$ for each $\mu=1,\ldots,\mu_{\max}$. Since $\vR$ is Coherent with $P$ and $\vR$ is Constant Along Paths, we have
\begin{equation} R_1 = P, \; \mbox{and} \; R_\nu = R^\sharp_{\kappa(\nu)} \; \mbox{for each} \; \nu=2,\ldots,\nu_{\max}. \label{defnR2}
\end{equation}
In this subsection, $R^\sharp_\mu$ and $R_\nu$ should be considered as $\cP$-valued linear maps on $L^{m,p}(E, z)$. 
Consider the decomposition
\begin{equation} \label{defnR3} 
R^\sharp_\mu = \widehat{R}^\sharp_\mu + \widetilde{R}^\sharp_\mu, \; \mbox{where} \; \widehat{R}^\sharp_\mu \in \cP \; \mbox{depends only on} \; f|_{E \cap 9Q^\sharp_\mu} \; \mbox{and} \; \widetilde{R}^\sharp_\mu \in \cP \; \mbox{depends only on} \; P.
\end{equation}
First we define linear functionals $\omega_{\beta, \mu} : L^{m,p}(E) \rightarrow \R$, for $\mu=1,\ldots,\mu_{\max}$ and $\beta \in \cM$, by
$$\omega_{\beta, \mu}(f) = \partial^\beta \left[\widehat{R}^\sharp_\mu\right](x_{\mu}^{\sharp}).$$
Therefore, $\sp(\omega_{\beta,\mu}) \leq \#(E \cap 9Q_\mu^\sharp)$. We define
$$\Omega_{\new} := \bigl\{\omega_{\beta, \mu} : \mu=1,\ldots,\mu_{\max}, \; \beta \in \cM \bigr\} \subset [L^{m,p}(E)]^*.$$

Recall the definition of $\widehat{\Omega}$ and $\Omega_\nu$ in the text surrounding \eqref{sumsp1} and \eqref{nicestuff1}, respectively. Note that $\Omega_\nu \subset [L^{m,p}(E_\nu)]^*$ is included in $[L^{m,p}(E)]^*$ through the natural restriction from $L^{m,p}(E)$ onto $L^{m,p}(E_\nu)$. Let us define
\begin{equation} \label{dds}
\Omega :=  \widehat{\Omega} \cup \left( \bigcup_{\nu=1}^{\nu_{\max}} \Omega_\nu \right) \cup \Omega_{\new}.
\end{equation}
Thus, from \eqref{sumsp1} and \eqref{nicestuff1}, we have
\begin{align*}
\sum_{\omega \in \Omega} \sp(\omega) &\leq \sum_{\nu}  \sum_{\omega \in \Omega_\nu} \sp(\omega) \;\;\; + \sum_{\omega \in \widehat{\Omega}} \sp(\omega) + \sum_{\beta, \; \mu} \sp(\omega_{\beta, \mu})  \\
& \lesssim \sum_{\nu} \#(E \cap 1.1 Q_\nu) + \quad \#(E) \quad\; + \sum_{\mu} \#(E \cap 9 Q_\mu^\sharp),
\end{align*}
which is bounded by $C \cdot \#(E)$ thanks to Lemma \ref{moregg}  and Lemma \ref{moregeom}. This proves \textbf{(E4)}.
 
Since $\widehat{T}$ has $\widehat{\Omega}$-assisted bounded depth, there exists a universal constant $r \in \N$ such that, for each point $x \in \R^n$ there exist assists $\omega_1,\ldots,\omega_r \in \widehat{\Omega}$, polynomials $\hP^1,\ldots,\hP^r \in \cP$ and a linear map $\widetilde{\lambda} : L^{m,p}(E;E') \rightarrow \cP$ with $\sp(\widetilde{\lambda}) \leq r$, such that
\begin{equation} \label{ghh1} J_x \bigl[ T(f,P) \bigr] = J_x\left[\widehat{T}(f,\vR)\right] = \omega_1(f) \hP^1 + \cdots + \omega_r(f) \hP^r + \widetilde{\lambda}(f,\vR).\end{equation}
Thanks to \eqref{defnR2}, \eqref{defnR3}, and a change of basis, we obtain
$$J_x\bigl[T(f,P)\bigr] = \omega_1(f) \hP^1 + \cdots + \omega_r(f) \hP^r + \overline{\lambda} \left(f  ,  P,    
  \left( \widehat{R}_\mu^\sharp  + \widetilde{R}_\mu^\sharp \right)_{\mu=1}^{\mu_{\max}}\right),$$
where $\sp(\overline{\lambda}) \leq C \cdot \sp(\widetilde{\lambda}) \leq C \cdot r$. Thus, by placing aside those terms from $\overline{\lambda}$ that depend on $\widehat{R}^\sharp_\mu$ (which depends solely on $f$), we have
$$J_x\bigl[T(f,P)\bigr] = \omega_1(f) \hP^1 + \cdots + \omega_r(f) \hP^r + \omega_{r+1}(f)\hP^{r+1} + \cdots + \omega_{r+s}(f)\hP^{r+s} + \widehat{\lambda}(f,P),$$
where $\omega_{r+1},\ldots,\omega_{r+s} \in \Omega_{\new}$, $\hP^{r+1},\ldots,\hP^{r+s} \in \cP$ are independent of $(f,P)$, and $s \leq C \cdot r$, $\sp(\widehat{\lambda}) \leq \sp(\overline{\lambda}) \leq C \cdot r$.
Thus, $T$ has $\Omega$-assisted bounded depth, which establishes \textbf{(E5)}.

Finally, we define $\Xi \subset [L^{m,p}(E;z)]^*$ as
\begin{align*}
\Xi = \bigl\{\mbox{linear functionals arising on the right-hand side of} \; \eqref{greatstuff} \mbox{, counted with repetition}\bigr\}. \notag{}
\end{align*}
By ``counted with repetition,'' we mean that if some linear functional $\xi : L^{m,p}(E;z) \rightarrow \R$ appears $t$ times on the right-hand side of \eqref{greatstuff}, then we include $t^{1/p} \xi$ in $\Xi$.

From the definition of $M$ in \eqref{defnM}, we find that
$$M(f,P) = \left( \sum_{\xi \in \Xi} |\xi(f,P)|^p \right)^{1/p}.$$
Thus, \textbf{(E6c)} holds.

From \eqref{k3},\eqref{nicestuff1} and the Good Geometry of the CZ cubes, we have
$$\#(\Xi) \leq \sum_{\nu=1}^{\nu_{\max}} \#(\Xi_\nu) + C \cdot \#(\cI^\sharp) + C \cdot \#\{1 \leq \nu \leq \nu_{\max}: \nu \leftrightarrow 1\} \leq C' \sum_{\nu=1}^{\nu_{\max}} \#(E_\nu) + C' \cdot \#(E),$$
which is controlled by $C'' \cdot \#(E)$, thanks to Lemma \ref{moregg}. Thus, \textbf{(E6b)} holds.

One verifies that each functional $\xi \in \Xi$ (appearing on the right-hand side of \eqref{greatstuff}) has $\Omega$-assisted bounded depth using \eqref{nicestuff1}, \eqref{defnR3} and \eqref{dds}. Thus, \textbf{(E6a)} holds.

We have completed the induction step indicated in section \ref{sec_plan}, thus completing the proof of the Main Lemma for all labels $\cA$. Taking $\cA = \emptyset$, we obtain the Extension Theorem for $(E,z)$ whenever $E \subset \R^n$ is finite.

\section{Proofs of the Main Theorems for Finite $E$}\label{sec_pf}
\setcounter{equation}{0}

Let $E \subset \R^n$ be a finite subset with cardinality $\#(E) = N$. Pick $z \in \R^n \backslash E$ that satisfies $d(z,E) \leq 1$. The Extension Theorem for $(E,z)$ produces $(T,M,\Omega,\Xi)$ that satisfy \textbf{(E1-6)}. Thus, $T$ is a bounded extension operator:
\begin{subequations}
\begin{align}
&T(f,P) = f \; \mbox{on} \; E \; \mbox{and} \; J_z (T(f,P)) = P; \; \mbox{and} \label{f0} \\
&\|T(f,P)\|_{\LR} \simeq \|(f,P)\|_{L^{m,p}(E;z)} \;\; \mbox{for any data} \; (f,P). \label{f1}
\end{align}
\end{subequations}
We also have a formula
\begin{equation} \label{f2}
M(f,P) := \left( \sum_{\xi \in \Xi} |\xi(f,P)|^p \right)^{1/p} \; \mbox{with} \; \#(\Xi) \leq C \cdot \#(E),
\end{equation}
such that
\begin{equation}\|(f,P)\|_{L^{m,p}(E;z)} \simeq M(f,P).\label{f3}\end{equation}
Moreover, $T$ and the functionals in $\Xi$ have $\Omega$-assisted bounded depth, while the assists $\Omega$ satisfy
\begin{equation} \label{f4}
\sum_{\omega \in \Omega} \sp(\omega) \leq C \cdot \#(E).
\end{equation}

\subsection{Theorems 1, 2 and 3 for finite sets}

By definition of the trace seminorm, we have
\begin{equation}\label{ruff}
\|f\|_{\LE} = \inf \bigl\{\|(f,P)\|_{L^{m,p}(E;z)} : P \in \cP \bigr\}.
\end{equation}
We use Lemma \ref{lpmin} to choose $R \in \cP$ depending linearly on $f$, with
$$M(f,R) \leq C \cdot \inf \bigl\{M(f,P) : P \in \cP \bigr\}.$$
Define $\widehat{M}(f) := M(f,R)$ and $\widehat{T}(f) := T(f,R)$. Since $T$ is an extension operator we have $\widehat{T}(f) = T(f,R) = f$ on $E$. Therefore, $\widehat{T}$ is an extension operator. From \eqref{f3} and \eqref{ruff}, we have
\begin{equation} \label{3done} \widehat{M}(f) \simeq \|f\|_{\LE}.\end{equation}
Moreover,
$$\|\widehat{T}(f)\|_{\LR} = \|T(f,R)\|_{\LR} \osimeq{\eqref{f1},\eqref{f3}} M(f,R) = \widehat{M}(f) \simeq \|f\|_{\LE},$$
which completes the proof of Theorem \ref{thm1}.

We now define the collections of linear functionals
\begin{align*}
\widehat{\Omega} &:= \Omega \cup \bigl\{f \mapsto \partial^\alpha [R(f)](z) : \alpha \in \cM \bigr\}; \; \mbox{and} \\
\widehat{\Xi} &:= \bigl\{\mbox{linear functionals} \; f \mapsto \xi(f,R(f)) \; \mbox{with} \; \xi \in \Xi, \; \mbox{counted with repetition} \bigr\}.
\end{align*}
(For the definition of ``counted with repetition,'' see the proof of \textbf{(E6c)} in the preceding section.)

From \eqref{f2}, we have $\# (\widehat{\Xi}) \leq \# (\Xi) \leq C N$, and also
$$\widehat{M}(f) = M(f,R) = \left(\sum_{\xi \in \widehat{\Xi}} |\xi(f)|^p\right)^{1/p}.$$
Together with \eqref{3done}, this yields the conclusion of Theorem \ref{thm2}.

By standard arguments (e.g., see the previous section), the extension operator $\widehat{T}$ and the linear functionals belonging to $\widehat{\Xi}$ have $\widehat{\Omega}$-assisted bounded depth. Since $\widehat{\Omega}$ consists of the assists $\Omega$ along with $\#(\cM) = D$ (possibly) new linear functionals, it follows that 
$$\sum_{\omega \in \widehat{\Omega}} \sp(\omega) \leq \sum_{\omega \in \Omega} \sp(\omega) + D \cdot \#(E) \leq C \cdot \#(E),$$
which proves Theorem \ref{thm3}.\hfill \qed

\subsection{Inhomogeneous Sobolev Spaces}

Denote $W^{m,p}(E)$ for the space of real-valued functions on some finite subset $E \subset \R^n$, equipped with the norm
$$\|f\|_{W^{m,p}(E)} := \inf \{ \|F\|_{\WR} : F = f \; \mbox{on} \; E\}.$$ 

Here, we consider a variant of the problem solved in the previous section.

\noindent \underline{$\WR$ Extension Problem:} Let $f : E \rightarrow \R$ be defined on a finite subset $E \subset \R^n$. Find $G \in \WR$ that depends linearly on $f$, with $G|_E = f$ and $\|G\|_{\WR} \leq C \|f\|_{\WE}$.

We solve this problem, thereby proving the analogue of Theorem 1 for $W^{m,p}$ and finite $E$. The obvious analogues of Theorems 2,3 for $W^{m,p}$ also hold; we leave their consideration to the reader.

Let $Q^1,Q^2,\ldots$ be a tiling of $\R^n$ by unit cubes. If $F^i \in W^{m,p}(1.3Q^i)$ is a near optimal extension of $f|_{E \cap 1.1Q^i}$ for each $i=1,2,\ldots,$ it is easy to verify  that
\begin{align*}
&G=\sum_{i=1}^\infty \theta^i F^i \in \WR \; \mbox{is a near optimal extension of} \; f, \; \mbox{where}\\
&\theta^1,\theta^2,\ldots \; \mbox{form a smooth partition of unity on} \; \R^n,\;\mbox{with} \; \theta^i \; \mbox{supported on} \; 1.1Q^i.
\end{align*}
Thus, we may assume that $E \subset (0.9)Q$ for some cube with sidelength $\delta_Q \simeq 1$, and solve for an $F$ that depends linearly on $f$, such that $F = f$ on $E$ and $\|F\|_{W^{m,p}(Q)}$ is nearly minimal. If we could do this, we could take the $F^i$ to depend linearly on $f$, and ultimately we could take $G$ to depend linearly on $f$.

Fix some $z \in 0.9Q$ with $0 <d(z,E) \leq 1$. We use the linear operator $T : L^{m,p}(E;z) \rightarrow \LR$ that satisfies \eqref{f0} and \eqref{f1}. Let $P \in \cP$ be arbitrary. By the Sobolev inequality and \eqref{f0}, we have 
\begin{align}
\|T(f,P)\|_{W^{m,p}(Q)}^p &\lesssim \|T(f,P)\|_{L^{m,p}(\R^n)}^p + \sum_{|\alpha| \leq m-1} |\partial^\alpha P(z)|^p \notag{}\\
&\simeq \|(f,P)\|_{L^{m,p}(E;z)}^p + \sum_{|\alpha| \leq m-1} |\partial^\alpha P(z)|^p \quad (\mbox{thanks to} \; \eqref{f1}). \label{drum}
\end{align}
On the other hand, let $H \in W^{m,p}(Q)$ be arbitrary with $H = f$ on $E$ and $J_z H = P$. Choose $\theta \in C^\infty_0(Q)$ with $\theta \equiv 1$ on a neighborhood of $E \cup \{z\}$, and $|\partial^\alpha \theta| \lesssim 1$ when $|\alpha| \leq m$. Therefore, $\theta H \in \WR$ satisfies 
$$\theta H = f \; \mbox{on}  \; E, \;\; J_z(\theta H) = P, \;\; \mbox{and} \; \|\theta H\|_{\WR} \lesssim \|H\|_{W^{m,p}(Q)}.$$ Thus, by definition of the trace seminorm and the Sobolev inequality, we have
\begin{equation} \|(f,P)\|_{L^{m,p}(E;z)}^p + \sum_{|\alpha| \leq m-1} |\partial^\alpha P(z)|^p \lesssim \|\theta H\|_{\WR}^p \lesssim \|H\|_{W^{m,p}(Q)}^p.\label{drum1}\end{equation}
Thus, $T(f,P) \in W^{m,p}(Q)$ is a near optimal extension of $(f,P)$. Moreover, from \eqref{drum},\eqref{drum1} and \eqref{f2},\eqref{f3}, we have
$$\|T(f,P)\|_{W^{m,p}(Q)}^p \simeq  \|(f,P)\|_{L^{m,p}(E;z)}^p + \sum_{|\alpha| \leq m-1} |\partial^\alpha P(z)|^p \simeq \sum_{\xi \in \Xi} |\xi(f,P)|^p + \sum_{|\alpha| \leq m-1} |\partial^\alpha P(z)|^p.$$
Choose a polynomial $R \in \cP$ that depends linearly on $f$, for which $P=R$ minimizes the right-hand side above to within a universal constant factor. (This is possible thanks to Lemma \ref{lpmin}.) Thus, $T(f,R) \in W^{m,p}(Q)$ is a near optimal extension of $f$. This also yields a solution to the $W^{m,p}(\R^n)$ extension problem, as mentioned before.

\section{Passage to Infinite $E$}\label{sec_inf}
\setcounter{equation}{0}

In this section, we deduce Theorem 1 from the known case of finite $E$. Our plan is as follows.

Let $E \subset \R^n$ be infinite. Pick a countable subset

\begin{equation} 
\label{b1} E^\circ = \{x_0,x_1,x_2,\ldots\} \subset E, \; \mbox{whose closure contains} \; E.
\end{equation}
For each $N \geq 0$, define

\begin{equation}
\label{b2} E_N = \{x_0,x_1,\ldots,x_N\}.
\end{equation}

The known case of Theorem 1 produces a linear map $T_N : L^{m,p}(E_N) \rightarrow \LR$, such that
\begin{equation}
\label{b3} T_N f = f \; \mbox{on} \; E_N
\end{equation}
and
\begin{equation}
\label{b4}   \|T_N f\|_{\LR} \leq C \|f\|_{L^{m,p}(E_N)}
\end{equation}
for all $f \in L^{m,p}(E_N)$.

We hope to pass from $T_N$ to $T$ by taking a Banach limit as $N \rightarrow \infty$. Recall that a Banach limit is a linear map that carries an arbitrary bounded sequence $(t_N)_{N \geq 0}$ of real numbers to a real number denoted $\displaystyle \Blim_{N \rightarrow \infty} t_N$; the defining properties of a Banach limit are
\begin{align}
\label{b5} & \Blim_{N \rightarrow \infty} t_N = \lim_{N \rightarrow \infty} t_N \quad \mbox{whenever} \; \lim_{N \rightarrow \infty} t_N \; \mbox{exists}, \; \mbox{and}  \\
\label{b6} & |\Blim_{N \rightarrow \infty} t_N| \leq \limsup_{N \rightarrow \infty} |t_N|     .
\end{align}
The existence of Banach limits is immediate from the Hahn-Banach theorem. See \cite{DS}.

Thus, for $f \in \LE$, we hope to define
\begin{equation}
\label{b7} Tf(x) := \Blim_{N \rightarrow \infty} \left[ T_N(f|_{E_N})(x)\right] \quad \mbox{for} \; x \in \R^n
\end{equation}
and then prove that $T$ is an extension operator as in Theorem 1.

Unfortunately, without further ideas, the above plan is doomed. For instance, suppose $E \subset \{P_0 = 0\}$ for some polynomial $P_0 \in \cP$. If $T_N$ is an extension operator for $L^{m,p}(E_N)$, then so is
$$T_N^\sharp f(x) := T_Nf(x) + \mu_N f(x_0)P_0(x) \quad (x \in \R^n),$$
where $x_0$ is as in \eqref{b1}, and $\mu_N$ is any real number. In fact, \eqref{b3} and \eqref{b4} hold for the $T_N^\sharp$, with the same constant as for $T_N$. 

Since the sequence $(\mu_N)_{N \geq 0}$ is arbitrary, there is no way to guarantee that the sequence $(T_N(f|_{E_N})(x))_{N \geq 0}$ will be bounded for fixed $x$. Consequently, we cannot guarantee that the Banach limit \eqref{b7} exists. This problem arises because $\LR$ carries a seminorm, not a norm.

To overcome this difficulty, we normalize the extension operators $T_N$ as follows. Among all finite subsets $S \subset E$, we pick $S_0$ to minimize the dimension of the vector space $\cP(S) = \{ P \in \cP : P = 0  \; \mbox{on}  \;S\}$. Such an $S_0$ exists, since any non-empty set of non-negative integers has a minimum.

For any $y \in E$, the subspace $\cP(S_0 \cup \{y\}) \subset \cP(S_0)$ has dimension no less than that of $\cP(S_0)$. Therefore, $\cP(S_0 \cup \{y\}) = \cP(S_0)$. That is, any $P \in \cP$ that vanishes on $S_0$ must also vanish at $y$. Thus,
\begin{equation}
\label{b8} S_0 \subset E \; \mbox{is finite}
\end{equation}
and
\begin{equation}
\label{b9} \mbox{Any polynomial} \; P \in \cP \; \mbox{that vanishes on} \; S_0 \; \mbox{must also vanish on} \; E.
\end{equation}

Let $S_0 = \{y_0,y_1,\ldots,y_L\}$. Without loss of generality, we may pick our $x_0,x_1,\ldots$ in \eqref{b1} so that $x_i = y_i$ for $i=0,\ldots,L$. Therefore,
\begin{equation}
\label{b10} S_0 \subset E_N \quad \mbox{for all} \; N \geq L.
\end{equation}

We pick any projection $\pi_0 : \cP \rightarrow \cP(S_0)$. We will establish the following results.

\begin{lem}
\label{l1} For $N \geq L$, the extension operators $T_N$ in \eqref{b3},\eqref{b4} can be picked to satisfy the additional condition
\begin{equation}
\label{b11} \pi_0 J_{x_0}[T_N f] = 0 \quad \mbox{for all} \; f \in L^{m,p}(E_N), \; \mbox{with} \; x_0 \; \mbox{as in} \; \eqref{b1}. 
\end{equation}
\end{lem}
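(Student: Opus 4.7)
The plan is to start with any extension operator $T_N^{\mathrm{old}} : L^{m,p}(E_N) \to \LR$ supplied by Theorem 1 for finite sets (so $T_N^{\mathrm{old}} f = f$ on $E_N$ and $\|T_N^{\mathrm{old}} f\|_{\LR} \leq C\|f\|_{L^{m,p}(E_N)}$), and to subtract off a polynomial correction, linear in $f$, that forces \eqref{b11} while destroying neither the interpolation property on $E_N$ nor the seminorm bound. Concretely, I would define
$$T_N f := T_N^{\mathrm{old}} f \;-\; \pi_0\bigl(J_{x_0}[T_N^{\mathrm{old}} f]\bigr).$$
Linearity of $T_N$ in $f$ is immediate since $T_N^{\mathrm{old}}$, $J_{x_0}$, and $\pi_0$ are all linear.

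For \eqref{b3}, the key point is that the correction term $Q := \pi_0(J_{x_0}[T_N^{\mathrm{old}} f])$ lies in $\cP(S_0)$ by definition of $\pi_0$, so $Q$ vanishes on $S_0$. By the defining property \eqref{b9} of $S_0$, any such $Q \in \cP$ must then vanish on all of $E$, and in particular on $E_N$ (using $N \geq L$, which ensures $S_0 \subset E_N$ via \eqref{b10}; though actually we only need $E_N \subset E$ here). Hence $(T_N f)|_{E_N} = f - 0 = f$. For \eqref{b4}, since $Q \in \cP$ is a polynomial of degree at most $m-1$, its $m^{\rst}$ order derivatives vanish identically, so $\|Q\|_{\LR} = 0$ and
$$\|T_N f\|_{\LR} = \|T_N^{\mathrm{old}} f - Q\|_{\LR} = \|T_N^{\mathrm{old}} f\|_{\LR} \leq C\|f\|_{L^{m,p}(E_N)}.$$

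For \eqref{b11}, observe that $J_{x_0}[T_N f] = J_{x_0}[T_N^{\mathrm{old}} f] - Q$ (because $Q$ is already a polynomial of degree less than $m$, so $J_{x_0} Q = Q$). Applying $\pi_0$ and using that $\pi_0$ is a projection onto $\cP(S_0)$ (so $\pi_0^2 = \pi_0$), we get
$$\pi_0\bigl(J_{x_0}[T_N f]\bigr) = \pi_0\bigl(J_{x_0}[T_N^{\mathrm{old}} f]\bigr) - \pi_0(Q) = Q - Q = 0.$$

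There is no real obstacle here; the only point that requires care is invoking the characterization \eqref{b9} of $S_0$ to ensure the polynomial correction $Q$ actually vanishes on $E_N$ (so that subtracting it preserves the interpolation condition). Everything else is automatic from linearity and the fact that $\cP$ lies in the kernel of the $\LR$ seminorm.
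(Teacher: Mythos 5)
Your proof is correct and follows essentially the same approach as the paper: subtract the polynomial $\pi_0(J_{x_0}[T_N^{\mathrm{old}}f])$ from the given extension, then check that it vanishes on $E$ via \eqref{b9}, has zero $\LR$ seminorm since it lies in $\cP$, and is annihilated by $\pi_0 J_{x_0}$ since $\pi_0$ is a projection. Your parenthetical remark that only $E_N \subset E$ is needed (not $N \geq L$) for this particular argument is also correct.
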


\begin{lem}
\label{l2} Suppose that $T_N$ satisfy \eqref{b11} for $N \geq L$. Then, for any $f \in \LE$ and for every cube $Q$, the functions $T_N(f|_{E_N})$ are bounded in $W^{m,p}(Q)$.
\end{lem}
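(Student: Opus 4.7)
\textbf{Proof proposal for Lemma \ref{l2}.} Write $F_N := T_N(f|_{E_N})$. The plan is to combine the uniform seminorm bound from \eqref{b4} with the normalization \eqref{b11} to pin down the full Taylor jet $J_{x_0}F_N$ in a fixed finite-dimensional subspace of $\cP$, and then use the Sobolev inequality on $Q$ to promote a $\LR$-seminorm bound together with a bound on $J_{x_0}F_N$ into a full $W^{m,p}(Q)$ bound.

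First, since any extension of $f \in \LE$ is in particular an extension of $f|_{E_N}$, we have $\|f|_{E_N}\|_{L^{m,p}(E_N)} \leq \|f\|_{\LE}$, so \eqref{b4} gives
$$
\|F_N\|_{\LR} \leq C\,\|f\|_{\LE} \qquad (N \geq 0).
$$
Set $P_N := J_{x_0} F_N \in \cP$. By \eqref{b11}, $P_N$ lies in the fixed finite-dimensional subspace $V := \ker \pi_0 \subset \cP$ for every $N \geq L$.

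Next I control $P_N$ using the minimality property of $S_0$. By \eqref{b10}, for $N \geq L$ each $y_i \in S_0$ lies in $E_N$, so $F_N(y_i) = f(y_i)$ is independent of $N$. The Sobolev inequality yields
$$
\bigl|P_N(y_i) - f(y_i)\bigr| = \bigl|(J_{x_0}F_N - F_N)(y_i)\bigr| \leq C\,|y_i - x_0|^{m - n/p}\,\|F_N\|_{\LR} \leq C'\,\|f\|_{\LE}.
$$
Hence $|P_N(y_i)| \leq |f(y_i)| + C'\|f\|_{\LE}$ uniformly in $N \geq L$. The decisive observation is that the evaluation map
$$
V \longrightarrow \R^{L+1}, \qquad P \longmapsto \bigl(P(y_0),\ldots,P(y_L)\bigr),
$$
is injective: if $P \in V$ vanishes on $S_0$, then $P \in \cP(S_0)$, but $\pi_0$ restricts to the identity on $\cP(S_0)$, so $P = \pi_0 P = 0$. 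Since $V$ is finite-dimensional, this injectivity makes $P \mapsto \max_i |P(y_i)|$ a norm on $V$, equivalent to any other norm. Consequently the coefficients of $P_N$ are bounded by $C''\|f\|_{\LE}$ for $N \geq L$, and therefore $\|P_N\|_{W^{m,p}(Q)} \leq C(Q)\,\|f\|_{\LE}$.

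Finally I split $F_N = (F_N - P_N) + P_N$. For $|\alpha| = m$, $\partial^\alpha P_N \equiv 0$, so $\|\partial^\alpha F_N\|_{L^p(Q)} \leq \|F_N\|_{\LR} \leq C\|f\|_{\LE}$. For $|\alpha| \leq m-1$, the Sobolev inequality (\ref{sec_not}.\ref{SET}) applied on a cube containing $Q \cup \{x_0\}$ gives
$$
\bigl|\partial^\alpha(F_N - P_N)(x)\bigr| \leq C\,|x - x_0|^{m - |\alpha| - n/p}\,\|F_N\|_{\LR} \leq C(Q)\,\|f\|_{\LE} \qquad (x \in Q),
$$
so $\|\partial^\alpha(F_N - P_N)\|_{L^p(Q)} \leq C(Q)\,\|f\|_{\LE}$, and combining with the bound on $P_N$ gives $\|F_N\|_{W^{m,p}(Q)} \leq C(Q)\|f\|_{\LE}$ for all $N \geq L$. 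The finitely many remaining $N < L$ contribute bounded quantities trivially. The main obstacle is purely conceptual—recognizing that the minimality of $\dim \cP(S_0)$ forces $\cP(S_0) \cap \ker \pi_0 = \{0\}$, which is exactly what makes the normalization \eqref{b11} strong enough to kill the seminorm-versus-norm ambiguity and turn the uniform $\LR$ bound into a uniform $W^{m,p}(Q)$ bound.
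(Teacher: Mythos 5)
Your argument is correct and proves the lemma, but it takes a genuinely different route from the paper. The paper first establishes an auxiliary Lemma \ref{l4} --- an a priori inequality $\|F\|_{W^{m,p}(Q)} \leq A_2(Q)\left[\|F\|_{L^{m,p}(Q)} + \max_{y\in S_0}|F(y)|\right]$ valid on the normalized slice $\{F : \pi_0 J_{x_0}F = 0\}$ --- via a contradiction argument using Ascoli compactness, and then plugs $F_N$ into it using \eqref{b4} and \eqref{b28}. Your proof bypasses the compactness step entirely: you isolate the jet $P_N = J_{x_0}F_N \in \ker\pi_0$, bound its values on $S_0$ by the Sobolev inequality, observe that the evaluation map $\ker\pi_0 \to \R^{L+1}$ given by $P \mapsto (P(y_0),\ldots,P(y_L))$ is injective (since a projection onto $\cP(S_0)$ satisfies $\ker\pi_0 \cap \cP(S_0)=\{0\}$), so $P\mapsto\max_i|P(y_i)|$ is a norm on the finite-dimensional space $\ker\pi_0$, and conclude the coefficients of $P_N$ are controlled; the remainder $F_N - P_N$ is then handled by the Sobolev inequality directly. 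The paper's compactness route is softer and does not require identifying the finite-dimensional obstruction explicitly; your route is more constructive and avoids the diagonal extraction. One small slip: since you only get $|P_N(y_i)| \leq |f(y_i)| + C'\|f\|_{\LE}$, the bound on the coefficients of $P_N$ and the final estimate should read $C(Q)\left(\max_{y\in S_0}|f(y)| + \|f\|_{\LE}\right)$ rather than $C(Q)\|f\|_{\LE}$; the $\max_{y \in S_0}|f(y)|$ term cannot be absorbed because $\|\cdot\|_{\LE}$ is only a seminorm. This does not affect the boundedness conclusion (both quantities are finite and fixed once $f$ is), and indeed the paper's own closing estimate in the proof of Lemma \ref{l2} carries the term $\max_{y\in S_0}|f(y)|$ explicitly.
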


If the $T_N$ satisfy \eqref{b11} then $\left[ T_N(f|_{E_N})(x) \right]_{N \geq 0}$ is thus a bounded sequence for each fixed $x \in \R^n$ and $f \in \LE$. Therefore, the Banach limit in \eqref{b7} is well-defined. Clearly, $T$ is a linear map, taking functions $f \in \WE$ to functions $Tf$ defined on $\R^n$. Moreover, for each $k \geq 0$, we have
$$Tf(x_k) = \Blim_{N \rightarrow \infty} T_N(f|_{E_N})(x_k) = f(x_k) \quad \mbox{with} \; x_k \; \mbox{as in} \; \eqref{b1},$$
simply because $x_k \in E_N$ for $N \geq k$, hence $T_N(f|_{E_N})(x_k) = f(x_k)$ for $N \geq k$; see \eqref{b3} and \eqref{b5}. Thus,
\begin{equation}
\label{b12} Tf=f \; \mbox{on} \; E^\circ \;\; \mbox{for any} \; \; f \in \LR, \; \; \mbox{with} \; E^\circ \subset E \; \mbox{as in} \; \eqref{b1}.
\end{equation}
We do not yet know that $Tf \in \LR$. Therefore, we prove the following result.
\begin{lem}\label{l3}
Let $Q \subset \R^n$ be a cube, and let $F_0,F_1,\ldots$ be a bounded sequence in $W^{m,p}(Q)$. Then the function $F$, defined by
\begin{equation}
\label{b13} F(x) = \Blim_{N \rightarrow \infty} F_N(x) \quad(x \in Q)
\end{equation}
belongs to $W^{m,p}(Q)$, and we have
$$\|F\|_{L^{m,p}(Q)} \leq \limsup_{N \rightarrow \infty} \|F_N\|_{L^{m,p}(Q)}.$$
\end{lem}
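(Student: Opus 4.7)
My plan is to show that $F$ has weak $m$-th derivatives in $L^p(Q)$ via a Fubini-type interchange of the Banach limit with integration, followed by integration by parts. Since $p > n$, Morrey's embedding gives $\sup_N \|F_N\|_{C^{m-1,s}(\overline{Q})} < \infty$ with $s = 1 - n/p > 0$. In particular the $F_N$ are uniformly bounded, so $F(x) := \Blim_N F_N(x)$ is well-defined and $\|F\|_{L^\infty(Q)} < \infty$. Applying $|\Blim a_N| \leq \limsup |a_N|$ to $a_N = F_N(x) - F_N(y)$ shows that $F$ inherits the same uniform Hölder modulus as the $F_N$, hence $F \in C(\overline{Q})$.

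The key step is the interchange identity
\[
\int_Q F(x)\,\phi(x)\,dx = \Blim_{N \to \infty} \int_Q F_N(x)\,\phi(x)\,dx \qquad (\phi \in L^1(Q)).
\]
I would prove it first for $\phi = \mathbbm{1}_A$ with $A \subset Q$ an axis-parallel rectangle: partition $A$ into subcubes $Q_j$ of sidelength $\delta$ with tags $y_j$, and form the Riemann sums $\Sigma_N(\delta) := \sum_j F_N(y_j)|Q_j|$ and $\Sigma(\delta) := \sum_j F(y_j)|Q_j|$. Linearity of $\Blim$ gives $\Sigma(\delta) = \Blim_N \Sigma_N(\delta)$. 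The uniform Hölder estimate yields $|\Sigma_N(\delta) - \int_A F_N\,dx| \leq C|A|\delta^s$ uniformly in $N$ (and the same bound holds for $F$). Using $|\Blim t_N| \leq \sup |t_N|$, I may pass to the limit $\delta \to 0$ inside $\Blim_N$, obtaining the identity for $\phi = \mathbbm{1}_A$. Linearity extends it to step functions, and an $L^1$-density argument (using $\|F_N\|_{L^\infty} \leq C_0$ uniformly) extends it to all $\phi \in L^1(Q)$.

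With the interchange identity, for any $\phi \in C^\infty_c(Q)$ and $|\alpha| = m$, integration by parts gives $\int_Q F_N\,\partial^\alpha \phi\,dx = (-1)^m \int_Q \partial^\alpha F_N \cdot \phi\,dx$. Since $\partial^\alpha \phi \in L^1(Q)$, the identity produces
\[
\int_Q F\,\partial^\alpha \phi\,dx = (-1)^m \Blim_N \int_Q \partial^\alpha F_N \cdot \phi\,dx.
\]
Hölder's inequality together with $|\Blim| \leq \limsup$ bounds the right side by $M\|\phi\|_{L^{p'}(Q)}$, where $M := \limsup_N \|F_N\|_{L^{m,p}(Q)}$. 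Density of $C^\infty_c(Q)$ in $L^{p'}(Q)$ (valid as $1 \leq p' < \infty$) identifies the distributional derivative $\partial^\alpha F$ with an element of $L^p(Q)$ of norm at most $M$. Taking the max over $|\alpha| = m$, together with $F \in L^p(Q)$ from Step 1, yields $F \in W^{m,p}(Q)$ with $\|F\|_{L^{m,p}(Q)} \leq M$.

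The main obstacle is the Fubini-type interchange in the second paragraph, since Banach limits do not in general commute with integration for bounded sequences (witness $F_N(x) = \sin(Nx)$, where $\int F_N \to 0$ while $\Blim F_N$ is typically a nonzero function). The essential point that rescues the argument is the uniform Hölder continuity supplied by Morrey's embedding: it gives a modulus of continuity common to the entire sequence, which makes Riemann approximation converge uniformly in $N$ and so allows $\delta \to 0$ to be passed through the Banach limit.
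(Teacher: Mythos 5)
Your argument is correct and rests on the same essential observation as the paper's: the Banach limit can be interchanged with the integral because Morrey's embedding gives a H\"older modulus of continuity \emph{uniform} in $N$, making Riemann-sum approximation converge uniformly across the sequence. The two proofs differ in organization. You abstract the interchange into a self-contained identity $\int_Q F\,\psi\,dx = \Blim_N \int_Q F_N\,\psi\,dx$ valid for all $\psi \in L^1(Q)$, and then apply it to $\psi = \partial^\alpha\phi$ for $\phi \in C^\infty_c(Q)$ and $|\alpha|=m$, integrating by parts all $m$ derivatives at once on each $F_N$. The paper instead first sets $F^{(\alpha)} = \Blim_N \partial^\alpha F_N$ for every $|\alpha|\le m-1$, verifies via a pointwise discrete-Taylor criterion (its Lemma 16.5) that $F \in C^{m-1,s}(Q)$ with $\partial^\alpha F = F^{(\alpha)}$ classically, and only then runs the Riemann-sum argument against $\partial_{x_j}\varphi$ with $|\alpha| = m-1$, so that a single integration by parts is needed. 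Your version is a bit more streamlined and avoids Lemma 16.5 entirely; the paper's version has the advantage of exhibiting all intermediate derivatives of $F$ explicitly as Banach limits. One tiny point to flag: at the end you only write down the bound for $|\alpha|=m$ and $\alpha=0$, but membership in $W^{m,p}(Q)$ requires controlling all $|\alpha|\le m$. This is immediate from exactly the same interchange identity (applied to $\partial^\alpha\phi$ for every $|\alpha|\le m$, using that the $F_N$ are bounded in $W^{m,p}(Q)$ and not just in $L^{m,p}$-seminorm), so it is not a real gap, but it is worth stating.
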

We apply the above lemma to $F_N = T_N(f|_{E_N})$ for $f \in \LE$. Then $F = Tf$ is given by \eqref{b13}. Since $\|(f|_{E_N})\|_{L^{m,p}(E_N)} \leq \|f\|_{L^{m,p}(E)}$, estimate \eqref{b4} and Lemmas \ref{l2}, \ref{l3} together imply that $Tf$ belongs to $L^{m,p}(Q)$ for any cube $Q$, and moreover
$$\|Tf\|_{L^{m,p}(Q)} \leq C \|f\|_{L^{m,p}(E)}.$$
Here we assume the $T_N$ satisfy \eqref{b11}. Since $Q \subset \R^n$ is an arbitrary cube, it follows that $Tf \in L^{m,p}(\R^n)$, and that
\begin{equation}
\label{b14} \|Tf\|_{\LR} \leq C \|f\|_{\LE} \;\;\; \mbox{for all} \; f \in \LE.
\end{equation}

Also, since the subset $E^\circ$ in \eqref{b1} is dense in $E$, we conclude from \eqref{b12} that
\begin{equation}
\label{b15} Tf = f \; \mbox{on} \; E, \; \mbox{for all} \; f \in \LE.
\end{equation}
Thus, our extension operator $T$ maps $\LE$ to $\LR$ and satisfies \eqref{b14}, \eqref{b15}. We therefore obtain Theorem \ref{thm1}, once we have Lemmas \ref{l1}, \ref{l2}, \ref{l3}.

\subsection{Proof of Lemma \ref{l1}}

To prove Lemma \ref{l1}, let $T_N : L^{m,p}(E_N) \rightarrow \LR$ satisfy \eqref{b3} and \eqref{b4}. For $f \in L^{m,p}(E_N)$, define
$$\widetilde{T}_N f = T_N f - \pi_0 J_{x_0}(T_N f) \in \LR \;\; \mbox{since} \; \pi_0 J_{x_0}(T_N f) \in \cP.$$
Note that $\pi_0 J_{x_0}(\widetilde{T}_N f) = \pi_0 \left[ J_{x_0}(T_N f) - \pi_0 J_{x_0}(T_N f) \right] = 0$, where we have used the fact that $J_{x_0} (T_N f) \in \cP$ (and the fact that $\pi_0^2 = \pi_0$). Also, since $\pi_0 J_{x_0}(T_N f) \in \cP$, we have $\|\pi_0 J_{x_0}( T_N f)\|_{\LR} = 0$, hence
$$\|\widetilde{T}_N f\|_{\LR} = \|T_N f\|_{\LR} \leq C \|f\|_{L^{m,p}(E_N)} \;\; \mbox{by} \; \eqref{b4}.$$

Finally, since $\pi_0 J_{x_0}(T_N f) \in \cP(S_0)$, we have $\pi_0 J_{x_0} (T_N f) = 0$ on $S_0$, hence also on $E$, by \eqref{b9}. Therefore,
$$\widetilde{T}_N f = f \; \mbox{on} \; E, \; \mbox{by} \; \eqref{b3}.$$
Thus, $\widetilde{T}_N : L^{m,p}(E_N) \rightarrow \LR$ is a linear map that satisfies \eqref{b3}, \eqref{b4} and \eqref{b11}. The proof of Lemma \ref{l1} is complete.

\subsection{Proof of Lemma \ref{l2}}

To establish Lemma \ref{l2}, we use the following result.

\begin{lem} \label{l4}
For any cube $Q$ containing $x_0$ there exist constants $A_1(Q), A_2(Q)>0$ such that for all $F \in W^{m,p}(Q)$ with $\pi_0 J_{x_0}(F) = 0$, we have the estimate
\begin{equation}
\label{b16} \sum_{|\alpha| \leq m-1} |\partial^\alpha F(x_0)| \leq A_1(Q) \cdot \left[ \|F\|_{L^{m,p}(Q)} + \max_{y \in S_0} |F(y)| \right],
\end{equation}
and therefore
\begin{equation}
\label{b17} \|F\|_{W^{m,p}(Q)} \leq A_2(Q) \cdot \left[ \|F\|_{L^{m,p}(Q)} + \max_{y \in S_0} | F(y)| \right].
\end{equation}
\end{lem}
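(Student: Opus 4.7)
The plan is to reduce \eqref{b16} to a norm equivalence on a finite-dimensional subspace of $\cP$, and then bootstrap \eqref{b17} from \eqref{b16} via the Sobolev inequality for $Q$. I may assume throughout that $Q$ is large enough to contain $S_0 \cup \{x_0\}$; in the application to Lemma \ref{l2}, the relevant functions are defined on all of $\R^n$, so a fixed cube may always be enlarged to meet this condition, with the resulting constants still depending only on $Q$.

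First I would prove \eqref{b16}. Set $P := J_{x_0}(F) \in \cP$. The hypothesis $\pi_0 P = 0$ means $P \in \ker(\pi_0)$, which, since $\pi_0 : \cP \to \cP(S_0)$ is a projection, is a linear complement of $\cP(S_0)$ in $\cP$. Because $\cP(S_0)$ is by definition the kernel of the evaluation map $R \mapsto (R(y))_{y \in S_0}$ from $\cP$ to $\R^{S_0}$, this evaluation map is injective on $\ker(\pi_0)$. Hence on the finite-dimensional space $\ker(\pi_0)$ the two seminorms
\[
R \longmapsto \sum_{|\alpha| \leq m-1} |\partial^\alpha R(x_0)|, \qquad R \longmapsto \max_{y \in S_0} |R(y)|
\]
are in fact norms and are therefore equivalent, yielding a constant $K_1 = K_1(\pi_0, x_0, S_0)$ with
\[
\sum_{|\alpha| \leq m-1} |\partial^\alpha P(x_0)| \leq K_1 \max_{y \in S_0} |P(y)|.
\]
For each $y \in S_0 \subset Q$, the Sobolev inequality (\ref{sec_not}.\ref{SET}) applied to $F$ on $Q$ gives $|F(y) - P(y)| \leq C(Q)\|F\|_{L^{m,p}(Q)}$, so the triangle inequality $|P(y)| \leq |F(y)| + |P(y) - F(y)|$ together with $\partial^\alpha P(x_0) = \partial^\alpha F(x_0)$ delivers \eqref{b16}.

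To deduce \eqref{b17}, write $\|F\|_{W^{m,p}(Q)}^p = \|F\|_{L^{m,p}(Q)}^p + \sum_{|\alpha| \leq m-1} \|\partial^\alpha F\|_{L^p(Q)}^p$, and for each $|\alpha| \leq m-1$ split
\[
\|\partial^\alpha F\|_{L^p(Q)} \leq \|\partial^\alpha J_{x_0}(F)\|_{L^p(Q)} + \|\partial^\alpha (F - J_{x_0}(F))\|_{L^p(Q)}.
\]
The second term is bounded by $C(Q)\|F\|_{L^{m,p}(Q)}$ by raising the pointwise Sobolev bound (\ref{sec_not}.\ref{SET}) to the $p$th power and integrating over $Q$. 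The first term is the $L^p(Q)$ norm of a polynomial, which is controlled by $C(Q) \sum_{|\gamma| \leq m-1} |\partial^\gamma F(x_0)|$, and this sum is in turn dominated by the right-hand side of \eqref{b16}. Summing over $\alpha$ and combining yields \eqref{b17} with a constant $A_2(Q)$ depending only on $A_1(Q)$ and on $Q$.

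The only genuine care is in checking that $A_1(Q)$ is finite and well-defined, and this rests entirely on the finite-dimensional fact that $\ker(\pi_0)$ meets $\cP(S_0)$ only at $0$; no analytic difficulty arises, since the Sobolev inequality (\ref{sec_not}.\ref{SET}) is available on the cube $Q$ with a constant depending only on $Q$. There is no serious obstacle; the step that requires the most attention is merely the identification of $\ker(\pi_0)$ as a complement of $\cP(S_0)$, which makes the evaluation-at-$S_0$ map a norm on the relevant polynomial subspace.
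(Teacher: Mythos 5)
Your proposal is correct, and it takes a genuinely different route from the paper. The paper proves \eqref{b16} by contradiction: it assumes the estimate fails, normalizes a sequence $F_N$ to have $\sum_{|\alpha|\leq m-1}|\partial^\alpha F_N(x_0)| = 1$ while the right-hand side tends to $0$, passes to a $C^{m-1}$-limit $F$ via Ascoli, and shows that $F$ would have to be a polynomial in $\cP(S_0)$ that violates the constraints $\pi_0 J_{x_0}(F)=0$ and $\sum|\partial^\alpha F(x_0)| = 1$. You instead isolate the finite-dimensional linear algebra up front: since $\pi_0$ is a projection onto $\cP(S_0)$, the subspace $\ker(\pi_0)$ is a complement of $\cP(S_0)$, so the evaluation-at-$S_0$ map is injective there, making $R \mapsto \max_{y\in S_0}|R(y)|$ a norm on $\ker(\pi_0)$ equivalent to $R \mapsto \sum_{|\alpha|\leq m-1}|\partial^\alpha R(x_0)|$. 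You then convert $\max_{y}|J_{x_0}F(y)|$ to $\max_y|F(y)|$ with a single application of the Sobolev inequality on $Q$. Both proofs ultimately rely on compactness (yours via equivalence of norms in finite dimension, the paper's via Ascoli), but your route bypasses the sequence and limit-function bookkeeping entirely, and it has the side benefit of cleanly separating the constant: your $A_1(Q)$ is explicitly a $Q$-independent linear-algebra constant $K_1(\pi_0,x_0,S_0)$ multiplied by the Sobolev constant for $Q$ (really just a power of $\delta_Q$), a refinement the contradiction argument does not produce. Your derivation of \eqref{b17} from \eqref{b16} via the splitting $\partial^\alpha F = \partial^\alpha J_{x_0}(F) + \partial^\alpha(F - J_{x_0}(F))$ is standard and matches the spirit of the paper's one-line deduction. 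The only housekeeping point — needing $S_0 \subset Q$ for $\max_{y\in S_0}|F(y)|$ to make sense and for the Sobolev inequality to be applicable at those points — is implicit in the lemma's statement and you handle it appropriately by noting that $Q$ can always be enlarged.
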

\begin{proof}
Fix a cube $Q$ containing $x_0$, and suppose \eqref{b16} fails. Then there exists a sequence of functions $F_N \in W^{m,p}(Q)$ $(N \geq 0)$ such that
\begin{equation}
\label{b18} \sum_{|\alpha| \leq m-1} |\partial^\alpha F_N(x_0)|  = 1
\end{equation}
and
\begin{equation}
\label{b19} \pi_0 J_{x_0}(F_N) = 0
\end{equation}
but
\begin{equation}
\label{b20} \|F_N\|_{L^{m,p}(Q)} \rightarrow 0 \; \mbox{as} \; N \rightarrow \infty \; \mbox{and} 
\end{equation}
\begin{equation}
\label{b21} \max_{y \in S_0} |F_N(y)| \rightarrow 0 \; \mbox{as} \; N \rightarrow \infty.
\end{equation}
By \eqref{b18}, \eqref{b20}, the functions $F_N$ are bounded in $W^{m,p}(Q)$, hence in $C^{m-1,s}(Q)$ with $s = 1-n/p \in (0,1)$. By Ascoli's theorem, a subsequence of the $(F_N)$ converges in $C^{m-1}(Q)$ to a function $F \in C^{m-1,s}(Q)$. From \eqref{b18}-\eqref{b21}, we obtain the following properties of $F$:
\begin{align}
\label{b22} &\qquad \sum_{|\alpha| \leq m-1} |\partial^\alpha F(x_0)|  = 1;  \\
\label{b23} &\;\;\qquad \pi_0 J_{x_0}(F) = 0; \\
\label{b24} &\;\;\qquad \|F\|_{\dot{C}^{m-1,s}(Q)} \leq \limsup_{N \rightarrow \infty} \|F_N\|_{\dot{C}^{m-1,s}(Q)} \leq C \limsup_{N \rightarrow \infty} \|F_N\|_{L^{m,p}(Q)} = 0; \; \mbox{and} \\
\label{b25} &\;\;\qquad F = 0 \; \mbox{on} \; S_0.
\end{align}

From \eqref{b24} we see that $F$ is a polynomial, $F \in \cP$. By \eqref{b25}, we have $F \in \cP(S_0)$, hence $\pi_0 J_{x_0}(F) = \pi_0 F = F$. Therefore, $F=0$ by \eqref{b23}. This contradicts \eqref{b22}.

The above contradiction shows that \eqref{b16} cannot fail. Conclusion \eqref{b17} follows at once from \eqref{b16} and the Sobolev inequality. The proof of Lemma \ref{l4} is complete.
\end{proof}

To prove Lemma \ref{l2}, we fix a cube $Q \subset \R^n$. Without loss of generality, we may suppose that $Q$ contains $x_0$.

For fixed $f \in \LE$, and for any $N \geq L$, we apply Lemma \ref{l4} to $F_N = T_N(f|_{E_N})$. Note that $F_N \in W^{m,p}(Q)$, and $\pi_0 J_{x_0}(F_N) = 0$ by \eqref{b11}. Hence, Lemma \ref{l4} applies, and we learn that
\begin{equation}
\label{b26} \|F_N\|_{W^{m,p}(Q)} \leq A_2(Q) \cdot \left[ \|F_N\|_{L^{m,p}(Q)} + \max_{y \in S_0} |F_N(y)| \right].
\end{equation}
On the other hand \eqref{b4} yields the estimates
\begin{equation}
\label{b27} \|F_N\|_{L^{m,p}(Q)} \leq \|F_N\|_{\LR} \leq C \|(f|_{E_N})\|_{L^{m,p}(E_N)} \leq C \|f\|_{L^{m,p}(E)}.
\end{equation}
Also, since $S_0 \subset E_N$, \eqref{b3} yields $F_N = f|_{E_N}$ on $S_0$, i.e.,
\begin{equation}
\label{b28} F_N = f \; \mbox{on} \; S_0.
\end{equation}

Putting \eqref{b27} and \eqref{b28} into \eqref{b26}, we find that
$$\|F_N\|_{W^{m,p}(Q)} \leq C A_2(Q) \left[ \|f\|_{\LE} + \max_{y \in S_0} |f(y)|\right] \;\; \mbox{for} \; N \geq L.$$
Therefore, the functions $F_0,F_1,F_2,\ldots$ form a bounded subset of $W^{m,p}(Q)$, completing the proof of Lemma \ref{l2}.

\subsection{Proof of Lemma \ref{l3}}

The proof of Lemma \ref{l3} uses the following simple observation. As before, we take $s = 1-n/p \in (0,1)$.

\begin{lem} \label{l5}
Let $A >0$ be a constant, and let $Q \subset \R^n$ be a cube.

For each multi-index $\alpha$ of order $|\alpha| \leq m-1$, let $f^{(\alpha)}$ be a function on $Q$. Assume the following estimates.
\begin{align}
\label{b29} &|f^{(\alpha)}(x+h) - f^{(\alpha)}(x) - \sum_{j=1}^n f^{(\alpha + \mathbbm{1}_j)}(x) h_j | \leq A |h|^{1+s} \\
&\qquad\qquad\qquad \mbox{for} \; |\alpha| \leq m-2, \; x \in Q, \; h = (h_1,\ldots,h_n), \; x+h \in Q. \notag{} \\
& |f^{(\alpha)}(x) - f^{(\alpha)}(y)| \leq A |x-y|^s \;\; \mbox{for} \; |\alpha| \leq m-1, \; x,y, \in Q. \label{b30}
\end{align}
(Here, $\mathbbm{1}_j$ denotes the $j^\th$ unit multi-index, so that $\partial^{\mathbbm{1}_j} = \frac{\partial}{\partial x_j}$.)

Then $f^{(0)} \in C^{m-1,s}(Q)$, and $f^{(\alpha)} = \partial^\alpha f^{(0)}$ on $Q \;$ for each $\alpha$ of order $|\alpha| \leq m-1$.
\end{lem}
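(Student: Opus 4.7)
The plan is to prove by downward induction on $k$ that, for each $0 \le k \le m-1$, the function $f^{(0)}$ lies in $C^{m-1-k}(Q)$ and satisfies $\partial^\alpha f^{(0)} = f^{(\alpha)}$ on $Q$ for every multi-index $\alpha$ with $|\alpha| \le m-1-k$; the case $k=0$ then gives that $f^{(0)} \in C^{m-1}(Q)$ and $\partial^\alpha f^{(0)} = f^{(\alpha)}$ for all $|\alpha| \le m-1$, after which applying (b30) to $|\alpha| = m-1$ shows that $\partial^\alpha f^{(0)}$ is $s$-H\"older, i.e.\ $f^{(0)} \in C^{m-1,s}(Q)$.

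The base case $k = m-1$ is trivial: by definition $f^{(0)} \in C^{0}(Q)$ (in fact $C^{0,s}(Q)$ by (b30) with $\alpha = 0$), and the identification $\partial^\alpha f^{(0)} = f^{(\alpha)}$ only needs to hold for $\alpha = 0$, where it is tautological.

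For the inductive step, suppose the claim holds for some $k \ge 1$; we wish to prove it for $k - 1$. Fix $\alpha$ with $|\alpha| \le m - 1 - (k-1) = m - k$. If $|\alpha| \le m-1-k$ the claim is part of the inductive hypothesis, so we may assume $|\alpha| = m-k$, and in particular $|\alpha| \le m-2$ so that hypothesis (b29) is available for this $\alpha$. By the inductive hypothesis applied to the multi-indices $\alpha - \mathbbm{1}_j$ (which have order $m-k-1 \le m-1-k$, when such multi-indices exist), we already know $f^{(\alpha - \mathbbm{1}_j)} \in C^0(Q)$, so it suffices to show that $f^{(0)}$ has the classical mixed partial $\partial^\alpha f^{(0)}$, equal to $f^{(\alpha)}$, and that this function is continuous. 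Writing $\alpha = \beta + \mathbbm{1}_{j_0}$ for some index $j_0$, the inductive hypothesis gives $\partial^\beta f^{(0)} = f^{(\beta)}$; then (b29) applied to the multi-index $\beta$ (of order $m-k-1 \le m-2$) says exactly
\[
\bigl| f^{(\beta)}(x+h) - f^{(\beta)}(x) - \sum_{j=1}^n f^{(\beta + \mathbbm{1}_j)}(x)\, h_j \bigr| \le A |h|^{1+s},
\]
which is the statement that $f^{(\beta)}$ is (Fr\'echet) differentiable at every $x \in Q$ with partial derivatives $\partial_j f^{(\beta)}(x) = f^{(\beta + \mathbbm{1}_j)}(x)$. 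In particular $\partial_{j_0} f^{(\beta)} = f^{(\alpha)}$, and (b30) guarantees that $f^{(\beta + \mathbbm{1}_j)}$ is continuous on $Q$ for each $j$, so $f^{(\beta)}$ is actually $C^1$ and its classical partial derivative in direction $j_0$ equals $f^{(\alpha)}$. Composing with the inductive identification $\partial^\beta f^{(0)} = f^{(\beta)}$ and using the standard fact that a classical derivative of a classically differentiable function commutes with the derivative already taken, we conclude $\partial^\alpha f^{(0)} = \partial_{j_0}\partial^\beta f^{(0)} = f^{(\alpha)}$, and this is continuous on $Q$ by (b30). Running the induction down to $k = 0$ completes the proof.

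The only subtle step is the passage from ``differentiable at every point with continuous partials'' to the statement that the pointwise mixed partial is the classical partial derivative of the function already identified as $\partial^\beta f^{(0)}$; this is where the monotonic form of (b29) (a linear remainder estimate at every point, not merely a limit) is essential, because it lets us apply the elementary criterion that a function whose partials exist everywhere and are continuous is $C^1$, whence the Schwarz-type commutation of partial derivatives is automatic. Everything else is bookkeeping: once $f^{(0)} \in C^{m-1}(Q)$ with $\partial^\alpha f^{(0)} = f^{(\alpha)}$ for $|\alpha| \le m-1$, inequality (b30) for the top-order $|\alpha| = m-1$ is exactly the $\dot{C}^{m-1,s}$-seminorm estimate, which together with the boundedness of the lower-order derivatives (also from (b30)) yields $f^{(0)} \in C^{m-1,s}(Q)$.
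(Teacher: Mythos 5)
Your argument is correct and matches the paper's own proof: both arguments observe that (b29) and (b30) together show $f^{(\beta)}\in C^1(Q)$ with $\partial_{x_j} f^{(\beta)} = f^{(\beta+\mathbbm{1}_j)}$ for each $|\beta|\le m-2$, propagate this to $f^{(0)}\in C^{m-1}(Q)$ with $\partial^\alpha f^{(0)}=f^{(\alpha)}$ (the paper states this in one line, you spell out the downward induction and the invocation of Schwarz), and then read off the $\dot C^{m-1,s}$ seminorm from (b30). One small slip in your write-up: when $k=1$ a top-order multi-index has $|\alpha|=m-1\not\le m-2$, so the clause ``in particular $|\alpha|\le m-2$'' is false there --- but this is harmless since, as the subsequent sentences make clear, you apply (b29) to $\beta=\alpha-\mathbbm{1}_{j_0}$ of order $m-k-1\le m-2$, not to $\alpha$ itself.
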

\begin{proof}
Hypotheses \eqref{b29},\eqref{b30} show that, for $|\alpha| \leq m-2$, the function $f^{(\alpha)}$ belongs to $C^1(Q)$, and $\partial_{x_j} f^{(\alpha)} = f^{(\alpha + \mathbbm{1}_j)}$. Thus, $f^{(0)} \in C^{m-1}(Q)$ and $\partial^\alpha f^{(0)} = f^{(\alpha)}$ for $|\alpha| \leq m-1$. Hypothesis \eqref{b30} now shows that the derivatives of $f^{(0)}$ up to order $m-1$ are Lipschitz-$s$. Hence, $f^{(0)} \in C^{m-1,s}(Q)$, completing the proof of Lemma \ref{l5}.
\end{proof}

\textit{Proof of Lemma \ref{l3}:}
Let $Q, F_0, F_1, \ldots, F$ be as in the hypotheses of Lemma \ref{l3}. For $|\alpha| \leq m-1$, $N \geq 0$, let $$F_N^{(\alpha)} = \partial^\alpha F_N \in C^{m-1-|\alpha|,s}(Q).$$

Since the $F_N$ are bounded in $W^{m,p}(Q)$, the following estimates hold for some constant $A$.
\begin{align}
\label{b31}& |F_N^{(\alpha)}(x)| \leq  A \quad \mbox{for} \; x \in Q.\\
\label{b32}& |F_N^{(\alpha)}(x+h) - F_N^{(\alpha)}(x) - \sum_{j=1}^n F_N^{(\alpha + \mathbbm{1}_j)}(x) h_j| \leq A |h|^{1 + s }\\
& \notag{} \qquad\qquad\qquad\mbox{for} \;\; |\alpha| \leq m-2, \; x \in Q, \; h = (h_1,\ldots,h_n), \; x + h \in Q.\\
\label{b33}& |F_N^{(\alpha)}(x) - F_N^{(\alpha)}(y)| \leq A |x-y|^s \;\;\; \mbox{for} \; |\alpha| \leq m-1, \; x,y \in Q.
\end{align}

Thanks to \eqref{b31}, the Banach limit
\begin{equation}
\label{b34} F^{(\alpha)}(x) = \Blim_{N \rightarrow \infty} F_N^{(\alpha)}(x) \quad (x \in Q)
\end{equation}
is well-defined. Note that
\begin{equation}
\label{b35}F^{(0)}(x) = \Blim_{N \rightarrow \infty} F_N(x) = F(x), \;\; \mbox{with} \; F \; \mbox{as in the statement of Lemma \ref{l3}}.
\end{equation}
Applying \eqref{b6}, we may pass to the Banach limit and deduce from
\eqref{b31},\eqref{b32},\eqref{b33} for $F_N^{(\alpha)}$ the corresponding estimates for the $F^{(\alpha)}$.
Lemma \ref{l5} now shows that $F = F^{(0)} \in C^{m-1,s}(Q)$, and that $\partial^\alpha F = F^{(\alpha)}$ for each $|\alpha| \leq m-1$.

Hence, to prove that $F \in W^{m,p}(Q)$, and that $\|F\|_{L^{m,p}(Q)} \leq \limsup_{N \rightarrow \infty} \|F_N\|_{L^{m,p}(Q)}$, it is enough to prove that
\begin{equation}
\label{b36} \left| \int_Q \partial_{x_j} \varphi \cdot F^{(\alpha)} dx \right| \leq \limsup_{N \rightarrow \infty} \|F_N\|_{L^{m,p}(Q)} \cdot \|\varphi\|_{L^{p'}(Q)}
\end{equation}
for any test function $\varphi \in C^\infty_0(Q)$ and any $|\alpha| = m -1$, $j \in \{1,\ldots,n\}$. Here, $p'$ is the dual exponent to $p$, and we use the definition of the $L^{m,p}$-seminorm :
$$\|F\|_{L^{m,p}(Q)}^p = \max_{|\alpha| = m} \int_Q |\partial^\alpha F(x)|^p dx. $$

Thus, Lemma \ref{l3} reduces to the task of proving \eqref{b36} for any given $\varphi \in C_0^\infty(Q)$, $|\alpha| = m - 1$, $j \in \{1,\ldots,n\}$. Fix such $\varphi, \alpha, j,$ and let $M$ be any real number greater than $\limsup_{N \rightarrow \infty} \|F_N\|_{L^{m,p}(Q)}$.

For $N$ large enough we have $\|F_N\|_{L^{m,p}(Q)} \leq M$, hence
\begin{equation}
\label{b37} \left| \int_Q \partial_{x_j} \varphi \cdot F_N^{(\alpha)} dx \right| \leq M \|\varphi\|_{L^{p'}(Q)}
\end{equation}
since $F_N^{(\alpha)} = \partial^\alpha F_N$.

We will derive \eqref{b36} by passing to the Banach limit in \eqref{b37}. To do so, we simply approximate the integrals in \eqref{b36},\eqref{b37} by Riemann sums.

We know that
\begin{equation}
\label{b38} |F_N^{(\alpha)}(x) - F_N^{(\alpha)}(y)| \leq A |x-y|^s \;\; \mbox{and} \;\; |F^{(\alpha)}(x) - F^{(\alpha)}(y)| \leq A |x-y|^s
\end{equation}
for $x,y \in Q$, with $A$ independent of $N$.

Let $\delta>0$ be a small number (later, we will take $\delta \rightarrow 0^+$); let $\{Q_\nu\}$ be a partition of $Q$ into subcubes with $\mbox{center}(Q_\nu) = z_\nu$, and with sidelength $\delta_{Q_\nu} < \delta$. Then \eqref{b31} and \eqref{b38} together imply the estimates
\begin{equation}
\label{b39} \left|\int_Q \partial_{x_j} \varphi \cdot F^{(\alpha)}_N dx - \sum_{\nu=1}^{\nu_{\max}} \partial_{x_j} \varphi(z_\nu) \cdot F^{(\alpha)}_N(z_\nu) \cdot \delta_{Q_\nu}^n\right| \leq CA \delta^s \cdot \delta_{Q}^n
\end{equation}
and
\begin{equation}
\label{b40}\left|\int_Q \partial_{x_j} \varphi \cdot F^{(\alpha)} dx - \sum_{\nu=1}^{\nu_{\max}} \partial_{x_j} \varphi(z_\nu) \cdot F^{(\alpha)}(z_\nu) \cdot \delta_{Q_\nu}^n\right| \leq CA \delta^s \cdot \delta_{Q}^n,
\end{equation}
with $C$ independent of $N$.

In particular, \eqref{b39} shows that the sequence $\Bigl( \int_Q \partial_{x_j} \varphi \cdot F_N^{(\alpha)} dx \Bigr)_{N \geq 0}$ is bounded.

Since $F^{(\alpha)}(z_\nu) = \Blim_{N \rightarrow \infty} F^{(\alpha)}_N(z_\nu)$ for each $\nu$, property \eqref{b6} of the Banach limit, together with \eqref{b39}, implies that
\begin{equation}
\label{b41} \left| \left[ \Blim_{N \rightarrow \infty} \int_Q \partial_{x_j} \varphi \cdot F^{(\alpha)}_N dx \right] - \sum_{\nu=1}^{\nu_{\max}} \partial_{x_j} \varphi(z_\nu) \cdot F^{(\alpha)}(z_\nu) \cdot \delta_{Q_\nu}^n \right| \leq CA \delta^s \cdot \delta_Q^n.
\end{equation}
(In \eqref{b41}, the Banach limit in square brackets in well-defined, thanks to \eqref{b37}.)

Comparing \eqref{b40} and \eqref{b41}, we see that
$$\left| \int_Q \partial_{x_j} \varphi \cdot F^{(\alpha)} dx - \left[ \Blim_{N \rightarrow \infty} \int_Q \partial_{x_j} \varphi \cdot F^{(\alpha)}_N dx \right] \right| \leq 2CA \delta^s \cdot \delta_{Q}^n.$$
Since $\delta>0$ is arbitrarily small, we conclude that
\begin{equation}
\label{b42}\int_Q \partial_{x_j} \varphi \cdot F^{(\alpha)} dx = \Blim_{N \rightarrow \infty} \int_Q \partial_{x_j} \varphi \cdot F^{(\alpha)}_N dx.
\end{equation}

From \eqref{b37}, \eqref{b42} and property \eqref{b6} of the Banach limit, we obtain the estimate

\begin{equation}
\label{b43} \left| \int_Q \partial_{x_j} \varphi \cdot F^{(\alpha)} dx \right| \leq M \|\varphi\|_{L^{p'}(Q)}.
\end{equation}
This implies the desired estimate \eqref{b36}, since $M$ in \eqref{b43} is an arbitrary real number greater than $\limsup_{N \rightarrow \infty} \|F_N\|_{L^{m,p}(Q)}$.

We reduced the proof of Lemma \ref{l3} to \eqref{b36}, and we have now proven \eqref{b36}. This completes the proof of Lemma \ref{l3}, and with it, the proof of Theorem \ref{thm1}.

\subsection{Epilogue}

Given $E \subset \R^n$ (possibly infinite), we can also ``construct'' a linear extension operator $T : W^{m,p}(E) \rightarrow W^{m,p}(\R^n)$ by applying the case of finite $E$ and passing to a Banach limit. This exercise is a much easier version of the argument we just explained for $L^{m,p}$. Since $\|\cdot\|_{W^{m,p}(\R^n)}$ is a norm (rather than a seminorm), the pitfalls that we worked to avoid will no longer arise. We leave the details to the reader.

By using a Banach limit, we have sacrificed all knowledge of the structure of our linear extension operator $T$. It would be very interesting to gain some understanding of that structure. Such an understanding is achieved for the $C^m$ case in \cite{F4}, and for the deeper case of $C^{m,s}$ and related spaces in \cite{L1}.

\bibliographystyle{plain}
\bibliography{SobolevExtension}

\end{document}